\documentclass[12pt, reqno]{amsart}
\usepackage{amssymb,amsthm,amsfonts,amsmath, amscd}

\usepackage{hyperref}
\usepackage{mathrsfs}

\evensidemargin0cm \oddsidemargin0cm \textwidth15cm

\newcommand\Y{\mathbb Y}
\newcommand\Z{\mathbb Z}
\newcommand\C{\mathbb C}
\newcommand\R{\mathbb R}
\newcommand\T{\mathbb T}
\newcommand\GT{{\mathbb{S}}}

\newcommand\D{\mathbb D}
\newcommand\I{\mathbb I_M}
\renewcommand\AA{\mathbb A}

\newcommand\al{\alpha}
\newcommand\be{\beta}
\newcommand\ga{\gamma}

\newcommand\de{\delta}
\newcommand\De{\Delta}
\newcommand\ka{\varkappa}
\newcommand\La{\Lambda}
\newcommand\la{\lambda}
\newcommand\si{\sigma}
\newcommand\epsi{\varepsilon}
\newcommand\om{\omega}
\newcommand\Om{\Omega}

\newcommand\wt{\widetilde}
\newcommand\wh{\widehat}

\newcommand\const{\operatorname{const}}
\newcommand\Dim{\operatorname{Dim}}

\newcommand\Sym{\operatorname{Sym}}
\newcommand\sgn{\operatorname{sgn}}
\newcommand\Fun{{\operatorname{Fun}}}

\newcommand\dom{\operatorname{dom}}
\newcommand\supp{\operatorname{supp}}
\newcommand\Hom{\operatorname{Hom}}
\newcommand\Ind{\operatorname{Ind}}
\newcommand\Rep{\operatorname{Rep}}

\newcommand\RR{\mathscr R}
\newcommand\F{\mathscr F}
\newcommand\DD{\mathscr D}

\newcommand\A{\mathscr A}
\renewcommand\L{\mathscr L}
\newcommand\K{\mathscr K}
\newcommand\J{\mathscr J}

\newcommand\baM{{(a,b,M)}}
\newcommand\ba{{(a,b)}}
\newcommand\zw{{z,z',w,w'}}

\newcommand\one{\mathbf1}
\newcommand\One{\wt{\mathbf1}}

\newcommand\pd{\partial}

\newcommand\Minv{M_{\operatorname{inv}}}

\newtheorem{theorem}{Theorem}[section]
\newtheorem{proposition}[theorem] {Proposition}

\newtheorem{corollary}[theorem]{Corollary}
\newtheorem{lemma}[theorem]{Lemma}
\newtheorem{claim}[theorem]{Claim}

\theoremstyle{definition}
\newtheorem{definition}[theorem]{Definition}
\newtheorem{remark}[theorem]{Remark}

\numberwithin{equation}{section}

\begin{document}

\title[]
{The representation ring of the unitary groups and Markov processes of
algebraic origin}

\author{Grigori Olshanski}

\address{Institute for Information Transmission Problems, Moscow, Russia;
\newline\indent National Research University Higher School of Economics, Moscow, Russia}

\email{olsh2007@gmail.com}

\date{}

\dedicatory{To the memory of Andrei Zelevinsky}

\begin{abstract}
The paper consists of two parts. The first part introduces the representation
ring for the family of compact unitary groups $U(1)$, $U(2)$, \dots\,. This
novel object is a commutative graded algebra $R$ with infinite-dimensional
homogeneous components. It plays the role of the algebra of symmetric
functions, which serves as the representation ring for the family of finite
symmetric groups. The purpose of the first part is to elaborate on the basic
definitions and prepare the ground for the construction of the second part of
the paper.

The second part deals with a family of Markov processes on the dual object to
the infinite-dimensional unitary group $U(\infty)$. These processes were
defined in a joint work with Alexei Borodin (J. Funct. Anal. 2012). The main
result of the present paper consists in the derivation of an explicit
expression for their infinitesimal generators. It is shown that the generators
are implemented by certain second order partial differential operators with
countably many variables, initially defined as operators on $R$.

\end{abstract}

\maketitle

\tableofcontents

\section{Introduction}\label{sect1}

\subsection{Preliminaries: the symmetric group case}

The present paper deals with certain combinatorial and probabilistic aspects of
the representation theory of the infinite-dimensional unitary group
$U(\infty)$. A parallel theory also exists for the infinite symmetric group
$S(\infty)$. That theory is simpler and better developed, and it served as a
motivation for the present paper. So I start with a brief overview of some
relevant results which hold in the symmetric group case.

In the modern interpretation, classical Frobenius' construction \cite{Fr} of
irreducible characters of the symmetric groups $S(N)$ relies on the isomorphism
of graded algebras $\Rep(S(1),S(2),\dots)\simeq\Sym$, where $\Sym$ denotes the
algebra of symmetric functions and $\Rep(S(1),S(2),\dots)$ is our notation for
the \emph{representation ring of the family\/ $\{S(N): N=1,2,\dots\}$} of the
finite symmetric groups.

The algebra $\Rep(S(1),S(2),\dots)$ can be described as follows:
\begin{equation}\label{eq1.A}
\Rep(S(1),S(2),\dots):=\bigoplus_{N=0}^\infty \Rep^S_N
\end{equation}
where $\Rep^S_N$ is the space of class functions on $S(N)$, and the
multiplication
$$
\Rep^S_M\otimes \Rep^S_N\to \Rep^S_{M+N}
$$
is given by the operation of induction from $S(M)\times S(N)$ to $S(M+N)$.

(This definition should not be confused with that of the representation ring of
an individual group, see, e.g., Segal \cite{Segal}).

The algebra $\Rep(S(1),S(2),\dots)$ has a distinguished basis formed by the
irreducible characters of the symmetric groups. Under the isomorphism
$\Rep(S(1),S(2),\dots)\to\Sym$, called the \emph{characteristic map}, this
basis is transformed into the distinguished basis in $\Sym$ formed by the Schur
symmetric functions.

These facts are well known, see e.g. Macdonald \cite[Chapter I, Section
7]{Ma95}.

The \emph{infinite symmetric group} $S(\infty)$ is defined as the union of the
infinite chain
\begin{equation}\label{eq1.L}
S(1)\subset S(2)\subset \dots \subset S(N-1)\subset S(N)\subset \cdots
\end{equation}
of finite symmetric groups. For $S(\infty)$, the conventional notion of
irreducible characters is not applicable. However,  there exists a reasonable analog of \emph{normalized} irreducible characters (that is, irreducible characters divided by dimension). These are the so-called
\emph{extreme} characters whose definition, first suggested by Thoma
\cite{Tho64}, was inspired by the Murray--von Neumann theory of factors. Thoma
discovered that the extreme characters of $S(\infty)$ admit an explicit
description: they are parameterized by the points of the \emph{Thoma simplex}
$\Om^S$, a convex subset in the infinite-dimensional cube $[0,1]^\infty$. Note
that $\Om^S$ is compact in the product topology of $[0,1]^\infty$.

The \emph{dual object} to the group $S(N)$ is defined as the set $\wh{S(N)}$ of
its irreducible characters, and it can be identified with the set $\Y_N$ of
Young diagrams with $N$ boxes. Likewise, we regard the set of extreme
characters of the group $S(\infty)$ as (one of the possible versions of) the
dual object $\wh{S(\infty)}$ and identify it with the Thoma simplex $\Om^S$.

Vershik and Kerov \cite{VK81-Doklady}, \cite{VK81} initiated the
\emph{asymptotic theory of characters} (see also Vershik's foreword to
\cite{Kerov-book}). They explained how the extreme characters of the group
$S(\infty)$ arise from the normalized irreducible characters of the groups
$S(N)$ in a limit transition as $N$ goes to infinity. In the asymptotic theory
of characters, the algebra $\Sym$ still plays an important role. In particular,
the so-called \emph{ring theorem} of Vershik and Kerov says that the extreme
characters of $S(\infty)$ are in a one-to-one correspondence with those linear
functionals on $\Sym$ that are multiplicative, take nonnegative values on the
basis of Schur functions, and vanish on the principal ideal
$(e_1-1)\subset\Sym$, where $e_1$ is the first elementary symmetric function
(see Vershik-Kerov \cite{VK-RingTheorem} and also Gnedin-Olshanski
\cite{GO-zigzag}).

Now I proceed to probabilistic results. First, note that the embedding
$S(N-1)\subset S(N)$ gives rise, by duality, to a canonical ``link''
$\wh{S(N)}\dasharrow\wh{S(N-1)}$. Here by a link $X\dasharrow Y$ between two
spaces I mean a ``generalized map'' which assigns to every point of $X$ a
probability distribution on $Y$; in other words, a link is given by a Markov
kernel (which in our case is simply a stochastic matrix). As explained in
Borodin-Olshanski \cite{BO-MMJ}, the dual object $\wh{S(\infty)}$ can be viewed
as the projective limit of the chain
\begin{equation}\label{eq1.J}
\wh{S(1)}\dashleftarrow\wh{S(2)}\dashleftarrow\dots\dashleftarrow\wh{S(N-1)}
\dashleftarrow\wh{S(N)} \dashleftarrow\cdots
\end{equation}
taken in an appropriate category with morphisms given by Markov kernels. Thus,
$S(\infty)$ is an inductive limit group while its dual object $\wh{S(\infty)}$
is obtained by taking a kind of projective limit.

In \cite{BO-PTRF09}, Borodin and I constructed a two-parameter family of
continuous time Markov processes on the Thoma simplex. Our work was inspired by
our previous study of the problem of harmonic analysis on $S(\infty)$ and
substantially used the canonical links from \eqref{eq1.J}. We proved that the
Markov processes in question have continuous sample trajectories and
consequently are diffusion processes. The proof relied on the computation of
the infinitesimal generators of the processes: we showed that the generators
are given by certain second order differential operators initially acting on
the the quotient algebra $\Sym/(e_1-1)$. To relate the?se operators to Markov
processes we used the fact that there is a canonical embedding
\begin{equation}\label{eq1.K}
\Sym/(e_1-1) \hookrightarrow C(\Om^S),
\end{equation}
where $C(\Om^S)$ denotes the Banach algebra of continuous functions on the
compact space $\Om^S$.

\subsection{The results}

Let us turn to the compact unitary groups. They are organized into a chain
similar to \eqref{eq1.L},
\begin{equation*}
U(1)\subset U(2)\subset \dots \subset U(N-1)\subset U(N)\subset \cdots,
\end{equation*}
and we set $U(\infty):=\bigcup_{N=1}^\infty U(N)$. The extreme characters of
the group $U(\infty)$ were first investigated by Voiculescu \cite{Vo76}. They
are parameterized  by the points of an infinite-dimensional space $\Om$, which
can be realized as a convex subset in the product of countably many copies of
$\R_+$ (see Subsection \ref{sect3.A} below). Note that $\Om$ is locally
compact. Like the dual object to $S(\infty)$, the space $\Om=\wh{U(\infty)}$
can be identified with the projective limit of the dual chain
\begin{equation}\label{eq1.N}
\wh{U(1)}\dashleftarrow\wh{U(2)}\dashleftarrow\dots\dashleftarrow\wh{U(N-1)}
\dashleftarrow\wh{U(N)} \dashleftarrow\cdots
\end{equation}

Although the groups $S(\infty)$ and $U(\infty)$ are structurally very
different, there is a surprising similarity in the description of their
characters. An explanation of this phenomenon is suggested in Borodin-Olshanski
\cite{BO-MMJ}.

Here is a brief description of what is done in the present paper.

1. The attempt to extend the definition of the representation ring to the
family of the unitary groups leads us to a novel object --- a certain graded
algebra $R$, which plays the role of the algebra $\Sym$.

2. An analog of the embedding \eqref{eq1.K} is found. As explained below, it
may be viewed as a kind of Fourier transform on $U(\infty)$.

3. The main result is the computation of the infinitesimal generators for the
four-parameter family of Markov processes on $\Om$, previously constructed in
Borodin-Olshanski \cite{BO-GT-Dyn}. It is shown that the generators in question
are implemented by certain second order partial differential operators,
initially defined on $R$.

Now I will describe the results in more detail. As will be clear, for all the
similarities between $S(\infty)$ and $U(\infty)$, the unitary group case turns
out to be substantially more complicated.

\subsection{The representation ring for the unitary groups: the algebra $R$}

At first it was unclear to me if there is a good analog of the representation
ring  for the family $\{U(N)\}$. The difficulty here is that, in contrast to
the case of finite symmetric groups, induced characters have infinitely many
irreducible constituents. Therefore, directly following the definition of
$\Rep(S(1),S(2),\dots)$ we  see that products of basis elements are infinite
sums; how to deal with them? The proposed solution is to enlarge the space and
allow infinite sums. This leads to the following definition:

The algebra $R$, the suggested analog of the algebra $\Sym$, is the graded
algebra of formal power series of bounded degree, in countably many variables
each of which has degree 1. The variables are denoted by $\varphi_n$, where $n$
ranges over $\Z$.

Recall that $\Sym$ is the projective limit of polynomial algebras:
\begin{equation}\label{eq1.H}
\Sym=\varprojlim \C[e_1,\dots,e_k],
\end{equation}
where $k\to\infty$ and $e_1,e_2,\dots$ are the elementary symmetric functions.

Likewise, $R$ also can be represented as the projective limit of polynomial
algebras:
\begin{equation}\label{eq1.I}
R =\varprojlim \C[\varphi_{-l}, \dots,\varphi_{k}],
\end{equation}
where $k, l\to+\infty$.

A substantial difference is that $\deg e_k=k$, while $\deg\varphi_n=1$ for all
$n\in\Z$. Because of this, the homogeneous components of $\Sym$ have finite
dimension, while those of $R$ are infinite-dimensional. Nevertheless, it turns
out that the projective limit realization \eqref{eq1.I} is a kind of finiteness
property which can be efficiently exploited.

As in the case of the algebra $\Sym$, in $R$ there exist various interesting
bases, but these are \emph{topological bases}. Two bases are of particular
importance for the purpose of this paper. They are denoted as $\{\varphi_\la\}$
and $\{\si_\la\}$, where the subscript $\la$ ranges over the set of highest
weights of all unitary groups. The basis $\{\varphi_\la\}$ is formed by the
monomials in letters $\varphi_n$ and is similar to the multiplicative basis in
$\Sym$ generated by the elementary symmetric functions. The basis $\{\si_\la\}$
is an analog of the Schur functions. The interplay between these two bases
plays an important role in the derivation of the main result.

By the Schur-Weyl duality, the representation ring for the \emph{family}
$\{S(N)\}$ is isomorphic to a certain representation ring of a \emph{single}
object --- the Lie algebra $\mathfrak{gl}(\infty)$. Likewise, using the fermion
version of the Howe duality one can identify the representation ring for the
family $\{U(N)\}$ with a certain representation ring for the Lie algebra
$\mathfrak{gl}(2\infty)$ (for more detail, see Subsection \ref{sect2.B} below).

\subsection{What is the Fourier transform on $U(\infty)$?}

Let us consider first a finite group $G$ and let $\Minv(G)$ denote the space of
complex measures on $G$, invariant with respect to inner automorphisms.
Next, let $\wh G$ stand for the set of normalized irreducible characters and
$\Fun(\wh G)$ denote the space of functions on  $\wh G$. By integrating a
character $\chi\in\wh G$ against a measure $m\in\Minv(G)$ we get a linear map
$$
F:  \Minv(G)\to \Fun(\wh G).
$$
Using the functional equation for normalized irreducible characters one sees
that $F$ turns the convolution product of measures into the pointwise product
of functions. So $F$ is a reasonable version of Fourier transform.

More generally, the above definition of Fourier transform $F$ works perfectly
when $G$ is a compact group. Then as $\Minv(G)$ one can still take the space of
invariant complex measures on $G$ or, if $G$ is a Lie group, the larger space
of invariant distributions or else an appropriate subspace therein, depending
on the situation.

But what happens for $G=S(\infty)$ or $G=U(\infty)$? The dual object $\wh G$
has been defined, and one knows that it is large enough in the sense that the extreme characters of these groups separate the
conjugacy classes. The problem is that the above definition of
$\Minv(G)$ no longer works. For instance, the only invariant \emph{finite}
measure on $S(\infty)$ is the delta measure at the unit element.

This difficulty can be resolved as follows. For a group $G$ which is an
inductive limit of compact groups $G(N)$ we define
$$
\Minv(G):=\varinjlim \Minv(G(N)),
$$
where the map $\Minv(G(N-1))\to\Minv(G(N))$ is given by averaging over the
action of the group of inner automorphisms of $G(N)$. In more detail, given a measure $M\in\Minv(G(N-1))$, its image in $\Minv(G(N))$ is defined as
$$
\int_{g\in G(N)}M^g dg,
$$
where $M^g$ denotes the transformation of $M$ (which we transfer from $G(N-1)$ to $G(N)$) under the conjugation by an element $g\in G(N)$, and $dg$ denotes the normalized Haar measure on $G(N)$. 

In the case of $G=S(\infty)$ it is readily verified that $\Minv(S(\infty))$ can
be identified, in a natural way, with the quotient algebra $\Sym/(e_1-1)$, and
then the Fourier transform just defined coincides with the map \eqref{eq1.K}.

In the case $G=U(\infty)$ the situation is more delicate. In the first
approximation, the analog of $\Sym/(e_1-1)$ is the quotient algebra $R/J$,
where $J$ is the following principal ideal
\begin{equation}\label{eq1.M}
J:=(\varphi-1), \qquad \varphi:=\sum_{n\in\Z}\varphi_n.
\end{equation}
However, this algebra is too large and one has to narrow it in order for the
Fourier transform to be well defined. We discuss two variants of doing this,
both of which seem to be quite natural. Note that there are also many other
possibilities: they depend on the concrete choice of the spaces $\Minv(U(N))$.
I did not go too far in this direction, because for the main result it was
sufficient to dispose of the simplest way to relate the algebra $R$ to the
space $\wh{U(\infty)}=\Om$.

Note that in a number of cases involving those of $G=S(\infty)$ and $G=U(\infty)$, the set of conjugacy classes of $G$ can be endowed with a natural semigroup structure (see \cite{Ols-AA}, \cite{Ols-Semigroups}, \cite{Ols-GordonBreach}). Then one may endow $\Minv(G)$ with a multiplication, which is an analog of convolution product and which turns into pointwise multiplication on $\wh G$  under a suitable version of  Fourier transform.

\subsection{The Markov generators}

The Markov processes on $\Om$ constructed  in Borodin-Olshanski
\cite{BO-GT-Dyn} depend on four complex parameters $\zw$ subject to certain
constraints (see Definition \ref{def6.A}). Let us ignore  for a moment the
constraints, so that $\zw$ are arbitrary complex numbers, and consider a formal
second order partial differential operator
\begin{multline}\label{eq1.C}
\D_{\zw}=\sum_{n_1,n_2\in\Z} A_{n_1n_2}(\dots,\varphi_{-1},
\varphi_0,\varphi_1,\dots)\frac{\pd^2}{\pd\varphi_{n_1}\pd\varphi_{n_2}}\\
+\sum_{n\in\Z} B_n(\dots,\varphi_{-1}, \varphi_0,\varphi_1,\dots;
\zw)\frac{\pd}{\pd\varphi_n},
\end{multline}
where the variables $\varphi_n$ are indexed by integers $n\in\Z$, the second
order coefficients $A_{n_1n_2}$ are certain (complicated) quadratic expressions
in the variables, and the first order coefficients $B_n$ are certain linear
expressions which involve the parameters, see the explicit formulas
\eqref{eq4.B} and \eqref{eq4.C} below.

The main result of the paper can be informally stated as follows.

\begin{theorem}\label{thm1.A}
Assume that the quadruple $(\zw)$ satisfies the necessary constraints, so that
the construction of \cite{BO-GT-Dyn} provides a Markov process $X_\zw$ on
$\Om$. Then the generator of $X_\zw$ is implemented by the differential
operator\/ $\D_\zw$.
\end{theorem}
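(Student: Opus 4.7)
The plan is to exploit the fact that, in \cite{BO-GT-Dyn}, $X_{z,z',w,w'}$ is constructed as a projective limit (with respect to the canonical links of \eqref{eq1.N}) of continuous-time Markov chains $X^{(N)}_{z,z',w,w'}$ on the finite dual objects $\wh{U(N)}$, whose $Q$-matrices are given by explicit formulas. The identification of the generator on $\Om$ with $\D_{z,z',w,w'}$ then reduces to three subgoals: (i) showing that $\D_{z,z',w,w'}$ is a well-defined continuous operator on $R$ in the topology coming from \eqref{eq1.I} and that it preserves the principal ideal $J=(\varphi-1)$, hence descends to $R/J$; (ii) showing that, after passing through the Fourier transform discussed in the introduction, its action on functions pulled back from level $N$ coincides with the explicit generator $Q^{(N)}$ of $X^{(N)}_{z,z',w,w'}$; (iii) a core/density argument upgrading this pointwise matching to identification of Markov generators. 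Subgoal (i) is essentially bookkeeping, exploiting the fact that the coefficients $A_{n_1n_2}$ are quadratic and the $B_n$ linear in the $\varphi_n$, so degree-bounded subspaces are preserved.

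The workhorse of (ii) is the explicit action of $\D_{z,z',w,w'}$ on the Schur-like basis $\{\si_\la\}$, which is designed precisely so that its image under the Fourier transform yields the normalized irreducible characters of $U(N)$ pulled up to $\Om$. I would compute $\D_{z,z',w,w'}\si_\la$ by first applying the operator in the simpler monomial basis $\{\varphi_\la\}$, where each $\pd/\pd\varphi_n$ acts transparently, and then translating back via the change-of-basis rules between $\{\varphi_\la\}$ and $\{\si_\la\}$ alluded to in the introduction. The expected outcome is a finite linear combination $\sum_\mu q_{\la\mu}\si_\mu$ whose coefficients reproduce the jump rates of $X^{(N)}_{z,z',w,w'}$ prescribed in \cite{BO-GT-Dyn}.

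The main obstacle will be precisely this matching. The operator $\D_{z,z',w,w'}$ has countably many terms with quadratic second-order coefficients, so a priori its action on $\si_\la$ produces a vast proliferation of contributions; extracting the finite, combinatorially controlled expansion requires massive cancellations engineered by the interplay between the two bases (in the spirit of Jacobi--Trudi-type identities in $\Sym$) together with the reduction modulo $J$, which kills the direction dual to total character weight that is conserved by the dynamics. Once this matching is complete and the resulting coefficients are checked to agree with the $Q^{(N)}$ of \cite{BO-GT-Dyn}, subgoal (iii) follows by standard Feller-semigroup arguments: the images of the level-$N$ cylinder functions form a dense class in $C_0(\Om)$ (or the appropriate function space) that serves as a core for the limiting generator, so coincidence of the operators on this class forces coincidence of the full generators.
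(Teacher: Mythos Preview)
Your framework is right in outline: the construction in \cite{BO-GT-Dyn} gives generators $D_{\zw\mid N}$ on each $\GT_N$, the link $\La^\infty_N$ pulls back the delta functions $\One_\la$ to the functions $\wh\si_\la$ on $\Om$, and the generator $A_\zw$ is determined on the core $\wh\F$ by the explicit formula $A_\zw\wh\si_\mu=q(\mu,\mu)\wh\si_\mu+\sum_{\nu\ne\mu}r(\nu,\mu)\wh\si_\nu$. Subgoals (i) and (iii) are indeed routine and correspond to Propositions \ref{prop4.B}, \ref{prop6.E}, and Theorem \ref{thm6.A} in the paper.

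The genuine gap is in (ii). You propose to compute $\D_\zw\si_\la$ directly, by expanding $\si_\la$ in the monomial basis $\{\varphi_\nu\}$, applying $\D_\zw$, and re-expanding in $\{\si_\nu\}$. You correctly identify that this would require ``massive cancellations,'' but you offer no mechanism to produce them. The paper's author explicitly states that a direct verification of $\D_\zw\si_\la=\AA_\zw\si_\la$ for general $\la$ appears to be a difficult task, and the proof avoids it entirely. The change-of-basis from $\varphi$ to $\si$ is a finite Jacobi--Trudi sum, but the inverse is an infinite sum, and the coefficients $A_{n_1n_2}$ of $\D_\zw$ are themselves infinite series; controlling the resulting double infinities for $\la$ of arbitrary length $N$ is not just bookkeeping.

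The paper's route is indirect. One defines an operator $\AA_\zw$ on $R$ by decreeing that it act on $\si_\mu$ exactly as $A_\zw$ acts on $\wh\si_\mu$, and then proves $\AA_\zw=\D_\zw$ via two claims: first, the two operators agree on monomials of degree $\le2$ (a finite and tractable computation, Section \ref{sect8}); second, $\AA_\zw$ has order $\le2$ in the abstract sense of iterated commutators with multiplication operators (Claim \ref{claim2}). Since $\D_\zw$ is visibly second order, and second-order operators are determined by their action on monomials of degree $\le2$ (Proposition \ref{prop7.B}), this suffices. The second claim is itself nontrivial and is proved by a degeneration argument: for integer values $z=k$, $w=l$, the operator $\AA_{k,z',l,w'}$ descends to the finite-dimensional quotient $\wh R(k,-l)$, and there it is shown to coincide with a multivariate Jacobi differential operator (Theorem \ref{thm9.A}), hence is manifestly second order; Zariski density in the parameters then gives the general case. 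This Jacobi reduction is the key idea you are missing.
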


A rigorous version is given in Theorem \ref{thm7.B}.

Note that the Markov generator in question is defined on a dense subspace of
$C_0(\Om)$, the Banach space of continuous functions on $\Om$ vanishing at
infinity. To relate  such an operator with an operator acting on $R$ we use the
Fourier transform discussed in the preceding subsection. Here we use the fact
that $\D_\zw$  preserves the principal ideal $J\subset R$ (see \eqref{eq1.M}
above) and so also acts on $R/J$.

The operator $\D_\zw$ is well adapted to the basis $\{\varphi_\la\}$ in $R$
while the Markov generators are initially defined by their action on another
basis, $\{\si_\la\}$. This is the main source of difficulty in the proof of the
main theorem: transition from one basis to another one is achieved by rather
long computations.

The construction of the processes $X_\zw$ in our work \cite{BO-GT-Dyn} is based
on a limit transition along the chain \eqref{eq1.N}: we find jump processes on
the dual objects $\wh{U(N)}$ which are consistent with the ``links''
$\wh{U(N)}\dasharrow\wh{U(N-1)}$. The key idea is very simple but the
construction is formal and it drastically differs from the approaches used by
probabilists. So the intriguing problem is to understand what is the nature of the
processes $X_\zw$ and what can be explicitly computed. The computation of the
Markov generators in the present paper is the first step in this direction.

The fact that the Markov generators are implemented by differential operators
makes plausible the conjecture that the sample trajectories of the processes
are continuous (the diffusion property). In the symmetric group case (see
Borodin-Olshanski \cite{BO-PTRF09}) we give a simple proof of the diffusion
property for the processes on the Thoma simplex $\Om^S$ using the realization
of their generators as differential operators on $\Sym$. However, the structure
of the differential operator $\D_\zw$ is substantially more complicated,
because, in contrast to the symmetric group case, the coefficients $A_{n_1n_2}$
are given by infinite series. This is an obstacle to extending the approach of
\cite{BO-PTRF09}.

It seems that the Markov generators cannot be written in terms of the natural
coordinates on $\Om$, and the same holds in the models related to $S(\infty)$,
studied in \cite{BO-PTRF09} and \cite{BO-EJP} (a possible explanation is that
the coordinate functions do not enter the domain of the generators, see in this
connection the discussion in Petrov \cite[Remark 5.4]{Petrov-FAA} concerning a
simpler model). This is why one needs to use a more involved construction using
the algebra $R$ (or, in the symmetric group case, the algebra $\Sym$).

\subsection{Lifting of multivariate Jacobi differential operators to algebra $R$}

Let $m=1,2,3,\dots$\,. The \emph{Jacobi partial differential operator} in $m$
variables $t_1,\dots,t_m$ is given by
\begin{equation}\label{eq1.P}
D^{(a,b)}_m := \sum_{i=1}^m \left(t_i(1-t_i)\frac{\partial^2}{\partial
t_i^2}+\left[b+1-(a+b+2)t_i+\sum_{j:\, j\ne
i}\frac{2t_i(1-t_i)}{t_i-t_j}\right]\frac{\partial}{\partial t_i}\right).
\end{equation}
Here $a$ and $b$ are parameters. In the simplest case $m=1$ this operator turns into the familiar hypergeometric
ordinary differential operator
$$
D^{(a,b)}=t(1-t)\frac{d^2}{dt^2}+[b+1-(a+b+2)t]\frac{d}{dt}.
$$
The operator $D^{(a,b)}$ is attached to the Jacobi orthogonal polynomials with
the weight function $t^b(1-t)^a$ on the unit interval $0\le t\le 1$, that is,
the Jacobi polynomials are just the polynomial eigenfunctions of $D^{(a,b)}$.

In the case of several variables, despite the singularities on the hyperplanes
$t_i=t_j$, the operator $D^{(a,b)}_m$ is well defined on the space of symmetric
polynomials in $t_1,\dots,t_m$ and is diagonalized in the basis of $m$-variate
symmetric Jacobi polynomials. The latter polynomials are a particular case of
the Heckman-Opdam orthogonal polynomials, which corresponds to the root system
$BC_m$ and a special choice of the ``Jack parameter'' (see e.g. Heckman
\cite{Heckman}, Koornwinder \cite{Koornwinder}). The operator $D^{(a,b)}_m$ is
well known; it appeared (in a more general form involving the Jack parameter)
in many works, see, e.g., Baker-Forrester \cite{BF}.

Given $m$, let us fix two nonnegative integers $k$ and $l$ such that $k+l=m$.
We assume that $m+1$ variables $\varphi_{-l},\dots,\varphi_k$ are expressed through
$m$ variables $t_1,\dots,t_m$ via
$$
\sum_{n=-l}^k\varphi_nu^n=\prod_{i=1}^k
(t_i+(1-t_i)u)\cdot\prod_{i=k+1}^m(1-t_i+t_iu^{-1}),
$$
where the left-hand side should be viewed as a generating series for $\varphi_{-l},\dots,\varphi_k$  with an auxiliary indeterminate $u$ (then, by equating the coefficients of monomials $u^n$ in the both sides, we can write $\varphi_n$'s as polynomials in $t_i$'s). 
Setting $u=1$ one sees that the constraint $\sum_{n=-l}^k\varphi_n=1$ holds. Moreover, we
may identify the algebra $\Sym_m$ of symmetric polynomials in variables
$t_1,\dots,t_m$ with
$$
\wh
R(k,-l):=\C[\varphi_{-l},\dots,\varphi_k]\big/\left(\sum_{n=-l}^k\varphi_n-1\right),
$$
the quotient by the principal ideal generated by the element
$\sum_{n=-l}^k\varphi_n-1$.

In the next theorem  we regard the same algebra $\wh R(k,-l)$ as the quotient
$R/J(k,-l)$, where $J(k,-l)$ denotes the ideal of $R$ generated by the elements
\begin{equation}\label{eq1.O}
\varphi_{k+1}, \varphi_{k+2}, \dots; \quad \varphi_{-l}, \varphi_{-l-1},
\dots;\quad \varphi_{-l}+\dots+\varphi_k-1.
\end{equation}
Note that the ideal does not change if  $\varphi_{-l}+\dots+\varphi_k-1$ is
replaced by $\varphi-1$, where $\varphi$ is defined in \eqref{eq1.M} above.

{}From the proof of Theorem \ref{thm1.A} one can extract the following fact:

\begin{theorem}\label{thm1.B}
Let us assume that parameters $z$ and $w$ are nonnegative integers, which are
not both $0$. Let us denote them by $k$ and $l$, respectively.

In this special case the differential operator $\D_\zw$ preserves the ideal
$J(k,l)\subset R$ and so determines an operator on $R/J(k,-l)=\wh R(k,-l)$. The
latter  operator coincides with the $(k+l)$-variate Jacobi operator
\eqref{eq1.P} with parameters $a=z'-k$, $b=w'-l$.
\end{theorem}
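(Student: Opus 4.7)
The plan is to specialize the computations behind Theorem \ref{thm1.A} to the integer case $z=k$, $w=l$ with $k,l\in\Z_{\ge 0}$ not both zero, and then translate the result through the prescribed change of variables. The argument falls naturally into two stages: first establish that $\D_{\zw}$ preserves $J(k,-l)$, then compute the induced operator on the quotient $\wh R(k,-l)$ and identify it with $D^{(a,b)}_m$.

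For the first stage, I would examine the action of $\D_{\zw}$ on each of the three families of generators listed in \eqref{eq1.O}, using the explicit formulas \eqref{eq4.B}--\eqref{eq4.C}. On a single variable $\varphi_n$ with $n>k$, only the first-order part contributes, so $\D_{\zw}\varphi_n = B_n(\dots;\zw)$; the mechanism making this an element of $J(k,-l)$ is that $B_n$ is expected to depend on the parameters through a factor vanishing when $z=k$ and $n>k$, and symmetrically through a factor vanishing when $w=l$ and $n<-l$. This is precisely the algebraic phenomenon by which the integer specialization ``cuts off'' the tail variables. For the generator $\varphi - 1$, I would verify $\D_{\zw}\varphi \in J(k,-l)$, which is nothing but the Markov-generator condition that constants lie in the kernel after descent, and should follow from summing \eqref{eq4.C} over $n$ and using the quadratic structure of the $A_{n_1 n_2}$.

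For the second stage, I would translate each $\pd/\pd\varphi_n$ into a combination of $\pd/\pd t_i$ via the Jacobian matrix of the substitution prescribed by
$$
\sum_{n=-l}^k\varphi_n u^n \;=\; \prod_{i=1}^k\bigl(t_i+(1-t_i)u\bigr)\cdot\prod_{i=k+1}^m\bigl(1-t_i+t_iu^{-1}\bigr),
$$
plug into \eqref{eq1.C}, and reorganize. The first-order part should deliver the drift $[b+1-(a+b+2)t_i]\pd/\pd t_i$ with $a=z'-k$ and $b=w'-l$, matching exactly the parameters predicted by the theorem; the diagonal second-order terms should yield $t_i(1-t_i)\pd^2/\pd t_i^2$; and the off-diagonal second-order terms should produce the $BC_m$-type interaction $\sum_{j\ne i}2t_i(1-t_i)(t_i-t_j)^{-1}\pd/\pd t_i$.

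The principal obstacle is the emergence of this singular interaction in the second stage: the coefficients $A_{n_1 n_2}$ in \eqref{eq1.C} do not visibly contain poles of the form $(t_i-t_j)^{-1}$, which must appear only after the change of variables. I would address this by encoding $A_{n_1 n_2}$ as Laurent coefficients of a suitable rational function of two auxiliary variables and performing partial-fraction expansions with respect to the roots of the generating series above; the residues at $u=-t_i/(1-t_i)$ for $i\le k$ and at $u=-(1-t_i)/t_i$ for $i>k$ should account precisely for the coefficients $2t_i(1-t_i)/(t_i-t_j)$. Controlling the signs, normalizations, and the diagonal contributions in this partial-fraction step is the main computational hurdle; once completed, matching the parameters $a=z'-k$ and $b=w'-l$ is a direct read-off from the linear part of $B_n$.
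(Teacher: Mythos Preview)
Your Stage~1 contains a real gap: since $\D_\zw$ is second order and not a derivation, checking that each generator $g$ of $J(k,-l)$ satisfies $\D_\zw g\in J(k,-l)$ does not prove the ideal is preserved. For $g$ a generator and $h\in R$ arbitrary, $\D_\zw(gh)$ contains a cross term $2\sum_{n_1,n_2}A_{n_1n_2}(\partial_{\varphi_{n_1}}g)(\partial_{\varphi_{n_2}}h)$ which must also be placed in the ideal. The paper handles this structurally: Proposition~\ref{prop4.C} shows $\D_\zw$ preserves $I(k,-l)$ by observing that the second-order part can only \emph{enlarge} the support of a monomial (so $I(k,-l)$ is automatically stable under it) while the two dangerous boundary contributions from $B_n$ vanish exactly when $z=k$, $w=l$; and Proposition~\ref{prop4.B} shows $\D_\zw$ preserves $J$ by proving the stronger statement that $\D_\zw$ \emph{commutes} with multiplication by $\varphi$. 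Since $J(k,-l)=I(k,-l)+J$, these two facts together give the preservation.

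Your Stage~2 is a genuinely different route from the paper's. The paper never attempts the direct change of variables $\varphi_n\rightsquigarrow t_i$ applied to $\D_\zw$. Instead it identifies the induced operator on $\wh R(k,-l)$ with the Jacobi operator \emph{indirectly}, via the auxiliary operator $\AA_\zw$ built from the jump rates on $\GT_N$. The key is Theorem~\ref{thm9.A}: one rewrites the difference operator $D_{\zw\mid N}$ via the complementation $\la\leftrightarrow\ka$ of Young diagrams inside the rectangle $(m^N)$ and recognizes it as the $m$-variate Hahn difference operator (Proposition~\ref{prop9.A}); one then shows that the link $\La^\infty_N$ carries multivariate Hahn polynomials to multivariate Jacobi polynomials (Proposition~\ref{prop9.C}, via Lemma~\ref{lemma9.B}); since both families share the eigenvalues $-n(n+a+b+1)$, the intertwining relation \eqref{eq9.D} follows and pins down the induced operator as $D^{(a,b)}_m$. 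The identification with $\D_\zw$ then comes from $\D_\zw=\AA_\zw$ (Theorem~\ref{thm7.A}). Your partial-fraction scheme for producing the singular coefficients $2t_i(1-t_i)/(t_i-t_j)$ out of the polynomial $A_{n_1n_2}$ is plausible but unexecuted and would be laborious; if it could be carried through, it would be more elementary (no links, no Hahn polynomials, no appeal to Theorem~\ref{thm7.A}) and would in fact give an independent proof of Claim~\ref{claim2} as a byproduct.
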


This fact clarifies the nature of the differential operator $\D_\zw$. Indeed,
from Theorem \ref{thm1.B} one can see that the sophisticated expression for
$\D_\zw$ appears as the result of formal analytic extrapolation, with respect
to parameters $(k,l,a,b)$, of the Jacobi differential operators
$D^{(a,b)}_{k+l}$ rewritten in a new set of variables. Note that as $k$ and $l$
increase, the ideals $J(k,-l)$ decrease and their intersection
$\cap_{k,l=1}^\infty J(k,-l)$ coincides with the principal ideal $J\subset R$
generated by the sole element $\varphi-1$. Note also that the extrapolation
procedure is purely formal, because the integers $k$ and $l$, whose sum $m=k+l$
initially represents the number of variables, finally turn into complex
parameters.

It is interesting to compare this picture with what is done in the work of
Sergeev and Veselov \cite{SV} which deals with the same Jacobi differential
operators (involving the additional ``Jack parameter''). However, in \cite{SV}
the operators are lifted to the algebra $\Sym$, while our target space is the
algebra $R$. The initial motivation of Sergeev and Veselov is also different:
they used the lifting to $\Sym$ as a tool for constructing super versions of
quantum integrable systems in finite dimensions, while our interest is in
infinite-dimensional Markov dynamics. (See also the papers Desrosiers-Halln\"as
\cite{DH}, Olshanski \cite{Ols-LaguerreNote}, \cite{Ols-Laguerre} --- in all
these works the target space is $\Sym$.)

\subsection{Organization of the paper}

Section \ref{sect2} introduces the algebra $R$  and Section \ref{sect3} relates
it to the dual object $\wh{U(\infty)}$. Section \ref{sect4} introduces the
differential operator $\D_\zw$. In Sections \ref{sect5} and \ref{sect6} we
recall some general facts about Feller Markov processes, next describe the
``method of intertwiners'' \cite{BO-GT-Dyn}, and then explain how it produces a
special family of Markov processes on $\wh{U(\infty)}$ out of continuous time
Markov chains on the discrete sets $\wh{U(N)}$. In Section \ref{sect7} we
formulate the main theorem and outline the plan of its proof.  The proof itself
occupies Sections \ref{sect8} and \ref{sect9}. The last Section \ref{sect10} is
an appendix, where we prove the uniform boundedness of multiplicities in
certain induced representations of compact groups; this fact was used in
Section \ref{sect3}.

\subsection{Acknowledgement}
I am grateful to Igor Frenkel for an important comment which I used in
Subsection \ref{sect2.B}, and to Vladimir L. Popov who confirmed that the
statement of Proposition \ref{prop10.A} is true and communicated its proof to
me. I am also grateful to the anonymous referee for valuable suggestions. This research was partially supported by a grant from Simons Foundation
(Simons-IUM Fellowship) and by
the RFBR grant 13-01-12449.

\section{The algebra $R$}\label{sect2}

\subsection{Definition of algebra $R$}\label{sect2.A}

Throughout the paper $\{\varphi_n\}$ stands for a doubly infinite collection of
formal variables indexed by arbitrary integers $n\in\Z$.

We define $R$ as the commutative complex unital algebra formed by arbitrary
formal power series of bounded degree, in variables $\varphi_n$, $n\in\Z$. Here
we assume that $\deg\varphi_n=1$ for every $n$. The algebra $R$ is graded: we
write $R=\bigoplus_{N=0}^\infty R_N$, where the elements of the $N$th
homogeneous component $R_N$ have the form
\begin{equation}\label{eq2.A}
\psi=\sum_{n_1\ge \dots\ge
n_N}a_{n_1,\dots,n_N}\varphi_{n_1}\dots\varphi_{n_N}
\end{equation}
with no restriction on the complex coefficients $a_{n_1,\dots,n_N}$.

Equivalently, $R$ can be defined as a projective limit of polynomial algebras.
Namely, for a pair of integers $n_+\ge n_-$ we set
\begin{equation*}
R(n_+,n_-):=\C[\varphi_{n_-},\varphi_{n_-+1},\dots,\varphi_{n_+-1},\varphi_{n_+}].
\end{equation*}
Then one can write
$$
R=\varprojlim R(n_+,n_-), \qquad n_+\to+\infty, \quad n_-\to-\infty,
$$
where the limit is taken in the category of graded algebras.

We call the natural homomorphisms $R\to R(n_+,n_-)$ the \emph{truncation maps}.
Let $I(n_+,n_-)$ denote the kernel of the truncation $R\to R(n_+,n_-)$. As
$n_\pm\to\pm\infty$, the ideals $I(n_+,n_-)$ decrease and their intersection
equals $\{0\}$. We take these ideals as the base of a topology in $R$, which we
call the \emph{$I$-adic topology}.

Following Weyl \cite{Weyl} we define a \emph{signature of length} $N$ as an
arbitrary vector $\la=(\la_1,\dots,\la_N)\in\Z^N$ with weakly decreasing
coordinates: $\la\ge\dots\ge\la_N$. The set of all such vectors is denoted by
$\GT_N$. In particular, $\GT_1=\Z$. By agreement, $\GT_0$ consists of a single
element denoted by $\varnothing$.

With a signature $\la\in\GT_N$ we associate a monomial of degree $N$,
$$
\varphi_\la:=\varphi_{\la_1}\dots\varphi_{\la_N},
$$
and we agree that $\varphi_\varnothing=1$. With this notation, \eqref{eq2.A}
can be rewritten as
$$
\psi=\sum_{\la\in\GT_N}a_\la\varphi_\la.
$$
Initially, $\psi$ is a formal series, but, alternatively, the above sum can be
interpreted as the limit, in the $I$-adic topology, of the truncated finite
sums,
$$
\psi=\lim_{n_\pm\to\pm\infty}\;\sum_{\la\in\GT_N:\; n_+\ge\la_1,\; \la_N\ge
n_-}a_\la\varphi_\la.
$$

Therefore, one can say that the monomials $\varphi_\la$ form a homogeneous
\emph{topological} basis of $R$.

\subsection{Bases in $R$}

We are going to describe a general recipe for constructing various topological
bases in $R$ which are all consistent with the projective limit realization
$R=\varprojlim R(n_+,n_-)$.

Let us introduce a partial order on signatures: two signatures $\la$, $\mu$ may
be comparable only if they have the same length $N$, and then
$$
\la\ge\mu \,
\Leftrightarrow\,\la-\mu\in\Z_+(\epsi_1-\epsi_2)+\dots+\Z_+(\epsi_{N-1}-\epsi_N),
$$
where $\epsi_1,\dots,\epsi_N$ is the natural basis of the lattice $\Z^N$. In
particular, $\la\ge\mu$ implies $\sum\la_i=\sum\mu_i$. We write $\la>\mu$ if
$\la\ge\mu$ and $\la\ne\mu$. Note that the signatures of length $N$ are
precisely the highest weights of the irreducible representations of $U(N)$, and
the introduced order is nothing else than the standard \emph{dominance partial
order} on the set of weights of the reductive Lie algebra
$\mathfrak{gl}(N,\C)$, the complexified Lie algebra of $U(N)$.

We will be dealing with various symmetric \emph{Laurent} polynomials in several
variables $u_1,\dots,u_N$, $N=1,2,\dots$. The simplest example is the family of
monomial sums $m_\la$. Here $\la\in\GT_N$ and, by definition,
$$
m_\la=\sum_{(n_1,\dots,n_N)\in S(N)\cdot\la}u_1^{n_1}\dots u_N^{n_N},
$$
where $S(N)\cdot\la$ denotes the orbit of $\la$ under the action of the
symmetric group $S(N)$; in other words, the summation is over all
\emph{distinct} vectors $(n_1,\dots,n_N)\in\Z^N$ that can be obtained from
$(\la_1,\dots,\la_N)$ by permutations of the coordinates. By agreement,
$m_\varnothing:=1$ (the same agreement is tacitly adopted for other families
of polynomials that will appear below).

Assume we are given an arbitrary family $\{P_\la\}$ of homogeneous symmetric
Laurent polynomials indexed by signatures and satisfying the following
\emph{triangularity condition}:
\begin{equation}\label{eq2.B}
P_\la=\sum_{\mu:\,\mu\le\la}\al(\la,\mu) m_\mu,\qquad \al(\la,\mu)\in\C, \quad
\al(\la,\la)=1
\end{equation}
(examples will be given shortly). In particular, the number of variables in
$P_\la$ equals the length of $\la$.

With every such a family $\{P_\la\}$ we associate a family $\{\pi_\la\}$ of
homogeneous elements of $R$ in the following way. We form a generating series
for $\varphi_n$'s:
\begin{equation}\label{eq2.C}
\Phi(u):=\sum_{n\in\Z}\varphi_n u^n\in R[[u,u^{-1}]].
\end{equation}
Then the elements $\pi_\la$ in question are obtained as the coefficients in the
expansion
\begin{equation}\label{eq2.D}
\Phi(u_1)\dots\Phi(u_N)=\sum_{\la\in\GT_N}\pi_\la P_\la(u_1,\dots,u_N), \qquad N=1,2,\dots,
\end{equation}
and we agree that
$$
\pi_{\varnothing}=1.
$$

If $P_\la=m_\la$ for all $\la$, then the meaning of \eqref{eq2.D} is clear and
we obtain $\pi_\la=\varphi_\la$. But in the general case one has to explain how
to understand the sum in the right-hand side: the answer is that it converges
coefficient-wise, in the $I$-adic topology of $R$.

Here is an equivalent definition. The relation \eqref{eq2.D} is interpreted as
an infinite system of linear equations,
\begin{equation}\label{eq2.E}
\sum_{\la:\, \la\ge\mu}\al(\la,\mu)\pi_\la=\varphi_\mu, \qquad \forall\mu.
\end{equation}
The triangularity condition \eqref{eq2.B} gives a sense to the infinite sum in
the left-hand side of \eqref{eq2.E} and guarantees that the infinite matrix
$[\al(\la,\mu)]$ is invertible. Then we get
\begin{equation}\label{eq2.I}
\pi_\la=\sum_{\nu: \,\nu\ge\la}\be(\nu,\la)\varphi_\nu
\end{equation}
with some new coefficients $\be(\nu,\la)$ such that $\be(\la,\la)=1$.

It is evident that $\{\pi_\la\}$ is a topological basis in $R$. Moreover,
$\{\pi_\la\}$ is consistent with the ideals $I(n_+,n_-)$ meaning that
$I(n_+,n_-)$ is (topologically) spanned by the basis elements that are
contained in it, that is, by the elements $\pi_\la$, $\la\in\GT_N$, such that
$\la$ violates at least one of the inequalities $n_+\ge\la_1$, $\la_N\ge n_-$.
The quotient algebra $R(n_+,n_-)$ is, on the contrary, spanned by the
$\pi_\la$'s such that $\la$ satisfies the both inequalities.

\subsection{Example: the basis $\{\si_\la\}$ related to the Schur rational functions}

Let us turn now to concrete examples. The most important example is obtained
when as $\{P_\la\}$ we take the \emph{rational Schur functions} $s_\la$. These
are symmetric Laurent polynomials given by the same ratio-of-determinants
formula as the ordinary Schur polynomials, only the index $\la$ is an arbitrary
signature, so that the integers $\la_i$ are not necessarily nonnegative:
$$
s_\la(u_1,\dots,u_N)=\frac{\det[u_i^{\la_j+N-j}]}{V(u_1,\dots,u_N)},
$$
where the determinant in the numerator is of order $N$ and the denominator is
the Vandermonde,
$$
V(u_1,\dots,u_N)=\prod_{1\le i<j\le N}(u_i-u_j).
$$
The required triangularity condition \eqref{eq2.B} holds because $s_\la$ is an
irreducible character of $U(N)$. Another way to check \eqref{eq2.B} is to use
the combinatorial formula for the Schur polynomials.

Note that
\begin{equation}\label{eq2.J}
u_1\dots u_N s_\la(u_1,\dots,u_N)=s_{\la_1+1,\dots,\la_N+1}(u_1,\dots,u_N),
\end{equation}
which makes it possible to reduce many claims concerning the rational Schur
functions to the case of ordinary Schur polynomials.

For the basis $\{\pi_\la\}$ in $R$ corresponding to $P_\la=s_\la$ we use the
special notation $\{\si_\la\}$. Thus, the elements $\si_\la\in R$ are defined
as the coefficients of the expansion
\begin{equation}\label{eq2.H}
\Phi(u_1)\dots\Phi(u_N)=\sum_{\la\in\GT_N}\si_\la s_\la(u_1,\dots,u_N).
\end{equation}

Combining this with \eqref{eq2.C} and the definition of $s_\la$, one gets a
nice formula expressing $\si_\la$ through $\varphi_n$'s:
\begin{equation}\label{eq2.N}
\si_\la=\det[\varphi_{\la_i-i+j}]_{i,j=1}^N =\sum_{s\in
S(N)}\sgn(s)\varphi_{\la_1-1+s(1)}\dots\varphi_{\la_N-N+s(N)},
\end{equation}
where $S(N)$ denotes the group of permutations of\/ $\{1,\dots,N\}$ and\/
$\sgn(s)=\pm1$ is the sign of a permutation $s$.

Thus, the expansion of the elements of the basis $\{\si_\la\}$ in the basis
$\{\varphi_\nu\}$ has only finitely many nonzero terms. On the contrary, the
expansion of the elements of the latter basis in the former basis has
infinitely many terms (for $N\ge2$). For instance,
\begin{equation}\label{eq2.G}
\si_{\la_1,\la_2}=\varphi_{\la_1,\la_2}-\varphi_{\la_1+1,\la_2-1}
\end{equation}
but
\begin{equation}\label{eq2.F}
\varphi_{\la_1,\la_2}=\sum_{n=0}^\infty\si_{\la_1+n,\la_2-n}.
\end{equation}

\subsection{Example: bases related to Macdonald polynomials}

Observe that the Macdonald polynomials in finitely many variables (as well
their degeneration, the Jack polynomials) have a natural Laurent version,
because they satisfy the relation similar to \eqref{eq2.J}, see Macdonald
\cite[chapter VI, (4.17)]{Ma95}. Moreover, they satisfy the condition
\eqref{eq2.B}, see \cite[chapter VI, (4.7)]{Ma95}. Therefore, one may take
$P_\la(u_1,\dots,u_N)=P_\la(u_1,\dots,u_N; q,t)$ (the Laurent version of
Macdonald polynomials with two parameters $(q,t)$) or
$P_\la(u_1,\dots,u_N)=P^{(\al)}(u_1,\dots,u_N)$ (the Laurent version of Jack
polynomials with parameter $\al$), and then we get a certain topological basis
in $R$. In particular, the case $q=t$ gives the Schur polynomials and the basis
$\{\si_\la\}$, and the case $(q=0, t=1)$ give the monomial sums $m_\la$ and the
basis $\{\varphi_\la\}$.

\subsection{Structure constants of multiplication}

Let, as above,  $\{P_\la\}$ be a family of symmetric Laurent polynomials
satisfying the triangularity condition \eqref{eq2.B} and $\{\pi_\la\}$ be the
corresponding topological basis in $R$. Then any homogeneous element $\psi\in
R_N$ can be uniquely represented in the form
$\psi=\sum_{\la\in\GT_N}a_\la\pi_\la$ with some complex coefficients $a_\la$. I
am going to explain how to write the operation of multiplication in this
notation.

Let $M$ and $N$ be two nonnegative integers and $\la\in\GT_{M+N}$. Partitioning
the variables in $P_\la$ into two groups, of cardinality $M$ and $N$, we get an
expansion of the form
\begin{equation}\label{eq2.L}
P_\la(u_1,\dots,u_{M+N})=\sum_{\mu\in\GT_M,
\,\nu\in\GT_N}c(\la\mid\mu,\nu)P_\mu(u_1,\dots,u_M)P_\nu(u_{M+1},\dots,u_{M+N}),
\end{equation}
where $c(\la\mid\mu,\nu)$ are certain coefficients. Indeed, the existence,
finiteness, and uniqueness of this expansion is obvious in the case
$P_\la=m_\la$, and the general case is reduced to that case using the
triangularity property and the fact that for any signature $\la$, the set
$\{\mu: \mu\le\la\}$ is finite.

Now it follows from \eqref{eq2.D} that the same quantities $c(\la\mid\mu,\nu)$
are the structure constants of multiplication in the basis $\{\pi_\la\}$. That
is,
\begin{equation}\label{eq2.M}
\left(\sum a'_\mu\pi_\mu\right)\left(\sum a''_\nu\pi_\nu\right)=\sum
a_\la\pi_\la, \qquad  a_\la:=\sum_{\mu,\nu}c(\la\mid\mu,\nu)a'_\mu a''_\nu.
\end{equation}
The latter sum makes sense because we know that the expansion \eqref{eq2.L} is
finite.

\subsection{The isomorphism  $R\to \Rep(\mathfrak{gl}(2\infty))$}\label{sect2.B}

The remark below is based on a comment by Igor Frenkel.

Let $\mathfrak{gl}(\infty)$ denote the Lie algebra of complex matrices of
format $\infty\times\infty$ and finitely many nonzero entries. It has a natural
basis formed by the matrix units $E_{ij}$ with indices $i,j$ ranging over
$\{1,2,\dots\}$. The Schur-Weyl duality establishes a bijective correspondence
$S_\la\leftrightarrow V_\la$ between the irreducible representations of various
symmetric groups and a certain class of irreducible highest weight
$\mathfrak{gl}(\infty)$-modules. Here $\la=(\la_1,\la_2,\dots)$ is an arbitrary
partition, $S_\la$ is the corresponding irreducible $S(N)$-module (where
$N=|\la|:=\sum\la_i$), and $V_\la$ is the irreducible polynomial
$\mathfrak{gl}(\infty)$-module whose highest weight is $(\la_1,\la_2,\dots)$
with respect to the Borel subalgebra spanned by the $E_{ij}$ with $i\le j$.
Under the Schur-Weyl correspondence, the multiplication in
$\Rep(S(1),S(2),\dots)$ turns into the the tensor product of
$\mathfrak{gl}(\infty)$-modules. In this sense the algebra
$\Rep(S(1),S(2),\dots)=\Sym$ can be identified with
$\Rep(\mathfrak{gl}(\infty))$, the representation ring of polynomial
$\mathfrak{gl}(\infty)$-modules.

A similar interpretation exists for the algebra $R$. Namely, we replace
$\mathfrak{gl}(\infty)$ with its relative $\mathfrak{gl}(2\infty)$ --- the
latter Lie algebra has the basis $\{E_{ij}\}$ of matrix units with indices
$i,j$ ranging over $\Z$. Instead of the Schur-Weyl duality we use a version of
the ``fermion'' Howe duality \cite{Howe} between various unitary groups $U(N)$
and the Lie algebra $\mathfrak{gl}(2\infty)$. This duality establishes a
different kind of correspondence of representations, $T_\la\leftrightarrow
V_\la$, where $\la$ ranges over the set of all signatures. Here, for
$\la\in\GT_N$, we denote by $T_\la$ the corresponding irreducible
representation of $U(N)$, while $V_\la$ now stands for the irreducible
$\mathfrak{gl}(2\infty)$-module with highest weight
$\wh\la=(\wh\la_i)_{i\in\Z}$ which is described as follows.

Recall that every signature $\la$ of length $N$ can be represented as a pair
$(\la^+,\la^-)$ of two partitions (=Young diagrams) such that $\ell(\la^+)+\ell(\la^-)\le N$,
where $\ell(\,\cdot\,)$ is the conventional notation for the number of nonzero
parts of a partition. Namely,
$$
\la=(\la^+_1,\dots,\la^+_{\ell(\la^+)},
0,\dots,0,-\la^-_{\ell(\la^-)},\dots,-\la^-_1).
$$

In this notation, the weight correspondence $\la\to\wh\la$ looks as follows
$$
\wh\la_i=(\la^+)'_i, \quad i=1,2,\dots;  \quad \wh\la_{-(i-1)}=N-(\la^-)'_i, \quad
i=1,2,\dots,
$$
where  $(\la^\pm)'$ denotes the conjugate to $\la^\pm$  partition (=Young diagram).

Note that the coordinates $\wh\la_i$, $i\in\Z$, weakly decrease; the fact that
$\wh\la_0\ge\wh\la_1$ is equivalent to the inequality
$\ell(\la^+)+\ell(\la^-)\le N$ mentioned above.

About this instance of Howe duality see also Olshanski \cite[Section
2]{Ols-HolExt} and \cite[Section 17]{Ols-GordonBreach}.

As in the case of the Schur-Weyl duality, the multiplication in $R$
corresponds, on the Lie algebra side, to the tensor product of modules, so that
we get an isomorphism $R\to\Rep(\mathfrak{gl}(2\infty))$, where
$\Rep(\mathfrak{gl}(2\infty))$ is our notation for the representation ring for
a special class of $\mathfrak{gl}(2\infty)$-modules. This class is generated by
the weight modules that are locally nilpotent with respect to the upper
triangular subalgebra and such that, for every weight
$\wh\mu=(\wh\mu_i)_{i\in\Z}$, the coordinates $\wh\mu_i$ are nonnegative
integers which stabilize to a nonnegative integer $N$ as $i\to-\infty$ and to
$0$ as $i\to+\infty$. The irreducible modules
$V_\la\in\Rep(\mathfrak{gl}(2\infty)$ correspond to the basis elements
$\si_\la\in R$.

\subsection{Comparison of $R$ with $\Sym$}

The two algebras have both similarities and differences. The homogeneous
components in $\Sym$ have finite dimension while those in $R$ are not. The
latter fact seems to be the most evident difference between $R$ and $\Sym$. On
the other hand, both algebras are projective limits of polynomial algebras:
\begin{equation}\label{eq2.K}
\Sym=\varprojlim\C[e_1,\dots,e_n], \qquad
R=\varprojlim\C[\varphi_{n_-},\dots,\varphi_{n_+}].
\end{equation}
These polynomial algebras can be viewed as \emph{truncations} of the initial
algebras.

All familiar homogeneous bases in $\Sym$ are parameterized by partitions, and
those in $R$ are parameterized by signatures, which are relatives of
partitions. However, these two kinds of labels, partitions and signatures, are
related to the grading in a very different way: the degree of a basis element in
$\Sym$ is given by the sum of parts of the corresponding partition, while the
degree in $R$ corresponds to the length $N$ of a signature $\la$.

This is also seen from the comparison of the representation rings
$\Rep(\mathfrak{gl}(\infty))$ and $\Rep(\mathfrak{gl}(2\infty))$. As abstract
Lie algebras, $\mathfrak{gl}(\infty)$ and $\mathfrak{gl}(2\infty)$ are
isomorphic, but the respective classes of modules are different, and the
degrees of the irreducible modules are defined in a very different way.

Truncation in $\Sym$ and $R$ is also defined differently. Namely, a basis
element in $\Sym$ is not contained in the kernel of the truncation map
$\Sym\to\C[e_1,\dots,e_n]$ if and only if the length of the corresponding
partition does not exceed $n$, while truncation in $R$ is controlled by the
first and last coordinates of a signature $\la$. In the case when $\la_1>0$ and
$\la_N<0$, one has $\la_1=\ell((\la^+)')$ and $|\la_N|=\ell((\la^-)')$.

To define a homomorphism of the algebra $\Sym$ in a commutative algebra $A$
(for instance, an algebra of functions on a space) it suffices to specialize,
in an arbitrary way, the images of the generators $e_1, e_2,\dots$\,. In the
case of $R$, the situation is more delicate. Although the elements $\varphi_n$
play the role similar to that of the $e_n$'s, to define a morphism $R\to A$ it
does not suffice to specialize the image of the $\varphi_n$'s. The reason is
that these elements are not generators of $R$ in the purely algebraic
sense, but only \emph{topological} generators. It may well happen that a given
specialization of the $\varphi_n$'s can be extended only to a suitable
subalgebra of $R$. Two examples of subalgebras are examined below.

\subsection{The subalgebras $\RR$ and $\RR^0$}

For $\la\in\GT_N$, let
$$
\Dim_N\la:=s_\la(1,\dots,1).
$$
This is the dimension of the irreducible representation of $U(N)$ with highest
weight $\la$. As is well known (Weyl \cite{Weyl}, Zhelobenko \cite{Zhe})
\begin{equation}\label{eq2.R}
\Dim_N\la=\prod_{1\le i<j\le N}\frac{\la_i-\la_j-i+j}{j-i}\,.
\end{equation}

For a homogeneous element $\psi=\sum_{\la\in\GT_N}a_\la\si_\la\in R_N$ we
define its \emph{norm} (which may be infinite) by
\begin{equation*}
\Vert\psi\Vert:=\sup_{\la\in\GT_N}\frac{|a_\la|}{\Dim_N\la}\in\R_+\cup\{+\infty\}
\end{equation*}
and we extend this definition to non-homogeneous elements by setting
\begin{equation}\label{eq2.S}
\left\Vert\sum_{N=0}^M\psi_N\right\Vert:=\sum_{N=0}^M \Vert\psi_N\Vert, \qquad
\psi_N\in R_N, \quad N=0,1,\dots,M
\end{equation}
with the understanding that $\Vert1\Vert=1$.

\begin{definition}\label{def2.B}
We define $\RR\subset R$ as the subspace of elements with finite norm.
Obviously, $\RR$ is graded, so that we may write $\RR=\sum_{N=0}^\infty\RR_N$.
\end{definition}

\begin{proposition}\label{prop2.E}
$\RR$ is a normed algebra.
\end{proposition}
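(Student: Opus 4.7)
My plan is to verify the norm axioms directly and then reduce the one nontrivial point, submultiplicativity, to a single dimension identity for rational Schur functions. Positive definiteness, absolute homogeneity, the triangle inequality, and $\|1\|=1$ are immediate from \eqref{eq2.S} together with the fact that on each $R_N$ the norm is a weighted supremum in the basis $\{\si_\la\}$. The substantive claim is $\|\psi_1\psi_2\|\le\|\psi_1\|\|\psi_2\|$.

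First I would reduce to the homogeneous case. Decomposing $\psi_i=\sum_N\psi_{i,N}$ and grouping the product by total degree, $(\psi_1\psi_2)_K=\sum_{M+N=K}\psi_{1,M}\psi_{2,N}$. The definition \eqref{eq2.S} then gives
\[
\|\psi_1\psi_2\|=\sum_K\|(\psi_1\psi_2)_K\|\le\sum_K\sum_{M+N=K}\|\psi_{1,M}\psi_{2,N}\|,
\]
and a bihomogeneous submultiplicativity bound would factor the right-hand side as $\|\psi_1\|\|\psi_2\|$. So it suffices to take $\psi_1=\sum_{\mu\in\GT_M}a'_\mu\si_\mu$ and $\psi_2=\sum_{\nu\in\GT_N}a''_\nu\si_\nu$.

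By the preceding subsection on structure constants, $\psi_1\psi_2=\sum_{\la\in\GT_{M+N}}a_\la\si_\la$ with $a_\la=\sum_{\mu,\nu}c(\la\mid\mu,\nu)\,a'_\mu a''_\nu$, where the $c(\la\mid\mu,\nu)$ are determined by
\[
s_\la(u_1,\dots,u_{M+N})=\sum_{\mu,\nu}c(\la\mid\mu,\nu)\,s_\mu(u_1,\dots,u_M)\,s_\nu(u_{M+1},\dots,u_{M+N}).
\]
These are precisely the branching multiplicities for the restriction $U(M+N)\downarrow U(M)\times U(N)$ of irreducible characters, hence nonnegative integers; the presence of negative parts in signatures causes no difficulty, since the shift identity \eqref{eq2.J} reduces the problem to the familiar Littlewood--Richardson setting for ordinary Schur polynomials. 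Specializing all $u_i=1$ in the displayed expansion yields the key identity
\[
\Dim_{M+N}\la=\sum_{\mu,\nu}c(\la\mid\mu,\nu)\,\Dim_M\mu\cdot\Dim_N\nu.
\]

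Finally I would combine these facts: using $c(\la\mid\mu,\nu)\ge 0$ together with $|a'_\mu|\le\|\psi_1\|\Dim_M\mu$ and $|a''_\nu|\le\|\psi_2\|\Dim_N\nu$,
\[
|a_\la|\le\|\psi_1\|\|\psi_2\|\sum_{\mu,\nu}c(\la\mid\mu,\nu)\,\Dim_M\mu\cdot\Dim_N\nu=\|\psi_1\|\|\psi_2\|\Dim_{M+N}\la,
\]
which upon division by $\Dim_{M+N}\la$ and taking the supremum over $\la$ gives $\|\psi_1\psi_2\|\le\|\psi_1\|\|\psi_2\|$. The only delicate step is recognizing $c(\la\mid\mu,\nu)$ as nonnegative branching multiplicities; everything else is essentially bookkeeping.
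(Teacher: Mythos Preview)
Your proof is correct and follows essentially the same approach as the paper: reduce to the bihomogeneous case, identify the structure constants $c(\la\mid\mu,\nu)$ as nonnegative branching multiplicities for $U(M+N)\downarrow U(M)\times U(N)$, use the dimension identity $\sum_{\mu,\nu}c(\la\mid\mu,\nu)\Dim_M\mu\,\Dim_N\nu=\Dim_{M+N}\la$ obtained by specializing all variables to $1$, and conclude. The paper's proof is slightly terser about the reduction to homogeneous elements and the verification of the basic norm axioms, but the core argument is identical.
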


\begin{proof}
We have to prove that for any elements $\psi', \psi''\in R$ one has
\begin{equation}\label{eq2.Q}
\Vert\psi'\psi''\Vert\le \Vert\psi'\Vert\Vert\psi''\Vert.
\end{equation}

Indeed, assume first that $\psi'$ and $\psi''$ are homogeneous of degree $M$
and $N$, respectively, and write $\psi'=\sum a'_\mu\si_\mu$, $\psi''=\sum
a''_\nu\si_\nu$. By \eqref{eq2.M}
$$
\Vert\psi'\psi''\Vert
=\sup_{\la\in\GT_{M+N}}\frac{|\sum_{\mu,\nu}c(\la\mid\mu,\nu)a'_\mu
a''_\nu|}{\Dim\la}.
$$
Note that in our case, when $P_\la=s_\la$, the structure constants describe
the expansion of irreducible characters restricted from $U(M+N)$ to $U(M)\times
U(N)$. It follows that these constants are nonnegative integers. Next, by
counting dimensions one gets
\begin{equation*}
\sum_{\mu\in\GT_M,\,\nu\in\GT_N}c(\la\mid\mu,\nu)\Dim_M\mu\Dim_N\nu=\Dim_{M+N}\la.
\end{equation*}
Therefore, for every $\la\in\GT_{M+N}$,
$$
\frac{|\sum_{\mu,\nu}c(\la\mid\mu,\nu)a'_\mu a''_\nu|}{\Dim_{M+N}\la}
\le\Vert\psi'\Vert \Vert\psi''\Vert
\frac{\sum_{\mu,\nu}c(\la\mid\mu,\nu)\Dim_M\mu\Dim_N\nu}{\Dim_{M+N}\la}
=\Vert\psi'\Vert \Vert\psi''\Vert.
$$

This proves the desired inequality \eqref{eq2.Q}.

Now the general case, when $\psi'$ and $\psi''$ are not necessarily
homogeneous, follows immediately, by taking into account the definition of the
norm for non-homogeneous elements, \eqref{eq2.S}.
\end{proof}

\begin{definition}\label{def2.A}
For $N=1,2,\dots$ we define $\RR^0_N\subset\RR_N$ as the subspace of those
elements $\psi=\sum_{\la\in\GT_N}a_\la\si_\la\in R_N$ for which the ratio
$|a_\la|/\Dim_N\la$ tends to 0 as $\la$ goes to infinity. In other words, for
every $\epsi>0$ there should exist a finite subset of $\GT_N$ outside of which
$|a_\la|/\Dim_N\la\le\epsi$.

Next, we set
$$
\RR^0:=\bigoplus_{N=1}^\infty\RR^0_N
$$
and observe that $\RR^0$ is a norm-closed subspace of $\RR$.
\end{definition}

Let $R^{\operatorname{fin}}$ denote the space of finite linear combinations of
the basis elements $\si_\la$, where $\la\ne\varnothing$. By the very definition
of $\RR^0$, it coincides with the norm closure of $R^{\operatorname{fin}}$.

\begin{proposition}\label{prop2.F}
$\RR^0$ is closed under multiplication and so is a subalgebra in $\RR$.
\end{proposition}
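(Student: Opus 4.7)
The plan is to reduce the statement to the density of $\Rfin$ in $\RR^0$ combined with the submultiplicativity of the norm already established in Proposition \ref{prop2.E}.

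First I would verify that $\Rfin$ is itself closed under multiplication. By \eqref{eq2.M}, for any two basis elements $\si_\mu \in R_M$ and $\si_\nu \in R_N$ one has $\si_\mu\si_\nu = \sum_{\la\in\GT_{M+N}} c(\la\mid\mu,\nu)\si_\la$, a \emph{finite} sum by the discussion following \eqref{eq2.L}. If $\mu,\nu\ne\varnothing$, then $M+N\ge 2$, so every index $\la$ appearing is nonempty. By bilinearity this shows $\Rfin\cdot\Rfin\subset\Rfin$.

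Next I would exploit the remark preceding the proposition, namely that $\RR^0$ is the norm closure of $\Rfin$. Given $\psi',\psi''\in\RR^0$, pick sequences $\psi'_n,\psi''_n\in\Rfin$ with $\Vert\psi'_n-\psi'\Vert\to 0$ and $\Vert\psi''_n-\psi''\Vert\to 0$. Then $\psi'_n\psi''_n\in\Rfin$, and using Proposition \ref{prop2.E}
\begin{equation*}
\Vert \psi'_n\psi''_n-\psi'\psi''\Vert \le \Vert\psi'_n\Vert\cdot\Vert\psi''_n-\psi''\Vert+\Vert\psi'_n-\psi'\Vert\cdot\Vert\psi''\Vert.
\end{equation*}
Since $\Vert\psi'_n\Vert$ is bounded (it converges to $\Vert\psi'\Vert$), the right-hand side tends to zero. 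Hence $\psi'\psi''$ lies in the norm closure of $\Rfin$, which is $\RR^0$.

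There is no real obstacle here: the only subtlety worth noting is verifying that $\RR^0$ really does exclude the degree $0$ component in a compatible way, i.e.\ that products of elements indexed by nonempty signatures are themselves supported on nonempty signatures. This is automatic from the grading, since $\RR_0=\C$ and $\RR_M\cdot\RR_N\subset\RR_{M+N}$, so $\RR^0\cdot\RR^0\subset\bigoplus_{N\ge 2}\RR_N$. Combined with the continuity argument above, this yields the proposition.
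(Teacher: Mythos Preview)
Your argument contains a genuine error in the first step: the product $\si_\mu\si_\nu$ is \emph{not} a finite sum in the basis $\{\si_\la\}$. The finiteness statement following \eqref{eq2.L} says that for a \emph{fixed} $\la\in\GT_{M+N}$ there are only finitely many pairs $(\mu,\nu)$ with $c(\la\mid\mu,\nu)\ne0$; this is what makes the formula $a_\la=\sum_{\mu,\nu}c(\la\mid\mu,\nu)a'_\mu a''_\nu$ in \eqref{eq2.M} meaningful. The direction you need is the opposite one: for fixed $\mu,\nu$, how many $\la$ satisfy $c(\la\mid\mu,\nu)\ne0$? In general there are infinitely many. (A concrete example: take $M=N=1$ and $\mu=\nu=(0)$. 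Then $c(\la\mid 0,0)\ne0$ for every $\la=(n,-n)$ with $n\ge0$, so $\si_0\si_0=\sum_{n\ge0}\si_{(n,-n)}$.) This is exactly the phenomenon flagged in the introduction: induced characters from $U(M)\times U(N)$ to $U(M+N)$ have infinitely many irreducible constituents, which is the whole reason $R$ must be built from formal series rather than finite sums. Consequently $\Rfin$ is \emph{not} closed under multiplication, and your density argument collapses at its starting point.

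What is actually needed is to show directly that $\si_\mu\si_\nu\in\RR^0_{M+N}$, i.e.\ that $c(\la\mid\mu,\nu)/\Dim_{M+N}\la\to0$ as $\la\to\infty$. The paper does this in two pieces: first, the multiplicities $c(\la\mid\mu,\nu)$ are uniformly bounded in $\la$ for fixed $(\mu,\nu)$ --- a nontrivial fact about spherical subgroups whose proof is deferred to Section~\ref{sect10}; second, $\Dim_{M+N}\la\to\infty$ along the support of the product, which follows from Weyl's dimension formula since $|\la|$ is fixed while $\la_1-\la_{M+N}\to\infty$. Only after establishing $\si_\mu\si_\nu\in\RR^0$ does the density-plus-submultiplicativity argument (your second step, which is fine) finish the proof.
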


Note that, according to our definition, $\RR^0$ does not contain the unity
element $1=\si_\varnothing$.

\begin{proof}
\emph{Step} 1. For any fixed $\mu\in\GT_M$ and $\nu\in\GT_N$, where $M,N\ge1$,
there exists a constant $C(\mu,\nu)$ such that
$$
\text{$c(\la\mid\mu,\nu)\le C(\mu,\nu)$ for all $\la\in\GT_{M+N}$}.
$$
This is a nontrivial claim whose proof is postponed to Section \ref{sect10}.

\emph{Step} 2. Let us fix $\mu$ and $\nu$ as above. We claim that
$$
\si_\mu\si_\nu\in\RR^0_{M+N}.
$$

Indeed, by the definition of the multiplication in $R$,
$$
\si_\mu\si_\nu=\sum_{\la\in\GT_{M+N}}c(\la\mid\mu,\nu)\si_\la.
$$
By the result of Step 1, the coefficients $c(\la\mid\mu,\nu)$ are bounded from
above. Therefore, to conclude that $\si_\mu\si_\nu\in\RR^0_{M+N}$ it remains to
show that $\Dim_N\la$ tends to infinity as $\la$ goes to infinity along the
subset
$$
X:=\{\la\in\GT_{M+N}: c(\la\mid\mu,\nu)>0\}.
$$

Observe that $\la\in X$ implies that the quantity $\la_1+\dots+\la_{M+N}$
remains fixed, because it is equal to $(\mu_1+\dots+\mu_M)+(\nu_1+\dots+\nu_N)$.

Therefore, as $\la$ goes to infinity along $X$, the difference
$\la_1-\la_{M+N}$ tends to $+\infty$, so that $\Dim_N\to\infty$, as it is seen
from Weyl's dimension formula \eqref{eq2.R}.

\emph{Step} 3. Let us show that $\RR^0$ is closed under multiplication. By the
result of Step 2, $R^{\operatorname{fin}}R^{\operatorname{fin}}$ is contained
in $\RR^0$. Since $R^{\operatorname{fin}}\subset\RR^0$ is dense with respect to
the norm topology, we conclude that $\RR^0\RR^0\subset\RR^0$.
\end{proof}

\subsection{Remarks on comultiplication}

By Frobenius' reciprocity,
\begin{equation*}
\operatorname{Ind}^{U(M+N)}_{U(M)\times U(N)} s_\mu\otimes
s_\nu=\sum_{\la\in\GT_{M+N}} c(\la\mid \mu,\nu)s_\la,
\end{equation*}
where the left-hand side is the induced character. So, one
could identify the formal symbols $\si_\la$ with the irreducible characters
$s_\la$ and say that the multiplication $R_M\otimes R_N\to R_{M+N}$ mimics the
operation of induction from $U(M)\times U(N)$ to $U(M+N)$. The reason to use
the separate notation $\si_\la$ is that characters should be viewed as
\emph{functions} while elements of $R$ behave as \emph{measures} (or, more
generally, distributions), which are dual objects with respect to functions.

Of course, on a finite or compact group, one can use the normalized Haar
measure $m_{\textrm{Haar}}$ to turn a function $f$ into a measure, $f
m_{\textrm{Haar}}$. However, one should not forget that functions and measures
have different functorial properties, so that when we restrict a character
$\chi$ to a subgroup, we regard $\chi$ as a function, while if we induct $\chi$
from a subgroup, we tacitly treat $\chi$ as a measure. In the case of finite
groups, the assignment $f\mapsto f m_{\textrm{Haar}}$ is a linear isomorphism
between the space of functions and the space of measures. Because of this,
$\Rep(S(1),S(2),\dots)$ (the representation ring of the family of symmetric
groups) possesses two dual operations, multiplication and comultiplication
making it a selfdual Hopf algebra (Zelevinsky \cite{Zelevinsky}). For compact Lie groups $U(N)$, the situation is more delicate as the space of
measures is much larger than the space of functions. This explains why the
representation ring $R$, as we have defined it, is not a Hopf algebra.

Note that one can use the same structure constants $c(\la \mid\mu,\nu)$ (in the
basis $\{\si_\la\}$) to construct a coalgebra $R^\circ$ which is paired with
$R$. Namely, a generic element of $R^\circ$ is a possibly infinite sum of
homogeneous elements which in turn are finite linear combinations of symbols
that we denote as $\chi_\la$; the comultiplication in $R^\circ$ is defined by
setting, for $\la\in\GT_N$,
$$
\triangle \chi_\la=\sum_{N_1,N_2:\, N_1+N_2=N}\,\sum_{\mu\in\GT_{N_1},
\nu\in\GT_{N_2}}c(\la\mid \mu,\nu)\,\chi_\mu\otimes\chi_\nu.
$$
Then the pairing $R\times R^\circ\to\C$ is defined in a natural way, by
proclaming $\{\si_\la\}$ and $\{\chi_\la\}$ to be biorthogonal systems.

Likewise, one can also define a suitable coalgebra $\RR\,^\circ$ which is
paired with the algebra $\RR$. However, in contrast to the case of the
representation ring for the symmetric groups, I do not see any way to modify the
definition of $R$ so that it becomes a selfdual Hopf algebra. Fortunately, for
our purposes we do not need to have both operations, multiplication and
comultiplication, to be defined on the same object.

\section{Characters of $U(\infty)$}\label{sect3}

Here we study a relationship between the representation ring $R$ and the dual
object $\Om=\wh{U(\infty)}$. In the symmetric group case, there is a
homomorphism of the algebra $\Sym$ into the algebra of continuous functions on
the dual object $\wh{S(\infty)}$, and the kernel of that homomorphism is the
principal ideal of $\Sym$ generated by $e_1-1$. The  purpose of this section is
to understand whether there exists something similar for the algebra $R$ and
the dual object $\Om$.

We exhibit three homomorphisms.

First, $R$ can be mapped into an algebra of functions defined on a certain subset
$\Om^0\subset\Om$ ($\Om^0$ is composed from some finite-dimensional ``faces''
of $\Om$). This map is far from being the desired analog but it is useful for
some technical purposes.

Second, the subalgebra $\RR$ can be mapped into $C(\Om)$, the Banach algebra of
bounded continuous functions on $\Om$.

Third, the above map sends the subalgebra $\RR^0\subset\RR$ into the subalgebra
$C_0(\Om)\subset C(\Om)$ formed by continuous functions vanishing at
infinity. The space $C_0(\Om)$ is of special interest for us because our main
objects of study, the generators of Markov processes on $\Om$, are operators on
the Banach space $C_0(\Om)$.

\subsection{Description of extreme characters: the Edrei-Voiculescu
theorem}\label{sect3.A}

For every $N=1,2,\dots$, we identify $U(N)$ with the subgroup of the group
$U(N+1)$ fixing the last basis vector in $\C^{N+1}$. This makes it possible  to
define the inductive limit group $U(\infty)=\varinjlim U(N)$. In other words,
elements of $U(\infty)$ are infinite unitary matrices $[U_{ij}]_{i,j=1}^\infty$
such that $U_{ij}=\de_{ij}$ when $i$ or $j$ is large enough.

We endow $U(\infty)$ with the inductive limit topology, which plainly means
that a function $f: U(\infty)\to\C$ is continuous if and only if for every $N$,
the function $f_N:=f\big|_{U(N)}$ is continuous on $U(N)$.

Notice that $f$ is a class function (respectively, a positive definite
function) if and only if so is $f_N$ for every $N$.

\begin{definition}\label{def3.A}
(i) By a \emph{character} of $U(\infty)$ we mean a continuous class function
$f:U(\infty)\to\C$ which is positive definite and normalized by $f(e)=1$.

(ii) Note that the set of all characters in the sense of (i) is a convex set.
Its extreme points are called \emph{extreme} or \emph{indecomposable}
characters.
\end{definition}

The extreme characters of $U(\infty)$ are analogs of the \emph{normalized}
irreducible characters
\begin{equation}\label{eq3.B}
\frac{s_\la(u_1,\dots,u_N)}{\Dim_N\la}, \qquad \la\in\GT_N.
\end{equation}

To describe the extreme characters we need to introduce some notation.

Let $\R_+\subset\R$ denote the set of nonnegative real numbers, $\R_+^\infty$
denote the product of countably many copies of $\R_+$, and set
$$
\R_+^{4\infty+2}=\R_+^\infty\times\R_+^\infty\times\R_+^\infty\times\R_+^\infty
\times\R_+\times\R_+.
$$
Let $\Om\subset\R_+^{4\infty+2}$ be the subset of sextuples
$$
\om=(\al^+,\be^+;\al^-,\be^-;\de^+,\de^-)
$$
such that
\begin{gather*}
\al^\pm=(\al_1^\pm\ge\al_2^\pm\ge\dots\ge 0)\in\R_+^\infty,\quad
\be^\pm=(\be_1^\pm\ge\be_2^\pm\ge\dots\ge 0)\in\R_+^\infty,\\
\sum_{i=1}^\infty(\al_i^\pm+\be_i^\pm)\le\de^\pm, \quad \be_1^++\be_1^-\le 1.
\end{gather*}
We observe that $\Om$ is a locally compact space in the topology inherited from
the product topology of $\R_+^{4\infty+2}$.

Instead of $\de^\pm$ it is often convenient to use the quantities
$$
\ga^\pm:=\de^\pm-\sum_{i=1}^\infty(\al_i^\pm+\be_i^\pm).
$$
Obviously, $\ga^+$ and $\ga^-$ are nonnegative. But, in contrast to $\de^+$ and
$\de^-$, they are \emph{not} continuous functions of $\om\in\Om$.

For $u\in\C^*$ and $\om\in\Om$ set
\begin{equation}\label{eqF.11}
\Phi(u;\om)= e^{\ga^+(u-1)+\ga^-(u^{-1}-1)}
\prod_{i=1}^\infty\frac{1+\be_i^+(u-1)}{1-\al_i^+(u-1)}
\,\frac{1+\be_i^-(u^{-1}-1)}{1-\al_i^-(u^{-1}-1)}.
\end{equation}

For any fixed $\om$, this is a meromorphic function in variable $u\in\C^*$ with
possible poles on $(0,1)\cup(1,+\infty)$. The poles do not accumulate to $1$,
so that the function is holomorphic in a neighborhood of the unit circle $\T:=\{u\in\C: |u|=1\}$.

Note that every conjugacy class of $U(\infty)$ contains a diagonal matrix with
diagonal entries $u_1,u_2, \ldots\in\T$, where only finitely many of $u_n$'s
are distinct from 1. These numbers are defined uniquely, within a permutation.
Thus every class function on $U(\infty)$ can be interpreted as a symmetric
function $\Psi(u_1,u_2,\dots)$.

\begin{theorem}[Edrei-Voiculescu]\label{thm3.A}
The extreme characters of the group $U(\infty)$ are precisely the functions of
the form
\begin{equation}\label{eq3.A}
\Psi_\om(u_1,u_2,\dots):=\prod_{k=1}^\infty\Phi(u_k;\om),
\end{equation}
where $\om$ ranges over\/ $\Om$.
\end{theorem}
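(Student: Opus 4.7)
The plan is to follow the classical ergodic-boundary method, as developed by Voiculescu, Vershik--Kerov, Boyer, and Okounkov--Olshanski, applied to the Gelfand--Tsetlin branching graph of the chain $U(1) \subset U(2) \subset \cdots$.

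First I would reduce the classification to a boundary problem for coherent systems. A character $\chi$ of $U(\infty)$, restricted to $U(N)$, is a central positive definite function with $\chi(e)=1$, so it admits a unique expansion
$$\chi|_{U(N)} \;=\; \sum_{\la \in \GT_N} M_N(\la)\,\frac{s_\la}{\Dim_N\la},$$
where $M_N$ is a probability measure on $\GT_N$. The requirement that $\chi|_{U(N)}$ further restricts to $\chi|_{U(N-1)}$, combined with the Gelfand--Tsetlin branching rule, translates into a coherence relation between $M_{N-1}$ and $M_N$. Thus characters correspond bijectively to coherent systems $(M_N)$, which form a Bauer simplex whose extreme points are the entrance boundary of the graph. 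A standard backward-martingale argument then shows that for an extreme system the empirical coordinates $\la^+_i/N$, $(\la^+)'_j/N$, $\la^-_i/N$, $(\la^-)'_j/N$ converge $M_N$-almost surely to deterministic limits $\al^+_i,\be^+_j,\al^-_i,\be^-_j$, with the residual mass absorbed into $\ga^\pm$; this produces the parameter $\om\in\Om$.

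Second, I would identify the limit character explicitly. For a signature $\la \in \GT_N$ whose profile tends to $\om$, one needs to establish
$$\lim_{N \to \infty} \frac{s_\la(u,1,\dots,1)}{\Dim_N\la} \;=\; \Phi(u;\om),$$
uniformly for $u$ in a neighbourhood of the unit circle, with $\Phi$ given by \eqref{eqF.11}. I would derive this by combining the Weyl dimension formula \eqref{eq2.R} with a contour-integral representation of rational Schur polynomials (available via \eqref{eq2.J} and the Jacobi--Trudi identity); a saddle-point/residue analysis shows the integrand factorises in the limit into the stated infinite product. Iterating in the arguments $u_1,\dots,u_k$ and using that any element of $U(\infty)$ has only finitely many non-unit eigenvalues gives \eqref{eq3.A}.

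Third, for the converse and the main obstacle: I would check that each $\Psi_\om$ is indeed an extreme character by noting that the product in \eqref{eq3.A} is actually a finite product on every conjugacy class of $U(\infty)$ (since $\Phi(1;\om)=1$), that $\Psi_\om$ arises as a pointwise limit of the positive-definite normalized characters \eqref{eq3.B}, and that this yields extremality through a de Finetti--type tail-triviality argument on the associated coherent system. The main analytic difficulty sits inside the second step: one must exclude every conceivable limit function other than those of the Schur--Nevanlinna product form \eqref{eqF.11}. This is Edrei's theorem on total positivity of doubly infinite Toeplitz sequences; its two-sided version relevant for $U(\infty)$ is substantially more delicate than the one-sided version used in the $S(\infty)$ setting, and it is precisely here that the constraint $\be_1^++\be_1^-\le 1$ appears.
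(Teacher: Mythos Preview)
The paper does not give its own proof of this theorem. Immediately after the statement it writes: ``About various proofs and different facets of this fundamental theorem see Edrei \cite{Edrei}, Voiculescu \cite{Vo76}, Boyer \cite{Boyer}, Vershik-Kerov \cite{VK82}, Okounkov-Olshanski \cite{OO-Jack}, Borodin-Olshanski \cite{BO-GT-Appr}, Petrov \cite{Petrov-MMJ}.'' In other words, the result is quoted as background and the reader is referred to the literature.

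Your sketch is a reasonable outline of one of those approaches --- essentially the Vershik--Kerov ergodic method combined with the asymptotic analysis of normalized Schur characters carried out in Okounkov--Olshanski \cite{OO-Jack} and Borodin--Olshanski \cite{BO-GT-Appr}. The reduction to coherent systems on the Gelfand--Tsetlin graph is correct, and you correctly identify Edrei's two-sided total positivity theorem as the hard analytic core (this is where Voiculescu's original argument and the later asymptotic proofs diverge in method). So there is no conflict with the paper; you are simply supplying what the paper deliberately outsources.

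One remark on your sketch itself: the ``de Finetti--type tail-triviality argument'' you invoke for extremality of each $\Psi_\om$ is the weakest link as written. In the standard treatments, extremality is not argued separately in this fashion; rather, one shows (via the boundary theory for branching graphs, or equivalently Choquet theory on the simplex of characters) that the extreme points are \emph{exactly} the limits arising from regular sequences of signatures, and then the asymptotic formula identifies those limits with the $\Psi_\om$. The multiplicativity of $\Psi_\om$ (which you implicitly use) is what makes the converse direction clean, but multiplicativity alone does not give extremality without knowing that the boundary has already been exhausted. If you were to write this out in full, that is the step to be most careful with.
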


Note that the product actually terminates because $\Phi(1;\om)=1$ and $u_k=1$
for $k$ large enough. As compared with the normalized irreducible characters of the groups $U(N)$ given by \eqref{eq3.B},
the extreme characters of $U(\infty)$ seem to be both more elementary and more sophisticated
objects. They are more elementary because they are given by a product formula,
but they are also more sophisticated as they depend on countably many
continuous parameters.

About various proofs and different facets of this fundamental theorem see Edrei
\cite{Edrei}, Voiculescu \cite{Vo76}, Boyer \cite{Boyer}, Vershik-Kerov
\cite{VK82}, Okounkov-Olshanski \cite{OO-Jack}, Borodin-Olshanski
\cite{BO-GT-Appr}, Petrov \cite{Petrov-MMJ}.

\begin{proposition}\label{prop3.C}
Given $\om\in\Om$, write the Laurent expansion of the function $u\mapsto
\Phi(u;\om)$ as
$$
\Phi(u;\om)=\sum_{n\in\Z}\wh\varphi_n(\om) u^n.
$$

For $n\in\Z$ fixed, the coefficient $\wh\varphi_n(\om)$ is a continuous
function on $\Om$ vanishing at infinity.
\end{proposition}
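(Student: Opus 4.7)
My plan is to represent the Laurent coefficients as Fourier integrals on the unit circle $\T$. Since $\Phi(\,\cdot\,;\om)$ is holomorphic on an annulus containing $\T$, one has
\[
\wh\varphi_n(\om)=\frac{1}{2\pi}\int_0^{2\pi}\Phi(e^{i\theta};\om)\,e^{-in\theta}\,d\theta.
\]
Observing that $\Phi(\,\cdot\,;\om)|_{\T}$ is the restriction of the extreme character $\Psi_\om$ to the diagonal copy $U(1)\subset U(\infty)$, it is positive definite with $\Phi(1;\om)=1$, so by Bochner $\Phi(u;\om)=\E[u^{S^+(\om)-S^-(\om)}]$ for an integer-valued $S^+-S^-$. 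Reading off the factors of \eqref{eqF.11}, the $S^\pm(\om)$ are independent sums of $\mathrm{Poisson}(\ga^\pm)$, $\mathrm{geometric}(\al_i^\pm)$, and $\mathrm{Bernoulli}(\be_i^\pm)$ variables; in particular $\wh\varphi_n(\om)=\Pr(S^+-S^-=n)\in[0,1]$.

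For continuity, the obstacle is that $\ga^\pm$ are not continuous on $\Om$. I would rewrite $\Phi$ using $\ga^\pm=\de^\pm-\sum_i(\al_i^\pm+\be_i^\pm)$ and absorb the compensating exponentials into the individual factors of the infinite product. The resulting modified factors $g_i^\pm(u;\om)$ have logarithms of order $O((\al_i^\pm)^2+(\be_i^\pm)^2)$ uniformly in $u\in\T$, and since $\sum_i((\al_i^\pm)^2+(\be_i^\pm)^2)\le(\de^\pm)^2$ is locally bounded on $\Om$, dominated convergence in the log-series yields uniform convergence $\Phi(\,\cdot\,;\om_k)\to\Phi(\,\cdot\,;\om)$ on $\T$. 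Continuity of $\wh\varphi_n$ then follows from the integral representation.

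For vanishing at infinity, the constraints $\al_i^\pm\le\de^\pm$ and $\be_i^\pm\le 1$ imply that $\om\to\infty$ in $\Om$ forces $\de^+(\om)+\de^-(\om)\to\infty$. The key computation is the lower bound $\operatorname{Var} S^\pm\ge(1-\be_1^\pm)\de^\pm$ (from $\sum_i(\be_i^\pm)^2\le\be_1^\pm\sum_i\be_i^\pm$); combined with $\be_1^++\be_1^-\le 1$ this gives $\max(1-\be_1^+,1-\be_1^-)\ge\tfrac12$, so at least one of $S^\pm$ has large variance whenever the corresponding $\de^\pm$ is large. Passing to subsequences and assuming WLOG $\be_1^-\le\tfrac12$, I split into two cases: (i) if $\de^-(\om_k)\to\infty$, the Parseval anti-concentration bound $\sup_m\Pr(S^-=m)\le\bigl(\tfrac{1}{2\pi}\int_\T|\Phi^-|^2\,d\theta\bigr)^{1/2}$ combined with the pointwise estimate $|\Phi^-(e^{i\theta};\om)|^2\le\exp(-c\,\de^-\sin^2(\theta/2))$ (valid when $\al_1^-$ is bounded) gives $\sup_m\Pr(S^-=m)\to 0$ and hence $\Pr(S=n)\le\sup_m\Pr(S^-=m)\to 0$; (ii) if $\de^-(\om_k)$ stays bounded so that $\de^+(\om_k)\to\infty$, I would use the shift estimate $\Pr(S=n)\le\Pr(S^+\le M+n)+\Pr(S^->M)$, with the second term small by tightness of $S^-$ (Markov) and the first by Chebyshev, $\Pr(S^+\le M+n)\le\operatorname{Var} S^+/(\de^+-M-n)^2\to 0$. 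Scenarios with $\al_1^\pm\to\infty$ are handled by isolating the single leading geometric factor, whose modulus $1/(1+\al_1^\pm)$ already tends to zero.

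The main obstacle is that $|\Phi|$ need not tend to $0$ pointwise on $\T$: when many $\be_i^\pm$ approach $1$, $\Phi$ behaves like a high power of $u$ and only oscillates. Consequently neither naive dominated convergence on $\int|\Phi|$ nor a bare Parseval bound on $\int|\Phi|^2$ suffices by itself, and the proof must combine the shift-type mechanism (Chebyshev, active when $\E S$ escapes) with the spread-type mechanism (Parseval, active when $\operatorname{Var} S$ diverges). The constraint $\be_1^++\be_1^-\le 1$ is exactly what guarantees that at least one of these mechanisms is available in every regime.
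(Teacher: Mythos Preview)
The paper does not prove this proposition; it simply cites \cite[Proposition 2.10]{BO-GT-Appr}. So there is nothing to compare at the level of arguments, and your proposal supplies actual content where the paper defers. Your probabilistic reading --- $\wh\varphi_n(\om)=\Pr(S^+-S^-=n)$ with $S^\pm$ built from independent Poisson, geometric and Bernoulli pieces read off from \eqref{eqF.11} --- is exactly the natural one, and both halves of your argument are essentially correct.

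Two places deserve tightening. For continuity, the phrase ``dominated convergence in the log-series'' needs a \emph{termwise} majorant, not just the global bound $\sum_i\bigl((\al_i^\pm)^2+(\be_i^\pm)^2\bigr)\le(\de^\pm)^2$. This is easy to supply: on a compact $K\subset\Om$ one has $\de^\pm\le D$, and monotonicity plus summability give $\al_i^\pm,\be_i^\pm\le D/i$, so for $i$ beyond a fixed index your quadratic estimate on $\log g_i^\pm$ is dominated by $CD^2/i^2$, uniformly on $K\times\T$. The finitely many leading factors are jointly continuous in $(\om,u)$, and the tail converges uniformly; this closes the gap.

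For vanishing at infinity, your case split is correct but the bookkeeping is a little tangled. The clean version is: pass to a subsequence along which all of $\de^\pm,\al_1^\pm,\be_1^\pm$ have limits in $[0,\infty]$. If either $\al_1^\pm\to\infty$, the leading geometric factor alone gives $\sup_m\Pr(S^\pm=m)=1/(1+\al_1^\pm)\to0$ and you are done. Otherwise both $\al_1^\pm$ are bounded, so $\operatorname{Var}S^\pm=O(\de^\pm)$. If exactly one of $\de^\pm$ diverges, your Chebyshev/Markov splitting works regardless of the $\be_1^\pm$ values (you do not need $\be_1^-\le\tfrac12$ there). Only when \emph{both} $\de^\pm\to\infty$ do you invoke $\be_1^++\be_1^-\le1$ to get, say, $\be_1^-\le\tfrac12$, and then your Parseval bound $|\Phi^-(e^{i\theta})|^2\le\exp(-c\,\de^-\sin^2(\theta/2))$ finishes it. Your closing remark --- that the constraint $\be_1^++\be_1^-\le1$ is precisely what prevents the ``pure shift'' scenario $\Phi\sim u^N$ from occurring simultaneously on both sides --- is the right conceptual point.
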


\begin{proof}
See Borodin-Olshanski \cite[Proposition 2.10]{BO-GT-Appr}.
\end{proof}

Recall that we denoted by $\Phi(u)$ the formal generating series assembling the
variables $\varphi_n$, see \eqref{eq2.C} above. The fact that we employ now a
similar notation is not occasional. As explained below, the functions
$\wh\varphi_n(\om)$ serve as the image of the generators $\varphi_n\in R$ under
the maps mentioned in the preamble to the section.

\subsection{The quotient algebra $\wh R=R/J$}

Observe that $\Phi(1;\om)\equiv1$, which implies
\begin{equation}\label{eq3.H}
\sum_{n\in\Z}\wh\varphi_n(\om)=1, \qquad \om\in\Om.
\end{equation}
This relation motivates the following definitions.

Let us set
$$
\varphi:=\sum_{n\in\Z}\varphi_n.
$$
and let $J:=(\varphi-1)\subset R$ be the principal ideal  generated by the
element $\varphi-1$. The ideal $J$ and the quotient algebra $\wh R:=R/J$ play
an important role in our theory, similar to that of the ideal
$(e_1-1)\subset\Sym$ and the quotient ring $\Sym/(e_1-1)$ in Vershik-Kerov's
theory \cite{VK-RingTheorem}, \cite{VK-GordonBreach}.

The quotient ring $\wh R$ is a filtered algebra: its filtration is inherited
from the filtration in $R$, which in turn is determined from the grading; the
latter is not inherited because the ideal $J$ is not homogeneous.

We will prove a few simple propositions concerning the algebra $\wh R$.

\begin{proposition}\label{prop3.D}
For every $N=0,1,2,\dots$, the intersection $J\cap R_N$ is trivial.
\end{proposition}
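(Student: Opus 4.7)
My plan is to reduce Proposition~\ref{prop3.D} to a single injectivity statement --- multiplication by $\varphi$ has zero kernel on $R$ --- and then to deduce that statement from the projective-limit description $R = \varprojlim R(n_+, n_-)$.

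\textbf{Reduction.} Suppose $\chi \in J\cap R_N$, so $\chi = (\varphi-1)\psi$ for some $\psi \in R$. Decompose $\psi = \sum_{M\ge 0}\psi_M$ with $\psi_M\in R_M$; only finitely many terms are nonzero because $R = \bigoplus_{M\ge 0} R_M$ is a direct (not direct-product) sum. Equating degree-$M$ parts on the two sides of $(\varphi-1)\psi = \chi$ produces the recurrence
$$
\varphi\psi_{M-1}-\psi_M = \delta_{M,N}\,\chi,\qquad M=0,1,2,\dots,
$$
with the convention $\psi_{-1}:=0$. For $M<N$ this inductively forces $\psi_M = 0$. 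At $M=N$ we read off $\psi_N = -\chi$, and for each $j\ge 1$ the relation at $M = N+j$ gives $\psi_{N+j} = \varphi\psi_{N+j-1} = -\varphi^{\,j}\chi$. Since $\psi$ has only finitely many nonzero homogeneous components, there must exist $j$ with $\varphi^{\,j}\chi = 0$. Iterating the claimed injectivity of $\varphi\cdot$ then yields $\chi = 0$, as needed.

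\textbf{Injectivity of multiplication by $\varphi$.} For any pair $n_+\ge n_-$, the algebra $R(n_+, n_-) = \C[\varphi_{n_-},\dots,\varphi_{n_+}]$ is a polynomial ring, hence an integral domain. The truncation homomorphism $R\to R(n_+, n_-)$ sends $\varphi = \sum_{n\in\Z}\varphi_n$ to the nonzero polynomial $\varphi_{n_-}+\cdots+\varphi_{n_+}$. Consequently, if $\varphi\psi = 0$ in $R$, then the truncation of $\psi$ to every $R(n_+, n_-)$ must vanish (by the domain property). Because the kernels $I(n_+, n_-)$ of the truncations have trivial intersection (as recorded in Subsection~\ref{sect2.A}), this means $\psi = 0$.

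The only real subtlety I anticipate is a bookkeeping issue in the reduction: one must use that $R = \bigoplus_M R_M$ is a direct sum to conclude that the infinite tail $\psi_{N+j} = -\varphi^{\,j}\chi$ must terminate, which is what actually delivers the vanishing equation $\varphi^{\,j}\chi = 0$. Once this is set up correctly, the injectivity step is essentially immediate from the polynomial-ring truncations.
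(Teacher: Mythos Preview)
Your proof is correct and follows essentially the same approach as the paper: both arguments decompose the factor $\psi$ into homogeneous components, use the recurrence coming from $(\varphi-1)\psi$ being homogeneous, and appeal to the integral-domain property of $R$ (equivalently, injectivity of multiplication by $\varphi$) obtained from the polynomial truncations $R(n_+,n_-)$. The only cosmetic difference is that the paper first bounds $\deg\psi\le N-1$ and then clears the components from the bottom up, while you clear components below degree $N$ first and then use finiteness of the top to force $\varphi^{\,j}\chi=0$; these are two ways of packaging the same observation.
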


\begin{proof}
This is a formal consequence of the fact that $R$ has no zero divisors (which
in turn follows from the isomorphism $R=\varprojlim R(n_+,n_-)$).

Indeed, assume $\psi\in J\cap R_N$ and show that $\psi=0$. There exists
$\psi'\in R$ such that $\psi=(\varphi-1)\psi'$. Since $R$ has no zero divisors,
the degree of $\psi'$ cannot be larger than $N-1$, so one can write
$$
\psi'=\psi_0+\dots+\psi_{N-1}, \qquad \psi_i\in R_i.
$$
Then
$$
\psi=\sum_{i=0}^{N-1}(\varphi-1)\psi_i
=-\psi_0+(\varphi\psi_0-\psi_1)+\dots+(\varphi\psi_{N-2}-\psi_{N-1})+\varphi\psi_{N-1}.
$$
Since $\psi$ is homogeneous of degree $N$, we have $\psi=\varphi\psi_{N-1}$ and
$$
-\psi_0=(\varphi\psi_0-\psi_1)=\dots=(\varphi\psi_{N-2}-\psi_{N-1})=0.
$$
This implies $\psi_0=\dots=\psi_{N-1}=0$ and finally $\psi=0$.
\end{proof}

Let, as above, $n_+\ge n_-$ be a couple of integers. We denote by $J(n_+,n_-)$
the ideal in $R$ generated by the ideals $J$ and $I(n_+,n_-)$. Under the
homomorphism $R\to R(n_+,n_-)$, the image of $J$ is the principal ideal
generated by the element $(\varphi_{n_-}+\dots+\varphi_{n_+})-1$. We set
\begin{equation}\label{eq3.C}
\wh R(n_+,n_-):=R/J(n_+,n_-).
\end{equation}
This algebra can be identified with the quotient
$$
\C[\varphi_{n_-},\dots,\varphi_{n_+}]\big/(\varphi_{n_-}+\dots+\varphi_{n_+}-1)
$$
and so is isomorphic to the algebra of polynomials with $n_+-n_-$ variables.

\begin{proposition}\label{prop3.E}
As $n_\pm\to\pm\infty$, the intersection of the kernels of the composite
homomorphisms
$$
R\to R(n_+,n_-)\to\wh R(n_+,n_-)
$$
coincides with $J$.
\end{proposition}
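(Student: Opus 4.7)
The inclusion $J\subseteq\bigcap J(n_+,n_-)$ is immediate, so the task is the reverse one. The plan is, given $\psi\in\bigcap J(n_+,n_-)$, to ``divide $\psi$ by $\varphi-1$'' inside each truncation $R(n_+,n_-)$ and then to patch the resulting quotients into a single element $\eta\in R$ using the identification $R=\varprojlim R(n_+,n_-)$, obtaining $\psi=(\varphi-1)\eta\in J$.

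Let $\psi(n_+,n_-)$ denote the image of $\psi$ in $R(n_+,n_-)$ and set $\varphi':=\varphi_{n_-}+\dots+\varphi_{n_+}$, so that the image of $J$ in $R(n_+,n_-)$ is the principal ideal $(\varphi'-1)$. By hypothesis there exists $\eta(n_+,n_-)\in R(n_+,n_-)$ with
\[
\psi(n_+,n_-)=(\varphi'-1)\,\eta(n_+,n_-).
\]
The essential observation is that $R(n_+,n_-)$ is a polynomial algebra, hence an integral domain, and $\varphi'-1$ is nonzero; therefore $\eta(n_+,n_-)$ is \emph{uniquely} determined by $\psi(n_+,n_-)$. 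Uniqueness then forces compatibility under the truncation maps: for $n'_+\ge n_+$ and $n'_-\le n_-$, the map $R(n'_+,n'_-)\to R(n_+,n_-)$ sends $\varphi_{n'_-}+\dots+\varphi_{n'_+}-1$ to $\varphi'-1$, so the image of $\eta(n'_+,n'_-)$ also satisfies the defining equation for $\eta(n_+,n_-)$ and hence coincides with it.

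To pass to the projective limit one must still bound the degree of $\eta(n_+,n_-)$. Let $M$ be the top degree of $\psi$ and write $\eta(n_+,n_-)=\eta_0+\dots+\eta_d$ in homogeneous components. The degree-$(d+1)$ part of $(\varphi'-1)\eta(n_+,n_-)$ is $\varphi'\eta_d$, which is nonzero whenever $\eta_d\ne 0$ (integral domain again) and which must equal the degree-$(d+1)$ component of $\psi(n_+,n_-)$; hence $d\le M-1$. Decomposing the compatible family $\{\eta(n_+,n_-)\}$ into homogeneous components and using that the projective limit $R=\varprojlim R(n_+,n_-)$ is taken in the category of graded algebras, each homogeneous piece lives in the corresponding $R_k=\varprojlim R_k(n_+,n_-)$ for $0\le k\le M-1$, and their sum defines $\eta\in R$. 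A truncation-by-truncation comparison then shows $(\varphi-1)\eta=\psi$, so $\psi\in J$.

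The one substantive point is the uniqueness of the quotient $\eta(n_+,n_-)$, which rests on $R(n_+,n_-)$ being an integral domain (and in particular on $\varphi'-1$ being a non-zero-divisor there); once this is secured, compatibility, degree boundedness, and the final glueing are formal consequences of the projective-limit realization, and I do not anticipate a deeper obstacle.
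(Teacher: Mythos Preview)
Your proof is correct and follows essentially the same route as the paper's: divide by $\varphi'-1$ in each truncation, invoke the integral-domain property of $R(n_+,n_-)$ to get uniqueness (hence compatibility) of the quotients, bound their degrees by $\deg\psi-1$, and pass to the projective limit. The paper's version is more terse, but the logic is identical.
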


\begin{proof}
This is a trivial consequence of the absence of zero divisors. Indeed, the
ideal $J$ lies in the intersection of the kernels in question. Conversely,
assume $\psi\in R$ belongs to the intersections of the kernels and show that
$\psi\in J$, that is, there exists $\psi'\in R$ such that
$\psi=(\varphi-1)\psi'$.

By the assumption, for every couple $(n_+,n_-)$ there exists an element
$\psi'_{n_+,n_-}\in R(n_+,n_-)$ such that the image of $\psi$ in $R(n_+,n_-)$
is equal to
$$
(\varphi_{n_-}+\dots+\varphi_{n_+}-1)\psi'_{n_+,n_-}.
$$
Note that this element is unique and its degree is bounded from above by
$\deg(\psi)-1$.

It follows that there exists an element $\psi'=\varprojlim \psi'_{n_+,n_-}$.
The elements $\psi$ and $(\varphi-1)\psi'$ have the same image under the map
$R\to R(n_+,n_-)$, for every $(n_+,n_-)$. Therefore, these elements are equal
to each other.
\end{proof}

\begin{corollary}\label{cor3.A}
The algebra $\wh R$ can be identified with the projective limit of filtered
algebras $\wh R(n_+,n_-)$ as $n_\pm\to\pm\infty$.
\end{corollary}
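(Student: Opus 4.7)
The plan is to deduce the corollary from Proposition~\ref{prop3.E} by matching filtrations on both sides. Let $\Psi : \wh R \to \varprojlim \wh R(n_+,n_-)$ denote the natural homomorphism induced, via the universal property of projective limits, by the compatible family of quotients $R \twoheadrightarrow \wh R(n_+,n_-)$ (which all kill $J$). It preserves the natural filtration by total degree, inherited from the grading on $R$. Proposition~\ref{prop3.E} is precisely the statement that $\Psi$ is injective. What remains is to check that $\Psi$ induces an isomorphism on every filtered piece; this is where the ``filtered algebra'' interpretation of the projective limit matters.

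The main computational input I would establish is the identity
\[
J \cap R_{\le M} = (\varphi - 1) \cdot R_{\le M-1}, \qquad M \ge 0,
\]
together with its analog at every truncation level. To prove it, take $\psi = (\varphi-1)\psi' \in R_{\le M}$ and decompose $\psi' = \sum_{k\ge 0}\psi'_k$ by homogeneous degree. The degree-$n$ component of $\psi$ equals $\varphi\,\psi'_{n-1} - \psi'_n$; forcing this to vanish for every $n > M$ gives $\psi'_n = \varphi^{n-M}\psi'_M$. Since $R$ has no zero divisors (a property already invoked in the proofs of Propositions~\ref{prop3.D} and~\ref{prop3.E}) and elements of $R$ have bounded degree, this forces $\psi'_M = 0$, hence $\psi' \in R_{\le M-1}$. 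The same degree-chasing argument, carried out in the polynomial algebra $R(n_+,n_-)$, gives the truncated version.

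Assembling these yields a short exact sequence
\[
0 \to R_{\le M-1}(n_+,n_-) \xrightarrow{\;\cdot(\varphi_{n_-}+\dots+\varphi_{n_+}-1)\;} R_{\le M}(n_+,n_-) \to F_M \wh R(n_+,n_-) \to 0
\]
for every $(n_+,n_-)$. As $(n_+,n_-)$ varies, the transition maps in the projective systems formed by the first two terms are all surjective (truncation of polynomial algebras is surjective), so the Mittag-Leffler criterion applies and the functor $\varprojlim$ is exact on this system. Using $R_{\le K} = \varprojlim R_{\le K}(n_+,n_-)$ (a restriction of $R = \varprojlim R(n_+,n_-)$ to each filtered piece), the limit sequence reads
\[
0 \to R_{\le M-1} \xrightarrow{\;\cdot(\varphi-1)\;} R_{\le M} \to \varprojlim F_M \wh R(n_+,n_-) \to 0,
\]
and comparison with the analogous sequence for $F_M \wh R = R_{\le M}/(\varphi-1)R_{\le M-1}$ produces the isomorphism $F_M \wh R \cong \varprojlim F_M \wh R(n_+,n_-)$ for every $M$. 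Assembling over $M$ delivers the identification of filtered algebras.

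I expect the main obstacle to be conceptual rather than computational: one must fix the correct category in which the projective limit is taken. The full set-theoretic inverse limit is strictly larger than $\wh R$ --- for instance, the compatible family $\{\prod_{k=n_-}^{n_+}(1+\varphi_k)\}$ has unbounded degree and hence cannot come from any element of $\wh R$. Thus the statement is genuinely an isomorphism of \emph{filtered} algebras, i.e., only the bounded-degree part of the set-theoretic limit is picked out. Once this point is fixed, everything reduces to a Mittag-Leffler/diagram-chase argument powered by the integral domain property of $R$.
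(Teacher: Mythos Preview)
Your proof is correct and takes a genuinely different route from the paper's. Both arguments obtain injectivity from Proposition~\ref{prop3.E}. For surjectivity, however, the paper proceeds by an explicit section: assuming $n_+>0>n_-$, one uses the relation $\varphi_{n_-}+\dots+\varphi_{n_+}=1$ to eliminate $\varphi_0$, thereby identifying $\wh R(n_+,n_-)$ with the subalgebra $R'(n_+,n_-)\subset R(n_+,n_-)$ generated by the remaining variables. These sections are compatible with the transition maps, so $\varprojlim \wh R(n_+,n_-)$ is identified with $\varprojlim R'(n_+,n_-)$, which sits inside $R$ and visibly surjects onto $\wh R$. Your approach instead establishes the exact sequences $0\to R_{\le M-1}(n_+,n_-)\to R_{\le M}(n_+,n_-)\to F_M\wh R(n_+,n_-)\to 0$ and passes to the limit via Mittag--Leffler. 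The paper's argument is shorter and entirely elementary, requiring no homological input; your argument avoids the somewhat ad hoc choice of which variable to eliminate and makes the role of the filtration more transparent. Your final paragraph correctly identifies the one subtle point---that the projective limit must be taken in the filtered category---which the paper handles implicitly by the same device.
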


\begin{proof}
Since $R=\varprojlim R(n_+,n_-)$, there is a natural homomorphism $\wh R\to
\varprojlim \wh R(n_+,n_-)$.  Proposition \ref{prop3.E} shows that  it is
injective. Let us check that it is also surjective. Without loss of generality
one can assume that $n_+>0>n_-$. Then we use the relation
$\varphi_{n_-}+\dots+\varphi_{n_+}=1$ in $\wh R(n_+,n_-)$ to eliminate
$\varphi_0$ and to lift $\wh R(n_+,n_-)$ into $R(n_+,n_-)$ as the subalgebra
$R'(n_+,n_-)$ generated by $\varphi_{n_-}, \dots, \varphi_{-1},
\varphi_1,\dots,\varphi_{n_+}$. This makes it possible to identify $\varprojlim
\wh R(n_+,n_-)$ with $\varprojlim R'(n_+,n_-)$, where both limits are taken in
the category of filtered algebras. Then the surjectivity in question becomes
obvious.
\end{proof}

We say that two signatures $\mu\in\GT_N$ and $\la\in\GT_{N+1}$ \emph{interlace} if 
\begin{equation}\label{eq3.AA}
\la_i\ge\mu_i\ge\la_{i+1}, \qquad i=1,\dots,N,
\end{equation}
and then we write $\mu\prec\la$ or, equivalently, $\la\succ\mu$. 
By agreement, any signature $\la\in\GT_1$ is interlaced with the empty signature $\varnothing\in\GT_0$.  

\begin{proposition}\label{prop3.AA}
For any $\mu\in\GT_N$, where $N=0,1,2,\dots$, one has
\begin{equation}\label{eq3.AB}
\varphi\si_\mu=\sum_{\la\in\GT_{N+1}:\,\la\succ\mu}\si_\la.
\end{equation}
\end{proposition}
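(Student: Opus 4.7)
The plan is to read off the identity from the defining generating series \eqref{eq2.H} for $N+1$ variables, specializing the last variable to $u_{N+1}=1$ and applying the branching rule for rational Schur polynomials.

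First I would start with the identity
\[
\Phi(u_1)\dots\Phi(u_N)\Phi(u_{N+1}) = \sum_{\la\in\GT_{N+1}} \si_\la\, s_\la(u_1,\dots,u_{N+1}),
\]
and set $u_{N+1}=1$. Since $\Phi(u)=\sum_{n\in\Z}\varphi_n u^n$, we have $\Phi(1)=\varphi$, so the left-hand side becomes $\varphi\cdot\Phi(u_1)\dots\Phi(u_N)$. Expanding this using \eqref{eq2.H} in $N$ variables, the left-hand side equals
\[
\sum_{\mu\in\GT_N}(\varphi\,\si_\mu)\,s_\mu(u_1,\dots,u_N).
\]

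Next I would invoke the Gelfand--Tsetlin branching rule for rational Schur polynomials,
\[
s_\la(u_1,\dots,u_N,1) = \sum_{\mu\in\GT_N,\,\mu\prec\la} s_\mu(u_1,\dots,u_N),
\]
which is classical for ordinary Schur polynomials (setting the last variable to $1$ in $\mathrm{GL}(N+1)\downarrow\mathrm{GL}(N)$) and extends to the rational case via \eqref{eq2.J}, since multiplying all variables by the same monomial shifts both sides by the same amount. Substituting this into the right-hand side of the specialized generating identity and switching the order of summation gives
\[
\sum_{\mu\in\GT_N}\Bigl(\sum_{\la\in\GT_{N+1},\,\la\succ\mu}\si_\la\Bigr) s_\mu(u_1,\dots,u_N).
\]

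Comparing the two expressions coefficient-wise against $s_\mu(u_1,\dots,u_N)$—which is legitimate because $\{s_\mu\}_{\mu\in\GT_N}$ arises from the triangularity condition \eqref{eq2.B} and the expansion \eqref{eq2.D} is unique—yields the claimed formula. The one point that deserves care (and which I regard as the main, if minor, technical obstacle) is the convergence of the inner sum $\sum_{\la\succ\mu}\si_\la$ in the $I$-adic topology of $R$: among $\la\succ\mu$ the middle coordinates $\la_2,\dots,\la_N$ are pinned between consecutive $\mu_i$'s, while $\la_1$ and $\la_{N+1}$ run to $+\infty$ and $-\infty$ respectively. By \eqref{eq2.N}, an $\si_\la$ with $\la_1$ very large or $\la_{N+1}$ very negative lies in $I(n_+,n_-)$, so only finitely many terms survive modulo any $I(n_+,n_-)$, and the series converges. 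This also justifies the coefficient extraction, completing the proof.
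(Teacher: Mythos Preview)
Your proof is correct and follows essentially the same route as the paper: both arguments rest on the Gelfand--Tsetlin branching rule for rational Schur functions. The only cosmetic difference is that the paper reads off the structure constants $c(\la\mid\mu,n)$ from the full branching identity $s_\la(u_1,\dots,u_{N+1})=\sum_{\mu\prec\la}s_\mu(u_1,\dots,u_N)\,u_{N+1}^{|\la|-|\mu|}$ and then invokes the already-established multiplication rule \eqref{eq2.M}, whereas you specialize $u_{N+1}=1$ directly in the generating identity \eqref{eq2.H}; these are two packagings of the same computation, and your care with the $I$-adic convergence of $\sum_{\la\succ\mu}\si_\la$ is exactly the point that needs checking.
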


\begin{proof}
The classical Gelfand--Tsetlin branching rule says that for $\la\in\GT_{N+1}$,
$$
s_\la(u_1,\dots,u_{N+1})=\sum_{\mu\in\GT_N:\, \mu\prec\la}s_\mu(u_1,\dots,u_N) u_{N+1}^{|\la|-|\mu|},
$$
where $|\la|:=\sum\la_i$, $|\mu|:=\sum\mu_j$. This gives us the structure constants $c(\la\mid\mu,\nu)$ (see Section \ref{sect2}) for the basis of rational Schur functions in the special case when $\la\in\GT_{N+1}$, $\mu\in\GT_N$, and $\nu=n\in\GT_1=\Z$. Namely, in this special case,
$$
c(\la\mid\mu,\nu)=\begin{cases} 1, & \text{if $\mu\prec\la$ and $n=|\la|-|\mu|$}, \\
0, & \text{otherwise.}
\end{cases}
$$
Combining this with the definition of the multiplication in $R$ we get \eqref{eq3.AB}.
\end{proof}

The proposition shows that the ideal $J$ coincides with the closed linear span of the elements of the form
$$
-\si_\mu+\sum_{\la:\,\la\succ\mu}\si_\la,
$$
where $\mu$ ranges over the set\/ $\GT_0\cup\GT_1\cup\dots$ of all signatures.
This fact is used below in the proof of Proposition \ref{prop7.C}.

\subsection{The simplices $\Om(n_+,n_-)$}\label{sect3.B}

Let $(n_+, n_-)$ be a couple of integers such that $n_+\ge0\ge n_-$. We set
\begin{multline*}
\Om(n_+,n_-):=\big\{\om=(\al^\pm,\be^\pm,\de^\pm)\; : \;\al^\pm_i=0 \quad
\textrm{for all $i$}, \quad  \be^+_i=0 \quad \textrm{for $i>n_+$},\\  \be^-_j=0
\quad \textrm{for $j>|n_-|$}, \quad \de^+=\be^+_1+\dots+\be^+_{n_+}, \quad
\de^-=\be^-_1+\dots+\be^-_{|n_-|} \big\} \subset \Om
\end{multline*}
This a compact subset of $\Om$ whose elements depend only on $n_++|n_-|$
independent parameters $\be^+_1,\dots,\be^+_{n_+}, \be^-_1\dots,\be^-_{|n_-|}$.
The conditions on these parameters can be written in the form
$$
1-\be^+_{|n^+|}\ge\dots\ge1-\be^+_1\ge\be^-_1\ge\dots\ge\be^-_{n^-}\ge0
$$
(because $\be^+_1+\be^-_1\le1$). This means that $\Om(n_+,n_-)$ can be viewed
as a simplex of dimension $n_++|n_-|$.

If $\om\in\Om(n_+,n_-)$, then the function $\Phi(u;\om)$ drastically simplifies
and takes the form
$$
\Phi(u;\om)=\prod_{i=1}^{n_+}(1-\be^+_i+\be^+_i u)\cdot
\prod_{j=1}^{|n_-|}(1-\be^-_j+\be^-_j u^{-1}).
$$
The function $\varphi_n(\om)$ vanishes identically on $\Om(n_+,n_-)$ unless
$n_+\ge n\ge n_-$.

Let $C(\Om(n_+,n_-))$ denote the algebra of continuous functions on the simplex
$\Om(n_+,n_-)$. By Proposition \ref{prop3.C}, every function
$\wh\varphi_n(\om)$ is continuous on $\Om(n_+,n_-)$.

Recall that $J(n_+,n_-)$ denotes the principal ideal in $R(n_+,n_-)$ generated
by the element $(\varphi_{n_-}+\dots+\varphi_{n_+})-1$.

\begin{proposition}\label{prop3.B}
The kernel of the homomorphism
$$
R(n_+,n_-)=\C[\varphi_{n_-},\dots,\varphi_{n_+}]\to C(\Om(n_+,n_-))
$$
assigning to $\varphi_n$ the function $\wh\varphi_n(\om)$ on $\Om(n_+,n_-)$
coincides with the ideal $J(n_+,n_-)$.
\end{proposition}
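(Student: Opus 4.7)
My plan is to prove the two inclusions separately. The inclusion $J(n_+,n_-) \subseteq \ker$ is immediate: on $\Om(n_+,n_-)$ the explicit product formula shows that every factor of $\Phi(u;\om)$ equals $1$ at $u=1$, so $\sum_n \wh\varphi_n(\om) \equiv 1$ on $\Om(n_+,n_-)$, whence the generator $\varphi_{n_-}+\dots+\varphi_{n_+}-1$ of $J(n_+,n_-)$ lies in the kernel.

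For the reverse inclusion, I would pass to the quotient $\wh R(n_+,n_-) := R(n_+,n_-)/J(n_+,n_-)$ and use the identification, anticipated in the Introduction after \eqref{eq1.O}, of $\wh R(n_+,n_-)$ with the algebra $\Sym_m$ of symmetric polynomials in $m := n_+ + |n_-|$ variables $t_1,\dots,t_m$, encoded by
\[
\sum_{n=n_-}^{n_+} \varphi_n\, u^n = \prod_{i=1}^{n_+}\bigl(t_i + (1-t_i)u\bigr) \cdot \prod_{i=n_++1}^{m}\bigl(1 - t_i + t_i u^{-1}\bigr).
\]
Multiplying both sides by $u^{|n_-|}$ recasts the right-hand side as $\prod_{i=1}^m (t_i + (1-t_i)u)$, which is manifestly $S_m$-symmetric, so each $\varphi_n$ lands in $\Sym_m$; a short triangular argument --- for instance, recovering the power sums of $1-t_i$ from the expansion of $\log \prod_i(1 + (1-t_i)(u-1))$ at $u=1$ --- then shows that the induced map is surjective. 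Since both $\wh R(n_+,n_-)$ and $\Sym_m$ are polynomial algebras of Krull dimension $m$, surjectivity forces injectivity, so the map is an algebra isomorphism.

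Comparing the product formula above with the explicit form of $\Phi(u;\om)$ on $\Om(n_+,n_-)$, the substitution $\be^+_i = 1 - t_i$ for $i \le n_+$ and $\be^-_j = t_{n_++j}$ for $j \le |n_-|$ identifies $\Om(n_+,n_-)$ with the chamber
\[
\{\,0 \le t_m \le \dots \le t_{n_++1} \le t_1 \le \dots \le t_{n_+} \le 1\,\} \subset [0,1]^m,
\]
which, after relabeling, is a fundamental domain for the $S_m$-action. Under these identifications the homomorphism in question becomes evaluation of $p \in \Sym_m$ at $(t_1,\dots,t_m)$ in the chamber. If such a $p$ vanishes on the chamber, then by $S_m$-symmetry it vanishes on all of $[0,1]^m$, a set with nonempty interior in $\R^m$, so $p \equiv 0$. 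This gives the required injectivity.

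The main technical step is thus the identification $\wh R(n_+,n_-) \cong \Sym_m$ --- the hidden $S_m$-symmetry of the formally $S_{n_+}\times S_{|n_-|}$-symmetric construction, together with surjectivity onto $\Sym_m$. Once this is in hand, the remainder reduces to the elementary fact that a polynomial vanishing on a set with nonempty interior in $\R^m$ is identically zero.
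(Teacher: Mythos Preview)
Your proof is correct and follows the same overall architecture as the paper's: show $J(n_+,n_-)\subseteq\ker$, then identify the quotient $\wh R(n_+,n_-)$ with the algebra $\Sym_m$ of symmetric polynomials in $m=n_+-n_-$ variables and observe that the resulting map into $C(\Om(n_+,n_-))$ is injective. The tactics differ in two places, and it is worth recording the comparison.

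\textbf{Reduction to a single product.} The paper first treats the special case $n_-=0$ and then invokes the twisting transformation $\tau$ (forward-referenced from the next subsection) to handle $n_-<0$. Your multiplication by $u^{|n_-|}$, which collapses the two products into the single $S_m$-symmetric product $\prod_{i=1}^m\bigl(t_i+(1-t_i)u\bigr)$, \emph{is} the twist written out explicitly; it has the advantage of being self-contained.

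\textbf{The identification $\wh R(n_+,n_-)\cong\Sym_m$.} Here the paper's argument is more concrete: it evaluates the functions $\wh\varphi_n$ at the $m+1$ special points
\[
x_k=(\underbrace{1,\dots,1}_{m-k},\underbrace{0,\dots,0}_{k}),\qquad k=0,\dots,m,
\]
and obtains $\wh\varphi_n(x_k)=\de_{nk}$, so the $\wh\varphi_n$ are linearly independent and hence form a basis of the span of $e_0,\dots,e_m$. You instead prove surjectivity onto $\Sym_m$ (via the power sums of $1-t_i$) and then appeal to equality of Krull dimensions to force injectivity. Both are valid; the paper's evaluation trick is short and elementary, while your argument is more structural and avoids locating special points. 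Your explicit final step---that a symmetric polynomial vanishing on a fundamental chamber in $[0,1]^m$ vanishes on all of $[0,1]^m$ and hence identically---is left implicit in the paper.
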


\begin{proof}
Since $\wh\varphi_n(\om)$ vanishes on $\Om(n_+,n_-)$ unless $n_+\ge n\ge n_-$,
the equality \eqref{eq3.H} shows that
$$
\sum_{n=n_-}^{n_+}\wh\varphi_n\big|_{C(\Om(n_+,n_-))}=1.
$$
It remains to prove that this is the only relation.

Let us examine the special case when $n_-=0$. To simplify the notation, set
$n_+=m$ and
$$
(t_1,\dots,t_m):=(1-\be^+_m,\dots,1-\be^+_1)
$$
Let us write $\wh\varphi_n(t_1,\dots,t_m)$ instead of $\wh\varphi_n(\om)$,
where $n=0,\dots,m$. These are symmetric polynomials in $t_1,\dots,t_m$
satisfying
$$
\prod_{i=1}^m(t_i+(1-t_i)u)=\sum_{n=0}^m\wh\varphi_n(t_1,\dots,t_m)u^n.
$$
For instance, for $m=2$,
$$
\wh\varphi_0(t_1,t_2)=t_1t_2, \quad \wh\varphi_1(t_1,t_2)=(t_1+t_2)-2t_1t_2,
\quad \wh\varphi_2(t_1,t_2)=(1-t_1)(1-t_2).
$$

In the case under consideration, the claim of the proposition is equivalent to
saying that the only algebraic relation between these $m+1$ polynomials is that
their sum equals $1$. Let us prove the last assertion.

Evidently, our polynomials lie in the linear span of the elementary symmetric
polynomials $e_n(t_1,\dots,t_m)$, where $n=0,\dots,m$ and $e_0:=1$. Therefore,
it suffices to check that our polynomials are linearly independent.

To do this, we evaluate them in the following $m+1$ points of $\R^m$:
$$
x_k:=(\,\underbrace{1,\dots,1}_{m-k}, \underbrace{0,\dots,0}_{k}\,), \qquad
k=0,\dots,m.
$$
At $x_k$, the product $\prod(t_i+(1-t_i)u)$ equals $u^k$. This implies that
$\wh\varphi_n(x_k)=\de_{nk}$, which concludes the proof in our special case.

Finally, the case $n_-<0$ is readily reduced to the special case $n_-=0$ by
using the twisting transformation $\tau$ defined in the next subsection.
\end{proof}

Proposition \ref{prop3.B} shows that the quotient ring $\wh R(n_+,n_-)=
R(n_+,n_-)/J(n_+,n_-)$ is embedded into the algebra $C(\Om(n_+,n_-))$ of
continuous functions on the simplex $\Om(n_+,n_-)$ as the subalgebra of
polynomial functions.

Together with Proposition \ref{prop3.E} this makes it possible to realize the
quotient ring $\wh R= R/J$ as an algebra of functions on the subset
\begin{equation}\label{eq3.I}
\Om^0:=\bigcup_{n_+\ge n_-}\Om(n_+,n_-)\subset\Om.
\end{equation}

\subsection{Symmetries}

There exist natural transformations of characters of $U(\infty)$, which
preserve the subset of extreme characters and thus induce transformations (or
\emph{symmetries}) $\Om\to\Om$ of the parameter space.

One such transformation is the operation of \emph{conjugation} mapping a
character $f(U)$ to the conjugate character $\overline{f(U)}$ (here $U$ ranges
over $U(\infty)$). Conjugation induces the symmetry $\om\mapsto\om^*$ of $\Om$
consisting in switching
$(\al^+,\be^+,\de^+)\leftrightarrow(\al^-,\be^-,\de^-)$.

Another kind of transformation is the multiplication of $f(U)$ by $\det(U)$. In
terms of the eigenvalues this amounts to multiplication by the product
$u_1u_2\dots$. The corresponding symmetry of $\Om$ leaves the parameters
$\al^\pm$ intact and changes the remaining parameters in the following way:
\begin{gather*}
(\be^+_1,\be^+_2,\dots)\mapsto (1-\be^-_1,\be^+_1,\be^+_2,\dots)\\
(\be^-_1,\be^-_2,\dots)\mapsto (\be^-_2,\be^-_3,\dots)\\
\de^+\mapsto\de^++(1-\be^-_1)\\
\de^-\mapsto\de^--\be^-_1.
\end{gather*}
Note that $1-\be^-_1\ge \be^+_1$ because of the condition
$\be^+_1+\be^-_1\le1$.

We call this the \emph{twisting} symmetry of $\Om$ and denote it as $\om\mapsto
\tau(\om)$. Obviously, $\tau$ is invertible.

Under the symmetry $\om\mapsto\om^*$, the subset $\Om(n_+,n_-)$ is mapped onto
$\Om(-n_-,-n_+)$. If $n_-\le-1$, then the twisting symmetry $\tau$ maps
$\Om(n_+,n_-)$ onto $\Om(n_++1,n_-+1)$.

Recall that so far we assumed $n_+\ge0\ge n_-$. However, one can extend the
definition of $\Om(n_+,n_-)$ so that the equality
$\tau(\Om(n_+,n_-))=\Om(n_++1,n_-+1)$ will be valid for every couple $n_+\ge
n_-$, dropping the assumption that $n_+\ge0$ and $n_-\le0$. For instance, if
$n_-\ge1$, then the first $n_-$ coordinates in $\be^+$ are equal to 1 and the
actual parameters are $\be^+_{n_-+1},\dots,\be^+_{n_+}$.

\subsection{The homomorphisms $\RR\to C(\Om)$ and $\RR^0\to C_0(\Om)$}

Recall that the functions $\wh\varphi_n(\om)$ introduced in Proposition
\ref{prop3.C} belong to the Banach space $C_0(\Om)$. At this moment we only
exploit the fact that they belong to $C(\Om)$. Let us assign to every generator
$\varphi_n\in R$ the function $\wh\varphi_n(\om)$. We are going to extend this
correspondence to a norm continuous homomorphism $\RR\to C(\Om)$.

Let us start by assigning to every basis element $\si_\la$ a suitable function
$\wh\si(\om)$. This can be done in two equivalent ways.

\emph{First way}. We use the determinantal formula \eqref{eq2.N} and set for
$\la\in\GT_N$ and $\om\in\Om$
\begin{equation}\label{eq3.E}
\wh\si_\la(\om):=\det[\wh\varphi_{\la_i-i+j}(\om)].
\end{equation}

\emph{Second way}. Restricting the extreme character $\Psi_\om$ defined in
\eqref{eq3.A} to the subgroup $U(N)\subset U(\infty)$ gives us a normalized positive
definite class function on $U(N)$, which can be expanded into an absolutely and
uniformly convergent series on the irreducible characters of $U(N)$. Then the
desired quantities $\wh\si_\la(\om)$ arise as the coefficients of this
expansion. Passing to matrix eigenvalues one can write this in the form
\begin{equation}\label{eq3.J}
\Phi(u_1;\om)\dots\Phi(u_N;\om)=\sum_{\la\in\GT_N}\wh\si_\la(\om)
s_\la(u_1,\dots,u_N).
\end{equation}

{}From \eqref{eq3.E} it follows that the functions $\wh\si_\la(\om)$ belong to
$C(\Om)$ (even to $C_0(\Om)$), and from \eqref{eq3.J} we see that
$\wh\si_\la(\om)\ge0$ (because the function in the left-hand side is positive
definite). This is an important observation which will be exploited below.

Here is one more useful consequence of \eqref{eq3.J}: setting $u_1=\dots=u_N=1$
we get the identity
\begin{equation}\label{eq2.O}
\sum_{\la\in\GT_N}\Dim_N\la\,\wh\si_\la(\om)=1.
\end{equation}

Next, given an element $\psi=\sum a_\la\si_\la\in\RR$, we want to assign to it the
function $\wh\psi(\om)=\sum a_\la\wh\si_\la(\om)$ on $\Om$.

\begin{proposition}\label{prop3.A}
{\rm(i)} For every element $\psi=\sum a_\la\si_\la\in\RR$, the series
$\wh\psi(\om):=\sum a_\la\wh\si_\la(\om)$ converges absolutely at every point
$\om\in\Om$. Moreover, the resulting function on $\Om$ is bounded and its
supremum norm does not exceed $\Vert\psi\Vert$.

{\rm(ii)} The map $\psi\mapsto\wh\psi(\,\cdot\,)$ is an algebra homomorphism
$\RR\to C(\Om)$.

{\rm(iii)} The kernel of this homomorphism is the principal ideal $\J\subset
\RR$ generated by the element $\varphi-1$. This ideal coincides with
$J\cap\RR$.
\end{proposition}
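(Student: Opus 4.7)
The three parts are handled in sequence; the crucial inputs are positivity $\wh\si_\la(\om)\ge 0$ and the normalization \eqref{eq2.O}, which come from \eqref{eq3.J}. For (i), given a homogeneous $\psi=\sum_\la a_\la\si_\la\in\RR_N$, the definition of the norm gives $|a_\la|\le\Vert\psi\Vert\cdot\Dim_N\la$; combined with \eqref{eq2.O} and positivity this at once yields absolute convergence of $\wh\psi(\om):=\sum a_\la\wh\si_\la(\om)$ with $|\wh\psi(\om)|\le\Vert\psi\Vert$. For continuity I would fix a compact $K\subset\Om$ and invoke Dini's theorem: the partial sums $\sum_{\la\in F}\Dim_N\la\cdot\wh\si_\la$ are continuous on $K$, increase monotonically in $F$ to the constant function $1$, and therefore converge uniformly on $K$; the same dominating estimate then makes $\sum a_\la\wh\si_\la$ converge uniformly on $K$, so $\wh\psi$ is continuous on $\Om$. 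The non-homogeneous case reduces to this by \eqref{eq2.S}.

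For (ii), I would first prove the pointwise identity $\wh\si_\mu(\om)\wh\si_\nu(\om)=\sum_\la c(\la\mid\mu,\nu)\wh\si_\la(\om)$ by writing $\Phi(u_1;\om)\cdots\Phi(u_{M+N};\om)$ in two ways: first, apply \eqref{eq3.J} in all $M+N$ variables and expand each $s_\la$ via \eqref{eq2.L}; second, factor into the two variable blocks and apply \eqref{eq3.J} separately to each. Equating coefficients of $s_\mu(u_1,\dots,u_M)\,s_\nu(u_{M+1},\dots,u_{M+N})$, which are linearly independent Laurent polynomials, yields the identity. Multiplicativity $\widehat{\psi'\psi''}=\wh\psi'\cdot\wh\psi''$ then follows from \eqref{eq2.M} by interchanging summations, the interchange being justified by the absolute bound from (i).

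For (iii), the inclusion $\J\subset\ker$ is immediate from \eqref{eq3.H}. For the reverse, the key observation I would use is that on $\Om(n_+,n_-)$ the determinant \eqref{eq3.E} has an identically zero row whenever $\la_1>n_+$ or $\la_N<n_-$, since $\wh\varphi_n$ vanishes there outside $[n_-,n_+]$. Consequently, for any $\psi\in\RR_N$ the restriction $\wh\psi|_{\Om(n_+,n_-)}$ is a \emph{finite} sum agreeing with the value of the image of $\psi$ under the composite $R\to R(n_+,n_-)\to C(\Om(n_+,n_-))$. If $\wh\psi\equiv 0$, then by the injectivity from Proposition \ref{prop3.B} the image of $\psi$ in $\wh R(n_+,n_-)$ vanishes for every $(n_+,n_-)$, and Proposition \ref{prop3.E} gives $\psi\in J$. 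To upgrade to $\psi\in\J$, I would write $\psi=(\varphi-1)\psi'$ with $\psi'\in R$ and solve degree-by-degree to obtain $\psi'_d=-\sum_{k=0}^d\varphi^{d-k}\psi_k$; since $\varphi=\sum_n\si_{(n)}$ and $\Dim_1(n)=1$ give $\Vert\varphi\Vert=1$, the Banach-algebra property (Proposition \ref{prop2.E}) yields $\Vert\psi'_d\Vert\le\sum_{k=0}^d\Vert\psi_k\Vert$, so $\psi'\in\RR$. The main obstacle is the matching step in (iii): one must check that the a priori infinite series defining $\wh\psi$ collapses, on each simplex $\Om(n_+,n_-)$, to the purely algebraic image of $\psi$ in $\wh R(n_+,n_-)$, after which the structural result Proposition \ref{prop3.E} takes over and the norm estimate for $\psi'$ closes the gap between $J\cap\RR$ and $\J$.
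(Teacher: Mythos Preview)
Your proof is correct and follows essentially the same route as the paper's: the same positivity-plus-normalization bound for (i), the same two-way expansion of $\Phi(u_1;\om)\cdots\Phi(u_{M+N};\om)$ for (ii), and the same reduction to the simplices $\Om(n_+,n_-)$ via Propositions \ref{prop3.B} and \ref{prop3.E} for (iii). You are in fact more explicit than the paper in several places: you name Dini's theorem where the paper only says ``since all the summands are nonnegative, the convergence is uniform on compact sets''; you spell out the vanishing-row argument for $\wh\si_\la$ on $\Om(n_+,n_-)$ and the matching with the algebraic truncation, which the paper leaves implicit in its appeal to the end of Subsection \ref{sect3.B}; and your explicit formula $\psi'_d=-\sum_{k=0}^d\varphi^{d-k}\psi_k$ with the bound via $\Vert\varphi\Vert=1$ makes concrete what the paper abbreviates as ``the same argument as in the proof of Proposition \ref{prop3.D}.''
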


\begin{proof}
\emph{Step} 1. Let us check (i). We will assume first that $\psi$ is
homogeneous of degree $N$. Then we have (recall that $\wh\si_\la(\om)\ge0$)
\begin{equation}\label{eq3.D}
\sum_\la|a_\la|\wh\si_\la(\om)=\sum_\la\frac{|a_\la|}{\Dim_N\la}\Dim_N\la\,\wh\si_\la(\om)
\le\Vert\psi\Vert \sum_\la\Dim_N\la\,\wh\si_\la(\om)=\Vert\psi\Vert,
\end{equation}
where the final equality follows from \eqref{eq2.O}.

The same holds for arbitrary (not necessarily homogeneous) elements, by the
very definition of the norm in $\RR$.

\emph{Step} 2. Let us check that the map $\psi\mapsto\wh\psi(\,\cdot\,)$ is
consistent with multiplication. That is, for any two elements
$\psi',\psi''\in\RR$ and any $\om\in\Om$ one has
$$
\wh{\psi'}(\om)\wh{\psi''}(\om)=\wh\psi(\om), \qquad \psi:=\psi'\psi''.
$$

Indeed, without loss of generality we may assume that $\psi'$ and $\psi''$ are
homogeneous, of degree $M$ and $N$, respectively. Write
$$
\psi'=\sum_{\mu\in\GT_M}a'_\mu\si_\mu, \qquad
\psi''=\sum_{\nu\in\GT_N}a''_\nu\si_\nu, \qquad
\psi=\sum_{\la\in\GT_{M+N}}a_\la\si_\la.
$$
By virtue of \eqref{eq2.M}, we have
$$
a_\la=\sum_{\mu,\nu}c(\la\mid\mu,\nu)a'_\mu a''_\nu,
$$
where the structure constants correspond to the choice $P_\la=s_\la$.

It readily follows that the desired statement is reduced to the following
identity:  for any fixed $\mu\in\GT_M$ and $\nu\in\GT_N$ one has
\begin{equation}\label{eq2.P}
\wh\si_\mu(\om)\wh\si_\nu(\om)=\sum_{\la\in\GT_{M+N}}c(\la\mid\mu,\nu)\wh\si_\la(\om),
\qquad \om\in\Om.
\end{equation}

This identity, in turn, follows from the second definition of the quantities 
$\wh\si_\la(\om)$ (formula \eqref{eq3.J} above) and the identity
$$
s_\la(u_1,\dots,u_{M+N})=
\sum_{\mu\in\GT_M,\,\nu\in\GT_N}c(\la\mid\mu,\nu)s_\mu(u_1,\dots,u_M)
s_\nu(u_{M+1},\dots,u_{M+N}).
$$
Necessary interchanges of the order of summation are justified because all the
series are absolutely convergent.

\emph{Step 3}. Let us show that the functions $\wh\psi(\om)$ are continuous on
$\Om$. We may assume that $\psi$ is homogeneous of degree $N$. Then the
corresponding function $\wh\psi(\om)$ is given by the series
$\sum_{\la\in\GT_N}a_\la\wh\si_\la(\om)$. We know that the functions
$\wh\si_\la(\om)$ are continuous, but one cannot immediately conclude that
$\wh\psi$ is also continuous because the series is not necessarily convergent
in the norm topology of $C(\Om)$. This difficulty is resolved in the following
way. Since the space $\Om$ is locally compact, it suffices to prove that the
series for $\wh\psi$ converges uniformly on compact subsets of $\Om$. Looking
at \eqref{eq3.D} one sees that it suffices to do this for the series
$\sum_\la\Dim_N\la\,\wh\si_\la(\om)$. By \eqref{eq2.O}, it converges to the
constant function $1$ at every point $\om\in\Om$. Since all the summands are
nonnegative, the convergence is uniform on compact sets, as desired.

Thus, we completed the proof of (ii).

\emph{Step} 4. Obviously, the element $\varphi$ belongs to $\RR$, so that the
principal ideal $\J\subset\RR$ generated by $\varphi-1$ is well defined. Let us
show that $\J=J\cap\RR$. To do this we have to check that if $\psi\in R$ is
such that $(\varphi-1)\psi\in\RR$, then $\psi\in\RR$. This is proved by the
same argument as in the proof of Proposition \ref{prop3.D}.

\emph{Step} 5. Finally, let us check that $\J$ coincides with the kernel of the
homomorphism $\psi\mapsto\wh\psi(\,\cdot\,)$. We know that the function
$\wh\varphi(\om)$ is the constant function $1$, so $\J$ is contained in the
kernel.

It remains to show that if, conversely, $\psi\in\RR$ is such that
$\wh\psi(\om)\equiv0$ on $\Om$, then $\psi\in\J$. Here we apply the result
stated at the very end of Subsection \ref{sect3.B}. It suffices to use the fact
that the function $\wh\psi(\om)$ vanishes on $\Om^0$. Then that result says
that $\psi\in J$. Because $J\cap\RR=\J$, we conclude that $\psi\in\J$.
\end{proof}

\begin{corollary}\label{cor3.B}
The homomorphism of Proposition \ref{prop3.A} determines by restriction a
homomorphism $\RR^0\to C_0(\Om)$.
\end{corollary}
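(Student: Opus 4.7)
The plan is to verify the corollary by combining three ingredients already established: (1) the determinantal description \eqref{eq3.E} of $\wh\si_\la$, (2) the fact from Proposition \ref{prop3.C} that each $\wh\varphi_n$ lies in $C_0(\Om)$, and (3) the density of $R^{\operatorname{fin}}$ in $\RR^0$ together with the norm contractivity of the map $\psi\mapsto\wh\psi$ asserted in Proposition \ref{prop3.A}(i).

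First, I would check that for every signature $\la\in\GT_N$ with $N\ge1$ the function $\wh\si_\la$ belongs to $C_0(\Om)$. By \eqref{eq3.E}, $\wh\si_\la(\om)=\det[\wh\varphi_{\la_i-i+j}(\om)]$, a signed sum of $N$-fold products of the functions $\wh\varphi_n$. Since $C_0(\Om)$ is an ideal in the Banach algebra $C(\Om)$ (so a product of a $C_0$-function with any bounded continuous function still vanishes at infinity) and is closed under finite linear combinations, and since each factor $\wh\varphi_{\la_i-i+j}$ lies in $C_0(\Om)$ by Proposition \ref{prop3.C}, every such product is in $C_0(\Om)$, and hence so is $\wh\si_\la$. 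Consequently the homomorphism of Proposition \ref{prop3.A} sends all of $R^{\operatorname{fin}}$ (finite linear combinations of the $\si_\la$ with $\la\ne\varnothing$) into $C_0(\Om)$.

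Next I would invoke the fact, recorded right after Definition \ref{def2.A}, that $R^{\operatorname{fin}}$ is dense in $\RR^0$ with respect to the norm. Given $\psi\in\RR^0$, choose a sequence $\psi^{(k)}\in R^{\operatorname{fin}}$ with $\Vert\psi^{(k)}-\psi\Vert\to0$. By Proposition \ref{prop3.A}(i) the assignment $\psi\mapsto\wh\psi$ is norm-decreasing from $\RR$ into $C(\Om)$ equipped with its supremum norm, so $\wh{\psi^{(k)}}\to\wh\psi$ uniformly on $\Om$. Since each $\wh{\psi^{(k)}}$ lies in $C_0(\Om)$ and $C_0(\Om)$ is norm-closed in $C(\Om)$, the limit $\wh\psi$ belongs to $C_0(\Om)$ as well.

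There is no real obstacle here; the only point requiring the slightest care is the preliminary observation that $C_0(\Om)$ is closed under the multiplication inherited from $C(\Om)$, so that the $N$-fold products appearing in the expansion of the determinant \eqref{eq3.E} stay inside $C_0(\Om)$. Everything else is a direct continuity/density argument using the contractivity already built into Proposition \ref{prop3.A}.
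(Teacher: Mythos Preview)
Your proof is correct and follows essentially the same approach as the paper: the paper likewise observes that $\wh\si_\la\in C_0(\Om)$ (this was noted just after \eqref{eq3.E}), invokes the norm-density of the span of the $\si_\la$ in $\RR^0$, and concludes via norm-continuity of the map $\RR\to C(\Om)$ together with closedness of $C_0(\Om)$ in $C(\Om)$. The only difference is that you spell out the determinantal argument for $\wh\si_\la\in C_0(\Om)$ explicitly, whereas the paper simply refers back to its earlier remark.
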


\begin{proof}
By the definition of the subalgebra $\RR^0\subset\RR$, the linear span of the
basis elements $\si_\la$ is dense in $\RR^0$ with respect to the norm topology.
On the other hand, as it was pointed above, the functions $\wh\si_\la(\om)$
belong to $C_0(\Om)$. Since $C_0(\Om)$ is closed in $C(\Om)$ and the
homomorphism $\RR\to C(\Om)$ is norm continuous, this shows that the image of
the whole subalgebra $\RR^0$ is contained in $C_0(\Om)$.
\end{proof}

\subsection{Analog of the Vershik-Kerov ring theorem}

Let $\RR_+\subset\RR$ denote the closed (in the norm topology) convex cone
spanned by the elements $\si_\la$. For two elements $\psi_1, \psi_2\in\RR_+$,
write $\psi_1\le\psi_2$ if $\psi_2-\psi_1\in\RR_+$.

The following result is similar to the so-called \emph{ring theorem} due to
Vershik and Kerov, see \cite[Theorem 6]{VK-RingTheorem} and \cite[Introduction,
Theorem 4]{Kerov-book}.

\begin{proposition}
{\rm(i)} The set of characters of $U(\infty)$ in the sense of Definition
\ref{def3.A} is in a natural one-to-one correspondence with linear functionals
$F:\RR\to\C$ satisfying the following properties{\rm:}

\begin{itemize}

\item $F$ is norm-continuous and takes real nonnegative values on the cone
$\RR_+$.

\item If $\psi\in\RR_+$ is the least upper bound for a  sequence
$0\le\psi_1\le\psi_2\le\dots$, then $F(\psi)=\lim_{n\to\infty} F(\psi_n)$.

\item $F(1)=1$ and $F(\varphi\psi)=F(\psi)$ for every $\psi\in\R$.

\end{itemize}

{\rm(ii)} A character is extreme if and only if the corresponding functional
$F$ is multiplicative, that is, $F(\psi_1\psi_2)=F(\psi_1)F(\psi_2)$ for any
$\psi_1,\psi_2\in\RR$.

\end{proposition}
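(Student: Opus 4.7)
The plan is to realize each such functional $F$ as integration of $\wh\psi(\om)$ against a probability measure on $\Om$, using the homomorphism $\RR\to C(\Om)$ of Proposition \ref{prop3.A} and the branching identity of Proposition \ref{prop3.AA} to translate between the axioms on $F$ and the data of a Gelfand--Tsetlin coherent system.

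\emph{Forward direction.} Given a character $f$ of $U(\infty)$, a Choquet-type argument combined with the Edrei--Voiculescu classification (Theorem \ref{thm3.A}) produces a unique probability measure $P$ on $\Om$ with $f=\int_\Om\Psi_\om\,dP(\om)$. Define $F(\psi):=\int_\Om\wh\psi(\om)\,dP(\om)$. Norm-continuity together with $|F(\psi)|\le\Vert\psi\Vert$ is immediate from Proposition \ref{prop3.A}(i); nonnegativity on $\RR_+$ follows from $\wh\si_\la\ge 0$; the monotone convergence property for $F$ is the usual monotone convergence theorem for $P$; $F(1)=1$ is clear; and $F(\varphi\psi)=F(\psi)$ follows because $\wh\varphi(\om)=\Phi(1;\om)\equiv 1$.

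\emph{Reverse direction.} Given such an $F$, set $P_N(\la):=\Dim_N\la\cdot F(\si_\la)$ for $\la\in\GT_N$. Iterating Proposition \ref{prop3.AA} starting from $\si_\varnothing=1$ expresses
$$\varphi^N=\sum_{\la\in\GT_N}\Dim_N\la\cdot\si_\la,$$
where the right-hand side is the least upper bound in $\RR_+$ of its finite partial sums (the multiplicity $\Dim_N\la$ counting the interlacing chains $\varnothing\prec\la^{(1)}\prec\cdots\prec\la^{(N)}=\la$, i.e.\ Gelfand--Tsetlin patterns). The monotone convergence axiom combined with iterated use of $F(\varphi\psi)=F(\psi)$ yields $\sum_{\la\in\GT_N}P_N(\la)=F(\varphi^N)=F(1)=1$, so $P_N$ is a probability distribution on $\GT_N$. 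A further application of Proposition \ref{prop3.AA} and monotone convergence gives
$$F(\si_\mu)=F(\varphi\si_\mu)=\sum_{\la\succ\mu}F(\si_\la),$$
which is exactly the coherence relation $P_N(\mu)=\Dim_N\mu\sum_{\la\succ\mu}P_{N+1}(\la)/\Dim_{N+1}\la$ on the branching graph of $\{U(N)\}$. The classical bijection between such coherent systems and probability measures $P$ on $\Om$ (equivalent to Theorem \ref{thm3.A}; see \cite{BO-GT-Appr}) then supplies a unique $P$, hence a unique character $f$, and the two constructions are mutually inverse because $P_N(\la)=\Dim_N\la\int\wh\si_\la\,dP$.

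\emph{Part (ii).} If $P=\delta_\om$, then $F(\psi)=\wh\psi(\om)$ and multiplicativity is Proposition \ref{prop3.A}(ii). Conversely, if $F$ is multiplicative then $F(\psi^2)=F(\psi)^2$, that is, $\int\wh\psi^2\,dP=(\int\wh\psi\,dP)^2$, for every $\psi\in\RR$. Applied to $\psi=\si_\la$, where $\wh\si_\la\ge 0$ is real, this forces $\wh\si_\la$ to be $P$-a.s.\ constant; in particular each $\wh\varphi_n=\wh\si_n$, $n\in\Z$, is $P$-a.s.\ constant. By Theorem \ref{thm3.A} the family $(\wh\varphi_n(\om))_{n\in\Z}$ separates points of $\Om$, so the support of $P$ is a single point and $f=\Psi_\om$ is extreme. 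The main obstacle is the identification of characters of $U(\infty)$ with probability measures on $\Om$ (equivalently, of Gelfand--Tsetlin coherent systems with points of the entrance boundary) underlying the reverse direction; this rests on the Edrei--Voiculescu theorem and is classical. Given that ingredient, the axiomatic properties in the statement encode precisely the data of such a coherent system via Propositions \ref{prop3.A} and \ref{prop3.AA}.
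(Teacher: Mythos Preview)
Your argument is correct but runs contrary to the paper's stated intent. The paper gives no detailed proof, referring instead to the Vershik--Kerov ring theorem approach, and immediately remarks: ``This result does not depend on the classification of the extreme characters and provides one more proof of their multiplicativity.'' Your proof, by contrast, invokes the Edrei--Voiculescu classification at every stage: you decompose an arbitrary character as $\int_\Om\Psi_\om\,dP(\om)$, and in part~(ii) you identify extreme characters with delta measures on $\Om$ before checking multiplicativity. Thus you are \emph{using} the classification to deduce that extreme characters are multiplicative, whereas the paper's point is that the proposition furnishes an \emph{independent} proof of this fact.

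Some of your detours are also avoidable on their own terms. In the reverse direction of~(i), once the coherent system $(P_N)$ is in hand, the character is simply $f\big|_{U(N)}=\sum_\la P_N(\la)\,s_\la/\Dim_N\la$; the coherence relations make this consistent across $N$, and no identification of the boundary with $\Om$ is needed. Likewise, in the forward direction one can set $F(\si_\la)$ equal to the $\la$-th Fourier coefficient of $f\big|_{U(N)}$ directly, without any integral representation. The substantive content is part~(ii): that extremality of $f$ is equivalent to multiplicativity of $F$. The Vershik--Kerov argument (see \cite{VK-RingTheorem}, or \cite[Section~8.7]{GO-zigzag} for a detailed version) establishes this by a direct convexity analysis exploiting the ring structure, without appeal to the explicit description of the extreme characters. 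Your route is shorter given Theorem~\ref{thm3.A} as a black box, but it forfeits the logical independence that the paper highlights.
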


The proof is similar to that given in \cite{VK-RingTheorem} (see also a more
detailed version in Gnedin-Olshanski \cite[Section 8.7]{GO-zigzag}).

This result does not depend on the classification of the extreme characters and
provides one more proof of their multiplicativity.

\section{The operator $\mathbb D_\zw$}\label{sect4}

\begin{definition}\label{def4.A}
Fix an arbitrary quadruple $(z,z',w,w')$ of complex parameters and introduce
the following formal differential operator in countably many variables
$\{\varphi_n:n\in\Z\}$
$$
\D_\zw=\sum_{n\in\Z}A_{nn}\frac{\pd^2}{\pd\varphi_n^2}+2\sum_{\substack{n_1,n_2\in\Z\\
n_1>n_2}} A_{n_1 n_2}\frac{\pd^2}{\pd\varphi_{n_1}\pd\varphi_{n_2}}
+\sum_{n\in\Z}B_n\frac{\pd}{\pd\varphi_n}, \
$$
where, for any indices $n_1\ge n_2$,
\begin{equation}\label{eq4.B}
\begin{gathered}
A_{n_1 n_2}=\sum_{p=0}^\infty(n_1-n_2+2p+1)(\varphi_{n_1+p+1}\varphi_{n_2-p}
+\varphi_{n_1+p}\varphi_{n_2-p-1})\\
-(n_1-n_2)\varphi_{n_1}\varphi_{n_2}
-2\sum_{p=1}^\infty(n_1-n_2+2p)\varphi_{n_1+p}\varphi_{n_2-p}
\end{gathered}
\end{equation}
and, for any $n\in\Z$,
\begin{equation}\label{eq4.C}
\begin{gathered}
B_n=(n+w+1)(n+w'+1)\varphi_{n+1}+(n-z-1)(n-z'-1)\varphi_{n-1}\\
-\bigl((n-z)(n-z')+(n+w)(n+w')\bigr)\varphi_n.
\end{gathered}
\end{equation}
\end{definition}

\medskip

Note that only coefficients $B_n$ depend on the parameters $(z,z',w,w')$.

\begin{proposition}
The operator $\D_\zw$ is correctly defined on $R$.
\end{proposition}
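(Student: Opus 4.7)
The plan is to show that the formal expression for $\D_\zw$ defines a continuous linear endomorphism of $R$ in the $I$-adic topology, using the projective limit realization $R=\varprojlim R(n_+,n_-)$.

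\emph{Step 1: The coefficients themselves lie in $R$.} First I will observe that each $A_{n_1 n_2}$ with $n_1\ge n_2$ is a well-defined element of $R_2$: by \eqref{eq2.A} an arbitrary formal series in the $\varphi_n$'s with $N=2$ and \emph{no} restriction on the complex coefficients belongs to $R_2$, and \eqref{eq4.B} manifestly writes $A_{n_1 n_2}$ as such a series (with integer coefficients). Similarly, each $B_n$ is a polynomial of degree $1$ in three explicit variables $\varphi_{n-1},\varphi_n,\varphi_{n+1}$, hence lies in $R_1$.

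\emph{Step 2: Reduction to $I$-adic continuity.} To make sense of $\D_\zw$ on all of $R$, it suffices, by the projective limit realization, to show that for every pair $(n_+,n_-)$ with $n_+\ge n_-$ there exists a pair $(n_+',n_-')$ with $n_+'\ge n_+$ and $n_-'\le n_-$ such that $\D_\zw$ carries $I(n_+',n_-')$ into $I(n_+,n_-)$; equivalently, that the image in $R(n_+,n_-)$ of $\D_\zw\psi$ is determined by the image of $\psi$ in $R(n_+',n_-')$. I will prove this with the natural choice $(n_+',n_-'):=(n_++1,n_--1)$.

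\emph{Step 3: Index bookkeeping monomial by monomial.} Fix $\mu\in\GT_N$ and look at the contribution of $\varphi_\mu$ to the truncation of $\D_\zw\varphi_\mu$ in $R(n_+,n_-)$. In the second-order part, the partial derivative $\pd^2\varphi_\mu/\pd\varphi_{n_1}\pd\varphi_{n_2}$ vanishes unless $n_1,n_2$ occur among the indices of $\mu$; the remnant $\varphi_{\mu\setminus\{n_1,n_2\}}$ must have all its remaining $N-2$ indices in $[n_-,n_+]$ to survive truncation; and I will check directly from \eqref{eq4.B} that $A_{n_1 n_2}$ has a nonzero image in $R(n_+,n_-)$ only when $n_-\le n_2\le n_1\le n_+$, since every monomial in $A_{n_1 n_2}$ has the form $\varphi_{n_1+p'}\varphi_{n_2-q'}$ with $p',q'\ge 0$, so existence of an index $p$ with both factors in range is equivalent to the inequalities $n_1\le n_+$ and $n_2\ge n_-$ (and these are also forced by the constant term $\varphi_{n_1}\varphi_{n_2}$). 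Thus the second-order part forces all indices of $\mu$ into $[n_-,n_+]$. For the first-order part \eqref{eq4.C}, $B_n$ involves only $\varphi_{n-1},\varphi_n,\varphi_{n+1}$, so the single differentiated index of $\mu$ may leave $[n_-,n_+]$ by at most one, while the remaining $N-1$ indices must stay in $[n_-,n_+]$. Taking the union gives the estimate $[n_-',n_+']=[n_--1,n_++1]$ claimed in Step 2.

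The claim follows: if every monomial $\varphi_\mu$ appearing in $\psi$ has at least one index outside $[n_-',n_+']$, then each term of $\D_\zw\psi$ vanishes in $R(n_+,n_-)$, so $\D_\zw\psi\in I(n_+,n_-)$. Conversely, since $\D_\zw\varphi_\mu$ is defined by a \emph{finite} sum of products $A_{n_1 n_2}\varphi_\nu$ and $B_n\varphi_\nu$ of elements of $R$ (Step 1), the map is well defined on each polynomial layer $R(n_+',n_-')$ composed with the truncation to $R(n_+,n_-)$, and the family is compatible as $n_\pm\to\pm\infty$. The main obstacle is the second-order analysis: one must verify that, despite $A_{n_1 n_2}$ being an infinite series and the outer double sum ranging over all of $\Z^2$, only finitely many pairs $(n_1,n_2)$ contribute to any fixed truncation, and this is exactly what the index estimate $n_-\le n_2\le n_1\le n_+$ achieves.
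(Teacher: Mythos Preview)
Your proof is correct and rests on the same underlying observation as the paper's: every monomial in $A_{n_1n_2}$ has the form $\varphi_{n_1+p'}\varphi_{n_2-q'}$ with $p',q'\ge0$, so the second-order part can only enlarge the index range of a monomial, while $B_n$ involves only $\varphi_{n-1},\varphi_n,\varphi_{n+1}$, so the first-order part shifts the range by at most one. The paper packages this as a direct ``no accumulation'' argument (for a fixed output monomial $\varphi_\nu$, only finitely many input monomials $\varphi_\mu$ can contribute, because the support of $\varphi_\mu$ is constrained by that of $\varphi_\nu$), whereas you package it as $I$-adic continuity ($\D_\zw$ sends $I(n_++1,n_--1)$ into $I(n_+,n_-)$). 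These are equivalent formulations; your version has the minor advantage of making the continuity in the $I$-adic topology explicit, which is used later in the paper. One small wording issue: in your final paragraph you speak of $\D_\zw\psi$ for $\psi\in I(n_+',n_-')$ before $\D_\zw$ is defined on infinite sums, but the intended meaning is clear --- for each \emph{monomial} $\varphi_\mu$ with an index outside $[n_--1,n_++1]$ one has $\D_\zw\varphi_\mu\in I(n_+,n_-)$, and this is what gives convergence of $\sum_\mu a_\mu\D_\zw\varphi_\mu$ in the projective limit.
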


Note that not every formal differential operator in variables $\varphi_n$ can
act on $R$. Here is a very simple example: application of
$\sum_{n\in\Z}\frac{\pd}{\pd\varphi_n}$ to the element
$\varphi=\sum_{n\in\Z}\varphi_n$ gives the meaningless expression
$\sum_{n\in\Z}1$. As is seen from the argument below, the validity of the
proposition relies on the concrete form of the coefficients of $\D_\zw$.

\begin{proof}

(i) Obviously, when $\D_\zw$ is formally applied to a monomial in $R$, the
result is a well-defined element of $R$. We have to prove that, more generally,
the same holds when $\D_\zw$ is applied to any homogeneous element $g\in R$. In
other words, the infinite sum arising in $\D_\zw g$ cannot contain infinitely
many nonzero terms proportional to one and the same monomial.

(ii) Given a monomial $\varphi_\la=\varphi_{\la_1}\dots\varphi_{\la_N}$ indexed
by a signature $\la$, define its \emph{support} $\supp \varphi_\la$ as the
lattice interval $[a,b]:=\{a,\dots,b\}\subset\Z$, where
$a=\la_N=\min(\la_1,\dots,\la_N)$ and $b=\la_1=\max(\la_1,\dots,\la_N)$.

{}From \eqref{eq4.B} is is evident that for every monomial $\varphi_\mu$
entering
$$
A_{n_1 n_2}\frac{\pd^2 \varphi_\la}{\pd\varphi_{n_1}\pd\varphi_{n_2}},
$$
one has $\supp \varphi_\mu\supseteq[a,b]$.

(iii) Likewise, from \eqref{eq4.B} it is clear that if a monomial $\varphi_\mu$
enters
$$
B_n\frac{\pd \varphi_\la}{\pd\varphi_n}
$$
and $[a',b']:=\supp \varphi_\mu$, then one has $|a'-a|\le1$, $|b'-b|\le1$.

(iv) Let again, as in (i) above, $g$ be a homogeneous element of $R$, and
examine the infinite sum $\D_\zw g$ resulting from application of $\D_\zw$ to
$g$. Observe that there exist only finitely many monomials of a prescribed
degree and with the support contained in a prescribed lattice interval.
Therefore, (ii) and (iii) guarantee that the undesired accumulation of
infinitely many proportional terms in $\D_\zw g$ is excluded.

\end{proof}

\begin{proposition}\label{prop4.C}
If $z=n_+$ and\/ $w=-n_-$, where $n_+\ge n_-$ are integers, then the operator\/
$\D_\zw$ preserves the ideal $I(n_+,n_-)$ and hence correctly determines an
operator acting on the quotient ring $R(n_+,n_-)=R/I(n_+,n_-)$.

\end{proposition}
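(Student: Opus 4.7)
My approach exploits that $I(n_+, n_-)$ is the closed ideal topologically generated by the variables $\{\varphi_m : m \notin [n_-, n_+]\}$. Since this ideal is closed in the $I$-adic topology and $\D_\zw$ acts by sums converging coefficientwise, it suffices to prove that $\D_\zw(\varphi_m h) \in I(n_+, n_-)$ for every $m \notin [n_-, n_+]$ and every $h \in R$. Applying the Leibniz rule to the second-order operator $\D_\zw$, with the symmetric convention $A_{m,n}=A_{n,m}$, gives
\[
\D_\zw(\varphi_m h) \;=\; \varphi_m\,\D_\zw(h) \;+\; B_m\,h \;+\; 2\sum_{n\in\Z} A_{m,n}\,\frac{\pd h}{\pd\varphi_n}.
\]
The first term is manifestly in $I(n_+, n_-)$, so the task reduces to verifying (i) $B_m \in I(n_+, n_-)$ for every $m \notin [n_-, n_+]$, and (ii) $A_{m,n} \in I(n_+, n_-)$ for every such $m$ and every $n \in \Z$; the remaining infinite sum then lies in $I(n_+, n_-)$ by closedness.

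For (ii), I would take $n_1 = \max(m,n)$ and $n_2 = \min(m,n)$ in formula \eqref{eq4.B}. A direct inspection shows that every monomial $\varphi_a\varphi_b$ appearing there satisfies $a \ge n_1$ and $b \le n_2$. If $m > n_+$ then $a \ge n_1 \ge m > n_+$, and if $m < n_-$ then $b \le n_2 \le m < n_-$; in either case one of the two factors lies in $I(n_+, n_-)$, placing $A_{m,n}$ in the ideal. Notably this part holds for \emph{all} choices of the parameters $(z,z',w,w')$ and requires no hypothesis at all.

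For (i), the specialization $z = n_+$, $w = -n_-$ becomes essential. When $m > n_+ + 1$ or $m < n_- - 1$, all three of $\varphi_{m-1}, \varphi_m, \varphi_{m+1}$ already lie in $I(n_+, n_-)$, so $B_m$ does too by inspection of \eqref{eq4.C}. The delicate cases are the boundary indices $m = n_+ + 1$, where $\varphi_{m-1} = \varphi_{n_+}$ lies \emph{outside} the ideal, and $m = n_- - 1$, where $\varphi_{m+1} = \varphi_{n_-}$ lies outside the ideal. At $m = n_+ + 1$, the coefficient of $\varphi_{m-1}$ in $B_m$ carries the factor $m - z - 1 = n_+ + 1 - n_+ - 1 = 0$; symmetrically, at $m = n_- - 1$, the coefficient of $\varphi_{m+1}$ carries the factor $m + w + 1 = n_- - 1 - n_- + 1 = 0$. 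So the offending terms vanish and $B_m \in I(n_+, n_-)$ in every case. The main obstacle is just this boundary bookkeeping: the specialization of $z,w$ to $n_+, -n_-$ is tailored precisely so that the two would-be escape routes from the ideal are killed by vanishing scalar coefficients.
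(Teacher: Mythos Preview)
Your proof is correct and follows essentially the same approach as the paper's own proof. Both arguments observe that the second-order coefficients $A_{n_1 n_2}$ only involve monomials whose indices lie outside the interval $[n_2,n_1]$ (so the second-order part preserves the ideal for \emph{any} parameter values), and then isolate the two boundary cases $n=n_++1$ and $n=n_--1$ in the first-order part, where the specialization $z=n_+$, $w=-n_-$ forces the offending coefficients to vanish. Your use of the Leibniz rule to package the computation is slightly more explicit than the paper's support-based phrasing, but the content is identical.
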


\begin{proof}
The ideal $I(n_+,n_-)$ consists of (possibly infinite) linear combinations of
monomials whose support is not contained in the lattice interval $[n_-,n_+]$.
Step (ii) of the argument above shows that the application of the second order
terms in $\D_\zw$ enlarges the supports and so preserves the ideal
$I(n_+,n_-)$. Note that this holds for any values of the parameters.

Now let us examine the effect of the application of a first degree term
$B_n\frac{\pd}{\pd \varphi_n}$. {}From \eqref{eq4.C} it is seen that the only
danger may come from the quantities
$$
(n+w+1)(n+w'+1)\varphi_{n+1}\big|_{n=n_--1}, \quad
(n-z-1)(n-z'-1)\varphi_{n-1}\big|_{n=n_++1}.
$$
But these quantities vanish because, by our assumption, $w=-n_-$ and $z=n_+$.
\end{proof}

\begin{proposition}\label{prop4.A}
For any fixed integer $m$, the operator\/ $\D_\zw$ is invariant under the change
of variables $\varphi_n\mapsto \varphi_{n+m}$ {\rm($n\in\Z$)} combined with the
shift of parameters
$$
z\to z+m, \quad z'\to z'+m, \quad w\to w-m, \quad w'\to w-m.
$$
\end{proposition}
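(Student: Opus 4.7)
The plan is to verify the invariance at the level of conjugation by the shift automorphism. Let $\sigma : R \to R$ denote the graded algebra automorphism determined by $\sigma(\varphi_n) = \varphi_{n+m}$, $n\in\Z$. First I would establish the commutation rule
$$
\sigma\circ\frac{\pd}{\pd\varphi_n}=\frac{\pd}{\pd\varphi_{n+m}}\circ\sigma,
$$
which is immediate on monomials: if $\varphi_\la = \varphi_{\la_1}\cdots\varphi_{\la_N}$, then applying $\frac{\pd}{\pd\varphi_n}$ and then $\sigma$ picks out the indices $i$ with $\la_i = n$ and shifts the remaining factors, while applying $\sigma$ first and then $\frac{\pd}{\pd\varphi_{n+m}}$ picks out the indices with $\la_i+m=n+m$, producing the same sum. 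The statement of the proposition is then equivalent to
$$
\sigma\,\D_{z,z',w,w'}\,\sigma^{-1} \;=\; \D_{z+m,\,z'+m,\,w-m,\,w'-m},
$$
which is what I would prove.

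Next I would handle the second order part, which does \emph{not} depend on the parameters. Conjugation gives $\sigma A_{n_1n_2}(\varphi)\sigma^{-1}=A_{n_1n_2}(\sigma\varphi)$, and inspection of \eqref{eq4.B} shows that $A_{n_1n_2}(\sigma\varphi) = A_{n_1+m,\,n_2+m}(\varphi)$, because every quadratic monomial in $A_{n_1n_2}$ has the form $\varphi_{n_1+p}\varphi_{n_2-q}$ for various $p,q$, and the coefficients depend only on $n_1-n_2$ and $p,q$. After the relabeling $k_i := n_i+m$ and using the commutation rule above, the second order sum
$$
\sum_{n_1,n_2} A_{n_1n_2}(\varphi)\frac{\pd^2}{\pd\varphi_{n_1}\pd\varphi_{n_2}}
$$
is therefore preserved by the conjugation, with no need to shift parameters.

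For the first order part, the computation is a direct verification. After conjugation and relabeling, I obtain a sum of the form $\sum_k B_{k-m}(\sigma\varphi;z,z',w,w')\,\frac{\pd}{\pd\varphi_k}$, and I must check that
$$
B_{k-m}(\sigma\varphi;\,z,z',w,w') \;=\; B_{k}(\varphi;\,z+m,z'+m,w-m,w'-m).
$$
Substituting $(\sigma\varphi)_j = \varphi_{j+m}$ into \eqref{eq4.C} with index $n = k-m$, the left-hand side becomes
$$
(k-m+w+1)(k-m+w'+1)\varphi_{k+1}+(k-m-z-1)(k-m-z'-1)\varphi_{k-1}
$$
$$
-\bigl((k-m-z)(k-m-z')+(k-m+w)(k-m+w')\bigr)\varphi_k,
$$
while the right-hand side, obtained by shifting the parameters in \eqref{eq4.C} with index $n=k$, produces exactly the same expression after grouping $w-m$ with $k$ and $z+m$ with $k$. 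The equality is evident term by term.

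The only mildly nontrivial point is the rearrangement of the (infinite) sums when applying the commutation rule: this is harmless because, by Proposition~4.1 and the support argument used in its proof, the action of $\D_\zw$ on any element of $R$ is well-defined, and the same support bound controls the $\sigma$-conjugated operator. There is no genuine obstacle, and the proposition reduces to the two bookkeeping verifications above.
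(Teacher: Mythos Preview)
Your proof is correct and takes essentially the same approach as the paper: you verify directly that the simultaneous shift of variables and parameters leaves the coefficients $A_{n_1n_2}$ and $B_n$ unchanged. The paper compresses this into a single sentence, while you spell out the conjugation by the shift automorphism $\sigma$ and the term-by-term matching; the content is identical.
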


In connection with this proposition see also Remark 3.7 in \cite{BO-AnnMath}.

\begin{proof}
Indeed, the indicated simultaneous shift of the variables and parameters does
not change the coefficients $A_{n_1n_2}$ and $B_n$.
\end{proof}

The next proposition is not so evident:

\begin{proposition}\label{prop4.B}
The operator $\D_\zw$ preserves the principal ideal $J\subset R$.
\end{proposition}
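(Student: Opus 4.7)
The strategy is to exploit the second-order structure of $\D_\zw$ via a Leibniz-type computation. Setting $\wt A_{n_1 n_2} := A_{\max(n_1, n_2), \min(n_1, n_2)}$, one rewrites the operator in symmetric form
\[
\D_\zw = \sum_{n_1, n_2 \in \Z} \wt A_{n_1 n_2}\,\frac{\pd^2}{\pd\varphi_{n_1}\pd\varphi_{n_2}} + \sum_{n \in \Z} B_n\,\frac{\pd}{\pd\varphi_n}.
\]
Since $\pd\varphi/\pd\varphi_n = 1$ for every $n$ and all higher derivatives of $\varphi$ vanish, applying $\D_\zw$ to an arbitrary element $(\varphi - 1)g \in J$ via the product rule yields
\[
\D_\zw((\varphi-1)g) = \Bigl(\sum_{n} B_n\Bigr)g + (\varphi - 1)\D_\zw(g) + 2\sum_{n_2 \in \Z} C_{n_2}\,\frac{\pd g}{\pd\varphi_{n_2}}, \qquad C_{n_2} := \sum_{n_1 \in \Z} \wt A_{n_1 n_2}.
\]
The middle term already lies in $J$, so the proposition reduces to verifying the identities
\[
\text{(A)}\ \sum_{n\in\Z} B_n = 0 \qquad \text{and} \qquad \text{(B)}\ C_{n_2} = 0 \text{ for every } n_2 \in \Z.
\]

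Identity (A) is checked by direct reindexing in \eqref{eq4.C}. Substituting $m = n + 1$ transforms $(n+w+1)(n+w'+1)\varphi_{n+1}$ into $(m+w)(m+w')\varphi_m$, and substituting $m = n - 1$ transforms $(n-z-1)(n-z'-1)\varphi_{n-1}$ into $(m-z)(m-z')\varphi_m$; summing the two shifted series exactly cancels the unshifted third term of $B_n$.

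For (B), I first reduce to the case $n_2 = 0$. Since the coefficients $A_{n_1 n_2}$ are parameter-independent, Proposition~\ref{prop4.A} implies that the $A$-part of $\D_\zw$ is invariant under the simultaneous substitution $\varphi_n \mapsto \varphi_{n+k}$ and $(n_1, n_2) \mapsto (n_1+k, n_2+k)$. Hence $C_{n_2}$ is obtained from $C_0$ by a relabelling of the $\varphi$-indices, and it suffices to prove $C_0 = 0$. Inspecting \eqref{eq4.B}, the coefficient of a fixed monomial $\varphi_a\varphi_b$ (with $a > b$) in $A_{n_1 n_2}$ (for $n_1 \ge n_2$) can be nonzero only when $a + b \in \{n_1+n_2-1,\, n_1+n_2,\, n_1+n_2+1\}$, and the four summation groups in \eqref{eq4.B} contribute respectively $+(a-b)$, $+(a-b)$, $-(a-b)$ (this last only in the boundary case $(a,b) = (n_1, n_2)$), and $-2(a-b)$. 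Using the symmetric extension $\wt A_{n_1, 0} = A_{0, n_1}$ for $n_1 < 0$ produces a mirror-image list of contributions. A finite case analysis, partitioned by the signs of $a$, $b$, and $s := a + b$, then shows that in every regime the nonzero contributions to the coefficient of $\varphi_a\varphi_b$ in $C_0$ cancel: either as the ``bulk'' identity $(+d) + (+d) + (-2d) = 0$ with $d := a-b$ (Types 1, 2, 4 all firing), or as a pair $\pm d$ involving the boundary Type~3 contribution (which arises precisely when $a = 0$ or $b = 0$).

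The principal obstacle is the case-by-case bookkeeping for (B): although all contributions are proportional to $(a-b)$ and the cancellation pattern is uniform, one must carefully track which of the eight families (four types, times two orderings of $n_1$ relative to $n_2 = 0$) actually contributes in each regime of $(a, b, s)$. Once (A) and (B) are both established, the Leibniz identity collapses to $\D_\zw((\varphi-1)g) = (\varphi - 1)\D_\zw(g) \in J$, proving that $\D_\zw(J) \subset J$.
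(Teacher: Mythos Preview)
Your proposal is correct and follows essentially the same route as the paper: both arguments reduce the claim to the two identities $\sum_n B_n=0$ and $\sum_{n_1}\wt A_{n_1 n_2}=0$ (the paper writes the latter as $A_{nn}+\sum_{n_1>n}A_{n_1 n}+\sum_{n_2<n}A_{n n_2}=0$, which is the same sum), use the shift symmetry of Proposition~\ref{prop4.A} to reduce to $n_2=0$, and finish by a direct term-by-term cancellation. The only cosmetic difference is that you organize the second-order cancellation by fixing a monomial $\varphi_a\varphi_b$ and collecting its contributions across all $A_{n_1 n_2}$, whereas the paper writes out $A_{00}$, $A_{m0}$, $A_{0,-m}$ explicitly and cancels; both verifications are of the same ``slightly tedious but direct'' nature.
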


\begin{proof}
We will prove that $\D_\zw$ commutes with the operator of multiplication by $\varphi$, which obviously implies that $\D_\zw$ preserves $J$.

Take an arbitrary element $F\in R$ and observe that  $\D_\zw(\varphi F)-\varphi\D_\zw F$ equals 
$$
2\sum_{n\in\Z}\left(A_{nn}+\sum_{n_1:\,
n_1>n}A_{n_1n}+\sum_{n_2:\,n_2<n}A_{nn_2}\right)\frac{\pd
F}{\pd\varphi_n}+\left(\sum_{n\in\Z}B_n\right)F.
$$

We are going to check that this expression vanishes. More precisely, the $n$th
summand in the first sum vanishes for every $n\in\Z$ and the sum $\sum B_n$
vanishes, too.

Indeed, by \eqref{eq4.C}, one has
\begin{multline*}
\sum_{n\in\Z}B_n=
\sum_{n\in\Z}(n+w+1)(n+w'+1)\varphi_{n+1}+\sum_{n\in\Z}(n-z-1)(n-z'-1)\varphi_{n-1}\\
-\sum_{n\in\Z}\bigl((n-z)(n-z')+(n+w)(n+w')\bigr)\varphi_n.
\end{multline*}
By making the change $n\to n\pm1$ in the first two sums one sees that that the
whole expression equals $0$.

Next, let us check that
$$
A_{nn}+\sum_{n_1:\, n_1>n}A_{n_1n}+\sum_{n_2:\,n_2<n}A_{nn_2}=0.
$$
By virtue of Proposition \ref{prop4.A}, it suffices to do this for the
particular value $n=0$, which slightly simplifies the notation. Then the
identity in question can be written as
\begin{equation}\label{eq4.A}
A_{00}+\sum_{m>0}A_{m0}+\sum_{m>0}A_{0,-m}=0.
\end{equation}

Let us write down explicitly all the summands:
$$
A_{00}=\sum_{p\ge0}(2p+1)[\varphi_{p+1}\varphi_{-p}+\varphi_p\varphi_{-p-1}]
-2\sum_{p\ge1}2p\varphi_p\varphi_{-p}.
$$

$$
A_{m0}=\sum_{p\ge0}(m+2p+1)[\varphi_{m+p+1}\varphi_{-p}+\varphi_{m+p}\varphi_{-p-1}]
-m\varphi_m\varphi_0-2\sum_{p\ge1}(m+2p)\varphi_{m+p}\varphi_{-p}.
$$

$$
A_{0,-m}=\sum_{p\ge0}(m+2p+1)[\varphi_{p+1}\varphi_{-m-p}+\varphi_{p}\varphi_{-m-p-1}]
-m\varphi_0\varphi_{-m}-2\sum_{p\ge1}(m+2p)\varphi_{p}\varphi_{-m-p}.
$$
Then a slightly tedious but direct examination shows that in \eqref{eq4.A}, all
the terms are cancelled.
\end{proof}

\section{The method of intertwiners}\label{sect5}

This method was proposed in Borodin-Olshanski \cite{BO-GT-Dyn}. The method
allows one to construct Markov processes on dual objects to inductive limit
groups like $S(\infty)$ or $U(\infty)$ by essentially algebraic tools. Here we
describe its idea. For more details, see \cite{BO-GT-Dyn}, Borodin-Olshanski
\cite{BO-EJP}, and the expository paper Olshanski \cite{Ols-SPb}.

\subsection{Generalities on Markov kernels and Feller processes}

Let $X$ and $Y$ be two measurable spaces. Recall that a \emph{Markov kernel}
with source space $X$ and target space $Y$ is a function $P(x,A)$, where the
first argument $x$ ranges over $X$ and the second argument is a measurable
subset of $Y$; next, one assumes that the following two conditions hold (see
e.g. Meyer \cite{Mey66}):

\medskip

$\bullet$ For $A$ fixed, $P(\,\cdot\,, A)$ is a measurable function on $X$.

$\bullet$ For $x$ fixed, $P(x,\,\cdot\,)$ is a probability measure on $Y$ (we
will denote it by $P(x,dy)$).

\medskip

When the second space $Y$ is a discrete space, it is convenient to interpret
the kernel as a function on $X\times Y$ by setting $P(x,y):=P(x,\{y\})$. In the
case when both spaces are discrete, $P(x,y)$ is a stochastic matrix of format
$X\times Y$.

We regard a Markov kernel $P$ as a surrogate of map between $X$ and $Y$,
denoted as $P: X\dasharrow Y$ and called a \emph{link}. Here the dashed arrow
symbolizes the fact that a link is not an ordinary map: it assigns to a given
point $x\in X$ not a single point in $Y$ but a probability distribution on $Y$.

The superposition of two links $P':X\dasharrow Y$ and $P'': Y\dasharrow Z$ is
the link $P=P'P''$ between $X$ and $Z$ defined by
$$
P(x,dz)=\int_{y\in Y}P'(x,dy)P''(y,dz).
$$
If both $X$ and $Y$ are discrete, then the superposition becomes the matrix
product.

Every link $P:X\dasharrow Y$ induces a contractive linear operator $f\mapsto Pf$ from the
Banach space of bounded measurable functions on $Y$ to the similar function
space on $X$:
$$
(P f)(x)=\int_{y\in Y}P(x,dy)f(y), \qquad x\in X.
$$

Assuming $X$ and $Y$ are locally compact spaces, we say that $P: X\dasharrow Y$
is a \emph{Feller link} if the above operator maps $C_0(Y)$ into $C_0(X)$. Note
that the superposition of Feller links is a Feller link, too. (We recall that $C_0(X)$ consists of continuous functions on $X$ vanishing at infinity. If $X$ is a
discrete space, then the continuity assumption is trivial and $C_0(X)$ consists
of arbitrary functions vanishing at infinity.)

Now we recall a few basic notions from the theory of Markov processes (see
Liggett \cite{Liggett}, Ethier-Kurtz \cite{EK}).

A \emph{Feller semigroup} on a locally compact space $X$ is a strongly
continuous semigroup $P(t)$, $t\ge0$, of contractive operators on $C_0(X)$
given by Feller links $P(t;x, dy)$. A well-known abstract theorem says that a
Feller semigroup gives rise to a Markov process on $X$ with transition function
$P(t;x,dy)$. The processes derived from Feller semigroups are called
\emph{Feller processes}; they form a particularly nice subclass of general
Markov processes.

A Feller semigroup $P(t)$ is uniquely determined by its \emph{generator}. This
is a closed dissipative operator $A$ on $C_0(X)$ given by
$$
Af=\lim_{t\to+0}\frac{P(t)f-f}{t}.
$$
The \emph{domain} of $A$, denoted by $\dom A$, is the (algebraic) subspace
formed by those functions $f\in C_0(X)$ for which the above limit exists; $\dom
A$ is always a dense subspace. Every subspace $\F\subset\dom A$ for which the
closure of $A\big|_{\F}$ equals $A$ is called a \emph{core} of $A$. One can say
that a core is an ``essential domain'' for $A$. Very often, the full domain of
a generator is difficult to describe explicitly, and then one is satisfied by
exhibiting a core $\F$ with the explicit action of the generator on $\F$.

\subsection{Stochastic links between dual objects}

Here we introduce concrete examples of stochastic links we will dealing with.

For a compact group $G$, we denote by $\wh G$ the set of irreducible characters
of $G$ and call  it the \emph{dual object} to $G$. Given  $\chi\in\wh G$, we
denote by $\wt \chi$ the corresponding normalized character:
$$
\wt\chi(g)=\frac{\chi(g)}{\chi(e)}, \qquad g\in G.
$$

In the special case when $G$ is commutative, $\wt\chi=\chi$ and $\wh G$ is a
discrete group, but in the general case (when $G$ is noncommutative), the dual
object does not possess a group structure and we regard it simply as a discrete
space.

To every morphism $\iota: G_1\to G_2$ of compact groups there corresponds a
\emph{canonical} ``dual'' link $\La: \wh G_2\dasharrow \wh G_1$, defined as follows.
For every irreducible character $\chi\in\wh G_2$, its superposition with
$\iota$ is a finite linear combination of irreducible characters $\chi'\in\wh
G_1$ with nonnegative integral coefficients. It follows that the superposition
of $\wt\chi$ with $\iota$ is a convex linear combination of normalized
irreducible characters of the group $G_1$; the coefficients of the latter
expansion are just the entries of the stochastic matrix $\La$. That is,
$$
\wt\chi(\iota(g))=\sum_{\chi'\in \wh G_1}\La(\chi,\chi')\wt{\chi'}(g), \qquad
g\in G_1, \quad \chi\in\wh G_2.
$$

If $G_1\to G_2$ and $G_2\to G_3$ are two morphisms of compact groups, then it
is evident that the superposition of the canonical dual links $\wh
G_3\dasharrow \wh G_2$ and $\wh G_2\dasharrow \wh G_1$ coincides with the
canonical link $\wh G_3\dasharrow \wh G_1$ corresponding to the composition
morphism $G_1\to G_3$.

Consider now the infinite chain of groups
$$
U(1)\subset U(2)\subset U(3)\subset\dots
$$
as defined in the beginning of Subsection \ref{sect3.A}. For every $N<M$, this
chain defines an embedding $U(N)\hookrightarrow U(M)$, and we denote by
$\La^M_N(\la,\mu):\GT_M\dasharrow\GT_N$ the corresponding dual link, which is a
stochastic matrix of format $\GT_M\times\GT_N$. In particular, for $M=N+1$ this
matrix takes the form
\begin{equation}\label{eq5.C}
\La^{N+1}_N(\la,\mu)=\begin{cases}\dfrac{\Dim_N\mu}{\Dim_{N+1}\la}, & \text{if
$\mu\prec\la$}\\ 0, &\text{otherwise},\end{cases}
\end{equation}
where $\mu\prec\la$ means that the two signatures \emph{interlace} in the sense
that
$$
\la_i\ge\mu_i\ge\la_{i+1}, \qquad i=1,\dots,N,
$$
see Borodin-Olshanski \cite[Section 1.1]{BO-GT-Dyn} for more details.

Next, consider the embedding $U(N)\hookrightarrow U(\infty)$ (the image of the
former group in the latter group consists of the infinite unitary matrices
$[U_{ij}]$ such that $U_{ij}=\de_{ij}$ unless both $i$ and $j$ are less or
equal to $N$). We define the dual object $\wh{U(\infty)}$ as the set of extreme
characters and identify it with $\Om$. Then the above definition of the dual
link is still applicable with the extreme characters of $U(\infty)$ playing the
role of the (nonexisting) normalized irreducible characters. The resulting
Markov kernel $\Om\dasharrow \GT_N$ has the form
\begin{equation}\label{eq5.A}
\La^\infty_N(\om,\la)=\Dim_N\la\cdot \wh\si_\la(\om), \qquad \om\in\Om, \quad
\la\in\GT_N,
\end{equation}
where $\wh\si_\la(\om)$ is defined in Section \ref{sect3}. The derivation of
this formula is simple: by \eqref{eq3.A}, the restriction of the extreme
character $\Psi_\om$ to the subgroup $U(N)$ is given by the function
$\Phi(u_1;\om)\dots\Phi(u_N;\om)$; the expansion of that function on the
irreducible characters $\chi_\la=s_\la$ is given by \eqref{eq3.J}, and we only
need to introduce the factor $\Dim_N\la$ to get the required expansion on the
normalized characters $\wt\chi_\la=s_\la/\Dim_N\la$.

\begin{proposition}\label{prop5.A}
The canonical links $\La^M_N:\GT_M\dasharrow\GT_N$ and
$\La^\infty_N:\Om\dasharrow\GT_N$ are Feller links.
\end{proposition}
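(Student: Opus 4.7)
The plan is to verify directly that each kernel pulls back $C_0$ into $C_0$. Since the composition of Feller links is Feller (the induced operators compose, and $C_0\to C_0\to C_0$ gives $C_0\to C_0$), for the first assertion it suffices to treat the one-step kernels $\La^{N+1}_N$, writing $\La^M_N=\La^{N+1}_N\La^{N+2}_{N+1}\cdots\La^M_{M-1}$.

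For $\La^{N+1}_N$, fix $f\in C_0(\GT_N)$ and $\epsi>0$, and pick a finite $K\subset\GT_N$ with $|f(\mu)|<\epsi$ off $K$. Using the dimension branching identity $\sum_{\mu\prec\la}\Dim_N\mu=\Dim_{N+1}\la$ (which says $\La^{N+1}_N$ is stochastic), the tail contribution $\sum_{\mu\prec\la,\,\mu\notin K}\frac{\Dim_N\mu}{\Dim_{N+1}\la}|f(\mu)|$ is bounded by $\epsi$ uniformly in $\la$. For the finite sum over $\mu\in K$, the interlacing $\mu\prec\la$ forces $\la_2\le\max_{\mu\in K}\mu_1$ and $\la_N\ge\min_{\mu\in K}\mu_N$; hence if $\la$ leaves every finite subset of $\GT_{N+1}$ while still admitting some $\mu\in K$ with $\mu\prec\la$, the inner coordinates $\la_2,\dots,\la_N$ stay in a bounded window, and therefore $\la_1\to+\infty$ or $\la_{N+1}\to-\infty$. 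In either case $\la_1-\la_{N+1}\to\infty$, so Weyl's formula \eqref{eq2.R} yields $\Dim_{N+1}\la\to\infty$, making each ratio $\Dim_N\mu/\Dim_{N+1}\la$ tend to $0$ uniformly for $\mu$ in the finite set $K$. This shows $(\La^{N+1}_Nf)(\la)\to0$ at infinity.

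For $\La^\infty_N$, formula \eqref{eq5.A} gives
\[
(\La^\infty_Nf)(\om)=\sum_{\la\in\GT_N}\Dim_N\la\cdot\wh\si_\la(\om)\cdot f(\la).
\]
All summands are nonnegative, and summed they equal $1$ by \eqref{eq2.O}, so the series converges absolutely with bound $\|f\|_\infty$. Continuity follows by the same argument as in Step 3 of the proof of Proposition~\ref{prop3.A}: the partial sums (over increasing finite index sets) are continuous, nondecreasing when $f\equiv1$, and converge pointwise to $1$, so Dini's theorem gives uniform convergence on compact subsets of $\Om$, hence continuity. For vanishing at infinity, given $\epsi>0$ and a finite $K$ with $|f|<\epsi$ off $K$, we split
\[
|(\La^\infty_Nf)(\om)|\le\|f\|_\infty\sum_{\la\in K}\Dim_N\la\cdot\wh\si_\la(\om)+\epsi\sum_{\la\notin K}\Dim_N\la\cdot\wh\si_\la(\om)\le\|f\|_\infty\sum_{\la\in K}\Dim_N\la\cdot\wh\si_\la(\om)+\epsi.
\]
The finite sum over $K$ lies in $C_0(\Om)$ by Proposition~\ref{prop3.C}, so $\limsup_{\om\to\infty}|(\La^\infty_Nf)(\om)|\le\epsi$, and letting $\epsi\downarrow0$ finishes the argument.

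The only mildly delicate point is the assertion in the first step that $\Dim_{N+1}\la\to\infty$ uniformly along sequences $\la\succ\mu$ with $\mu$ confined to a finite set---a direct consequence of Weyl's formula once one observes that the interlacing traps the inner coordinates of $\la$ in a compact window. Everything else is bookkeeping with the stochasticity identities \eqref{eq2.O} and $\sum_{\mu\prec\la}\Dim_N\mu=\Dim_{N+1}\la$ together with $\wh\si_\la\in C_0(\Om)$.
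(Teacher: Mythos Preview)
Your argument is correct. The paper itself does not give a proof here; it merely cites Borodin--Olshanski \cite{BO-GT-Appr} (Corollary~2.11 and Proposition~2.12). So you are supplying a self-contained verification that the paper delegates elsewhere, and your route---split $f$ into a finite-support piece and a small tail, control the finite piece via $\Dim_{N+1}\la\to\infty$ (for the discrete links) or via $\wh\si_\la\in C_0(\Om)$ (for $\La^\infty_N$), and control the tail by stochasticity---is exactly the natural one.

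Two cosmetic points. First, your composition is written in the reversed order: with the paper's convention (superposition $P'P''$ means first $P'$ then $P''$), one has $\La^M_N=\La^M_{M-1}\cdots\La^{N+1}_N$, not $\La^{N+1}_N\cdots\La^M_{M-1}$. This does not affect the argument, since all you use is that a composition of Feller links is Feller. Second, when you write ``All summands are nonnegative'' you mean the kernel weights $\Dim_N\la\cdot\wh\si_\la(\om)$, not the full summands $\Dim_N\la\cdot\wh\si_\la(\om)\cdot f(\la)$; the intended inequality $\sum_\la\Dim_N\la\,\wh\si_\la(\om)\,|f(\la)|\le\|f\|_\infty$ is of course what you need and what follows. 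Also, Proposition~\ref{prop3.C} is stated for $\wh\varphi_n$; the fact that $\wh\si_\la\in C_0(\Om)$ follows from it via the determinantal formula \eqref{eq3.E}, as the paper notes just after \eqref{eq3.J}.
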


For a proof, see Borodin-Olshanski \cite[Corollary 2.11 and Proposition
2.12]{BO-GT-Appr}.

\subsection{The method of intertwiners}

Let $X$ and $Y$ be locally compact spaces, $P_X(t)$ and $P_Y(t)$ be Feller
semigroups on $X$ and $Y$, respectively, and $\La:X\dasharrow Y$ be a Feller
link. We say that $\La$ \emph{intertwines} the semigroups $P_X(t)$ and $P_Y(t)$
if the following commutation relation holds
$$
P_X(t)\La=\La P_Y(t), \qquad t\ge0.
$$
This relation can be understood as an equality of links or, equivalently, as an
equality of operators acting from $C_0(Y)$ to $C_0(X)$.

\begin{proposition}\label{prop5.B}
Assume we are given a family $\{P_N(t): N=1,2,3,\dots\}$ of Feller semigroups,
where the $N$th semigroup acts on $C_0(\GT_N)$. Further, assume that these
semigroups are intertwined by the canonical links $\La^{N+1}_N$, so that
$$
P_{N+1}(t)\La^{N+1}_N=\La^{N+1}_N P_N(t), \qquad N=1,2,3,\dots, \quad t\ge0.
$$

Then there exists a unique Feller semigroup $P_\infty(t)$ on $C_0(\Om)$
characterized by the property
$$
P_\infty(t)\La^\infty_N=\La^\infty_N P_N(t), \qquad N=1,2,\dots, \quad t\ge0.
$$
\end{proposition}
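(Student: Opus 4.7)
I would construct $P_\infty(t)$ directly as a Feller Markov kernel on $\Om$, exploiting the fact that $\Om$ is a projective limit of the $\GT_N$ in the category of Feller kernels (Proposition \ref{prop5.A}). The plan is: (i) for each $t\ge 0$ and $\om\in\Om$ produce a Radon probability measure $\nu_{t,\om}$ on $\Om$ by a projective-limit construction, (ii) check that $(t,\om)\mapsto\nu_{t,\om}$ defines a Feller semigroup, (iii) verify the intertwining relation, and (iv) establish uniqueness.

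\textbf{Construction of $\nu_{t,\om}$.} Fix $t\ge 0$ and $\om\in\Om$. For each $N$, viewing the composite $\La^\infty_N\cdot P_N(t)$ as a Markov kernel from $\Om$ to $\GT_N$ (with $\cdot$ denoting composition of kernels), set
$$\nu^{(N)}_{t,\om}(\mu):=(\La^\infty_N\cdot P_N(t))(\om,\mu),\qquad\mu\in\GT_N,$$
a probability measure on the discrete set $\GT_N$. Using $\La^\infty_N=\La^\infty_{N+1}\cdot\La^{N+1}_N$ together with the intertwining hypothesis $\La^{N+1}_N\cdot P_N(t)=P_{N+1}(t)\cdot\La^{N+1}_N$, one gets $\La^\infty_N\cdot P_N(t)=\La^\infty_{N+1}\cdot P_{N+1}(t)\cdot\La^{N+1}_N$ and hence $\nu^{(N)}_{t,\om}=\nu^{(N+1)}_{t,\om}\cdot\La^{N+1}_N$, i.e.\ the family $\{\nu^{(N)}_{t,\om}\}_N$ is compatible under the canonical links. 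Because $\Om$ is the projective limit of the $\GT_N$ in the category of Feller kernels (Proposition \ref{prop5.A}, together with \cite[Cor.\,2.11 and Prop.\,2.12]{BO-GT-Appr}), there is a unique Radon probability measure $\nu_{t,\om}$ on $\Om$ whose pushforward under $\La^\infty_N$ is $\nu^{(N)}_{t,\om}$ for every $N$.

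\textbf{Properties.} Set $P_\infty(t;\om,\cdot):=\nu_{t,\om}$ and $(P_\infty(t)f)(\om):=\int_\Om f\,d\nu_{t,\om}$ for $f\in C_0(\Om)$. The intertwining $P_\infty(t)\La^\infty_N=\La^\infty_N P_N(t)$ is built into the construction. Feller-ness requires continuity of $\om\mapsto\nu_{t,\om}$ in the vague topology; by Stone--Weierstrass applied to the point-separating subalgebra of $C_0(\Om)$ generated by the functions $\wh\si_\la$ (Proposition \ref{prop2.F} and Corollary \ref{cor3.B}), it suffices to check continuity of $\om\mapsto\int_\Om\wh\si_\la\,d\nu_{t,\om}$ for each $\la$, which reduces to a continuity statement at the discrete level $N=\ell(\la)$ and follows from the Feller property of $\La^\infty_N$. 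The semigroup identity $\nu_{s+t,\om}=\int_\Om\nu_{t,\om'}\,d\nu_{s,\om}(\om')$ holds on each $\GT_N$ by the semigroup property of $P_N(t)$ and extends by projective-limit uniqueness. Strong continuity at $t=0$ holds first on the dense subspace $V:=\bigcup_N\La^\infty_N(C_0(\GT_N))$ via $\|P_\infty(t)\La^\infty_N g-\La^\infty_N g\|\le\|P_N(t)g-g\|\to 0$, and then on all of $C_0(\Om)$ by density and uniform contractivity. Uniqueness of $P_\infty(t)$ among Feller semigroups satisfying the intertwining follows since $V$ is dense.

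\textbf{Main obstacle.} The subtlest step is the projective-limit existence of $\nu_{t,\om}$: one must extend the compatible family $\{\nu^{(N)}_{t,\om}\}_N$ on the discrete spaces $\GT_N$ to a genuine Radon probability measure on the locally compact space $\Om$. In \cite{BO-GT-Appr} this is packaged as the statement that $\Om$ is the projective limit of the $\GT_N$ in the appropriate category; the non-formal content is the tightness furnished by the Feller property of the canonical links $\La^\infty_N$ (Proposition \ref{prop5.A}), without which the construction would only yield a finitely-additive set function on the cylinder algebra.
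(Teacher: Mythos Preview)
Your construction is correct and follows the same projective-limit approach as \cite[Proposition~2.4]{BO-GT-Dyn}, which is all the paper invokes here. One imprecision worth flagging: you write ``Feller-ness requires continuity of $\om\mapsto\nu_{t,\om}$ in the vague topology,'' but vague (or weak) continuity of the kernel only yields $P_\infty(t)f\in C(\Om)$ for $f\in C_0(\Om)$; it does not by itself give vanishing at infinity. The correct argument is the one you already use for strong continuity and uniqueness: on the subspace $V=\bigcup_N\La^\infty_N C_0(\GT_N)$ the intertwining relation gives $P_\infty(t)\La^\infty_N g=\La^\infty_N P_N(t)g\in C_0(\Om)$ directly from the Feller property of $\La^\infty_N$, and then density of $V$ in $C_0(\Om)$ together with contractivity extends this to all of $C_0(\Om)$. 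The Stone--Weierstrass step is then superfluous. Note also that the density of $V$ is itself part of the content of the boundary identification $\Om=\varprojlim\GT_N$ from \cite{BO-GT-Appr}, on the same footing as the projective-limit existence of $\nu_{t,\om}$ that you correctly single out as the main nontrivial input.
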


\begin{proof}
See Proposition 2.4 in Borodin-Olshanski \cite{BO-GT-Dyn}. The fact that the
hypothesis of this proposition is satisfied in our concrete situation is
established in Subsection 3.3 of that paper.
\end{proof}

\begin{proposition}\label{prop5.C}
We keep to the hypotheses of Proposition \ref{prop5.B}. Let $A_N$ and
$A_\infty$ denote the generators of the semigroups $P_N(t)$ and $P_\infty(t)$,
respectively.

{\rm(i)} For every $N=1,2,\dots$ and every $f\in\dom(A_N)$, the vector
$\La^\infty_Nf$ belongs to $\dom(A_\infty)$ and one has
$$
A_\infty\La^\infty_N f=\La^\infty_N A_N f.
$$

{\rm(ii)} Assume additionally that for each $N=1,2,3,\dots$ we are given a core
$\F_N\subseteq \dom(A_N)$ for the operator $A_N$. Then the linear span of the
vectors of the form $\La^\infty_N f$, where $N=1,2,\dots$ and $f\in\F_N$, is a
core for $A_\infty$.
\end{proposition}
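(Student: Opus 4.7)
\medskip
\noindent\textbf{Plan of proof.}

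For part (i), the key ingredients are the intertwining identity $P_\infty(t)\La^\infty_N=\La^\infty_N P_N(t)$ from Proposition \ref{prop5.B} and the fact that $\La^\infty_N\colon C_0(\GT_N)\to C_0(\Om)$ is a bounded linear operator (indeed a contraction, because it is given by a Markov kernel, and it is continuous between the $C_0$ spaces by the Feller property, Proposition \ref{prop5.A}). Fix $f\in\dom(A_N)$. Then $t^{-1}(P_N(t)f-f)$ converges in $C_0(\GT_N)$ to $A_N f$ as $t\to 0^+$. Applying $\La^\infty_N$ and using the intertwining identity,
\begin{equation*}
\frac{P_\infty(t)\La^\infty_N f-\La^\infty_N f}{t}
=\La^\infty_N\!\left(\frac{P_N(t)f-f}{t}\right)\longrightarrow \La^\infty_N A_N f
\end{equation*}
in $C_0(\Om)$. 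This shows both that $\La^\infty_N f\in\dom(A_\infty)$ and that $A_\infty\La^\infty_N f=\La^\infty_N A_N f$, proving (i).

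For part (ii) the plan is to interpose an intermediate subspace. Set
\begin{equation*}
\F:=\operatorname{span}\{\La^\infty_N f:\ N\ge 1,\ f\in\F_N\},\qquad
\DD:=\operatorname{span}\{\La^\infty_N f:\ N\ge 1,\ f\in\dom(A_N)\}.
\end{equation*}
First I would verify that $\DD$ is itself a core for $A_\infty$. By (i), $\DD\subset\dom(A_\infty)$. Since $P_N(t)$ leaves $\dom(A_N)$ invariant, the intertwining identity gives $P_\infty(t)\DD\subset\DD$, so $\DD$ is semigroup-invariant. Assuming density of $\DD$ in $C_0(\Om)$, the standard Hille--Yosida core criterion (see e.g.\ Ethier--Kurtz \cite{EK}, Proposition 1.3.3) then yields that $\DD$ is a core for $A_\infty$.

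Next, I would show that $\F$ is graph-norm dense in $\DD$, hence also a core. Given any $\La^\infty_N f\in\DD$ with $f\in\dom(A_N)$, since $\F_N$ is a core for $A_N$ there exist $f_k\in\F_N$ with $f_k\to f$ and $A_N f_k\to A_N f$ in $C_0(\GT_N)$. Using contractivity of $\La^\infty_N$ together with part (i),
\begin{equation*}
\La^\infty_N f_k\longrightarrow \La^\infty_N f,\qquad
A_\infty\La^\infty_N f_k=\La^\infty_N A_N f_k\longrightarrow \La^\infty_N A_N f=A_\infty\La^\infty_N f,
\end{equation*}
so $\La^\infty_N f$ lies in the graph-norm closure of $\F$. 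Taking linear combinations gives $\DD\subset\overline{\F}^{\text{graph}}$, and since $\DD$ is a core, so is $\F$.

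The main obstacle is the density of $\DD$ (equivalently, of $\bigcup_N \La^\infty_N C_0(\GT_N)$) in $C_0(\Om)$. This is not a purely abstract fact: it relies on the specific identification of $\Om=\wh{U(\infty)}$ as the projective limit of the discrete sets $\wh{U(N)}=\GT_N$ under the Markov links $\La^M_N$, so that probability measures on $\Om$ correspond to coherent sequences of probability measures on the $\GT_N$. Dually this forces the functions of the form $\La^\infty_N f$ to separate points of $\Om$ and to be dense in $C_0(\Om)$; this density statement is established in Borodin--Olshanski \cite{BO-GT-Appr} (see in particular the discussion surrounding Corollary 2.11 and Proposition 2.12 cited above for the Feller property), and I would invoke it at this juncture rather than reprove it.
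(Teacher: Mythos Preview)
Your argument is correct. For (i) you give exactly the direct verification the paper alludes to (``directly follows from the definition of the generator''). For (ii) the paper does not prove anything in place but simply cites \cite[Proposition~5.2]{BO-EJP}; your sketch is a faithful reconstruction of that proof: one first shows the larger space $\DD=\operatorname{span}\bigcup_N\La^\infty_N\dom(A_N)$ is a core via the invariance-plus-density core criterion (Ethier--Kurtz), and then passes to $\F$ by graph-norm approximation using that each $\F_N$ is a core for $A_N$. The one nontrivial external input you correctly isolate --- density of $\bigcup_N\La^\infty_N C_0(\GT_N)$ in $C_0(\Om)$ --- is precisely the ``boundary'' property of $\Om$ underlying the construction of $P_\infty(t)$ in Proposition~\ref{prop5.B} (see \cite[\S2]{BO-GT-Dyn}), so invoking it here is legitimate.
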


\begin{proof}
Claim (i) directly follows from the definition of the generator. Claim (ii) is
established in Borodin-Olshanski \cite[Proposition 5.2]{BO-EJP}.

\end{proof}

\subsection{The degenerate case}\label{sect5.A}

Let us fix a couple of integers $n_+\ge n_-$ and set
\begin{equation}\label{eq5.B}
\GT_N(n_+,n_-)=\{\nu\in\GT_N: n_+\ge\nu_1\ge\dots\ge\nu_N\ge n_-\}.
\end{equation}
Note that this is a finite set.

If $\mu\in\GT_M(n_+,n_-)$ and $N<M$, then $\La^M_N(\mu,\nu)$ vanishes unless
$\nu\in\GT_N(n_+,n_-)$. So, $\La^M_N$ induces a link $\GT_M(n_+,n_-)\dasharrow
\GT_N(n_+,n_-)$. Likewise, if $\om\in\Om(n_+,n_-)$, then
$\La^\infty_N(\om,\nu)$ vanishes unless $\nu\in\GT_N(n_+,n_-)$. So,
$\La^\infty_N$ induces a link $\Om(n_+,n_-)\dasharrow \GT_N(n_+,n_-)$.

When $\GT_N$ (with $N=1,2,3,\dots$) and $\Om$ are replaced by $\GT_N(n_+,n_-)$
and $\Om(n_+,n_-)$, respectively, all the results of the present section remain
valid. The proofs are extended automatically, and we only point out some
simplifications:

In Proposition \ref{prop5.A}, the claim concerning the Feller property for the
links $\La^M_N$ becomes redundant as the links $\GT_M(n_+,n_-)\dasharrow
\GT_N(n_+,n_-)$ are finite matrices. Next, because $\Om(n_+,n_-)$ is a compact
space, the Feller property for the link $\La^\infty_N:\Om(n_+,n_-)\dasharrow
\GT_N(n_+,n_-)$ simply means that the functions of the form $\om\to
\La^\infty_N(\om,\nu)$ are continuous on $\Om(n_+,n_-)$.

In Proposition \ref{prop5.B}, one should replace $C_0(\Om)$ by
$C(\Om(n_+,n_-))$, the Banach space of all continuous functions on the compact
space $\Om(n_+,n_-)$.

In Proposition \ref{prop5.C}, because the sets $\GT_N(n_+,n_-)$ are finite, the
generators $A_N$ are finite-dimensional, so that $\dom(A_N)$ is the whole space
of functions on $\GT_N(n_+,n_-)$.

\section{Markov processes on $\Om$ and their generators}\label{sect6}

This section contains some necessary material from Borodin-Olshanski
\cite{BO-GT-Dyn}, together with a brief motivation. In that paper, we
constructed a family $\{X_\zw\}$ of continuous time Markov processes on the
space $\Om$, indexed by the quadruple of parameters $(\zw)$ ranging over a
certain subset of $\C^4$. The infinitesimal generator of $X_\zw$, denoted by
$A_\zw$, is an unbounded operator on the Banach space $C_0(\Om)$. The results
of \cite{BO-GT-Dyn} tell us how $A_\zw$ acts on a subspace $\wh\F\subset
C_0(\Om)$, the (algebraic) linear span of the functions $\wh\si_\la(\om)$,
where $\la$ ranges over the set of all signatures except $\la=\varnothing$. The
explicit formulas for this action are the starting point for the computations
in the remaining part of the paper. Note that $\wh\F$ serves as a core for
the generator $A_\zw$, so that it is uniquely determined by
its restriction to $\wh\F$.

\subsection{Special bilateral birth-death processes}

Birth-death processes form a well-studied class of continuous time Markov
chains. The state space of every birth-death process is the set $\Z_+$ of
nonnegative integers, and the process is determined by specifying the
quantities $q(n,n\pm1)$,  the \emph{jump rates} from state $n\in\Z_+$ to the
neighboring states $n\pm1$, with the understanding that $q(0,-1)=0$, which
prevents from leaving the subset $\Z_+\subset\Z$. Under appropriate constraints
on the jump rates the process is well defined (that is, does not explode,
meaning that, with probability 1, one cannot escape to infinity in finite
time).

The \emph{bilateral} birth-death processes are defined in a similar way, only
now the state space is the whole lattice $\Z$ and the jump rates $q(n,n\pm1)$
are assumed to be strictly positive for all $n\in\Z$. Again, one needs some
restrictions to be imposed on these quantities in order that the process be
non-exploding. Bilateral birth-death processes are not so widely known as the
ordinary ones. However, they were also discussed in the literature.

We are interested in bilateral birth-death processes whose jump rates
$q(n,n\pm1)$ are quadratic functions in variable $n$. We write them in the form
\begin{equation}\label{eq6.B}
q(n,n-1)=(w+n)(w'+n), \qquad q(n,n+1)=(z-n)(z'-n).
\end{equation}
It is readily verified that these quantities are strictly positive for all
$n\in\Z$ if and only if each of pairs $(z,z')$ and $(w,w')$ belongs to the
subset $\mathscr Z\subset\C^2$ defined by
\begin{multline}\label{eq6.A}
\mathscr Z:= \{(\zeta,\zeta')\in(\C\setminus\Z)^2\mid
\zeta'=\bar{\zeta}\}\\
 \cup
\{(\zeta,\zeta')\in(\R\setminus\Z)^2\mid m<\zeta,\zeta'<m+1 \text{ for some }
m\in\Z\}.
\end{multline}

Note that if $(\zeta,\zeta')\in\mathscr Z$, then $\zeta+\zeta'$ is real.

\begin{definition}\label{def6.A}
We say that a quadruple $(\zw)\in\C^4$ is \emph{admissible} if
$(z,z')\in\mathscr Z$, $(w,w')\in\mathscr Z$, and $z+z'+w+w'>-1$.
\end{definition}

\begin{proposition}\label{prop6.A}
Let $(\zw)\in\C^4$ be admissible.

{\rm(i)} There exists a non-exploding bilateral birth-process with the jump
rates given by \eqref{eq6.B}.

{\rm(ii)} This process is a Feller process.

{\rm(iii)} Its generator is implemented by the difference operator $D_\zw$ on
$\Z$ acting on functions $f(n)$, $n\in\Z$ by
\begin{multline}\label{eq6.C}
(D_\zw f)(n)=(z-n)(z'-n)(f(n+1)-f(n))\\
+(w+n)(w'+n)(f(n-1)-f(n)),
\end{multline}
and the domain of the generator consists of those functions $f\in C_0(\Z)$ for
which $D_\zw f\in C_0(\Z)$.

\end{proposition}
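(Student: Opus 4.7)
The plan is to recognize the process as a classical bilateral birth-death chain on $\Z$ and to apply the standard Karlin--McGregor / Reuter theory, where the admissibility condition of Definition \ref{def6.A} enters \emph{exactly} through the asymptotic analysis of the reversibility density. I would then deduce the Feller property from non-explosion via a Lyapunov argument, and identify the generator by the usual Hille--Yosida machinery.

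For part (i), I would first check strict positivity of the jump rates: by the very definition of $\mathscr Z$, each of $(z-n)(z'-n)$ and $(w+n)(w'+n)$ is either a product of complex conjugate numbers or a product of two reals lying in a common open unit interval, so both are strictly positive for every $n\in\Z$. Next, to prove non-explosion, introduce the reversible weights
$$
\pi_0=1,\qquad \pi_n=\prod_{k=1}^{n}\frac{(z-k+1)(z'-k+1)}{(w+k)(w'+k)}\quad (n>0),
$$
with the symmetric formula at $-\infty$, and apply the two-sided Reuter criterion: the minimal $Q$-process is non-exploding iff each of the series
$$
\sum_{n=0}^{\infty}\frac{1}{q(n,n+1)\pi_n}\sum_{k=0}^{n}\pi_k,\qquad \sum_{n=0}^{\infty}\frac{1}{q(-n,-n-1)\pi_{-n}}\sum_{k=0}^{n}\pi_{-k}
$$
diverges. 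A routine ratio computation gives $\log(\pi_{n+1}/\pi_n)=-(z+z'+w+w'+2)/n+O(n^{-2})$, hence $\pi_n\sim C_+\,n^{-(z+z'+w+w')-2}$ as $n\to+\infty$, and an analogous asymptotic at $-\infty$. Under admissibility, $\sum_k\pi_k<\infty$, while $1/(q(n,n+1)\pi_n)\sim n^{z+z'+w+w'}$; the strict inequality $z+z'+w+w'>-1$ is precisely the condition for $\sum n^{z+z'+w+w'}$ to diverge, so non-explosion holds on both ends.

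For parts (ii) and (iii), non-explosion makes the minimal semigroup $P(t)$ the unique honest sub-Markov semigroup with $Q$-matrix derived from $D_\zw$. To upgrade to the Feller property, I would exhibit a Lyapunov function $V:\Z\to\R_+$ with $V(n)\to\infty$ as $|n|\to\infty$ and $D_\zw V\le C(1+V)$ (the near-quadratic balance of $q(n,n\pm1)$ combined with linear net drift makes a simple choice such as $V(n)=\log(2+n^2)$ work); Dynkin's formula then yields $\E_n V(X_t)\le e^{Ct}(1+V(n))$, which gives tightness uniformly in $t$ on compacts and forces $(P(t)f)(n)\to 0$ as $|n|\to\infty$ for every $f\in C_0(\Z)$. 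Strong continuity is automatic on the discrete space since $P(t)\delta_n(n)\to 1$ as $t\to0$. Finally, the identification of the generator is a standard Hille--Yosida / Feller argument: $D_\zw$ acts on the dense subspace $C_c(\Z)$ of finitely supported functions and agrees there with the infinitesimal action of $P(t)$; non-explosion provides uniqueness, so the generator is the closure of $D_\zw\big|_{C_c(\Z)}$, whose full domain is exactly $\{f\in C_0(\Z): D_\zw f\in C_0(\Z)\}$.

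The main obstacle will be the asymptotic analysis in the non-explosion step: producing the exponent $-(z+z'+w+w')-2$ uniformly over the two branches of $\mathscr Z$ (complex conjugate and real-in-a-unit-interval) with complex parameters requires a careful ratio-product / Gamma-function argument, since the signs and arguments of the individual factors are not uniformly controlled. A secondary technical point is verifying the Lyapunov bound with a $V$ of moderate growth, because the naive choice $V(n)=|n|$ is essentially borderline: the quadratic jump rates combined with near-cancellation of birth and death rates mean one must be precise about the first-order drift to keep $D_\zw V/V$ bounded.
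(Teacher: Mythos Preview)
The paper gives no self-contained argument here; it simply cites \cite[Theorem 5.1 and Proposition 4.6]{BO-GT-Dyn}, so your proposal is an attempt to supply what the paper defers. Your non-explosion sketch via the Reuter criterion is correct in structure, but you misattribute the role of the hypothesis $z+z'+w+w'>-1$. The Reuter series in fact diverges for every real value of $s:=z+z'+w+w'$: when $s\le-1$ the partial sums $\sum_{k\le n}\pi_k$ themselves diverge (since $\pi_n\sim n^{-s-2}$ with exponent $\ge -1$) and the general term of the Reuter series is then asymptotically $n^{-1}$. So non-explosion does not rely on this inequality. The condition $s>-1$ is exactly the condition $\sum_n\pi_n<\infty$, i.e.\ positive recurrence.

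There is a genuine gap in your Feller argument. The Lyapunov inequality $\E_nV(X_t)\le e^{Ct}(1+V(n))$ controls escape \emph{to} infinity, not entrance \emph{from} infinity, and that is the wrong direction: for $P(t)f\in C_0(\Z)$ you need $\Pr_n(X_t\in K)\to0$ as $|n|\to\infty$ for each finite $K$, i.e.\ that a chain started far away does not reach a fixed compact set in bounded time. Your bound cannot give this --- a chain that jumps instantly to $0$ from every state satisfies $\E_nV(X_t)=V(0)\le e^{Ct}(1+V(n))$ trivially, yet fails the Feller property on $C_0(\Z)$. The appropriate tool is Feller's boundary classification for bilateral birth--death chains (showing that $\pm\infty$ are not entrance boundaries), or equivalently a first-passage-time lower bound; this is where the asymptotics of $\pi_n$ and of the rates, and possibly the inequality $s>-1$, are actually used. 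Once (ii) is correctly established, your derivation of (iii) via the closure of $D_\zw\big|_{C_c(\Z)}$ is standard.
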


\begin{proof}
Statements (i) and (ii) are the subject of  Theorem 5.1 in Borodin-Olshanski
\cite{BO-GT-Dyn}, and (iii) is their formal consequence, as explained in
\cite[Proposition 4.6]{BO-GT-Dyn}.
\end{proof}

We refer to \cite{BO-GT-Dyn} for more details. Note that the property of
non-explosion is the same as \emph{regularity} of the so-called \emph{Q-matrix}
(or the \emph{matrix of jump rates}), see \cite[Section 4]{BO-GT-Dyn} and
references therein. In our case, this matrix is simply the matrix of the
difference operator $D_\zw$. This is a tridiagonal $\Z\times\Z$ matrix
$Q=[q(n,n')]$ with the entries $q(n,n\pm1)$ given by \eqref{eq6.B}, the
diagonal entries
$$
q(n, n)=-q(n,n+1)-q(n,n-1),
$$
and all remaining entries equal to 0.

\subsection{Feller dynamics on $\GT_N$}\label{sect6.A}

As explained in Borodin-Olshanski \cite[Section 5.2]{BO-GT-Dyn}, Proposition
\ref{prop6.A} admits an extension with $\Z$ replaced by $\GT_N$, where
$N=1,2,3,\dots$ (recall that $\GT_1=\Z$). To state it we need first to define a
matrix $Q=[q(\nu,\mu)]$ of format $\GT_N\times\GT_N$. It depends on $(\zw)$ and
has the following form:

$\bullet$ the entries $q(\nu,\mu)$ equal 0 unless $\mu=\nu$ or $\mu=\nu\pm
\epsi_i$, where $i=1,\dots,N$ and $\epsi_1,\dots,\epsi_N$ stands for the
canonical basis of $\Z^N$;

$\bullet$ the (nonzero) off-diagonal entries are given by
\begin{equation}\label{eq6.D}
q(\nu,\nu\pm \epsi_i)=\frac{\Dim_N(\nu\pm \epsi_i)}{\Dim_N\nu}\,r(\nu,\nu\pm
\epsi_i),
\end{equation}
where
\begin{equation}\label{eq6.E}
r(\nu,\nu+\epsi_i)=(z-\nu_i+i-1)(z'-\nu_i+i-1), \qquad i=1,\dots,N,
\end{equation}
and
\begin{equation}\label{eq6.F}
r(\nu,\nu-\epsi_i)=(w+\nu_i-i+N)(w'+\nu_i-i+N), \qquad i=1,\dots,N;
\end{equation}

$\bullet$ the diagonal entries are given by
\begin{multline}\label{eq6.G}
q(\nu,\nu)=-\sum_{\mu:\,
\mu\ne\nu}q(\nu,\mu)\\=(z+z'+w+w')\frac{N(N-1)}2+\frac{(2N-1)N(N-1)}3-\sum_{\mu:\,
\mu\ne\nu}r(\nu,\mu).
\end{multline}

When $N=1$, this agrees with the definition of the preceding subsection. (To compare the above formulas with those from \cite[Section 5.2]{BO-GT-Dyn}, take into account a shift of parameters indicated in \cite[(6.1) and (6.2)]{BO-GT-Dyn}.)

For $\nu=(\nu_1,\dots,\nu_N)\in\GT_N$, we set
$$
\nu^*=(-\nu_N,\dots,-\nu_1).
$$
The correspondence $\nu\mapsto\nu^*$ is an involutive bijection
$\GT_N\to\GT_N$.

\begin{proposition}\label{prop6.D}
One has
$$
q(\nu,\mu)=q^*(\nu^*,\mu^*),
$$
where the matrix $[q^*(\,\cdot\,,\,\cdot\,)]$ is obtained from the matrix
$[q^*(\,\cdot\,,\,\cdot\,)]$ by switching $(z,z')\leftrightarrow(w,w')$.
\end{proposition}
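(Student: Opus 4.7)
The plan is to verify the identity $q(\nu,\mu) = q^*(\nu^*,\mu^*)$ directly, checking separately the cases $\mu = \nu$, $\mu = \nu+\epsilon_i$, and $\mu = \nu-\epsilon_i$ (all other entries vanish on both sides by the support condition on the matrix). The key combinatorial observation is that the involution $\nu\mapsto\nu^*$ swaps the two types of elementary moves:
\begin{equation*}
(\nu + \epsilon_i)^* = \nu^* - \epsilon_{N+1-i}, \qquad (\nu - \epsilon_i)^* = \nu^* + \epsilon_{N+1-i},
\end{equation*}
so under the involution, $q(\nu,\nu+\epsilon_i)$ must match $q^*(\nu^*,\nu^*-\epsilon_{N+1-i})$, and similarly for the opposite move. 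This is already consistent with the parameter swap, since $r(\nu,\nu+\epsilon_i)$ depends on $(z,z')$ while $r(\nu,\nu-\epsilon_i)$ depends on $(w,w')$.

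For the off-diagonal entries, I would first note that $\Dim_N\nu$ depends only on the pairwise differences $\nu_i-\nu_j$, all of which are preserved under $\nu\mapsto\nu^*$; consequently $\Dim_N\nu=\Dim_N\nu^*$ and likewise $\Dim_N(\nu\pm\epsilon_i)=\Dim_N(\nu^*\mp\epsilon_{N+1-i})$, so the ratio factors in \eqref{eq6.D} match on both sides. Then for the rate factors, I would substitute $(\nu^*)_j = -\nu_{N+1-j}$ with $j=N+1-i$ into \eqref{eq6.F} (with $(z,z')$ in place of $(w,w')$, since we are computing $r^*$):
\begin{equation*}
r^*(\nu^*,\nu^*-\epsilon_{N+1-i}) = (z + (\nu^*)_{N+1-i} - (N+1-i) + N)(z' + \cdots) = (z-\nu_i+i-1)(z'-\nu_i+i-1),
\end{equation*}
which is exactly $r(\nu,\nu+\epsilon_i)$. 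The symmetric check for the $-\epsilon_i$ move is identical after swapping the roles of $(z,z')$ and $(w,w')$.

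For the diagonal entry $q(\nu,\nu)$ in \eqref{eq6.G}, the first two summands depend on $(z,z',w,w')$ only through $z+z'+w+w'$ and pure combinatorial constants, so they are invariant under the parameter swap. The remaining sum $\sum_{\mu\ne\nu} r(\nu,\mu)$ transforms correctly because the bijection $\mu \leftrightarrow \mu^*$ between neighbors of $\nu$ and neighbors of $\nu^*$, combined with the identity just established for individual rates, shows that $\sum_{\mu\ne\nu} r(\nu,\mu) = \sum_{\mu'\ne\nu^*} r^*(\nu^*,\mu')$.

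This is essentially a bookkeeping argument, and I do not anticipate a serious obstacle; the only place that requires care is keeping track of the reindexing $i \leftrightarrow N+1-i$ and the sign flip $\nu_i \leftrightarrow -\nu_{N+1-i}$ when comparing \eqref{eq6.E} with the parameter-swapped version of \eqref{eq6.F}. Once that substitution is made cleanly, all the factors match verbatim.
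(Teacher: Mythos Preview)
Your proposal is correct and is precisely the direct verification the paper alludes to with ``This is readily checked.'' You have simply written out the bookkeeping that the paper omits; there is no alternative argument to compare.
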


\begin{proof}
This is readily checked.
\end{proof}

\begin{proposition}\label{prop6.B}
Let $(\zw)\in\C^4$ be admissible in the sense of Definition \ref{def6.A}.

{\rm(i)} For every $N=1,2,3,\dots$, the\/ $\GT_N\times\GT_N$ matrix
$Q=[q(\nu,\mu)]$ defined above is regular, so that there exists a non-exploding
continuous time Markov process on $\GT_N$ with the jump rates given by the
off-diagonal entries $q(\nu,\mu)$.

{\rm(ii)} This process is a Feller process.

{\rm(iii)} Its generator is implemented by the $N$-variate difference operator
$D_{\zw\mid N}$ on $\GT_N\subset\Z^N$ acting on functions $f(\nu)$,
$\nu\in\GT_N$ by
\begin{equation}\label{eq6.H}
(D_{\zw\mid N} f)(\nu)=\sum_{\mu\in\GT_N}q(\nu,\mu)f(\mu)
=\sum_{\mu\in\GT_N\setminus\{\nu\}}q(\nu,\mu)(f(\mu)-f(\nu)),
\end{equation}
and the domain of the generator consists of those functions $f\in C_0(\GT_N)$
for which $D_{\zw\mid N} f\in C_0(\GT_N)$.
\end{proposition}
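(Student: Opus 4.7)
The strategy is to realize the $N$-variate process on $\GT_N$ as a Doob $h$-transform of $N$ independent bilateral birth-death processes on $\Z$, with the Weyl dimension $\Dim_N$ playing the role of harmonic function. This is the standard Karlin--McGregor non-intersecting-paths construction, which lifts the $N=1$ case (Proposition \ref{prop6.A}) to arbitrary $N$.

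First I would change coordinates to $y_i := \nu_i - i + 1$, so that the map $\nu \mapsto y$ becomes a bijection of $\GT_N$ onto the strict Weyl chamber $W := \{y \in \Z^N : y_1 > \dots > y_N\}$. In these variables, \eqref{eq6.E}--\eqref{eq6.F} simplify to $r(\nu, \nu+\epsi_i) = (z - y_i)(z' - y_i)$ and $r(\nu, \nu-\epsi_i) = (w + N - 1 + y_i)(w' + N - 1 + y_i)$; these match the single-variate rates \eqref{eq6.B} for the shifted parameter vector $(z, z', w + N - 1, w' + N - 1)$, now \emph{independent} of the index $i$. The shift by $N - 1$ preserves membership of $(w, w')$ in the region $\mathscr Z$ and it increases $z + z' + w + w'$ by $2(N-1)$, so the sum-constraint $>-1$ is also preserved. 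Proposition \ref{prop6.A} therefore supplies a Feller, non-exploding birth-death process on $\Z$, and its $N$-fold tensor product yields a Feller, non-exploding semigroup on $C_0(\Z^N)$ whose generator is $L := \sum_{i=1}^N L_i$.

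Next I would apply Doob's $h$-transform with $h(y) := V(y) = \prod_{i<j}(y_i - y_j)$, which by Weyl's formula \eqref{eq2.R} satisfies $\Dim_N(\nu) = c_N V(y)$ with $c_N = \prod_{i<j}(j-i)^{-1}$. The function $V$ is strictly positive on $W$ and vanishes on $\partial W$; a jump $\nu \to \nu \pm \epsi_i$ that would leave $\GT_N$ corresponds to $y \to y \pm \epsi_i$ landing on $\partial W$, and the ratio $V(y \pm \epsi_i)/V(y) = \Dim_N(\nu \pm \epsi_i)/\Dim_N \nu$ vanishes, automatically suppressing forbidden transitions. A direct computation then shows $L V = \lambda_N V$ for an explicit constant $\lambda_N$ depending on $(\zw)$ and $N$. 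The transformed generator $V^{-1}(L - \lambda_N)V$, transported back through $y \leftrightarrow \nu$, has precisely the off-diagonal entries \eqref{eq6.D}, and the diagonal \eqref{eq6.G} then emerges from the row-sum-zero condition combined with the value of $\lambda_N$.

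Parts (ii) and (iii) then follow with little extra work. The $h$-transform of a Feller semigroup by a strictly positive function on the (discrete, hence open) chamber $W$ is again Feller, since conjugation by $V$ preserves $C_0(W) \simeq C_0(\GT_N)$; non-explosion is inherited from the product process; and the description of $\dom(A_{\zw \mid N})$ is the standard characterization of generators of non-exploding Feller chains on countable state spaces, exactly as used in Proposition \ref{prop6.A}(iii). The one genuine computation, and thus the main obstacle, is the eigenfunction identity $LV = \lambda_N V$: because the ratios $V(y \pm \epsi_i)/V(y)$ are rational (not polynomial) in $y$, verifying it requires exploiting the antisymmetry of $V$ and the quadratic form of the birth-death coefficients to cancel all apparently singular contributions. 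Once this identity is in hand, everything else is bookkeeping within the Karlin--McGregor / Doob framework.
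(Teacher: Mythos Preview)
The paper's own ``proof'' is simply a citation to \cite[Theorem 5.4 and Proposition 4.6]{BO-GT-Dyn}, so there is nothing to compare at the level of argument.  Your outline --- Doob $h$-transform of $N$ independent bilateral birth-death chains by the Vandermonde, after the shift $y_i=\nu_i-i+1$ and the parameter shift $(w,w')\to(w+N-1,w'+N-1)$ --- is exactly the construction carried out in that reference, and the algebraic core (checking admissibility of the shifted parameters, the identity $LV=\lambda_N V$, and the match with \eqref{eq6.D}) is correct.

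Where your sketch is too optimistic is in the two analytic assertions at the end.  First, ``conjugation by $V$ preserves $C_0(W)$'' is false as stated: $V$ is unbounded on the Weyl chamber, so multiplication by $V$ does not send $C_0$ to anything the product semigroup can act on, and the Feller property of the $h$-transformed semigroup is \emph{not} a formal consequence of the Feller property of the product.  Second, ``non-explosion is inherited from the product process'' is not automatic for Doob transforms; an $h$-transform can in principle alter explosion behavior, and one needs a separate argument (e.g.\ a Lyapunov-type bound or an explicit estimate on the transition function).  In \cite{BO-GT-Dyn} these two points are precisely the content of Theorem 5.4: regularity and the Feller property are established there by direct estimates specific to the quadratic form of the rates, not by a general inheritance principle.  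Once (i) and (ii) are known, part (iii) is indeed the standard characterization of the generator of a regular Feller chain on a countable state space, as you say.

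So your framework is right and the identification of $LV=\lambda_N V$ as the key computation is apt, but the passage from ``$h$-transform of a Feller non-exploding process'' to ``Feller non-exploding process'' is where the actual work lies, and your sketch does not supply it.
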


\begin{proof}
Statements (i) and (ii) are proved in  Borodin-Olshanski \cite[Theorem
5.4]{BO-GT-Dyn}, and (iii) is their formal consequence, as explained in
\cite[Proposition 4.6]{BO-GT-Dyn}.
\end{proof}

\begin{proposition}\label{prop6.F}
For any $(\zw)\in\C^4$ and any $N=0,1,2,\dots$ the following relation holds
\begin{equation}\label{eq6.O}
D_{\zw\mid N+1}\La^{N+1}_N=\La^{N+1}_N D_{\zw\mid N}\qquad \forall
N=1,2,\dots\,.
\end{equation}
\end{proposition}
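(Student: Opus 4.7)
The plan is to verify the intertwining relation \eqref{eq6.O} entrywise after a similarity transformation that eliminates the dimension factors. Since $D_{\zw\mid N}$ is supported on pairs $(\nu,\nu\pm\epsi_i)$ and the diagonal, while $\La^{N+1}_N$ is supported on the interlacing set, both sides of \eqref{eq6.O} have well-defined, finitely supported entries, and the identity amounts to a family of scalar equalities indexed by pairs $(\la,\mu)\in\GT_{N+1}\times\GT_N$.

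Let $\Dim_N$ also denote the diagonal multiplication operator on $\C^{\GT_N}$, and set $\tilde D_N := \Dim_N\, D_{\zw\mid N}\, \Dim_N^{-1}$. By formulas \eqref{eq6.D}--\eqref{eq6.F}, the off-diagonal entries of $\tilde D_N$ are simply $r(\nu,\nu\pm\epsi_i)$, depending only on a single coordinate $\nu_i$ and on the parameters $(\zw)$. Meanwhile, $\Dim_{N+1}\,\La^{N+1}_N\,\Dim_N^{-1}$ becomes the $\{0,1\}$-valued interlacing matrix $I(\la,\mu):=\mathbf{1}_{\mu\prec\la}$. Thus \eqref{eq6.O} is equivalent to
$$\tilde D_{\zw\mid N+1}\,I \;=\; I\,\tilde D_{\zw\mid N},$$
an identity whose off-diagonal data are the uniform polynomial expressions $r(\nu,\nu\pm\epsi_i)$.

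Fixing $(\la,\mu)$, both sides are sums over at most $2N+3$ terms. The left-hand side receives contributions from $\la'\in\{\la,\la\pm\epsi_j\}$ with $\mu\prec\la'$; the right-hand side from $\mu'\in\{\mu,\mu\pm\epsi_i\}$ with $\mu'\prec\la$. A direct inspection of the interlacing condition yields: $\mu\prec\la+\epsi_j$ iff $\mu\prec\la$ and (either $j=1$ or $\mu_{j-1}>\la_j$), with the dual statement for $\la-\epsi_j$ and the analogous equivalences for the $\mu$-neighbors. Partitioning according to which coordinates of $\mu$ sit at the interlacing boundary (i.e.\ coincide with some $\la_j$ or $\la_{j+1}$), one reduces the identity to a short list of cases. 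The diagonal terms $\tilde D(\nu,\nu)=q(\nu,\nu)$ are expanded using \eqref{eq6.G}, which separates a parameter-free constant from the sum of $r$-values; the remaining polynomial identities in $(\la,\mu,\zw)$ follow from the factored forms \eqref{eq6.E}--\eqref{eq6.F} after straightforward but lengthy algebra.

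The main obstacle is the bookkeeping of the case analysis and the index shifts; although each individual cancellation is transparent once the correct boundary terms are identified, the enumeration of configurations is the only substantive technicality. Conceptually, this intertwining is precisely the design principle behind the jump rates \eqref{eq6.D}--\eqref{eq6.F}: the Markov chains on $\GT_N$ of \cite{BO-GT-Dyn} were constructed so as to be consistent with the Gelfand--Tsetlin branching encoded by $\La^{N+1}_N$, with the $\Dim_N$-ratios playing the role of a Doob $h$-transform. Accordingly, a second viable route is simply to invoke the corresponding result from \cite{BO-GT-Dyn} rather than redo the combinatorial verification.
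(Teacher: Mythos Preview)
Your proposal is sound in outline, and your ``second viable route'' --- simply citing the corresponding result from \cite{BO-GT-Dyn} --- is exactly what the paper does: its entire proof reads ``In a slightly different notation, this is proved in \cite[Proposition 6.2]{BO-GT-Dyn}.'' Your conjugation by the dimension operators to reduce \eqref{eq6.O} to $\tilde D_{N+1}I = I\tilde D_N$ with the bare interlacing matrix $I$ is correct and is indeed the natural first step in any direct verification; this is presumably close to what \cite{BO-GT-Dyn} actually carries out.

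That said, what you have written is a plan rather than a proof: the phrase ``straightforward but lengthy algebra'' hides the substance, and you do not actually execute the case analysis or exhibit even one of the nontrivial cancellations (e.g.\ when several coordinates of $\mu$ sit on the interlacing boundary simultaneously, or the matching of the $N$-dependent constants in \eqref{eq6.G} between levels $N$ and $N+1$). If you intend this as a self-contained argument, you should at least work out one representative boundary case in full; otherwise, your final sentence citing \cite{BO-GT-Dyn} is the honest proof, and the preceding paragraphs are commentary on why that citation is plausible.
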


(Recall that $\La^{N+1}_N: \GT_{N+1}\dasharrow\GT_N$ are the canonical links defined in \eqref{eq5.C}.)

\begin{proof}
In a slightly different notation, this is proved in \cite[Proposition
6.2]{BO-GT-Dyn}.
\end{proof}

This result serves as the basis for the construction described in the next
subsection. It is also used in Section \ref{sect9} below.

\subsection{Feller dynamics on $\Om$}

Throughout this subsection we assume, as before, that  $(\zw)$ is
admissible (Definition \ref{def6.A}).

\begin{proposition}\label{prop6.C}
For $N=1,2,\dots$, we denote by $P_{\zw\mid N}(t)$ the Feller semigroup on
$C_0(\GT_N)$ afforded by Proposition \ref{prop6.B}.

{\rm(i)} These semigroups $P_{\zw\mid N}(t)$ satisfy the hypothesis of
Proposition \ref{prop5.B}, that is, one has
$$
P_{\zw\mid N+1}(t)\La^{N+1}_N=\La^{N+1}_N P_{\zw\mid N}(t), \qquad t\ge0,
$$
for every $N=1,2,3,\dots$\,.

{\rm(ii)} There exists a unique Feller semigroup $P_{\zw\mid\infty}(t)$ on
$C_0(\Om)$ characterized by the property
$$
P_{\zw\mid\infty}(t)\La^\infty_N=\La^\infty_N P_N(t), \qquad N=1,2,\dots, \quad
t\ge0.
$$
\end{proposition}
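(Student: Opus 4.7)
The plan is to derive (ii) directly from Proposition \ref{prop5.B} once (i) is established, so the crux of the matter is the semigroup-level intertwining in (i). At the generator level the analogous relation is already available: Proposition \ref{prop6.F} states that the difference operators satisfy $D_{\zw\mid N+1}\La^{N+1}_N = \La^{N+1}_N D_{\zw\mid N}$. My task is to lift this identity from the generators $D_{\zw\mid N}$ to the associated Feller semigroups $P_{\zw\mid N}(t)$.

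First I would combine Proposition \ref{prop6.F} with the explicit description of the domain of $A_N$ given in Proposition \ref{prop6.B}(iii), namely $\dom(A_N) = \{f \in C_0(\GT_N) : D_{\zw\mid N} f \in C_0(\GT_N)\}$, to show that $\La^{N+1}_N$ sends $\dom(A_N)$ into $\dom(A_{N+1})$ and satisfies
$$
A_{N+1}\La^{N+1}_N f = \La^{N+1}_N A_N f, \qquad f \in \dom(A_N).
$$
Here the Feller property of $\La^{N+1}_N$ from Proposition \ref{prop5.A} is essential: it guarantees that $\La^{N+1}_N$ is a bounded (contractive) operator from $C_0(\GT_N)$ to $C_0(\GT_{N+1})$, so that $\La^{N+1}_N A_N f \in C_0(\GT_{N+1})$, and by Proposition \ref{prop6.F} this function equals $D_{\zw\mid N+1}(\La^{N+1}_N f)$, which places $\La^{N+1}_N f$ in $\dom(A_{N+1})$.

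Next, for a fixed $f \in \dom(A_N)$, I introduce the two $C_0(\GT_{N+1})$-valued curves $u(t) := P_{\zw\mid N+1}(t)\La^{N+1}_N f$ and $v(t) := \La^{N+1}_N P_{\zw\mid N}(t) f$. Since each Feller semigroup preserves its own generator's domain, the generator intertwining just established shows that both $u(t)$ and $v(t)$ lie in $\dom(A_{N+1})$ for every $t \ge 0$ and both satisfy the abstract Cauchy problem $w'(t) = A_{N+1} w(t)$ with initial value $w(0) = \La^{N+1}_N f$. Uniqueness of solutions for the Cauchy problem generated by a $C_0$-semigroup then forces $u \equiv v$. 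Since $\dom(A_N)$ is norm-dense in $C_0(\GT_N)$ and both $P_{\zw\mid N+1}(t)\La^{N+1}_N$ and $\La^{N+1}_N P_{\zw\mid N}(t)$ are bounded operators $C_0(\GT_N) \to C_0(\GT_{N+1})$, the intertwining extends to all of $C_0(\GT_N)$, which proves (i). Part (ii) is then an immediate invocation of Proposition \ref{prop5.B} applied to the family $\{P_{\zw\mid N}(t)\}$.

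The main obstacle to be careful about is the unbounded nature of the generators $A_N$: the state space $\GT_N$ is countably infinite and the off-diagonal entries of the $Q$-matrix given by \eqref{eq6.D}--\eqref{eq6.G} grow quadratically in the coordinates of $\nu$, so one cannot simply exponentiate $D_{\zw\mid N}$ to produce the semigroup. However, Propositions \ref{prop6.A} and \ref{prop6.B} already supply the decisive input that $A_N$ is a bona fide closed generator of a Feller semigroup with the indicated domain; once that input is taken for granted, the passage from generator intertwining to semigroup intertwining is the routine functional-analytic manipulation sketched above, and essentially the same argument could alternatively be phrased at the level of resolvents $R_\la^{(N)} := (\la - A_N)^{-1}$ for $\la > 0$.
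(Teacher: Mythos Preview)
Your argument is correct. The paper's own proof of (i) is simply a citation to \cite[Theorem 6.1]{BO-GT-Dyn}, and (ii) is deduced from Proposition~\ref{prop5.B} exactly as you do. So on part (ii) the two proofs coincide, while on part (i) you supply an actual self-contained argument where the paper defers to the external reference.

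Your route is the standard functional-analytic one: start from the generator-level relation (Proposition~\ref{prop6.F}), use the concrete description of $\dom(A_N)$ in Proposition~\ref{prop6.B}(iii) together with the Feller property of $\La^{N+1}_N$ (Proposition~\ref{prop5.A}) to obtain $\La^{N+1}_N:\dom(A_N)\to\dom(A_{N+1})$ with $A_{N+1}\La^{N+1}_N=\La^{N+1}_N A_N$ on $\dom(A_N)$, and then invoke uniqueness for the abstract Cauchy problem. All the ingredients you use are already stated in the present paper, so your proof has the advantage of making Proposition~\ref{prop6.C}(i) internally derivable rather than imported. The only thing worth noting is that Proposition~\ref{prop6.F} is itself quoted from \cite[Proposition 6.2]{BO-GT-Dyn}, so ultimately both approaches rest on the same external computation; your contribution is to isolate precisely which piece of \cite{BO-GT-Dyn} is needed and to show that the passage from generators to semigroups requires nothing beyond general $C_0$-semigroup theory.
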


(Recall that $\La^\infty_N:\Om\dasharrow \GT_N$ are the links defined in \eqref{eq5.A}.)

\begin{proof}
Claim (i) is established in Borodin-Olshanski \cite[theorem 6.1]{BO-GT-Dyn}.
Claim (ii) follows from Claim (i) by virtue of Proposition \ref{prop5.B}.
\end{proof}

\begin{definition}\label{def6.B}
In what follows $A_{\zw\mid N}$ denotes the generator of the semigroup $P_{\zw\mid N}(t)$ on $C_0(\GT_N)$
and $A_\zw$ denotes the generator of the semigroup $P_{\zw\mid\infty}(t)$ on $C_0(\Om)$.
\end{definition}

In the next proposition and its proof we use the quantities  $q(\nu,\mu)$ and
$r(\nu,\mu)$ that were defined in the preceding subsection. Note that they
depend on the parameters $\zw$, and $N$.

\begin{proposition}\label{prop6.E}
Let $N=1,2,\dots$\,. For every signature $\mu\in\GT_N$, the function
$\wh\si_\mu\in C_0(\Om)$ belongs to the domain of the generator $A_\zw$ and
\begin{equation}\label{eq6.I}
A_\zw\wh\si_\mu=q(\mu,\mu)\wh\si_\mu+\sum_{\nu\in\GT_N:\,
\nu\ne\mu}r(\nu,\mu)\wh\si_\nu.
\end{equation}
\end{proposition}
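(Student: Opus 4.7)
The plan is to deduce this from the intertwining relations already established, applied to delta functions on $\GT_N$, and then to unwind the formula using the explicit shape of the kernel $\La^\infty_N$.

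First I would pick $f=\de_\mu$, the indicator function of $\{\mu\}\subset\GT_N$, viewed as an element of $C_0(\GT_N)$. Direct computation using \eqref{eq6.H} gives $(D_{\zw\mid N}\de_\mu)(\nu)=q(\nu,\mu)$, which has finite support and therefore lies in $C_0(\GT_N)$. By Proposition \ref{prop6.B}(iii) this means $\de_\mu\in\dom(A_{\zw\mid N})$, and $A_{\zw\mid N}\de_\mu$ is the function $\nu\mapsto q(\nu,\mu)$.

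Next I would invoke Proposition \ref{prop5.C}(i), whose hypotheses are met thanks to Proposition \ref{prop6.C}(i). This yields $\La^\infty_N\de_\mu\in\dom(A_\zw)$ together with the identity
\begin{equation*}
A_\zw\bigl(\La^\infty_N\de_\mu\bigr)=\La^\infty_N\bigl(A_{\zw\mid N}\de_\mu\bigr).
\end{equation*}
Using \eqref{eq5.A}, the left-hand side equals $A_\zw(\Dim_N\mu\cdot\wh\si_\mu)$, so $\wh\si_\mu\in\dom(A_\zw)$. Expanding the right-hand side, again via \eqref{eq5.A}, I obtain
\begin{equation*}
\Dim_N\mu\cdot A_\zw\wh\si_\mu(\om)=\sum_{\nu\in\GT_N}\Dim_N\nu\cdot q(\nu,\mu)\,\wh\si_\nu(\om).
\end{equation*}

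Dividing by $\Dim_N\mu$, the coefficient of $\wh\si_\mu$ on the right is $q(\mu,\mu)$, as required. For a neighbor $\nu=\mu\pm\epsi_i$, the defining relation \eqref{eq6.D} for $Q$ gives precisely $\frac{\Dim_N\nu}{\Dim_N\mu}\,q(\nu,\mu)=r(\nu,\mu)$, and for all other $\nu\ne\mu$ the jump rate $q(\nu,\mu)$ vanishes. Reading off the coefficients produces \eqref{eq6.I}.

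There is really no serious obstacle here: the whole computation is a one-line application of the intertwining $A_\zw\La^\infty_N=\La^\infty_N A_{\zw\mid N}$ to $\de_\mu$, combined with the explicit form of $\La^\infty_N$. The only point that warrants care is to check that $\de_\mu$ lies in $\dom(A_{\zw\mid N})$ (handled above since $D_{\zw\mid N}\de_\mu$ has finite support) and that the normalization factors $\Dim_N\nu/\Dim_N\mu$ convert the $q$-rates into the $r$-rates via \eqref{eq6.D}, so that the off-diagonal terms appear in the form claimed in the statement.
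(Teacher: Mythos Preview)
Your proof is correct and follows essentially the same approach as the paper: apply the intertwining relation $A_\zw\La^\infty_N=\La^\infty_N A_{\zw\mid N}$ from Proposition \ref{prop5.C}(i) to the indicator function of $\{\mu\}$, and then use \eqref{eq5.A} and \eqref{eq6.D} to rewrite everything in terms of $\wh\si_\nu$ and the $r$-rates. The paper's only cosmetic difference is that it first introduces the normalized indicators $\One_\la=(\Dim_N\la)^{-1}\one_\la$ so that $\La^\infty_N\One_\la=\wh\si_\la$ directly, whereas you carry the $\Dim_N$ factors through and divide at the end.
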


\begin{proof}
For $\mu\in\GT_N$, let $\one_\mu$ denote the function on $\GT_N$ defined by
$\one_\mu(\nu)=\de_{\mu\nu}$. By the definition of $D_{\zw\mid N}$, see
\eqref{eq6.H},
\begin{equation}\label{eq6.L}
D_{\zw\mid N}\one_\mu=\sum_{\nu\in\GT_N}q(\nu,\mu)\one_\nu.
\end{equation}

For any $\la\in\GT_N$ we set
\begin{equation}\label{eq6.X}
\One_\la=(\Dim_N\la)^{-1}\,\one_\la.
\end{equation}
Then, by \eqref{eq6.D}, formula \eqref{eq6.L} can be rewritten as
\begin{equation}\label{eq6.M}
D_{\zw\mid N}\One_\mu=q(\mu,\mu)\One_\mu+\sum_{\nu\in\GT_N:\,\nu\ne\mu}
r(\nu,\mu)\One_\nu.
\end{equation}

Claim (iii) of Proposition \ref{prop6.B} implies that all finitely supported
functions on $\GT_N$ belong to the domain of $A_{\zw\mid N}$ and for every such
function $f$ one has $A_{\zw\mid N}f=D_{\zw\mid N}f$. In particular, taking
$f=\One_\mu$ we obtain from \eqref{eq6.M}
\begin{equation}\label{eq6.K}
A_{\zw\mid N}\One_\mu=q(\mu,\mu)\One_\mu+\sum_{\nu\in\GT_N:\,\nu\ne\mu}
r(\nu,\mu)\One_\nu.
\end{equation}

Next, by virtue of Proposition \ref{prop6.C} one can apply Proposition
\ref{prop5.C}, claim (i). It implies that for every $\la\in\GT_N$, the function
$\La^\infty_N\One_\la$ on $\Om$ belongs to the domain of the generator $A_\zw$
and
$$
A_\zw\La^\infty_N\One_\la=\La^\infty_N A_{\zw\mid N}\One_\la
$$
(recall that the links $\La^\infty_N:\Om\dasharrow \GT_N$ are defined in \eqref{eq5.A}).
Together with \eqref{eq6.K} this gives
\begin{equation}\label{eq6.N}
A_\zw\La^\infty_N\One_\mu=q(\mu,\mu)\La^\infty_N
\One_\mu+\sum_{\nu\in\GT_N:\,\nu\ne\mu} r(\nu,\mu)\La^\infty_N\One_\nu.
\end{equation}

Finally, \eqref{eq5.A} shows that for any $\la\in\GT_N$
$$
\La^\infty_N\One_\la=\wh\si_\la.
$$
Substituting this into \eqref{eq6.N} gives the desired formula.
\end{proof}

Let $\wh\F\subset C_0(\Om)$ denote the  linear span of the functions
$\wh\si_\la$, where $\la$ range over $\GT_1\sqcup
\GT_2\sqcup\GT_3\sqcup\dots$\,. As was shown in the proof of Proposition
\ref{prop6.E}, $\wh\F$ coincides with the linear span of the spaces
$\La^\infty_N C_c(\GT_N)$, where $N=1,2,3,\dots$ and $C_c(\GT_N)\subset
C_0(\GT_N)$ stands for the subspace of finitely supported functions. By
Proposition \ref{prop6.E}, $\wh\F$ is contained in the domain of the generator
$A_\zw$. Moreover, this proposition explains how the generator acts on $\wh\F$.
In particular, we see that $\wh\F$ is invariant under the action of the
generator.

\begin{theorem}\label{thm6.A}
The subspace  $\wh\F\subset C_0(\Om)$ is a core for the generator $A_\zw$. 
\end{theorem}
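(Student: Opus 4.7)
The strategy is to deduce the theorem from Proposition~\ref{prop5.C}(ii), applied with the cores $\F_N := C_c(\GT_N)$ (the subspace of finitely supported functions on $\GT_N$) for every $N\ge 1$. The intertwining hypothesis of Proposition~\ref{prop5.C} is exactly Proposition~\ref{prop6.C}(i), so, granting for the moment that $C_c(\GT_N)$ is a core for $A_{\zw\mid N}$, Proposition~\ref{prop5.C}(ii) yields that $\operatorname{span}\{\La^\infty_N f : N\ge 1,\, f\in C_c(\GT_N)\}$ is a core for $A_\zw$. To identify this span with $\wh\F$, recall from the proof of Proposition~\ref{prop6.E} that $\La^\infty_N\One_\la = \wh\si_\la$, where $\One_\la = (\Dim_N\la)^{-1}\one_\la$; since the rescaled deltas $\One_\la$ form a basis of $C_c(\GT_N)$, the span of their $\La^\infty_N$-images over all $N\ge 1$ is precisely the linear hull of $\{\wh\si_\la : \la\in\GT_N,\,N\ge 1\}$, which is exactly $\wh\F$.

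It remains to verify that $C_c(\GT_N)$ is a core for $A_{\zw\mid N}$ for every $N$. By Proposition~\ref{prop6.B}(iii), $A_{\zw\mid N}$ coincides with $D_{\zw\mid N}$ on $C_c(\GT_N)$, and $D_{\zw\mid N}$ preserves $C_c(\GT_N)$ because each row of the $Q$-matrix has only finitely many nonzero entries (the jumps $\nu \to \nu \pm \epsi_i$ for $i=1,\dots,N$, together with the diagonal term). The Hille--Yosida core criterion (see \cite{EK}, Ch.~1, Prop.~3.1) then reduces the problem to showing that $(\lambda I - D_{\zw\mid N})(C_c(\GT_N))$ is dense in $C_0(\GT_N)$ for some $\lambda>0$. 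Dualizing in $\ell^1(\GT_N)=C_0(\GT_N)^*$, this amounts to the absence of nonzero $g\in\ell^1(\GT_N)$ satisfying $Q^\top g = \lambda g$, which is a standard dual formulation of non-explosion for the reversed chain; this can be established by a Lyapunov-style argument analogous to the one used in \cite[Theorem~5.4]{BO-GT-Dyn} for the forward chain, exploiting the explicit quadratic form of the rates \eqref{eq6.E}--\eqref{eq6.F}.

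The main obstacle is this last step, namely the dual $\ell^1$-nonexistence statement (equivalently, the density of the range $(\lambda I - D_{\zw\mid N})(C_c(\GT_N))$ in $C_0(\GT_N)$). While standard in Markov chain theory, it requires a separate verification in the present non-reversible setting; everything else is either routine bookkeeping or a direct invocation of the previously established Propositions~\ref{prop6.B}, \ref{prop6.C}, and \ref{prop6.E}.
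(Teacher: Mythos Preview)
Your overall strategy coincides with the paper's: reduce via Proposition~\ref{prop5.C}(ii) to the claim that $C_c(\GT_N)$ is a core for $A_{\zw\mid N}$ for every $N$, and identify the resulting span with $\wh\F$ via $\La^\infty_N\One_\la=\wh\si_\la$. The paper itself only sketches the proof and defers the full argument to \cite{Ols-FAA}.

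The only substantive difference is in how the $N$-level core property is handled. The paper invokes a criterion of Ethier and Kurtz (stated as \cite[Theorem~2.3(iv)]{BO-EJP}), which gives a direct route once one knows the semigroup is Feller and the $Q$-matrix is regular. Your proposed route --- Hille--Yosida range density, dualized to an $\ell^1$ nonexistence statement, then a Lyapunov argument for the transposed chain --- is in principle viable but, as you yourself note, is not carried out here and is the nontrivial part. In the non-reversible setting the transposed $Q$-matrix need not itself be a $Q$-matrix (row sums need not vanish), so the ``reversed chain'' language is slightly informal; one would instead work directly with the $\ell^1$ eigenvalue problem. The Ethier--Kurtz criterion used in the paper avoids this detour.
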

 
 This fact is not used in the arguments below, but it is a substantial complement to our main result, Theorem \ref{thm7.B}, which describes explicitly the operator  $A_\zw\big|_{\wh\F}$ (the restriction of the generator to $\wh\F$). By virtue of Theorem \ref{thm6.A}, the latter operator uniquely determines the generator, so Theorem \ref{thm7.B} contains, in principle, a complete information about the generator. 
 
 \begin{proof}
 Theorem \ref{thm6.A} is proved in \cite{Ols-FAA}. Here we only indicate the idea of the proof. By Proposition \ref{prop5.C},  it
suffices to show that $C_c(\GT_N)$ is a core for $A_{\zw\mid N}$ for every $N$.
This, in turn, can be verified as in Borodin-Olshanski \cite{BO-EJP}, by making
use of a result due to Ethier and Kurtz (see its formulation in \cite[Theorem
2.3 (iv)]{BO-EJP}. (Note two misprints in \cite{BO-EJP}: the claims of
Corollary 6.6 (ii) and Corollary 8.7 (ii) concern the subspace of finitely
supported functions, so that instead of $C_0(\,\cdot\,)$ one should read
$C_c(\,\cdot\,)$.)
\end{proof}

\section{The main theorem}\label{sect7}

\subsection{Formulation of the main theorem}

In Section \ref{sect4}, we defined the differential operator $\D_\zw$ which
acts on $R$. It depends on an arbitrary quadruple $(\zw)\in\C^4$. We also
showed that it preserves the ideal $J\subset R$ and so determines an operator
on the quotient $\wh R=R/J$. Let us denote the latter operator by $\wh\D_\zw$.

Given $\psi\in R$, we will denote by $\wh\psi\in\wh R$ the image of $\psi$
under the canonical map $R\to\wh R$. In particular, we may speak about the
elements $\wh\si_\la\in\wh R$. Note that in Section \ref{sect3}, we already used the same notation: namely, given $\psi\in\RR$, we denoted by $\wh\psi(\om)$ the corresponding function on $\Om$ (its definition is given just before Proposition \ref{prop3.A}). Formally, the two definitions of $\wh\psi$ look differently, but the new definition is morally an extension of the old one, because, as shown in Proposition \ref{prop3.A}, the kernel of the homomorphism $\RR\ni \psi\mapsto \wh\psi(\,\cdot\,)$ coincides with $J\cap\RR$.

\begin{theorem}[Main Theorem]\label{thm7.B}
Let $(\zw)\in\C^4$ be an
admissible quadruple of parameters $(\zw)\in\C^4$, see Definition \ref{def6.A}, and recall that $A_\zw$ denotes the generator of the semigroup $P_{\zw\mid\infty}(t)$, see Definition \ref{def6.B}.  We
restrict $A_\zw$  to the core $\wh\F\subset C_0(\Om)$ defined in the end of
Section \ref{sect6}. Finally, let $\la$ range over the set of all signatures,
except $\la=\varnothing$.

Under the identification of the elements $\wh\si_\la\in\wh R$ with the
functions $\wh\si_\la(\om)$ from the core $\wh\F$, the action of the generator
$A_\zw$ on those functions coincides with the action of the operator\/
$\wh\D_\zw$ on the corresponding elements $\wh\si_\la\in\wh R$.
\end{theorem}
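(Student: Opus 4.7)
The plan is to verify, for each signature $\mu \in \GT_N$ with $N \ge 1$, the identity
$$
\D_\zw \sigma_\mu \;\equiv\; q(\mu,\mu)\,\sigma_\mu \;+\; \sum_{\nu \in \GT_N\setminus\{\mu\}} r(\nu,\mu)\,\sigma_\nu \pmod{J}
$$
inside $R$, where $q(\mu,\mu)$ and $r(\nu,\mu)$ are the jump-rate quantities from \eqref{eq6.E}--\eqref{eq6.G}; the sum on the right is finite because $r(\nu,\mu)$ vanishes unless $\nu = \mu\pm\epsi_i$. Combined with Proposition \ref{prop4.B} (which guarantees that $\D_\zw$ passes to $\wh R$) and Proposition \ref{prop6.E} (which gives the action of $A_\zw$ on $\wh\sigma_\mu$), this identity is exactly the content of the theorem, after transferring to $\Om$ via the homomorphism of Proposition \ref{prop3.A}.

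The natural way to perform the computation is to expand $\sigma_\mu$ via the determinantal formula \eqref{eq2.N}, $\sigma_\mu = \det[\varphi_{\mu_i-i+j}]_{i,j=1}^N$, and then apply the two pieces of $\D_\zw$ separately. The first-order part $\sum_n B_n\,\partial/\partial\varphi_n$ replaces a single entry $\varphi_{\mu_i-i+j}$ of the determinant by $B_{\mu_i-i+j}$. Using the three-term structure of $B_n$ from \eqref{eq4.C}, the resulting determinants can, after elementary column manipulations, be rewritten as linear combinations of the $\sigma_{\mu\pm\epsi_i}$; comparing the coefficients with the explicit formulas \eqref{eq6.E}--\eqref{eq6.F} for $r(\nu,\mu)$ should give the off-diagonal contributions, plus a diagonal $\sigma_\mu$ term with coefficient $(z+z'+w+w')\frac{N(N-1)}{2}-\sum_{\nu\ne\mu}r(\nu,\mu)$. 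The second-order part is parameter-free and, combined with this diagonal contribution, must supply the remaining parameter-free piece $\frac{(2N-1)N(N-1)}{3}\sigma_\mu$ in $q(\mu,\mu)$; here the reduction modulo $J$ is essential, and the relation $\varphi\sigma_\nu = \sum_{\la\succ\nu}\sigma_\la$ from Proposition \ref{prop3.AA} is the main tool for collapsing infinite sums of $\sigma$'s.

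The principal technical obstacle is the second-order part: the coefficients $A_{n_1n_2}$ are themselves infinite series, so a direct application to $\sigma_\mu$ produces infinitely many monomial contributions. Expressing these in the $\sigma$-basis is not possible within $R$ itself (the inverse expansion \eqref{eq2.F} is already infinite for $N = 2$), but becomes possible modulo $J$ via Proposition \ref{prop3.AA}. Controlling this infinite telescoping --- and checking that the surviving contribution is precisely $\frac{(2N-1)N(N-1)}{3}\sigma_\mu$ independent of $\mu$ and of all parameters --- is where the bulk of the calculation in Sections \ref{sect8}--\ref{sect9} must go. A useful independent sanity check is the specialization $(z,w) = (k,l)$ with $k,l \in \Z_{\ge 0}$: by Proposition \ref{prop4.C}, $\D_\zw$ then descends to the finite-dimensional quotient $\wh R(k,-l)$, where (per Theorem \ref{thm1.B}) it is identified with the $(k+l)$-variate Jacobi differential operator, whose matrix entries in any basis can be computed directly. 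Since both sides of the desired identity depend polynomially on $(\zw)$ of bounded degree, agreement for all sufficiently large integer pairs $(k,l)$ would provide an alternative route to the full result by Zariski density.
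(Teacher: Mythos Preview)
Your target identity is exactly right, and you have correctly identified that Propositions \ref{prop4.B}, \ref{prop6.E}, and \ref{prop3.A} reduce the theorem to it. But your main line of attack --- expand $\sigma_\mu$ by the determinantal formula and apply $\D_\zw$ entry by entry --- is precisely the direct computation that the paper declines to attempt. The author writes explicitly (end of Subsection \ref{sect7.B}) that ``a direct verification of the equality $\D_\zw=\AA_\zw$, without recourse to Claim \ref{claim2}, is a difficult task.'' Your paragraph on the second-order part candidly names the obstacle (infinite telescoping modulo $J$, to be carried out for every $N$) but offers no mechanism for controlling it; that is a real gap, not just bookkeeping.

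The paper's organization is genuinely different and worth noting. Rather than computing $\D_\zw\sigma_\mu$ for arbitrary $\mu$, it introduces the operator $\AA_\zw$ on $R$ encoding the right-hand side of your identity, and proves $\D_\zw=\AA_\zw$ by two steps: (Claim \ref{claim1}) the two operators agree on the monomials $\varphi_\kappa$ of degree $\le 2$ --- a finite calculation done in the $\varphi$-basis, where $\D_\zw$ acts transparently and $\AA_\zw\varphi_\kappa$ can be computed from \eqref{eq2.F}--\eqref{eq2.G}; and (Claim \ref{claim2}) $\AA_\zw$ has order $\le 2$ in the abstract sense of Proposition \ref{prop7.B}. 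Two operators of order $\le 2$ that agree on degree-$\le 2$ monomials agree on all monomials, hence everywhere by $I$-adic continuity. So the passage from $N\le 2$ to general $N$ is purely structural, and no infinite telescoping ever needs to be done.

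Your ``sanity check'' via specialization $z=k$, $w=l$ and Zariski density is close to the real argument, but it is deployed for a different purpose. The paper does not use it to verify your identity directly for integer parameters; it uses it to prove Claim \ref{claim2}. Under the specialization, $\AA_\zw$ descends to $\wh R(k,-l)$, and the paper identifies that reduced operator with the $(k+l)$-variate Jacobi differential operator $D^{(a,b)}_{k+l}$ (Theorem \ref{thm9.A}), which is manifestly of order $\le 2$; Zariski density in $(\zw)$ plus the vanishing intersection $\bigcap J(k,-l)=J$ then gives order $\le 2$ for all parameters. In short: what you list as an alternative route is the engine of the actual proof, but aimed at an abstract second-order property rather than at the full identity for each $\sigma_\mu$.
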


\begin{remark}\label{remark7.A}

As mentioned in the introduction, Theorem \ref{thm7.B} gives a precise sense to
the informal statement (Theorem \ref{thm1.A}) that ``the generator $A_\zw$ is
implemented by the differential operator $\D_\zw$''. It is tempting to regard
Theorem \ref{thm7.B} as the indication that $X_\zw$ are diffusion processes,
and it would be very interesting to find out whether this is true. For
instance, is it true that the operators $A_\zw$ are diffusion generators as
defined in Ledoux \cite[Section 1.1]{Ledoux}.
\end{remark}

Theorem \ref{thm7.B} will be proved in a slightly stronger form (Theorem
\ref{thm7.A} below).

We are going to define a linear operator $R\to R$ that mimics the action of the
generator $A_\zw$ on $\wh\F$. In the next proposition we use the $I$-adic
topology in $R$, introduced in Subsection \ref{sect2.A}.

\begin{proposition}\label{prop7.A}
For every quadruple $(\zw)\in\C^4$ there exists a unique linear operator
$\AA_\zw:R\to R$, continuous in the $I$-adic topology, annihilating the unity
element $1\in R$, and such that for every $N=1,2,\dots$ and every
$\mu\in\GT_N$,
\begin{equation}\label{eq7.A}
 \AA_\zw\si_\mu=q_{\zw\mid N}(\mu,\mu)\si_\mu+\sum_{\nu\in\GT_N:\,
\nu\ne\mu}r_{\zw\mid N}(\nu,\mu)\si_\nu,
\end{equation}
where $q_{\zw\mid N}(\mu,\mu)$ and $r_{\zw\mid N}(\nu,\mu)$ is a more detailed
notation for the quantities $q(\mu,\mu)$ and $r(\nu,\mu)$ defined in the
beginning of Subsection \ref{sect6.A}.
\end{proposition}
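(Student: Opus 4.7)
The plan is to exploit the topological basis property of $\{\si_\la\}$ together with its consistency with the $I$-adic filtration. By Section \ref{sect2}, the collection $\{\si_\la : \la \in \bigsqcup_{N\ge 0}\GT_N\}$ (with $\si_\varnothing = 1$) is a topological basis of $R$ in the $I$-adic topology, and it is consistent with the ideals $I(n_+,n_-)$ in the sense that $\si_\la \in I(n_+,n_-)$ precisely when $\la_1 > n_+$ or $\la_N < n_-$. Uniqueness of $\AA_\zw$ is then immediate: a continuous linear endomorphism of $R$ is determined by its values on any topological basis, and the conditions $\AA_\zw(1) = 0$ together with formula \eqref{eq7.A} pin down the action on every element of the basis.

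For existence, I would first observe that the right-hand side of \eqref{eq7.A} is literally a finite sum in $R_N$: by \eqref{eq6.E}--\eqref{eq6.F}, the coefficient $r_{\zw\mid N}(\nu,\mu)$ vanishes unless $\nu = \mu\pm\epsi_i$ for some $i\in\{1,\dots,N\}$ (with the result still a signature), so at most $2N$ summands survive, each homogeneous of degree $N$. Thus \eqref{eq7.A} unambiguously produces an element $\AA_\zw\si_\mu \in R_N$. The crucial step is a shift-by-one continuity estimate: if $\mu_1 > n_+ + 1$ or $\mu_N < n_- - 1$, then every $\nu \in \{\mu,\,\mu\pm\epsi_i\}$ satisfies $\nu_1 > n_+$ or $\nu_N < n_-$, because the indices drift by at most one under the perturbation $\mu\mapsto\mu\pm\epsi_i$. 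By the basis-ideal consistency recalled above, this forces $\AA_\zw\si_\mu \in I(n_+,n_-)$ whenever $\si_\mu \in I(n_++1, n_--1)$.

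With this estimate, I would extend $\AA_\zw$ to all of $R$ as follows. Any $\psi \in R$ has bounded degree and admits a unique topological expansion $\psi = \sum_\la a_\la \si_\la$, so I set $\AA_\zw\psi := \sum_\la a_\la\, \AA_\zw\si_\la$. Convergence of this series in the $I$-adic topology is guaranteed by the continuity estimate: for any fixed truncation target $R(n_+,n_-)$ and any fixed $N$, only the finitely many $\la \in \GT_N$ with $n_--1 \le \la_N \le \la_1 \le n_++1$ can contribute to the image of $\AA_\zw\si_\la$ in $R(n_+,n_-)$, and since $\psi$ has bounded degree only finitely many $N$ occur at all. The same estimate, now applied to the fully extended operator, yields the inclusion $\AA_\zw\bigl(I(n_++1, n_--1)\bigr) \subseteq I(n_+,n_-)$, which is exactly continuity in the $I$-adic topology. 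The only real obstacle is the careful shift-of-indices bookkeeping needed to verify the key estimate; once that is clean, the rest is standard manipulation of topological bases and projective limits of graded algebras.
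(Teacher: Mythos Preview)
Your proof is correct and follows essentially the same approach as the paper's: both arguments rest on the observation that $\AA_\zw\si_\mu$ is a finite linear combination of $\si_\mu$ and the ``neighboring'' basis vectors $\si_{\mu\pm\epsi_i}$, so that the support shifts by at most one, which immediately yields $I$-adic continuity. Your version simply makes explicit the shift-by-one estimate $\AA_\zw\bigl(I(n_++1,n_--1)\bigr)\subseteq I(n_+,n_-)$ and the convergence argument that the paper leaves to the reader.
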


It is worth emphasizing that here we drop the admissibility condition on the
parameters imposed in Section \ref{sect6}: the operator $\AA_\zw$ is considered
for any complex values of $(\zw)$. This is possible because the formulas
defining the quantities $q(\mu,\mu)$ and $r(\nu,\mu)$ make sense for arbitrary
$(\zw)\in\C^4$.

\begin{proof}
Together with the condition $\AA_\zw 1=0$, formula \eqref{eq7.A} determines
$\AA_\zw$ on the linear span of the basis elements $\si_\mu$ including
$\si_\varnothing=1$. The continuity of this operator immediately follows from
the fact that $\AA_\zw\si_\mu$ is a linear combination of $\si_\mu$ and
``neighboring'' basis vectors of the form $\si_{\mu\pm \epsi_i}$. The explicit
form of the coefficients is not important here.
\end{proof}

The next claim will be used in Section \ref{sect9}.

\begin{proposition}\label{prop7.C}
For any $(\zw)\in\C^4$, the operator $\AA_\zw$ preserves the ideal $J\subset
R$.
\end{proposition}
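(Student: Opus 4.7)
The plan is to prove the stronger statement that $\AA_\zw$ commutes with the operator of multiplication by $\varphi$ on $R$. This suffices, because then for every $g\in R$ we have $\AA_\zw((\varphi-1)g)=(\varphi-1)\AA_\zw g\in J$, and $J=(\varphi-1)R$.

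Both $\AA_\zw$ (by Proposition \ref{prop7.A}) and multiplication by $\varphi$ are continuous in the $I$-adic topology of $R$. The latter continuity holds because each $\varphi_n$ preserves every ideal $I(n_+,n_-)$, and the sum $\varphi g=\sum_n\varphi_n g$ converges $I$-adically for every $g\in R$. Since finite linear combinations of the basis elements $\si_\mu$ are $I$-adically dense in $R$, it suffices to verify $\AA_\zw(\varphi\,\si_\mu)=\varphi\,\AA_\zw\si_\mu$ for every signature $\mu$.

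Fix $\mu\in\GT_N$. The plan is to expand $\varphi\,\si_\mu=\sum_{\la\succ\mu}\si_\la$ via the branching identity of Proposition \ref{prop3.AA}, apply $\AA_\zw$ term-by-term (legitimate by $I$-adic continuity, since each $\AA_\zw\si_\la$ is a finite sum of ``neighboring'' basis elements $\si_{\la'}$ with $\la'\in\{\la\}\cup\{\la\pm\epsi_i\}$), and collect the coefficient of each $\si_\la$, $\la\in\GT_{N+1}$. On the other side, the plan is to apply formula \eqref{eq7.A} for $\AA_\zw\si_\mu$ and then invoke the branching rule on each resulting $\varphi\,\si_\nu$. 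The two coefficients of $\si_\la$ produced in this way are, respectively,
$$
c_\la=q_{\zw\mid N+1}(\la,\la)\,[\mu\prec\la]+\sum_{\la'\succ\mu,\,\la'\ne\la}r_{\zw\mid N+1}(\la,\la')
$$
and
$$
c'_\la=q_{\zw\mid N}(\mu,\mu)\,[\mu\prec\la]+\sum_{\nu\prec\la,\,\nu\ne\mu}r_{\zw\mid N}(\nu,\mu),
$$
where $[\,\cdot\,]$ denotes the Iverson bracket.

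The equality $c_\la=c'_\la$ is precisely the content of Proposition \ref{prop6.F}, rewritten in the $(q,r)$-notation. Indeed, evaluating the operator identity $D_{\zw\mid N+1}\La^{N+1}_N=\La^{N+1}_N D_{\zw\mid N}$ on the indicator $\one_\mu$ at the point $\la$ and substituting the explicit form \eqref{eq5.C} for $\La^{N+1}_N$ together with the definitions \eqref{eq6.D}--\eqref{eq6.G}, the dimension factors $\Dim_N\mu$ and $\Dim_{N+1}\la$ cancel out and the identity reduces exactly to $c_\la=c'_\la$. The main obstacle is purely bookkeeping: one must carefully separate diagonal from off-diagonal contributions when converting between the $q$-entries and the $r$-entries via the normalization $q(\nu,\mu)\Dim_N\nu=r(\nu,\mu)\Dim_N\mu$, and verify that the various infinite sums assembled along the way converge in the $I$-adic topology. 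No new input beyond Propositions \ref{prop3.AA} and \ref{prop6.F} is needed.
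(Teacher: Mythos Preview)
Your proposal is correct and follows essentially the same route as the paper: both prove the stronger statement that $\AA_\zw$ commutes with multiplication by $\varphi$, and both derive this from the branching identity of Proposition~\ref{prop3.AA} together with the commutation relation of Proposition~\ref{prop6.F}. The only cosmetic difference is that the paper packages the argument via an explicit linear isomorphism $I_N:R_N\to\Fun(\GT_N)$, $\sum a_\mu\si_\mu\mapsto\sum a_\mu\One_\mu$, under which $\AA_\zw\big|_{R_N}$ becomes $D_{\zw\mid N}$ and multiplication by $\varphi$ becomes $\La^{N+1}_N$, so that the desired commutation is literally \eqref{eq6.O}; you instead unwind the same identity coefficient-by-coefficient.
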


\begin{proof}
It suffices to prove that $\AA_\zw$ commutes with the operator of multiplication by $\varphi$. We are going to show that the latter claim is merely a rephrasing of the commutation relation \eqref{eq6.O}.

Indeed, for every $N=0,1,2,\dots$ we define a linear isomorphism $I_N$ between  the space $R_N$ and the space $\Fun(\GT_N)$ of functions on the discrete set $\GT_N$ by setting
$$
I_N: \sum_{\mu\in\GT_N}a_\mu \si_\mu \mapsto \sum_{\mu\in\GT_N}a_\mu\wt\one_\mu,
$$
where $a_\mu$ are arbitrary complex coefficients. By the very definition of $\AA_\zw$, we have
$$
\AA_\zw\big|_{R_N}=I_N^{-1}D_{\zw\mid N}I_N.
$$

On the other hand, 
Proposition \ref{prop3.AA} says that for every $\mu\in\GT_N$,
$$
\varphi\si_\mu=\sum_{\la:\, \la\succ\mu}\si_\la.
$$
Comparing this with the definition of the canonical link $\La^{N+1}_N$ (see \eqref{eq5.C}) and the definition of $\wt\one_\mu$ (see \eqref{eq6.X}) we conclude that the operator $R_N\to R_{N+1}$ given by multiplication by $\varphi$ coincides with the operator $I^{-1}_{N+1}\La^{N+1}_N I_N$.

Therefore, the commutation relation \eqref{eq6.O} just means that $\AA_\zw$ and multiplication by $\varphi$ commute.  
\end{proof}

\begin{theorem}\label{thm7.A}
Let $(\zw)$ be an arbitrary quadruple of complex parameters. The operator
$\AA_\zw:R\to R$ from Proposition \ref{prop7.A} coincides with the differential
operator\/ $\D_\zw$ introduced in Definition \ref{def4.A}.
\end{theorem}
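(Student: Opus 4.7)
The strategy is to reduce $\D_\zw = \AA_\zw$ to verifying the identity
\[
\D_\zw\si_\mu = q_{\zw\mid N}(\mu,\mu)\,\si_\mu + \sum_{\nu\in\GT_N,\,\nu\ne\mu} r_{\zw\mid N}(\nu,\mu)\,\si_\nu
\qquad (\mu\in\GT_N,\ N\ge 1).
\]
Both operators preserve the grading of $R$. That $\D_\zw$ is continuous in the $I$-adic topology follows from a support analysis parallel to the one in the proof of Proposition \ref{prop4.C}: the coefficients (4.B) and (4.C) extend the support of a monomial by at most a bounded amount (by $\pm 1$ via the first-order part, and with the new ``outgoing'' indices $n_1+p+1$, $n_2-p$ in (4.B) automatically ensuring that at least one part of every output monomial lies outside a prescribed interval once the input is sufficiently far outside it). Together with the continuity of $\AA_\zw$ recorded in Proposition \ref{prop7.A} and the density of the algebraic span of $\{\si_\mu\}$ in $R$, this reduces the theorem to the displayed identity above.

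The cleanest route to the verification is through the defining generating-series identity
\[
\Phi(u_1)\cdots\Phi(u_N) = \sum_{\la\in\GT_N}\si_\la\, s_\la(u_1,\dots,u_N),
\]
to which I would apply $\D_\zw$ on both sides. By the Leibniz rule the left-hand side becomes
\[
\sum_{1\le k_1\ne k_2\le N}\mathcal{A}(u_{k_1},u_{k_2})\prod_{j\ne k_1,k_2}\Phi(u_j) \;+\; \sum_{k=1}^N \mathcal{B}(u_k)\prod_{j\ne k}\Phi(u_j),
\]
where $\mathcal{A}(u,v) := \sum_{n_1,n_2}A_{n_1n_2}\,u^{n_1}v^{n_2}$ and $\mathcal{B}(u) := \sum_n B_n\,u^n$. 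A direct reindexing of (4.C) yields the compact closed form
\[
\mathcal{B}(u) = (u-1)(u\partial_u - z)(u\partial_u - z')\Phi(u) + (u^{-1}-1)(u\partial_u + w)(u\partial_u + w')\Phi(u),
\]
in which the parameters $\zw$ appear transparently. Meanwhile, the identity to be proved says the same left-hand side equals
\[
\sum_{\mu}\si_\mu\Bigl[q(\mu,\mu)\,s_\mu(u) + \sum_{\nu\ne\mu} r(\mu,\nu)\,s_\nu(u)\Bigr].
\]
Extracting the coefficient of $\si_\mu$ (after Schur expansion of the $\Phi$-products) converts the theorem into a family of symmetric Schur-Laurent-polynomial identities.

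The main technical obstacle is the explicit evaluation of $\mathcal{A}(u,v)$ from the series in (4.B) and the subsequent matching of Schur coefficients. The peculiar weights $(n_1-n_2+2p+1)$ together with the compensating tail of subtractions in (4.B) are engineered so that $\mathcal{A}(u,v)$ admits a closed form with a Vandermonde-type denominator $(u-v)^{\pm 1}$; this denominator is precisely what meshes with the Schur ratio of determinants, so that after symmetrization over the $N$ variables the second-order contribution reproduces the diagonal coefficient $q(\mu,\mu)\,s_\mu(u)$ in (6.G), while the first-order contribution, combined with the surviving parameter-independent pieces of $\mathcal{A}(u,v)$, yields the off-diagonal $r(\mu,\mu\pm\epsi_i)\,s_{\mu\pm\epsi_i}(u)$ terms from (6.E)–(6.F). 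The matching is carried out via classical tools for rational Schur functions — the Pieri rule, the Gelfand--Tsetlin branching rule (as in Proposition \ref{prop3.AA}), and Vandermonde manipulations. These are the ``long computations'' forewarned in the introduction and occupy Sections \ref{sect8} and \ref{sect9}; as a sanity check, the finite-dimensional specialization of this matching at $z=k$, $w=l$ nonnegative integers must reproduce the Jacobi operator (1.P), which is the content of Theorem \ref{thm1.B}.
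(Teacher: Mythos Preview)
Your proposal takes a direct route—apply $\D_\zw$ to the generating series $\Phi(u_1)\cdots\Phi(u_N)$, find closed forms for $\mathcal{A}(u,v)$ and $\mathcal{B}(u)$, and match Schur coefficients—which is \emph{not} what the paper does; the author explicitly calls this direct verification ``a difficult task'' (end of \S\ref{sect7.B}). Your formula for $\mathcal{B}(u)$ is correct, but you never actually compute $\mathcal{A}(u,v)$: you only assert it ``admits a closed form with a Vandermonde-type denominator'' and then say the matching ``occup[ies] Sections \ref{sect8} and \ref{sect9}.'' That is the gap. Those sections do not contain the computation you describe.

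The paper instead argues indirectly. Claim \ref{claim1} (Section \ref{sect8}) checks by hand that $\D_\zw$ and $\AA_\zw$ agree on monomials $\varphi_\nu$ of degree $\le 2$, using the explicit expansion $\varphi_{k_1,k_2}=\sum_{p\ge 0}\si_{k_1+p,\,k_2-p}$. Claim \ref{claim2} (Section \ref{sect9}) shows that $\AA_\zw$ has order $\le 2$ in the abstract sense of Proposition \ref{prop7.B}; since $\D_\zw$ is visibly a second-order differential operator it too has order $\le 2$, and then Proposition \ref{prop7.B} forces the two operators to agree on all monomials, hence on all of $R$ by $I$-adic continuity. The nontrivial Claim \ref{claim2} is established by specializing $z=k$, $w=l$ to nonnegative integers, passing to the finite-dimensional quotient $\wh R(k,-l)$, and identifying the induced operator with the multivariate Jacobi differential operator $D^{(a,b)}_{k+l}$ (Theorem \ref{thm9.A}), which is manifestly second-order; a Zariski-density argument in the parameters then carries the conclusion back to all $(\zw)$. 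The Jacobi identification runs through the links $\La^\infty_N$ and multivariate Hahn polynomials and never touches the bivariate series $\mathcal{A}(u,v)$. So your reference to Theorem \ref{thm1.B} has the logic reversed: in the paper the Jacobi-operator statement is the \emph{engine} of the proof of Claim \ref{claim2}, not a post-hoc sanity check.
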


The theorem says that for every signature $\mu$, the element
$\psi:=\D_\zw\si_\mu$ is a \emph{finite} linear combination of basis elements
$\si_\nu$ (which is not evident!) and the corresponding function $\wh\psi$
coincides with $A_\zw\wh\si_\mu$. Obviously, this implies Theorem \ref{thm7.B}.

The rest of the paper is devoted to the proof of Theorem \ref{thm7.A}. The main
essence of  difficulty is the fact that $\AA_\zw$ is defined by its action on
the elements of the basis $\{\si_\mu\}$, whereas the action of $\D_\zw$ is
directly seen in another basis, $\{\varphi_\nu\}$. The transition coefficients
between the two bases seem to be too complicated to allow a direct verification
of the theorem. 

In Subsection \ref{sect7.B} we outline the plan of the proof, but first we need to recall a necessary formalism.

\subsection{Abstract differential operators}

Let $\A$ be a commutative
unital algebra and $\mathscr D:\A\to\A$ be a linear operator. For $x\in\A$, let
$M_x:\A\to\A$ denote the operator of multiplication by $x$. Let us say that
$\mathscr D$ has order $\le k$ (where $k=0,1,2,\dots$) if its $(k+1)$-fold
commutator with operators of multiplication by arbitrary elements of the
algebra vanishes:
$$
[M_{x_1},[M_{x_2,},\dots [M_{x_{k+1}},\mathscr D]\dots]]=0, \qquad
x_1,\dots,x_{k+1}\in\A.
$$

Let $x_1,x_2,\dots$ be an arbitrary collection of elements of $\A$. If
$\mathscr D:\A\to\A$ has order $\le k$, then its action on all monomials of any
degree, formed from $\{x_i\}$, is uniquely determined provided one knows the
action on the monomials of degree $\le k$, including the monomial of degree
$0$, which is 1.

We give a proof for $k=2$ because we need this case only.

\begin{proposition}\label{prop7.B}
Let, as above, $\A$ be a commutative unital algebra and $\DD:\A\to\A$ be a
linear operator of order $\le2$. For any elements $x_1,\dots,x_n\in\A$, where
$n\ge3$, one has {\rm(}below the indices range over $1,\dots,n${\rm)}
\begin{multline}\label{eq7.B}
\DD(x_1\dots x_n)\\=\sum_{i<j}\left(\prod_{k:\, k\ne i,j}x_k\right)
\DD(x_ix_j)-\sum_{i}\left(\prod_{k:\, k\ne i}x_k\right) \DD x_i
+\left(\prod_{k}x_k\right)\DD 1.
\end{multline}
\end{proposition}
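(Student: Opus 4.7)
My plan is to prove Proposition \ref{prop7.B} by induction on $n$, with the order $\le 2$ hypothesis furnishing the base case.

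For the base case $n = 3$: the hypothesis that $\DD$ has order $\leq 2$ is precisely the statement that $[M_{x_1}, [M_{x_2}, [M_{x_3}, \DD]]] = 0$. I would first verify by a short induction on $k$ the auxiliary formula
$$[M_{y_1}, [M_{y_2}, \dots, [M_{y_k}, \DD] \dots]]\, z = \sum_{S \subseteq \{1, \dots, k\}} (-1)^{k - |S|} \Bigl(\prod_{i \in S} y_i \Bigr) \DD \Bigl(z \prod_{i \notin S} y_i\Bigr),$$
which holds for any linear operator $\DD$ on a commutative algebra and any $z \in \A$; this follows inductively from the elementary identity $[M_y, \DD] z = y \DD z - \DD(yz)$. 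Specializing to $k = 3$ and $z = 1$, and using the order $\le 2$ hypothesis to set the left-hand side to $0$, I would rearrange to express $\DD(x_1 x_2 x_3)$ in terms of $\DD$ applied to pairs, singletons, and $1$; the resulting identity is exactly \eqref{eq7.B} for $n = 3$.

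For the inductive step $n \to n+1$ with $n \ge 3$: I would apply the $n = 3$ identity just established to the three elements $x_1$, $x_2$, and $y := x_3 x_4 \cdots x_{n+1}$, treating $y$ as a single element of $\A$. This expresses $\DD(x_1 \cdots x_{n+1})$ as a linear combination involving $\DD(x_1 y)$, $\DD(x_2 y)$, $\DD(x_1 x_2)$, $\DD y$, $\DD x_1$, $\DD x_2$, and $\DD 1$. Each of $\DD(x_1 y)$, $\DD(x_2 y)$, and $\DD y$ is $\DD$ evaluated on a product of $n$ factors, so the induction hypothesis reduces each to a combination of $\DD(z_i z_j)$, $\DD z_i$, and $\DD 1$ with $z_\bullet \in \{x_1, \dots, x_{n+1}\}$. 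After substituting and collecting, one obtains \eqref{eq7.B} for $n + 1$.

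The main obstacle is the combinatorial bookkeeping in the inductive step. Each term of the form $\DD(x_i x_j)$, $\DD x_i$, or $\DD 1$ in the final right-hand side receives contributions from several sources --- namely, from the outer application of the $n = 3$ formula and from the three inner applications of the induction hypothesis. Verifying that these contributions aggregate to the coefficients displayed in \eqref{eq7.B} amounts to a careful case analysis, chiefly according to whether the relevant indices lie in $\{1, 2\}$ or in $\{3, \dots, n+1\}$; one must check that the various multiplicities produced by the inner expansions exactly cancel the overcounting produced by the outer expansion.
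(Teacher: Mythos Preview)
Your proposal is correct and follows essentially the same approach as the paper's proof: both establish the $n=3$ case from the order $\le 2$ hypothesis and then induct by applying the $n=3$ identity with one of the three slots taken to be a product of the remaining factors. The differences are cosmetic: for the base case the paper builds up the identities for orders $\le 0$, $\le 1$, $\le 2$ successively rather than writing your general commutator-expansion formula, and for the inductive step the paper groups the factors as $(x_1\cdots x_{n-1},\, x_n,\, x_{n+1})$ rather than your $(x_1,\, x_2,\, x_3\cdots x_{n+1})$; both choices lead to three applications of the induction hypothesis and the same bookkeeping burden, which the paper likewise leaves to the reader.
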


\begin{proof}
Assume first that $\DD$ has order $\le0$. This means $[\DD,M_x]=0$ for any
$x\in\A$. Then
\begin{equation}\label{eq7.C}
\DD x=\DD M_x 1=M_x\DD 1=x\DD 1.
\end{equation}

Next, assume $\DD$ has order $\le1$. This means that $[\DD,M_x]$ has order
$\le0$. Then, using \eqref{eq7.C}, we have for any $x,y\in\A$
\begin{equation}\label{eq7.D}
\DD(xy)=\DD M_x y=x \DD y+[\DD, M_x]y=x \DD y+y[\DD,M_x]1=x\DD y+y\DD
x-xy\mathscr D1.
\end{equation}

Finally, assume $\DD$ has order $\le2$. We are going to show that for any
$x,y,z\in\A$
\begin{equation}\label{eq7.E}
\DD(xyz)=x\DD(yz)+y\DD(xz)+z\DD(xy)-xy\DD z-xz\DD y-yz\DD x+xyz\DD 1.
\end{equation}
Once this is established, the desired formula \eqref{eq7.B} is verified by
induction on $n$. Namely, \eqref{eq7.E} is the base of the induction ($n=3$),
and in order to pass from $n$ to $n+1$ one applies \eqref{eq7.E} with
$x=x_1\dots x_{n-1}$, $y=x_n$, $z=x_{n+1}$.

It remains to prove \eqref{eq7.E}, which is achieved using the same trick. We
have
$$
\DD(xyz)=\DD M_x(yz)=x\DD(yz)+[\DD,M_x](yz).
$$
As $[\DD,M_x]$ has order $\le1$, we may apply \eqref{eq7.D}, which gives
$$
[\DD,M_x](yz)=y[\DD, M_x]z+z[\DD, M_x]y-yz[\DD,M_x]1.
$$
Next,
$$
y[\DD, M_x]z=y\DD(xz)-xy\DD z, \qquad z[\DD, M_x]y=z\DD(xy)-xz\DD y
$$
and
$$
-yz[\DD,M_x]1=-yz\DD x+xyz\DD 1.
$$
Putting all the pieces together we get \eqref{eq7.E}.
\end{proof}

\subsection{Plan of proof}\label{sect7.B}

The proof of Theorem \ref{thm7.A} is reduced to the following two claims.

\begin{claim}\label{claim1}
The operators\/ $\D_\zw$ and\/ $\AA_\zw$ coincide on the monomials of degree
$\le 2$.
\end{claim}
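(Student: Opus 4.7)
The claim splits by degree. For degree $0$, both operators annihilate $1$: $\D_\zw$ because each of its terms carries a derivative, and $\AA_\zw$ by the normalization in Proposition \ref{prop7.A}. For degree $1$, the generating series \eqref{eq2.H} with $N = 1$ reduces to $\si_{(n)} = \varphi_n$ because $s_{(n)}(u) = u^n$. Specializing \eqref{eq7.A} to $N = 1$ and reading off the rates from \eqref{eq6.E}--\eqref{eq6.G} (the constant term in \eqref{eq6.G} vanishes for $N = 1$), I obtain
\[
\AA_\zw \varphi_n = (w+n+1)(w'+n+1)\varphi_{n+1} + (z-n+1)(z'-n+1)\varphi_{n-1} - \bigl[(z-n)(z'-n)+(w+n)(w'+n)\bigr]\varphi_n,
\]
which is exactly the expression \eqref{eq4.C} for $B_n = \D_\zw \varphi_n$.

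For degree $2$, fix $\la = (n_1, n_2)$ with $n_1 \ge n_2$. Applying the Leibniz rule to the second-order operator $\D_\zw$ (with $\D_\zw 1 = 0$) yields
\[
\D_\zw(\varphi_{n_1}\varphi_{n_2}) \;=\; 2 A_{n_1 n_2} + B_{n_1}\varphi_{n_2} + B_{n_2}\varphi_{n_1},
\]
which via \eqref{eq4.B}--\eqref{eq4.C} is an explicit $I$-adically convergent combination of $\varphi$-monomials. On the other side, I would expand $\varphi_{n_1}\varphi_{n_2}$ in the $\si$-basis using \eqref{eq2.F},
\[
\varphi_{n_1}\varphi_{n_2} \;=\; \sum_{k=0}^\infty \si_{n_1+k,\, n_2-k},
\]
apply $\AA_\zw$ term-by-term (legitimate by the $I$-adic continuity of $\AA_\zw$ established in Proposition \ref{prop7.A}), and expand each image by \eqref{eq7.A} specialized to $N = 2$, with the individual coefficients read off from \eqref{eq6.E}--\eqref{eq6.G}.

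Regrouping the result by the output signature $\mu$, each $\mu$ inherits a coefficient $c(\mu)$ that is a finite sum of at most two explicit contributions: one from each $\la_k := (n_1+k, n_2-k)$ of which $\mu$ is a $\pm\epsi_i$-neighbour, together with the diagonal contribution $q(\la_k, \la_k)$ when $\mu = \la_k$. Only signatures $\mu$ lying on one of the three diagonals $\mu_1+\mu_2 \in \{n_1+n_2-1,\,n_1+n_2,\,n_1+n_2+1\}$ receive a nonzero $c(\mu)$. I would then convert back to the $\varphi$-basis via the two-term identity \eqref{eq2.G}, $\si_{a,b} = \varphi_{a,b} - \varphi_{a+1,b-1}$, so that the coefficient of $\varphi_\alpha$ in $\AA_\zw(\varphi_{n_1}\varphi_{n_2})$ emerges as the telescoped difference $c(\alpha) - c(\alpha - \epsi_1 + \epsi_2)$, with $c$ understood to be zero off $\GT_2$.

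The main obstacle is then a finite family of polynomial identities in the variables $(\zw,n_1,n_2)$, one for every $\alpha$ on one of the three diagonals, asserting that $c(\alpha) - c(\alpha - \epsi_1 + \epsi_2)$ coincides with the coefficient of $\varphi_\alpha$ in $2A_{n_1 n_2} + B_{n_1}\varphi_{n_2} + B_{n_2}\varphi_{n_1}$. These identities are quadratic in the parameters and integer-linear in the indices; after expanding each $r$-factor as a quadratic polynomial in $(\zw)$ via \eqref{eq6.E}--\eqref{eq6.F}, the pure $zz'$ and $ww'$ contributions cancel in the telescoping, and the surviving pieces reassemble into the shifted quadratic polynomials appearing in $B_n$ (formula \eqref{eq4.C}) together with the integer factors $n_1 - n_2 + 2p + 1$, $n_1 - n_2$, and $n_1 - n_2 + 2p$ appearing in \eqref{eq4.B}. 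The boundary case $n_1 = n_2$, in which certain formal ``neighbours'' of $(n_1, n_2)$ lie outside $\GT_2$, causes no additional trouble: the formula \eqref{eq6.G} for the diagonal rate $q(\nu,\nu)$ remains correct as stated because the spurious contributions come with a factor of $\Dim_N$ that vanishes on those fake neighbours, and this consistency propagates through the telescoping.
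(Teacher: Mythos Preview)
Your proposal is correct and follows essentially the same route as the paper's proof. In both arguments the degree-$0$ and degree-$1$ cases are handled identically, and for degree $2$ one expands $\varphi_{n_1}\varphi_{n_2}=\sum_{p\ge0}\si_{n_1+p,\,n_2-p}$, applies $\AA_\zw$ termwise, converts back via $\si_{a,b}=\varphi_{a,b}-\varphi_{a+1,b-1}$ to obtain a telescoping sum, separates the result along the three diagonals $\mu_1+\mu_2\in\{n_1+n_2,\,n_1+n_2\pm1\}$, and matches each piece against the corresponding part of $2A_{n_1n_2}+B_{n_1}\varphi_{n_2}+B_{n_2}\varphi_{n_1}$; the paper simply carries out explicitly (in its Steps~2--5, using also the $(z,z')\leftrightarrow(w,w')$ symmetry to halve the work) the verification that you describe schematically.
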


\begin{claim}\label{claim2}
The operator $\AA_\zw: R\to R$ has order $\le2$ in the abstract sense.
\end{claim}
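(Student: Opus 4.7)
The strategy is to verify the triple-commutator condition $[M_{x_1},[M_{x_2},[M_{x_3},\AA_\zw]]]=0$ directly. By the $I$-adic continuity of $\AA_\zw$ (Proposition~\ref{prop7.A}) together with continuity of the multiplication operators, and because the monomials $\varphi_\la$ form a topological basis of $R$, it suffices to check this identity with $x_1,x_2,x_3$ ranging over the generators $\{\varphi_n\}_{n\in\Z}$ and applied to arbitrary monomials. Expanding the triple commutator and using $\AA_\zw 1=0$, the problem reduces to verifying the formula \eqref{eq7.B} (with $\DD=\AA_\zw$) on all products of generators, that is, the identity
\begin{equation*}
\AA_\zw(\varphi_{m_1}\cdots\varphi_{m_N}) = \sum_{i<j}\Bigl(\prod_{k\ne i,j}\varphi_{m_k}\Bigr)\AA_\zw(\varphi_{m_i}\varphi_{m_j}) - \sum_i \Bigl(\prod_{k\ne i}\varphi_{m_k}\Bigr)\AA_\zw\varphi_{m_i}
\end{equation*}
for every $N\ge 1$ and every $(m_1,\dots,m_N)\in\Z^N$.

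My plan is to prove this target identity for all tuples simultaneously, using the generating series $\Phi(\mathbf u):=\Phi(u_1)\cdots\Phi(u_N)$. By \eqref{eq2.H}, $\Phi(\mathbf u)=\sum_{\la\in\GT_N}\si_\la s_\la(u_1,\dots,u_N)$, and $I$-adic continuity of $\AA_\zw$ on each homogeneous component makes termwise application legitimate. Substituting \eqref{eq7.A} and collecting by $\si_\la$, one obtains
\begin{equation*}
\AA_\zw[\Phi(\mathbf u)] = \sum_{\la\in\GT_N}\si_\la\, K_\la(\mathbf u),
\end{equation*}
where
\begin{equation*}
K_\la(\mathbf u):=q(\la,\la)s_\la(\mathbf u)+\sum_{i=1}^N r(\la,\la+\epsi_i)s_{\la+\epsi_i}(\mathbf u)+\sum_{i=1}^N r(\la,\la-\epsi_i)s_{\la-\epsi_i}(\mathbf u),
\end{equation*}
with the explicit coefficients provided by \eqref{eq6.E}--\eqref{eq6.G}. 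The key step is to exhibit a differential operator $L(\mathbf u)$ in $u_1,\dots,u_N$ of order at most two, acting on symmetric Laurent polynomials, such that $L(\mathbf u)s_\la(\mathbf u)=K_\la(\mathbf u)$ for every $\la\in\GT_N$.

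Granted such an $L$, we have $\AA_\zw[\Phi(\mathbf u)]=L(\mathbf u)\Phi(\mathbf u)$. Extracting the coefficient of $u_1^{m_1}\cdots u_N^{m_N}$ on both sides, and using that each second-order $u$-derivative differentiates at most two of the factors $\Phi(u_i)$ while leaving the others intact, we find that $\AA_\zw(\varphi_{m_1}\cdots\varphi_{m_N})$ equals a finite sum in which only two of the factors $\varphi_{m_k}$ get perturbed by bounded shifts while the rest stay untouched. This is exactly the structure of the right-hand side of the target identity, and a direct matching of terms completes the verification of Claim~2.

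The main obstacle is the construction and verification of $L$. Theorem~\ref{thm1.B} provides the essential guidance: when $z$ and $w$ are non-negative integers (denoted $k$ and $l$), $\D_\zw$ specializes on the quotient $R/J(k,-l)$ to a multivariate Jacobi operator of the type \eqref{eq1.P}; the sought $L(\mathbf u)$ must accordingly be a natural parameter-deformation of this Jacobi operator rewritten in $u$-variables. Verifying $Ls_\la=K_\la$ can be reduced via \eqref{eq2.J} to the case of ordinary Schur polynomials and then carried out by differentiating the Weyl numerator and matching Pieri-type neighbor contributions against the explicit $r(\la,\la\pm\epsi_i)$ from \eqref{eq6.E}--\eqref{eq6.F}; the eigenvalue-type bookkeeping for $q(\la,\la)$ is handled simultaneously via \eqref{eq6.G}. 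This direct but technical computation -- not its outcome, but the existence of a second-order $L$ satisfying the prescribed shift relations on all rational Schur functions -- is the substantive obstacle, and it is the point at which the specific algebraic structure of $\AA_\zw$ forces the abstract second-order property.
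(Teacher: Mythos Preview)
Your outline has a real gap at the crucial step. Granting that an operator $L(\mathbf u)$ with $L\,s_\la=K_\la$ exists, the natural construction is by Vandermonde conjugation: one checks easily that the single-variable operator
\[
\tilde L_i \;=\; u_i\bigl(z+N-1-u_i\partial_{u_i}\bigr)\bigl(z'+N-1-u_i\partial_{u_i}\bigr)
\;+\; u_i^{-1}\bigl(w+u_i\partial_{u_i}\bigr)\bigl(w'+u_i\partial_{u_i}\bigr)
\]
sends $u_i^{l}$ to $(z{+}N{-}1{-}l)(z'{+}N{-}1{-}l)u_i^{l+1}+(w{+}l)(w'{+}l)u_i^{l-1}$, so $L=V^{-1}\bigl(\sum_i\tilde L_i\bigr)V+\const$ does the job. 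But after conjugation by $V$, the operator $L$ acquires first-order terms with coefficients of the type $1/(u_i-u_j)$, exactly as in the Jacobi operator \eqref{eq1.P}. Such an operator does \emph{not} act on the product $\Phi(u_1)\cdots\Phi(u_N)$ by touching at most two factors: the singular cross terms couple all variables. So your sentence ``each second-order $u$-derivative differentiates at most two of the factors $\Phi(u_i)$ while leaving the others intact'' is unjustified, and the passage from $\AA_\zw[\Phi(\mathbf u)]=L\,\Phi(\mathbf u)$ to formula \eqref{eq7.B} does not follow. One can try to rescue this by working instead with $V\cdot\Phi(\mathbf u)$ and the unconjugated $\sum_i\tilde L_i$, but then Leibniz produces terms where $\tilde L_i$ hits both $V$ and $\Phi(u_i)$, and extracting monomial coefficients no longer isolates two $\varphi$-factors. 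A second issue: you invoke Theorem~\ref{thm1.B} as ``essential guidance'', but in the paper that theorem is extracted \emph{from} the proof of the main theorem, which rests on the very Claim you are trying to establish---so it cannot be used here without circularity.

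The paper's route is quite different and avoids these difficulties. Rather than working globally on $R$, it specializes $(z,w)=(k,l)$ to nonnegative integers, uses that $\AA_\zw$ then preserves both $J$ and $I(k,-l)$, and passes to the finite-variable quotient $\wh R(k,-l)$. There it identifies the induced operator $\bar\AA_{k,k+a,l,l+b}$ with the $m$-variate Jacobi differential operator $D^{(a,b)}_m$, by showing that the link $\La^\infty_N$ intertwines $D^{(a,b)}_m$ with an $m$-variate Hahn difference operator obtained from $D_{\zw\mid N}$ via the complementary-diagram correspondence $\la\leftrightarrow\kappa$. Since $D^{(a,b)}_m$ is manifestly a second-order differential operator, $\bar\AA$ has order $\le 2$; and because $\AA_\zw$ depends polynomially on the parameters and the ideals $J(k,-l)$ shrink to $J$ as $k,l\to\infty$, this suffices for the general claim. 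The cost is the detour through Hahn--Jacobi orthogonal polynomial identities; the gain is that one never needs to control how a singular-coefficient operator acts on an infinite product of generating series.
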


\begin{proof}[Derivation of the theorem from these claims]
Since both operators are continuous in the $I$-adic topology of $R$, it
suffices to prove that they coincide on the monomials
$\varphi_\nu=\varphi_{\nu_1}\dots\varphi_{\nu_N}$.

Since $\D_\zw$ is a second order differential operator, it has order $\le2$ in
the abstract sense. The same holds for the operator $\AA_\zw$, by virtue of
Claim \ref{claim2}. Thus, both operators have order $\le2$.

Therefore, by Proposition \ref{prop7.B}, it suffices to know that the two
operators coincide on monomials of degree $N\le2$, and this holds by virtue of
Claim \ref{claim1}.
\end{proof}

Claims \ref{claim1} and \ref{claim2} are  proved in Section \ref{sect8} and
\ref{sect9}, respectively.

The structure of the proof reflects the way of how the differential operator
$\D_\zw$ has been found. Namely, assuming that $\AA_\zw$ is a second order
differential operator we may write down it explicitly by computing its action
on the monomials of degree $\le2$, and this what we actually do in the proof of
Claim \ref{claim1}.

The proof is indirect, but it seems to me that a direct verification of the
equality $\D_\zw=\AA_\zw$, without recourse to Claim \ref{claim2}, is a
difficult task.

\section{Proof of Claim 7.7}\label{sect8}

\subsection{Beginning of proof}

The differential operator $\D_\zw$ does not contain terms of order 0, so it
annihilates the constants. The same holds for the operator $\AA_\zw$, by the
very definition.

Let us verify that
$$
\D_\zw\varphi_n=\AA_\zw\varphi_n, \qquad n\in\Z.
$$

By the definition of $\D_\zw$, the left-hand side equals
$$
\begin{gathered}
B_n=(n+w+1)(n+w'+1)\varphi_{n+1}+(n-z-1)(n-z'-1)\varphi_{n-1}\\
-\bigl((n-z)(n-z')+(n+w)(n+w')\bigr)\varphi_n.
\end{gathered}
$$

To compute the right-hand side we observe that $\varphi_n=\si_{(n)}$ and then
use the definition of $\AA_\zw$ (see \eqref{eq7.A}). It says that
$$
\AA_\zw\varphi_n=q(n,n)\varphi_n+r(n+1,n)\varphi_{n+1}+r(n-1,n)\varphi_{n-1}.
$$
Here the quantities  $r(n\pm1,n)$ and $q(n,n)$ are given by formulas
\eqref{eq6.E}, \eqref{eq6.F}, and \eqref{eq6.G}, where we take $N=1$, so that
$n$ and $n\pm1$ denote signatures of length 1. We get first
$$
r(n,n+1)=(z-n)(z'-n), \quad r(n,n-1)=(w+n)(w'+n),
$$
which implies
$$
r(n-1,n)=(z-n+1)(z'-n+1), \quad r(n+1,n)=(w+n+1)(w'+n+1).
$$
Next,
$$
q(n,n)=-r(n,n+1)-r(n,n-1)=-(z+n)(z'+n)-(w+n)(w'+n).
$$
This gives the same quantity $B_n$, as desired.

A more difficult task is to check that the two operators coincide on quadratic
monomials. That is,
\begin{equation}\label{eq8.N}
\D_\zw\varphi_\ka=\AA_\zw\varphi_\ka, \qquad \ka=(k_1,k_2)\in\Z^2, \quad k_1\ge
k_2.
\end{equation}
The rest of the section is devoted to the proof of this equality.

Below we use the notation:
$$
\de:=\epsi_1-\epsi_2=(1,-1)\in\Z^2.
$$

\subsection{Step 1}

By \eqref{eq2.F},
$$
\varphi_\ka=\sum_{p=0}^\infty \si_{\ka+p\de}.
$$

So far we used the notation $r(\nu,\mu)$ for $\nu=\mu\pm \epsi_i$ only, but now
it will be convenient to write $r(\mu,\mu)$ instead of $q(\mu,\mu)$. With this
agreement we have
$$
\AA_\zw\varphi_\ka=\sum_{p=0}^\infty\sum_\epsi
r(\ka+p\de+\epsi,\ka+p\de)\si_{\ka+p\de+\epsi},
$$
where $\epsi$ ranges over $\{\pm \epsi_1, \pm \epsi_2, 0\}$.

Next, by \eqref{eq2.G},
$$
\si_{\ka+p\de+\epsi}=\varphi_{\ka+p\de+\epsi}-\varphi_{\ka+(p+1)\de+\epsi}.
$$
Consequently,
\begin{multline}\label{eq8.A}
\AA_\zw\varphi_\ka=\sum_{\epsi}r(\ka+\epsi,\ka)\varphi_{\ka+\epsi}
+\sum_{p=1}^\infty\sum_\epsi\big[r(\ka+p\de+\epsi,\ka+p\de)\\-r(\ka+(p-1)\de+\epsi,
\ka+(p-1)\de)\big]\varphi_{\ka+p\de+\epsi}.
\end{multline}

The right-hand side is a linear combination of elements $\varphi_{\,l_1l_2}$
such that the difference $(l_1+l_2)-(k_1+k_2)$ takes only three possible
values: $\pm1$ and $0$. According to this we write $\AA_\zw\varphi_\ka$ as the
sum of three components,
$$
\AA_\zw\varphi_\ka=(\AA_\zw\varphi_\ka)_1+(\AA_\zw\varphi_\ka)_{-1}+(\AA_\zw\varphi_\ka)_0.
$$

On the other hand, it follows from \eqref{eq4.B} and \eqref{eq4.C} that
$\D_\zw\varphi_\ka$ has the same property, so we write
$$
\D_\zw\varphi_\ka=(\D_\zw\varphi_\ka)_1+(\D_\zw\varphi_\ka)_{-1}+(\D_\zw\varphi_\ka)_0.
$$

Thus we are led to check three equalities,
\begin{equation}\label{eq8.B}
 \begin{gathered}(\AA_\zw\varphi_\ka)_1=(\D_\zw\varphi_\ka)_1, \qquad
(\AA_\zw\varphi_\ka)_{-1}=(\D_\zw\varphi_\ka)_{-1},\\
(\AA_\zw\varphi_\ka)_0=(\D_\zw\varphi_\ka)_0.
\end{gathered}
\end{equation}

The first two equalities are equivalent because of the symmetry consisting in
switching
$$
(z,z') \leftrightarrow(w,w'), \quad (k_1,k_2)\leftrightarrow (-k_2,-k_1),
\qquad (l_1,l_2)\leftrightarrow(-l_2,-l_1).
$$
Therefore, it suffices to check the first and third equalities in
\eqref{eq8.B}.

\subsection{Step 2}

On this step, we write down explicitly the component $(\AA_\zw\varphi_\ka)_1$
of \eqref{eq8.A}. It collects the contribution from the terms with
$\epsi=\epsi_1$ and $\epsi=\epsi_2$. Because $\de=\epsi_1-\epsi_2$, we have
$$
p\de+\epsi_1=(p+1)\de+\epsi_2.
$$
Using this relation one can write $(\AA_\zw\varphi_\ka)_1$ in the following
form:
\begin{equation}\label{eq8.G}
(\AA_\zw\varphi_\ka)_1=X_1+X_2,
\end{equation}
where
\begin{equation}\label{eq8.C}
\begin{gathered}
X_1:=r(\ka+\epsi_1,\ka)\varphi_{\ka+\epsi_1}+r(\ka+\epsi_2,\ka)\varphi_{\ka+\epsi_2}\\
+\big[r(\ka+\de+\epsi_2,\ka+\de)-r(\ka+\epsi_2,\ka)\big]\varphi_{\ka+\de+\epsi_2}
\end{gathered}
\end{equation}
and
\begin{equation}\label{eq8.D}
\begin{gathered}
X_2:=\sum_{p=1}^\infty\big[r(\ka+p\de+\epsi_1,\ka+p\de)-r(\ka+(p-1)\de+\epsi_1,\ka+(p-1)\de)\\
+r(\ka+(p+1)\de+\epsi_2,\ka+(p+1)\de)-r(\ka+p\de+\epsi_2,\ka+p\de)\big]\varphi_{\ka+p\de+\epsi_1}.
\end{gathered}
\end{equation}

To proceed further we need the explicit values of the jump rates: if
$\la=(l_1,l_2)$ with $l_1\ge l_2$, then
\begin{gather}
r(\la+\epsi_1,\la)=(w+l_1+2)(w'+l_1+2), \label{eq8.E}\\
r(\la+\epsi_2,\la)=\begin{cases}(w+l_2+1)(w'+l_2+1), &\text{if $l_1>l_2$}\\ 0,
& \text{if $l_1=l_2$}\end{cases}.\label{eq8.F}
\end{gather}

Let us substitute this in \eqref{eq8.D}. Then $\la=\ka+p\de$ or
$\la=\ka+(p+1)\de$ with $p\ge1$, and in both cases one has $l_1>l_2$. After a
simple computation one finds
\begin{equation*}
X_2=2\sum_{p=1}^\infty(2p+1+k_1-k_2)\varphi_{\ka+p\de+\epsi_1}.
\end{equation*}
It is convenient to extend the summation to $p=0$ and, to compensate this,
subtract from $X_1$ the term $2(k_1-k_2+1)\varphi_{\ka+\epsi_1}$.

Then we rewrite the decomposition \eqref{eq8.G} in a modified form:
\begin{equation}\label{eq8.H}
(\AA_\zw\varphi_\ka)_1=X'_1+X'_2,
\end{equation}
where
\begin{equation}\label{eq8.I}
X'_2=2\sum_{p=0}^\infty(2p+1+k_1-k_2)\varphi_{\ka+p\de+\epsi_1}
=2\sum_{p=0}^\infty(2p+1+k_1-k_2)\varphi_{k_1+p+1}\varphi_{k_2-m}
\end{equation}
and
\begin{equation*}
X'_1=X_1-2(k_1-k_2+1)\varphi_{\ka+\epsi_1}.
\end{equation*}

Finally, using again \eqref{eq8.E} and \eqref{eq8.F} one can check that
\begin{equation}\label{eq8.J}
X'_1=(w+k_1+1)(w'+k_1+1)\varphi_{k_1+1}\varphi_{k_2}
+(w+k_2+1)(w'+k_2+1)\varphi_{k_1}\varphi_{k_2+1}
\end{equation}

\subsection{Step 3}

Now let us turn to $(\D_\zw\varphi_\ka)_1$. This quantity results from an
appropriate truncation of the operator $\D$. Namely, we replace it by
$$
\D^{(1)}_\zw:=\sum_{n\in\Z}A^{(1)}_{nn}\frac{\pd^2}{\pd\varphi_n^2}+2\sum_{\substack{n_1,n_2\in\Z\\
n_1>n_2}} A^{(1)}_{n_1 n_2}\frac{\pd^2}{\pd\varphi_{n_1}\pd\varphi_{n_2}}
+\sum_{n\in\Z}B^{(1)}_n\frac{\pd}{\pd\varphi_n}, \
$$
where, for any indices $n_1\ge n_2$,
\begin{equation*}
A^{(1)}_{n_1
n_2}=\sum_{p=0}^\infty(n_1-n_2+2p+1)\varphi_{n_1+p+1}\varphi_{n_2-p}
\end{equation*}
and, for any $n\in\Z$,
\begin{equation*}
B^{(1)}_n=(n+w+1)(n+w'+1)\varphi_{n+1}.
\end{equation*}

We represent $(\D_\zw\varphi_\ka)_1=\D^{(1)}_\zw\varphi_\ka$ as the sum of two
components, the one coming from the action of the first order derivatives and
the other coming from the action of the second order derivatives. {}From the
explicit expressions above one can readily check that these two components
coincide with $X'_1$ and $X'_2$, respectively.

This completes the proof of the identity
$(\AA_\zw\varphi_\ka)_1=(\D_\zw\varphi_\ka)_1$, which is the first equality in
\eqref{eq8.B}. Now we apply similar arguments to prove the third equality in
\eqref{eq8.B}.

\subsection{Step 4 (cf. Step 2 above)}
Here we compute $(\AA_\zw\varphi_\ka)_0$. {}From \eqref{eq8.A} we obtain
\begin{equation}\label{eq8.K}
\begin{aligned}
(\AA_\zw\varphi_\ka)_0=r(\ka,\ka)\varphi_\ka &+\sum_{p=1}^\infty\big[r(\ka+p\de,\ka+p\de)\\
&-r(\ka+(p-1)\de, \ka+(p-1)\de)\big]\varphi_{\ka+p\de}.
\end{aligned}
\end{equation}

Recall that $r(\la,\la):=q(\la,\la)$. By \eqref{eq6.G}, for $\la=(l_1,l_2)$
with $l_1\ge l_2$,
\begin{equation*}
\begin{aligned}
r(\la,\la)=&-(z-l_1)(z'-l_1)-(w+l_1+1)(w'+l_1+1)\\
&-(z-l_2+1)(z'-l_2+1)-(w+l_2)(w'+l_2)\\
&+z+z'+w+w'+2.
\end{aligned}
\end{equation*}

We substitute this into \eqref{eq8.K} and obtain
$$
(\AA_\zw\varphi_\ka)_0=Y_1+Y_2,
$$
where
\begin{equation}\label{eq8.L}
\begin{aligned}
Y_1=\big\{&-(z-l_1)(z'-l_1)-(w+l_1+1)(w'+l_1+1)\\
&-(z-l_2+1)(z'-l_2+1)-(w+l_2)(w'+l_2)\\
&+z+z'+w+w'+2\big\}\varphi_\ka
\end{aligned}
\end{equation}
and
\begin{equation}\label{eq8.M}
Y_2=-2\sum_{p=1}^\infty(k_1-k_2+p)\varphi_{\ka+p\de}.
\end{equation}

\subsection{Step 5 (cf. Step 3 above)}

Let us turn to $(\D_\zw\varphi)_0$. We write
$$
(\D_\zw\varphi)_0=\D^{(0)}_\zw\varphi_\ka
$$ 
with $\D^{(0)}_\zw$ being the following
truncated operator:
$$
\D^{(0)}_\zw=\sum_{n\in\Z}A^{(0)}_{nn}\frac{\pd^2}{\pd\varphi_n^2}+2\sum_{\substack{n_1,n_2\in\Z\\
n_1>n_2}} A^{(0)}_{n_1 n_2}\frac{\pd^2}{\pd\varphi_{n_1}\pd\varphi_{n_2}}
+\sum_{n\in\Z}B^{(0)}_n\frac{\pd}{\pd\varphi_n}, \
$$
where, for any indices $n_1\ge n_2$,
\begin{equation*}
A^{(0)}_{n_1 n_2}= -(n_1-n_2)\varphi_{n_1}\varphi_{n_2}
\end{equation*}
and, for any $n\in\Z$,
\begin{equation*}
B^{(0)}_n=-\bigl((n-z)(n-z')+(n+w)(n+w')\bigr)\varphi_n.
\end{equation*}

It is readily seen that the result of the action on $\varphi_\ka$ of the first
derivatives in $\D^{(0)}_\zw$ coincides with $Y_1$ (see \eqref{eq8.L}), while the
action of the second derivatives leads to $Y_2$ (see \eqref{eq8.M}).

This completes the proof of \eqref{eq8.N}. Thus, Claim \ref{claim1} is proved,
too.

\section{Proof of Claim 7.8}\label{sect9}

\subsection{Reduction of the problem}

Let us fix two nonnegative integers $k$ and $l$, not equal both to $0$.

\begin{proposition}[cf. Proposition \ref{prop4.C}]\label{prop9.D}
If $z=k$ and $w=l$ as above, then the operator $\AA_\zw$ preserves the ideal
$I(k,-l)$, the kernel of the canonical map $R\to R(k,-l)$.
\end{proposition}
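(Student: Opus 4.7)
The plan is to identify $I(k,-l)\cap R_N$ inside the Schur basis $\{\si_\la\}$ and then verify the preservation property on basis elements. I claim this intersection equals the $I$-adic closure of $\operatorname{span}\{\si_\la:\la\in\GT_N\setminus\GT_N(k,-l)\}$, where $\GT_N(k,-l)$ is the box from \eqref{eq5.B}. The inclusion $\supseteq$ is immediate from the determinantal formula \eqref{eq2.N}: if $\la_1>k$, then the first row of $[\varphi_{\la_i-i+j}]$ consists entirely of $\varphi_n$ with $n\ge\la_1>k$, all of which lie in $I(k,-l)$, so the determinant does too; the case $\la_N<-l$ is handled symmetrically via the last row.

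For the reverse inclusion, I would establish that the images of $\{\si_\la:\la\in\GT_N(k,-l)\}$ in the quotient $R(k,-l)$ are linearly independent. By inverting the Kostka-type relation $\varphi_\mu=\sum_{\la\ge\mu}K_{\la\mu}\si_\la$ coming from \eqref{eq2.E} applied to $P_\la=s_\la$, one obtains a triangular expansion $\si_\la=\varphi_\la+\sum_{\mu>\la}L_{\la\mu}\varphi_\mu$ with $\mu>\la$ in the dominance order. For $\la\in\GT_N(k,-l)$ the leading monomial $\varphi_\la$ still represents a nonzero element of the degree-$N$ part of $R(k,-l)$, and a standard triangularity argument (pick the dominance-minimal $\la$ with nonzero coefficient in a hypothetical relation and read off the coefficient of $\varphi_\la$) yields the independence. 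Combined with the first paragraph, this proves the claimed description of $I(k,-l)\cap R_N$.

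Given this description, it suffices to verify $\AA_\zw\si_\la\in I(k,-l)$ for every $\la\notin\GT_N(k,-l)$. By \eqref{eq7.A}, $\AA_\zw\si_\la$ is a finite combination of $\si_\la$ together with its neighbours $\si_{\la\pm\epsi_i}$. Any neighbour still outside $\GT_N(k,-l)$ lies in $I(k,-l)$ by the above, so the only dangerous contributions come from jumps that would push $\la$ back into the box. This can happen only when $\la$ lies one step beyond the boundary: either $\la_1=k+1$ (with the jump $\la-\epsi_1$) or $\la_N=-l-1$ (with the jump $\la+\epsi_N$). The corresponding coefficients are
$$
r(\la-\epsi_1,\la)=(z-\la_1+1)(z'-\la_1+1) \quad\text{and}\quad r(\la+\epsi_N,\la)=(w+\la_N+1)(w'+\la_N+1),
$$
and these vanish precisely because $z=k$ and $w=l$, respectively. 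Hence every dangerous term is killed, $\AA_\zw\si_\la\in I(k,-l)$, and by $I$-adic continuity of $\AA_\zw$ together with closedness of $I(k,-l)$ the preservation extends to the whole ideal. The main obstacle is the linear independence step in the second paragraph; the rest is a direct case check that precisely mirrors the ``process does not exit the box $\GT_N(k,-l)$'' property of the underlying Markov chain on $\GT_N$ with $z=k$, $w=l$.
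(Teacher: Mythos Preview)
Your proof is correct and follows essentially the same approach as the paper's: identify $I(k,-l)$ with the closed span of $\{\si_\la:\la\notin\GT_N(k,-l)\}$ and then check that the boundary jump rates $r(\la\mp\epsi_1,\la)$, $r(\la\pm\epsi_N,\la)$ vanish when $z=k$, $w=l$. The only difference is that you re-derive the $\si$-basis description of $I(k,-l)$, whereas the paper invokes this as already established in Section~\ref{sect2} (see the paragraph after \eqref{eq2.I}, which states that any basis $\{\pi_\la\}$ built via the triangularity scheme is consistent with the ideals $I(n_+,n_-)$).
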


\begin{proof}
Let us set $\GT(k,-l)=\bigcup_{N=1}^\infty\GT_N(k,-l)$ (recall that the
definition of $\GT_N(n_+,n_-)$ is given in \eqref{eq5.B}). The ideal $I(k,-l)$
is the closed linear span of the basis elements $\si_\nu$ such that
$\nu\notin\GT(k,-l)$, where the closure is taken in the $I$-adic topology.
Therefore, it suffices to prove the following: if $\nu\notin\GT_N(k,-l)$ and
$\mu\in\GT_N(k,-l)$, then the quantity $r_{k,z',l,w'\mid N}(\nu,\mu)$ vanishes.

Next, this claim is readily verified by using the definition  of $r_{\zw\mid
N}(\nu,\mu)$, see \eqref{eq6.E} and \eqref{eq6.F}.
\end{proof}

By Proposition \ref{prop5.C}, $\AA_\zw$ preserves the ideal $J$ (for arbitrary
$(\zw)$). Therefore, if $z=k$ and $w=l$, then $\AA_\zw$ preserves the ideal
$J(k,-l)$ generated by $J$ and $I(k,-l)$, and hence gives rise to an operator
on the quotient algebra
$$
\wh
R(k,-l)=R/J(k,-l)=\C[\varphi_{-l},\dots,\varphi_k]\big/(\varphi_{-l}+\dots+\varphi_k-1),
$$
(this quotient has already appeared in \eqref{eq3.C}). Let us denote the latter
operator by $\bar\AA_{k,z',l,w'}$.

\begin{proposition}\label{prop9.E}
To prove Claim \ref{claim2} it suffices to show that the operators
$\bar\AA_{k,z',l,w'}$ have order $\le2$.
\end{proposition}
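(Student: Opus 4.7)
The plan is to combine three ingredients already available in the paper: polynomial dependence of $\AA_\zw$ on the parameters, preservation of the ideals $J$ and $I(k,-l)$ under the action of $\AA_\zw$ (and any multiplication), and the rigidity provided by Proposition~\ref{prop3.D}. By continuity in the $I$-adic topology together with the iterated Leibniz expansion of a triple commutator, it suffices to prove vanishing on test elements $x_1,x_2,x_3$ drawn from the topological generating set $\{\varphi_n\}_{n\in\Z}$ and applied to an arbitrary monomial $\varphi_\nu$. So my target is
$$T_\zw := \bigl[M_{\varphi_{n_1}},\bigl[M_{\varphi_{n_2}},[M_{\varphi_{n_3}},\AA_\zw]\bigr]\bigr]\,\varphi_\nu = 0 \quad\text{in } R,$$
for arbitrary $n_1,n_2,n_3\in\Z$ and an arbitrary signature $\nu$.

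The first step is reduction to a Zariski-dense subset of $\C^4$. From \eqref{eq7.A} together with \eqref{eq6.E}--\eqref{eq6.G}, the matrix entries of $\AA_\zw$ in the basis $\{\si_\mu\}$ are polynomials in $\zw$, and the change of basis between $\{\si_\mu\}$ and $\{\varphi_\mu\}$ does not involve the parameters; hence the coefficients of $T_\zw$ in the basis $\{\varphi_\alpha\}$ are polynomials in $\zw$. Choose $K$ large enough that $n_1,n_2,n_3$ and all coordinates of $\nu$ lie in $[-K,K]$. Since $\{(k,z',l,w'): k,l\ge K,\, z',w'\in\C\}$ is Zariski-dense in $\C^4$, it is enough to check $T_{k,z',l,w'}=0$ for every $k,l\ge K$ and every $z',w'\in\C$.

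Fix such $k,l$ and arbitrary $z',w'\in\C$. By Proposition~\ref{prop9.D} the operator $\AA_{k,z',l,w'}$ preserves $I(k,-l)$, and by Proposition~\ref{prop7.C} it preserves $J$; hence it preserves $J(k,-l)=I(k,-l)+J$. Multiplication by any element of $R$ preserves every ideal, so the triple commutator itself preserves $J(k,-l)$ and descends to the quotient $\wh R(k,-l)=R/J(k,-l)$ as the triple commutator of $\bar\AA_{k,z',l,w'}$ with the corresponding multiplication operators applied to the image of $\varphi_\nu$. By the standing hypothesis of the proposition, $\bar\AA_{k,z',l,w'}$ has order $\le 2$, so this descended triple commutator is zero in $\wh R(k,-l)$. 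Consequently $T_{k,z',l,w'}\in J(k,-l)$ for every $k,l\ge K$, and Proposition~\ref{prop3.E} places $T_{k,z',l,w'}$ in $\bigcap_{k,l}J(k,-l)=J$.

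The decisive step is homogeneity. Formula \eqref{eq7.A} shows that $\AA_\zw$ preserves the grading $R=\bigoplus_N R_N$, while each $M_{\varphi_{n_i}}$ raises it by exactly one, so the triple commutator sends $R_N$ into $R_{N+3}$; writing $N$ for the length of $\nu$, we conclude $T_{k,z',l,w'}\in R_{N+3}$ is homogeneous. Proposition~\ref{prop3.D} asserts $J\cap R_{N+3}=\{0\}$, hence $T_{k,z',l,w'}=0$. The polynomial argument now yields $T_\zw=0$ for every $\zw\in\C^4$, proving Claim~\ref{claim2}. The main place requiring care is the Zariski-density step: because $K$ is chosen after fixing $n_1,n_2,n_3$ and $\nu$, one must emphasize that $K$ can be taken arbitrarily large, so the parameter set $\{(k,z',l,w'): k,l\ge K\}$ remains Zariski-dense regardless of the prior choices of $n_i$ and $\nu$. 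Every other step is a direct application of the cited propositions.
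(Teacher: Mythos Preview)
Your argument contains a genuine gap at the step invoking Proposition~\ref{prop3.E}. You correctly establish that for each fixed pair $(k,l)$ with $k,l\ge K$, the element $T_{k,z',l,w'}$ lies in $J(k,-l)$. But the sentence ``Proposition~\ref{prop3.E} places $T_{k,z',l,w'}$ in $\bigcap_{k,l}J(k,-l)=J$'' is a non sequitur: the identity $\bigcap_{k,l} J(k,-l)=J$ says that a \emph{single} element belonging to every $J(k,-l)$ lies in $J$, whereas your element $T_{k,z',l,w'}$ varies with $(k,l)$ through the parameters $z=k$, $w=l$. For one fixed $(k,l)$ you only get membership in the single ideal $J(k,-l)$, and you cannot upgrade this to membership in $J(k',-l')$ for $k'>k$ or $l'>l$, because $\AA_{k,z',l,w'}$ does \emph{not} preserve $I(k',-l')$ in general (look at the proof of Proposition~\ref{prop9.D}: the vanishing of the relevant rates uses precisely $z=k$, $w=l$).

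The repair, which is what the paper's terse list of three facts is meant to encode, is to interchange the order of the two reductions. First fix a target pair $(k_0,l_0)$ and look at the image of $T_\zw$ in $\wh R(k_0,-l_0)$; this image depends polynomially on $\zw$. For every $(k,l)$ with $k\ge k_0$ and $l\ge l_0$ one has the containment $J(k,-l)\subseteq J(k_0,-l_0)$ (the generators of $I(k,-l)$ are a subset of those of $I(k_0,-l_0)$), so your computation yields $T_{k,z',l,w'}\in J(k_0,-l_0)$ for all such $(k,l)$ and all $z',w'$. That parameter set is Zariski-dense in $\C^4$, hence the image of $T_\zw$ in $\wh R(k_0,-l_0)$ vanishes for \emph{every} $\zw$. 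Now let $(k_0,l_0)$ range and apply Proposition~\ref{prop3.E} (or Corollary~\ref{cor3.A}) to conclude $T_\zw\in J$ for all $\zw$; then homogeneity and Proposition~\ref{prop3.D} finish as you wrote. Your threshold $K$ tied to $n_1,n_2,n_3,\nu$ plays no role and can be dropped.
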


\begin{proof}
Indeed,  Proposition \ref{prop7.B} says that Claim \ref{claim2}  is equivalent to
the relation
$$
[M_{\psi_3},[M_{\psi_2},[M_{\psi_1},\AA_\zw]]]\psi_4=0,
$$
which has to hold for arbitrary four elements $\psi_1,\psi_2,\psi_3, \psi_4\in
R$. Without loss of generality we may assume that all these elements are
homogeneous. Then the left-hand side is homogeneous, too, as it follows from
the definition of $\AA_\zw$. By virtue of Proposition \ref{prop3.D}, it suffices
to prove that the left-hand side belongs to $J$. Because  $\AA_\zw$ preserves
$J$ (Proposition \ref{prop7.C}), this allows us to pass from from $R$ to its
quotient $\wh R=R/J$.

Next, we want to specify $z=k$, $w=l$ and to reduce the desired relation modulo
the ideal $I(k,-l)$. This is possible for the following reasons:

1.  $\AA_\zw$ depends quadratically on the parameters, which allows us to
specialize $(\zw)$ to any Zariski dense subset of $\C^4$ (or even any subset
which is a set of uniqueness for quadratic polynomials);

2.  as $k,l\to +\infty$, the ideals $I(k,-l)$ decrease and their intersection
is $\{0\}$;

3.  we know that the operator $A_\zw$ can be reduced modulo $I(k,-l)$ provided
that $z=k$ and $w=l$.
\end{proof}

As the result of the factorization modulo both $J$ and $I(k,-l)$ the algebra
$R$ is reduced to the algebra
$$
\wh
R(k,-l):=\C[\varphi_{-l},\dots,\varphi_k]\big/(\varphi_{-l}+\dots+\varphi_k-1),
$$
which is isomorphic to the algebra of polynomials in $m:=k+l$ variables (we
have $m+1$ variables subject to a linear relation). This substantially
simplifies our task, because instead of the operators $\AA_\zw$ acting on the
huge space $R$ we may deal with the operators $\bar\AA_{k,z',l,w'}$ acting on
algebras of polynomials.

We have a large freedom in the choice of parameters $(z',w')$, because the
argument above allows us to restrict them to an arbitrary set which is a set of
uniqueness for quadratic polynomials. For the reasons that will become clear
below it is convenient to set $z'=k+a$, $w'=l+b$, where $a$ and $b$ are real
numbers $>-1$.

Thus, we have to show that the operator $\bar\AA_{k,k+a,l,l+b}$, which acts on
the algebra $\wh R(k,-l)$, is of order $\le2$.

As explained in Subsection \ref{sect3.B}, we may realize $\wh R(k,-l)$ as the
algebra of polynomial functions on the simplex $\Om(k,-l)$. Our aim is to show
that in this realization, $\bar\AA_{k,k+a,l,l+b}$ is given by a second order
partial differential operator (the Jacobi operator). This will evidently imply
that it has order $\le2$ in the abstract sense.

Finally, it is readily seen that the operator $\AA_\zw$ behaves exactly as
$\D_\zw$ with respect to the shift of variables
$\varphi_n\mapsto\varphi_{n+\const}$ (see Proposition \ref{prop4.C}). This
allows us to assume, without loss of generality, that $l=0$, which slightly
simplifies the notation.

Thus, in what follows we assume that
\begin{equation}\label{eq9.M}
z=m, \quad z'=m+a, \quad w=0, \quad w'=b,
\end{equation}
where $m=1,2,\dots$ and $a,b>-1$, and we are dealing with the operator
$\bar\AA_{m,m+a,0,b}$ acting on $\wh R(m,0)$.

\subsection{The Jacobi differential operators}\label{sect9.B}

As in Subsection \ref{sect3.B} we introduce new variables $t_1,\dots,t_m$
related to $\varphi_0,\dots,\varphi_m$ in the following way:
$$
\sum_{n=0}^m\varphi_nu^n=\prod_{i=1}^m (t_i+(1-t_i)u),
$$
where $u$ is a formal variable. In other words, we substitute for
$\varphi_0,\dots,\varphi_m$ certain symmetric polynomials in $t_1,\dots,t_m$.
Then we may identify $\wh R(m,0)$ with the algebra of symmetric polynomials in
variables $t_1,\dots,t_m$ (see Proposition \ref{prop3.B}). We also regard
$(t_1,\dots,t_m)$ as coordinates on $\Om(m,0)$ with the understanding that
$$
1\ge t_1\ge\dots\ge t_m\ge0.
$$

Let us introduce the \emph{Jacobi differential operator} on $[0,1]$:
$$
D^{(a,b)}=t(1-t)\frac{d^2}{dt^2}+[b+1-(a+b+2)t]\frac{d}{dt}.
$$
Its connection with the classic Jacobi orthogonal polynomials is explained
below (Subsection \ref{sect9.A}). Let us observe that
\begin{equation}\label{eq9.I}
D^\ba t^n=-n(n+a+b+1) t^n+\,\text{lower degree terms}, \qquad n=0,1,2,\dots\,.
\end{equation}

Let
$$
V_m=V_m(t_1,\dots,t_m):=\prod_{1\le i<j\le m}(t_i-t_j), \qquad m=1,2,\dots,
$$
and let
$$
D^\ba_{\text{\rm variable $t_i$}}:=t_i(1-t_i)\frac{\partial^2}{\partial
t_i^2}+[b+1-(a+b+2)t_i]\frac{\partial}{\partial t_i}
$$
be a copy of the Jacobi operator applied to the $i$th variable, $i=1,\dots,m$.
{}From  \eqref{eq9.I} and the fact that $V_m$ is the Vandermonde determinant it
follows that
\begin{equation}\label{eq9.E}
\left(\sum_{i=1}^m D^\ba_{\text{\rm variable $t_i$}}\right)V_m=-\const_{a,b,m}
V_m,
\end{equation}
where
\begin{equation}\label{eq9.F}
\const_{a,b,m}:=\sum_{n=0}^{m-1}n(n+a+b+1).
\end{equation}

Now we introduce the \emph{$m$-variate Jacobi differential operator},
$m=2,3,\dots$, by
\begin{gather}
D^{(a,b)}_m :=\frac1{V_m}\circ\left(\sum_{i=1}^m D^\ba_{\text{\rm variable
$t_i$}}\right)\circ
V_m+\const_{a,b,m}\label{eq9.G}\\
=\sum_{i=1}^m \left(t_i(1-t_i)\frac{\partial^2}{\partial
t_i^2}+\left[b+1-(a+b+2)t_i+\sum_{j:\, j\ne
i}\frac{2t_i(1-t_i)}{t_i-t_j}\right]\frac{\partial}{\partial
t_i}\right).\label{eq9.H}
\end{gather}
The meaning of \eqref{eq9.G} is that the partial differential operator $\sum_{i=1}^m D^\ba_{\text{\rm variable
$t_i$}}$ is conjugated by the operator of multiplication by the Vandermonde
$V_m$, and adding $\const_{a,b,m}$ kills the constant term that arises after
conjugation. The equality between \eqref{eq9.G} and \eqref{eq9.H} is verified
directly (actually, in what follows,  we use only \eqref{eq9.G}).

Note that, although the coefficients of the first order derivatives in
\eqref{eq9.H} have singularities along the diagonals $t_i=t_j$, the action of
$D^{(a,b)}_m$ on the space of symmetric polynomials is well defined. Indeed,
let us look at \eqref{eq9.G}: the operator of multiplication by $V_m$
transforms symmetric polynomials into antisymmetric ones, then the application
of the symmetric partial differential operator $\sum_{i=1}^m D^\ba_{\text{\rm variable
$t_i$}}$ leaves the
space of antisymmetric polynomials invariant, and finally division by $V_m$
transforms it back into the space of symmetric polynomials.

(The construction of a partial differential (or difference) operator related to multivariate orthogonal polynomials that
 we used in \eqref{eq9.G} (and also in \eqref{eq9.Q} below) is well known. The probabilistic meaning of this construction is
related to Doob's $h$-transform, see K\"onig \cite{Konig}.)

The arguments of the preceding subsection reduce Claim \ref{claim2} to the
following theorem.

\begin{theorem}\label{thm9.A}
As explained above, we identify $\wh R(m,0)$ with the algebra of symmetric
polynomials in $m$ variables $t_1,\dots,t_m$. Then the action of the operator
$\bar\AA_{m,m+a,0,b}$ on this algebra is implemented by the $m$-variate Jacobi
differential operator $D^{(a,b)}_m$.
\end{theorem}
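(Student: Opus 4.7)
Proof proposal.

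The plan is to verify Theorem \ref{thm9.A} by direct computation of both operators on a common spanning set of $\wh R(m,0) \cong \Sym_m$, using the generating function $\wh\Phi(u;t) := \prod_{i=1}^m(t_i + (1-t_i)u)$ to translate between the $\si_\la$ basis of $R$ and symmetric polynomials in $t_1,\dots,t_m$.

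First, I identify $\wh\si_\la(t)$ for $\la \in \GT_N(m,0)$ as an explicit symmetric polynomial in the $t_i$. Applying the generating function identity \eqref{eq3.J} specialized to $\om \in \Om(m,0)$, together with a dual Cauchy identity of the form $\prod_{i,j}(1-s_i v_j) = \sum_\la (-1)^{|\la|}s_\la(s)s_{\la'}(v)$ (with $s_i = 1-t_i$ and $v_j = 1-u_j$), produces an explicit expression for $\wh\si_\la(t)$ as a quotient $\det[\cdots]/V_m(t)$ whose rows involve mixed powers of $t_i$ and $(1-t_i)$. In particular $\wh\si_\la(t) \equiv 0$ unless $\la \in \GT_N(m,0)$, and the collection $\{\wh\si_\la(t) : \la \in \GT_N(m,0),\; N \ge m\}$ spans $\wh R(m,0)$.

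Next, I compute $D^{(a,b)}_m \wh\si_\la(t)$ via the Vandermonde-conjugation factorization \eqref{eq9.G}. Working with $V_m(t) \wh\si_\la(t)$ in its determinantal form, one applies the commuting family of single-variable Jacobi operators row-by-row using the identity $D^{(a,b)}t^n = -n(n+a+b+1)t^n + n(n+b)t^{n-1}$ (extracted from \eqref{eq9.I}) together with its $(1-t)$-analogue $D^{(a,b)}(1-t)^n = -n(n+a+b+1)(1-t)^n + n(n+a)(1-t)^{n-1}$. Re-antisymmetrizing and dividing by $V_m(t)$, one obtains $D^{(a,b)}_m\wh\si_\la$ as a linear combination of $\wh\si_\la$ itself (with diagonal coefficient $\const_{a,b,m} - \sum_j(\la_j+m-j)(\la_j+m-j+a+b+1)$) plus $\wh\si_{\la\pm\epsi_i}$ carrying explicit coefficients quadratic in $\la_i$.

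Finally, I match this with $\bar\AA_{m,m+a,0,b}\si_\la$ as given by \eqref{eq7.A} at $(z,z',w,w') = (m,m+a,0,b)$. Evaluating the dimension ratios $\Dim_N(\la\pm\epsi_i)/\Dim_N\la$ via Weyl's formula \eqref{eq2.R} and substituting the specialized parameters into \eqref{eq6.E}--\eqref{eq6.G} produces exactly the same quadratic-in-$\la_i$ coefficients computed above. The main obstacle will be the boundary cancellations: when $\la\pm\epsi_i \notin \GT_N(m,0)$, the coefficients must vanish on both sides. On the $\bar\AA$-side this is Proposition \ref{prop9.D} (the specialization $z = m$, $w = 0$ is what forces the vanishings), while on the $D^{(a,b)}_m$-side it follows from the determinantal antisymmetry of the expression obtained above. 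A careful organization of the surviving terms, indexed by the position $i$ of the shift in the signature $\la$, then completes the term-by-term match; linearity extends the equality to all of $\wh R(m,0) \cong \Sym_m$.
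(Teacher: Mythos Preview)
Your approach is viable and, once the dust settles, runs parallel to the paper's, but the paper organizes the same computation more transparently via Hahn polynomials.

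Concretely: the determinantal form you want for $\wh\si_\la(t)$ is exactly Proposition~\ref{prop9.B}, written in terms of the \emph{complementary} diagram $\ka\subseteq(N^m)$ (coordinates $k_j=\ka_j+m-j$). Your diagonal coefficient $\const_{a,b,m}-\sum_j(\la_j+m-j)(\la_j+m-j+a+b+1)$ is really a $\ka$-expression, not a $\la$-expression; with $N$ coordinates $\la_1,\dots,\la_N$ the symbols $\la_j+m-j$ for $j\le m$ have no intrinsic meaning. The key row-by-row step is the action of $D^{(a,b)}$ on the Bernstein-type functions $g_k(t)=t^k(1-t)^{M-k}$, and this is \emph{not} obtained just from the pure-power formulas $D^{(a,b)}t^n$ and $D^{(a,b)}(1-t)^n$: you must include the second-order cross term $2t(1-t)f'g'$ and then re-express the resulting degree-$(M-1)$ pieces via $t^{k-1}(1-t)^n=t^{k-1}(1-t)^{n+1}+t^k(1-t)^n$. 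After this, $D^{(a,b)}g_k$ is genuinely tridiagonal in $\{g_{k-1},g_k,g_{k+1}\}$, and the coefficients are precisely (the transpose of) the Hahn difference operator $\Delta^{(a,b,M)}$ of \eqref{eq9.J}. The paper records this cleanly as the intertwining relation $D^{(a,b)}_m\Lambda^\infty_N=\Lambda^\infty_N\Delta^{(a,b,M)}_m$ (proved spectrally via Lemma~\ref{lemma9.B} and matching eigenvalues), while you would arrive at it by the direct computation above.

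For the final matching with $\bar\AA$, note that the off-diagonal coefficients $r(\nu,\mu)$ in \eqref{eq7.A} do \emph{not} contain dimension ratios; the ratios $\Dim_N\la'/\Dim_N\la$ enter on the \emph{other} side, when you convert the $k_j\to k_j\pm1$ shifted determinants back into $\wh\si_{\la'}$ (since $\wh\si_\la$ carries a factor $1/\Dim_N\la$). That conversion also requires the factor $V(k)/V(k')$ and is handled in the paper by Lemma~\ref{lemma9.A} (relating $V(\mathscr L)$ to $V(\mathscr K)$) together with Lemma~\ref{lemma9.D} and Proposition~\ref{prop9.A}, which identify the transformed jump rates with the Hahn operator. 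Your ``careful organization of the surviving terms'' would have to reproduce exactly these lemmas.

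In short: your plan works, but the steps you left vague are precisely the nontrivial ones, and the paper's route through the Hahn--Jacobi correspondence packages them into known orthogonal-polynomial identities rather than a bespoke coefficient chase.
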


The proof occupies the rest of the section. Here is the scheme of proof.

As explained in Subsection \ref{sect5.A}, we dispose of finite stochastic
matrices $\La^{N+1}_N: \GT_{N+1}(m,0)\dasharrow \GT_N(m,0)$ and the links
$\La^\infty_N:\Om(m,0)\dasharrow\GT_N(m,0)$. Let, as above, $C(\GT_N(m,0))$
stand for the space of functions on the finite set $\GT_N(m,0)$. The link
$\La^\infty_N$ maps $C(\GT_N(m,0))$ into $C(\Om(m,0))$, and the image is
actually contained in $\wh R(m,0)\subset C(\Om(m,0))$. As $N$ grows, this image
enlarges (because of the relation $\La^\infty_N=\La^\infty_{N+1}\La^{N+1}_N$)
and in the limit as $N\to\infty$ it exhausts the whole space $\wh R(m,0)$. This
point will be explained in more detail below.

Recall that the operator $\AA_\zw$ was defined through the difference operators
$D_{\zw\mid N}$. In the special case when $z=m$ and $w=0$, the $N$th difference
operator is well defined on the subset $\GT_N(m,0)$. {}From the definition of
operator $\bar\AA_{m,m+a,0,b}$ it follows that it is characterized by the
commutation relations
$$
\bar\AA_{m,m+a,0,b}\La^\infty_N=\La^\infty_N D_{m, m+a,0,b\mid N},
$$
where $N=1,2,\dots$ and the both sides are viewed as operators from the
finite-dimensional space $C(\GT_N(m,0))$ to $\wh R(m,0)$. We will prove that in
these relations, $\bar\AA_{m,m+a,0,b}$ can be replaced by the Jacobi operator
$D^{(a,b)}_m$. That is, one has
\begin{equation}\label{eq9.L}
D^{(a,b)}_m\La^\infty_N=\La^\infty_N D_{m, m+a,0,b\mid N},
\end{equation}
This will imply the desired equality $\bar\AA_{m,m+a,0,b}=D^{(a,b)}_m$.

The signatures  $\la\in\GT_N(m,0)$ can be viewed as Young diagrams contained in
the rectangular diagram
$$
(m^N):=(\,\underbrace{m,\dots,m}_N\,).
$$
Given such a diagram $\la$, we associate with it the complementary diagram
$\ka\subseteq(N^m)$: it is obtained from the shape $(m^N)\setminus\la$ by
rotation and conjugation.

The proof \eqref{eq9.L} is divided into three steps:

\smallskip

\emph{Step} 1. We express $\La^\infty_N$ in terms of $(t_1,\dots,t_m)$ and
$\ka$ (Proposition \ref{prop9.B}).

\smallskip

\emph{Step} 2. We show that under the correspondence $\la\leftrightarrow\ka$,
the difference operator $D_{m, m+a,0,b\mid N}$ in the right-hand side of
\eqref{eq9.L} turns into the \emph{$m$-variate Hahn difference operator}
(Proposition \ref{prop9.A}). As the result, \eqref{eq9.L} takes the form
\begin{equation}\label{eq9.D}
D^{(a,b)}_m\La^\infty_N=\La^\infty_N\De^{(a,b,N+m-1)}_m, \qquad N=1,2,\dots\,,
\end{equation}
where $\De^{(a,b,N+m-1)}_m$ is the Hahn difference operator in question.

\smallskip

\emph{Step} 3. We prove that $\La^\infty_N$ transforms the $m$-variate
symmetric Hahn polynomials into the respective $m$-variate symmetric Jacobi
polynomials (Proposition \ref{prop9.C}). Then the proof is readily completed.

\smallskip

We proceed to the detailed proof of the theorem.

\subsection{Step 1: transformation of the link  $\La^\infty_N$}

Let $\la$ range over the set of Young diagrams contained in the rectangle
$(m^N)$, and $\ka\subseteq(N^m)$ be the complementary diagram to $\la$. In more
detail,
$$
\ka=(N-\la'_m, \dots,N-\la'_1),
$$
where the diagram $\la'$ is conjugate to the diagram $\la$. Next, we set
$$
l_i:=\la_i+N-i, \quad i=1,\dots,N; \qquad  k_j=\ka_j+m-j, \qquad j=1,\dots,m.
$$
Evidently, $l_1>\dots>l_N$ and $k_1>\dots>k_m$.

\begin{lemma}\label{lemma9.C}
The set $\{0,\dots,N+m-1\}$ is the disjoint union of the sets
$\L:=\{l_1,\dots,l_N\}$ and $\K:=\{k_1,\dots,k_m\}$.
\end{lemma}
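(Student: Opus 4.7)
The plan is to prove disjointness $\L\cap\K=\varnothing$ directly from the conjugation identity, and then conclude by a cardinality count, since $|\L|+|\K|=N+m=|\{0,\dots,N+m-1\}|$.

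First, I would verify that both sets are contained in $\{0,\dots,N+m-1\}$. Since $0\le\la_i\le m$ and $1\le i\le N$, each $l_i=\la_i+N-i$ lies in $[0,N+m-1]$; similarly, since $0\le\la'_j\le N$ and $1\le j\le m$, rewriting $k_j=\ka_j+m-j=(N-\la'_{m+1-j})+(m-j)$ shows that each $k_j$ lies in $[0,N+m-1]$. The strict inequalities $l_1>\dots>l_N$ and $k_1>\dots>k_m$ (already noted in the text) give $|\L|=N$ and $|\K|=m$.

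Next, for disjointness, I would suppose $l_i=k_j$ for some $i\in\{1,\dots,N\}$ and $j\in\{1,\dots,m\}$. Setting $j':=m+1-j\in\{1,\dots,m\}$, this equation becomes
\[
\la_i+N-i=(N-\la'_{j'})+(j'-1),
\]
i.e., $\la_i+\la'_{j'}=i+j'-1$. The contradiction comes from a case analysis on whether the cell $(i,j')$ belongs to the Young diagram of $\la$: if $(i,j')\in\la$, then $\la_i\ge j'$ and $\la'_{j'}\ge i$, giving $\la_i+\la'_{j'}\ge i+j'$; if $(i,j')\notin\la$, then $\la_i\le j'-1$ and $\la'_{j'}\le i-1$, giving $\la_i+\la'_{j'}\le i+j'-2$. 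Either way, the equality $\la_i+\la'_{j'}=i+j'-1$ is impossible.

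Finally, combining disjointness with $|\L|+|\K|=N+m$ and $\L\cup\K\subseteq\{0,\dots,N+m-1\}$ forces the inclusion to be an equality. The only real content is the case analysis in Step 2, which is immediate once one recognises the involution between ``being inside $\la$'' and ``being outside $\la$'' on a single cell; this is the standard particle-hole (Maya diagram) duality, and no computation beyond the observation above is needed.
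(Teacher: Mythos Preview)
Your proof is correct. The paper does not give its own argument but simply cites Macdonald \cite[ch.~I, (1.7)]{Ma95}; your case analysis on whether the cell $(i,j')$ lies in $\la$ is exactly the standard proof behind that reference, so the approaches coincide.
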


\begin{proof}
This is a well-known fact, see e.g. Macdonald \cite[ch. I, (1.7)]{Ma95}.
\end{proof}

Introduce a notation:
$$
M:=N+m-1, \quad \I=\{0,\dots,M\}.
$$
Next, for a finite collection of numbers $X=\{x_1>\dots>x_n\}$ we set
$$
V(X)=V_n(x_1,\dots,x_n)=\prod_{1\le i<j\le n}(x_i-x_j).
$$

\begin{lemma}\label{lemma9.A}
One has
\begin{equation}\label{eq9.A}
V(l_1,\dots,l_N)=\frac{0!1!\dots M!\,V(k_1,\dots,k_m)}{\prod\limits_{j=1}^m
k_j!(M-k_j)!}
\end{equation}
\end{lemma}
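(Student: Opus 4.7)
The plan is to compute the full Vandermonde $V(\I)$ in two different ways and compare. Since $\I = \{0,1,\dots,M\}$ splits as a disjoint union $\L \sqcup \K$, every unordered pair from $\I$ is either a pair from $\L$, a pair from $\K$, or a mixed pair $\{l,k\}$ with $l \in \L$, $k \in \K$. Thus
\begin{equation*}
V(\I) = V(\L)\cdot V(\K)\cdot W, \qquad W := \prod_{l \in \L,\, k \in \K}|l-k|.
\end{equation*}
On the other hand, directly from the definition, $V(\I) = \prod_{0 \le i < j \le M}(j-i) = 0!\,1!\,\cdots M!$. So it only remains to evaluate $W$ and solve for $V(\L)$.

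To evaluate $W$, first fix $j \in \{1,\dots,m\}$ and look at the one-index product
\begin{equation*}
\prod_{i \in \I,\, i \ne k_j}|i - k_j| = k_j!\,(M-k_j)!,
\end{equation*}
which is immediate from $\I = \{0,\dots,M\}$. Splitting the left-hand side according to $\I \setminus \{k_j\} = \L \sqcup (\K \setminus \{k_j\})$ gives
\begin{equation*}
\prod_{l \in \L}|l - k_j| \;=\; \frac{k_j!\,(M-k_j)!}{\prod_{i \ne j}|k_i - k_j|}.
\end{equation*}
Now take the product over $j = 1,\dots,m$. In the denominator, $\prod_{j=1}^{m}\prod_{i \ne j}|k_i - k_j|$ counts each unordered pair from $\K$ twice, hence equals $V(\K)^{2}$. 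Therefore
\begin{equation*}
W = \frac{\prod_{j=1}^{m} k_j!\,(M-k_j)!}{V(\K)^{2}}.
\end{equation*}

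Substituting into $V(\I) = V(\L)\,V(\K)\,W$ and solving for $V(\L)$ yields
\begin{equation*}
V(\L) = \frac{V(\I)\,V(\K)}{\prod_{j=1}^{m} k_j!\,(M-k_j)!} = \frac{0!\,1!\cdots M!\,V(k_1,\dots,k_m)}{\prod_{j=1}^{m} k_j!\,(M-k_j)!},
\end{equation*}
which is the claimed identity. There is no real obstacle here: the only step that needs care is the bookkeeping that each pair from $\K$ contributes twice to $\prod_{j}\prod_{i\ne j}|k_i - k_j|$, so that one gets $V(\K)^{2}$ in the denominator and the single factor $V(\K)$ survives in the numerator after cancellation with $V(\K)$ on the left.
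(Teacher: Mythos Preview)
Your proof is correct and follows essentially the same route as the paper's: both decompose $V(\I)=V(\L)\,V(\K)\,\prod_{l\in\L,\,k\in\K}|l-k|$, evaluate the cross product by noting that $\prod_{i\in\I\setminus\{k\}}|i-k|=k!(M-k)!$ and that taking the product over $k\in\K$ introduces $V(\K)^2$ in the denominator, and then solve for $V(\L)$. The paper packages the single-index product as a named function $f(x)$, but the logic is identical.
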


\begin{proof}
By the preceding lemma, $\I=\L\sqcup\K$, whence
\begin{equation}\label{eq9.P}
V(\I)=V(\K\sqcup \L)=V(\K)\cdot V(\L)\cdot \prod_{x\in \K}\prod_{y\in \L}|x-y|.
\end{equation}

For $x\in\K$, set
$$
f(x):=\prod_{z\in\I\setminus \{x\}}|x-z|
$$
and observe that
$$
\prod_{x\in \K}\prod_{y\in
\L}|x-y|=\frac{\prod\limits_{x\in\K}f(x)}{(V(\K))^2}.
$$
Substituting this into \eqref{eq9.P} gives
$$
V(\L)=\frac{V(\I)V(\K)}{\prod\limits_{x\in \K}f(x)}.
$$

On the other hand, it is readily checked that
$$
f(x)=x!(M-x)!
$$
and $V(\I)=0!1!\dots M!$. This completes the proof.
\end{proof}

\begin{proposition}\label{prop9.B}
Let $\om=\om(t_1,\dots,t_m)$ be the point of the simplex $\Om(m,0)$ with
coordinates $(t_1,\dots,t_m)$. In the notation introduced above,
$$
\La^\infty_N(\om;\la)= \const_{m,M}\frac{V(k_1,\dots,k_m)}{V(t_1,\dots,t_m)}
\det\left[\binom{M}{k_j}t_i^{k_j}(1-t_i)^{M-k_j}\right]_{i,j=1}^m,
$$
where
$$
\const_{m,M}=\prod_{i=1}^m\frac{(M-i+1)!}{M!}.
$$
\end{proposition}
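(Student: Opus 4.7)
\emph{Plan of proof.} The strategy is to compute $\wh\si_\la(\om)$ directly from the generating-function relation \eqref{eq3.J}, using the dual Cauchy identity, and then combine with Weyl's dimension formula and Lemma~\ref{lemma9.A} to get $\La^\infty_N(\om,\la)=\Dim_N\la\cdot\wh\si_\la(\om)$. I will assume $t_1,\dots,t_m>0$; the boundary values are recovered by continuity of both sides.

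First, since $\Phi(u;\om)=\prod_{i=1}^m(t_i+(1-t_i)u)=\bigl(\prod_i t_i\bigr)\prod_i(1+y_i u)$ with $y_i:=(1-t_i)/t_i$, one has $\prod_k\Phi(u_k;\om)=\bigl(\prod_i t_i\bigr)^N\prod_{i,k}(1+y_iu_k)$. The dual Cauchy identity (Macdonald \cite[I.(4.3')]{Ma95}) expands the last double product as $\sum_{\mu\subseteq(N^m)}s_\mu(y_1,\dots,y_m)\,s_{\mu'}(u_1,\dots,u_N)$. Comparing with \eqref{eq3.J} and setting $\la:=\mu'\subseteq(m^N)$ yields
$$
\wh\si_\la(\om)=\Bigl(\prod_{i=1}^m t_i\Bigr)^N\,s_{\la'}(y_1,\dots,y_m).
$$

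Next I would convert this to a determinantal expression in the $t_i$. With $\al_j:=\la'_j+m-j$, the Weyl alternant formula gives $s_{\la'}(y)=\det[y_i^{\al_j}]/V(y)$. A direct computation using $y_i-y_j=(t_j-t_i)/(t_it_j)$ shows that $V(y)=(-1)^{\binom{m}{2}}V(t)/\prod_i t_i^{m-1}$. Substituting $y_i^{\al_j}=(1-t_i)^{\al_j}/t_i^{\al_j}$ and absorbing the resulting $\prod_i t_i^{N+m-1}=\prod_i t_i^M$ into the rows of the determinant, I arrive at
$$
\wh\si_\la(\om)=\frac{(-1)^{\binom{m}{2}}}{V(t)}\,\det\bigl[t_i^{M-\al_j}(1-t_i)^{\al_j}\bigr]_{i,j=1}^m.
$$
The key combinatorial observation is $\al_{m+1-j}=M-k_j$, which is immediate from $\ka_j=N-\la'_{m+1-j}$ and $k_j=\ka_j+m-j$. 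Reversing the column order of the determinant therefore transforms it into $\det[t_i^{k_j}(1-t_i)^{M-k_j}]$ and contributes a sign $(-1)^{\binom{m}{2}}$ that cancels the prefactor, leaving $\wh\si_\la(\om)=V(t)^{-1}\det[t_i^{k_j}(1-t_i)^{M-k_j}]_{i,j=1}^m$.

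Finally, Weyl's dimension formula reads $\Dim_N\la=V(l_1,\dots,l_N)/\bigl(1!\,2!\cdots(N-1)!\bigr)$. Inserting the expression for $V(l)$ from Lemma~\ref{lemma9.A}, cancelling $1!\cdots(N-1)!$ against the initial factors of $0!1!\cdots M!$, and regrouping each remaining $1/(k_j!(M-k_j)!)$ with a factor $M!$ to form $\binom{M}{k_j}$, I obtain
$$
\Dim_N\la=\const_{m,M}\cdot V(k_1,\dots,k_m)\cdot\prod_{j=1}^m\binom{M}{k_j},\qquad\const_{m,M}=\prod_{i=1}^m\frac{(M-i+1)!}{M!}.
$$
Multiplying by the formula for $\wh\si_\la(\om)$ and distributing the binomials into the columns of the determinant gives the claim. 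The only substantive obstacle is the combinatorial bookkeeping linking the three indexings $\la$, $\la'$, $\ka$ (equivalently $l_i$, $\al_j$, $k_j$) of the same Young diagrams, together with the two sign cancellations coming from $V(y)$ and the column reversal; once these are verified, the proof assembles mechanically from the dual Cauchy identity, Weyl's dimension formula, and Lemma~\ref{lemma9.A}.
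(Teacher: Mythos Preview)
Your proof is correct and follows essentially the same route as the paper: dual Cauchy identity, the determinantal formula for Schur polynomials, Weyl's dimension formula, and Lemma~\ref{lemma9.A}. The only difference is cosmetic: the paper factors $\Phi(u;\om)=\prod_i(1-t_i)\cdot(t_i/(1-t_i)+u)$ and invokes the variant of dual Cauchy that directly produces $s_\ka$ with $\ka$ the complementary diagram, thereby landing on the $k_j$-indexed determinant without your intermediate passage through $\la'$, the $\al_j$'s, and the two sign cancellations.
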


In particular, in the simplest case $m=1$, there is a single coordinate
$t=t_1\in[0,1]$, the diagram $\la$ has a single column, the complementary
diagram has a single row whose length equals $\ka_1=k\in\{0,\dots,N\}$, and
$\La^\infty_N$ is represented as the link $[0,1]\dasharrow\{0,\dots,N\}$ that
assigns to a point $t\in[0,1]$ the binomial distribution on $\{0,\dots,N\}$
with parameter $t$.

\begin{proof}
(i) By the very definition of the link $\La^\infty_N$ (see \eqref{eq5.A} and
the comment after it),
$$
\La^\infty_N(\om,\la)=\Dim_N\la\cdot\left\{\text{\rm coefficient of
$s_\la(u_1,\dots,u_N)$ in $\Phi(u_1;\om)\dots\Phi(u_N;\om)$}\right\}.
$$

We have
\begin{gather*}
\Phi(u_1;\om)\dots\Phi(u_N;\om)=\prod_{i=1}^m\prod_{j=1}^N(1+\be^+_i(u_j-1))
=\prod_{i=1}^m\prod_{j=1}^N(t_i+(1-t_i)u_j)\\
=\prod_{i=1}^m(1-t_i)^N \cdot
\prod_{i=1}^m\prod_{j=1}^N\left(\frac{t_i}{1-t_i}+u_j\right)\\
=\prod_{i=1}^m(1-t_i)^N \cdot \sum_{\la: \la\subseteq
(m^N)}s_\ka\left(\frac{t_1}{1-t_1},\dots,\frac{t_m}{1-t_m}\right)s_\la(u_1,\dots,u_N),
\end{gather*}
where the last equality follows from the dual Cauchy identity, see
\cite[Chapter I, Section 4, Example 5]{Ma95}. Therefore,
$$
\La^\infty_N(\om,\la)=\Dim_N\la\cdot \prod_{i=1}^m(1-t_i)^N \cdot
s_\ka\left(\frac{t_1}{1-t_1},\dots,\frac{t_m}{1-t_m}\right).
$$

(ii) By the definition of the Schur polynomials,
$$
\prod_{i=1}^m(1-t_i)^N\cdot
s_\ka\left(\frac{t_1}{1-t_1},\dots,\frac{t_m}{1-t_m}\right)
=\frac{\prod\limits_{i=1}^m(1-t_i)^N\cdot
\det\left[\left(\dfrac{t_i}{1-t_i}\right)^{k_j}\right]}
{V\left(\dfrac{t_1}{1-t_1},\dots,\dfrac{t_m}{1-t_m}\right)},
$$
where the determinant in the numerator is of order $m$.

The denominator of this expression is equal to
$$
\prod_{i=1}^m(1-t_i)^{-m+1}\cdot V(t_1,\dots,t_m).
$$
Therefore, the whole expression is
$$
\frac{\prod\limits_{i=1}^m(1-t_i)^M\cdot
\det\left[\left(\dfrac{t_i}{1-t_i}\right)^{k_j}\right]} {V(t_1,\dots,t_m)} =
\frac{\det\left[t_i^{k_j}(1-t_i)^{M-k_j}\right]} {V(t_1,\dots,t_m)},
$$
so that
$$
\La^\infty_N(\om,\la)=\frac{\Dim_N\la}{V(t_1,\dots,t_m)}
\det\left[t_i^{k_j}(1-t_i)^{M-k_j}\right].
$$

(iii) It remains to handle $\Dim_N\la$. By Weyl's dimension formula,
$$
\Dim_N\la=\frac{V(\L)}{V(N-1,N-2,\dots,0)}=\frac{V(l_1,\dots,l_N)}{0!1!\dots(N-1)!}
$$

The numerator has been computed in Lemma \ref{lemma9.A}. Applying it we get
$$
\La^\infty_N(\om,\la)=\frac{0!1!\dots M!}{0!1!\dots
(N-1)!}\frac{V(k_1,\dots,k_m)}{V(t_1,\dots,t_m)}
\det\left[\frac1{k_j!(M-k_j)!}\,t_i^{k_j}(1-t_i)^{M-k_j}\right].
$$

The constant factor in front equals $\prod_{j=1}^m(M-j+1)!$. Dividing it by
$(M!)^m$ and introducing the same quantity inside the determinant we finally
get the desired expression.
\end{proof}

\subsection{Step 2: transformation of the difference operator $D_{m, m+a,0,b}$}

We continue to deal with two mutually complementary point configurations
$\L=(l_1>\dots>l_N)$ and $\K=(k_1>\dots>k_m)$ on the lattice interval
$\I=\{0,\dots,M\}$. Our next aim is to derive a convenient expression for the
jump rates introduced in Subsection \ref{sect6.A}. So far they were denoted as
$q(\nu,\nu\pm \epsi_i)$. Now we rename $\nu$ to $\la$ and next we pass from $\la$
to the corresponding point configuration $\L$. In terms of $\L$, the transition
$\la\to\la\pm \epsi_i$ can be written as $x\to x\pm1$, where $x=l_i$. According
to this we change the former notation for the jump rates and will denote them
by $q(x\to x\pm1)$, with the understanding that $x\in\L$.

Taking into account the values of the parameters (see \eqref{eq9.M}), the
formulas of Subsection \ref{sect6.A} can be rewritten as follows
\begin{gather}
q(x\to x+1)=\frac{V(\L-\{x\}+\{x+1\})}{V(\L)}\,(M-x)(M+a-x), \label{eq9.B}\\
q(x\to x-1)=\frac{V(\L-\{x\}+\{x-1\})}{V(\L)}\,x(b+x).\label{eq9.C}
\end{gather}
Here $\L-\{x\}+\{x\pm1\}$ denotes the configuration obtained from $\L$ by
removing $x$ and inserting $x\pm1$ instead.

Note that the transition $x\to x+1$ is forbidden if  the corresponding vector
$\la+\epsi_i$ is not a signature, which happens when $\la_{i-1}=\la_i$. In
terms of $\L$, this means $x+1\in\L$,  in which case the configuration
$\L-\{x\}+\{x+1\}$ contains the point $x+1$ twice, and then
$V(\L-\{x\}+\{x+1\})$ should be understood as $0$. Likewise, if $x\to x-1$ is
forbidden, then $V(\L-\{x\}+\{x-1\})$ vanishes. Thus, \eqref{eq9.B} and
\eqref{eq9.C} formally assign rate 0 to forbidden transitions, which is
reasonable.

\begin{lemma}\label{lemma9.D}
In terms of the complementary configuration $\K$, the jump rates take the form
\begin{gather*}
\wt q(y\to y-1)=\frac{V(\K-\{y\}+\{y-1\})}{V(\K)}y(M+1+a-y),\\
\wt q(y\to y+1)=\frac{V(\K-\{y\}+\{y+1\})}{V(\K)}(M-y)(b+y+1).
\end{gather*}
\end{lemma}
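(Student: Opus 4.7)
The plan is to recognize that the lemma merely rewrites the jump rates \eqref{eq9.B}--\eqref{eq9.C} in the complementary coordinates, and to verify the translation by a direct computation that invokes Lemma \ref{lemma9.A}. First I would match up moves: an allowed transition $x\to x+1$ in $\L$ requires $x+1\notin\L$, hence $x+1\in\K$, so setting $y:=x+1$ this transition is the same as removing $y$ from $\K$ and inserting $y-1$, i.e., $y\to y-1$ in $\K$. Likewise $x\to x-1$ in $\L$ corresponds to $y\to y+1$ in $\K$ with $y:=x-1$. Thus $\wt q(y\to y\mp 1)$ should equal $q(x\to x\pm 1)$ under the appropriate substitution, and the claim reduces to a purely combinatorial identity between ratios of Vandermondes.

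Carrying out the substitution $x=y-1$ in \eqref{eq9.B} yields the factor $(M+1-y)(M+1+a-y)$ multiplied by $V(\L')/V(\L)$, where $\L':=\L-\{y-1\}+\{y\}$. Comparing with the stated expression for $\wt q(y\to y-1)$, the common factor $M+1+a-y$ drops out and the lemma reduces to
\[
(M+1-y)\,\frac{V(\L')}{V(\L)} \;=\; y\,\frac{V(\K')}{V(\K)},\qquad \K':=\K-\{y\}+\{y-1\}.
\]
Both pairs $(\L,\K)$ and $(\L',\K')$ are complementary inside $\I=\{0,\dots,M\}$, so Lemma \ref{lemma9.A} applies to each. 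Taking the quotient of the two instances of \eqref{eq9.A}, the universal factor $0!1!\cdots M!$ cancels and the only surviving discrepancy in the denominator is the replacement of $y!(M-y)!$ by $(y-1)!(M-y+1)!$. This gives
\[
\frac{V(\L')/V(\L)}{V(\K')/V(\K)} \;=\; \frac{y!(M-y)!}{(y-1)!(M+1-y)!} \;=\; \frac{y}{M+1-y},
\]
which is precisely what is needed. The formula for $\wt q(y\to y+1)$ follows by the mirror-image argument (substitute $x=y+1$ in \eqref{eq9.C} and apply the same Vandermonde identity with the roles of $y$ and $y-1$ swapped).

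There is no genuine obstacle here; one only has to be careful with the boundary/forbidden cases (where one of the sets $\L',\K'$ fails to be a valid configuration), but these are harmless because in each such case the corresponding Vandermonde factor vanishes on both sides of the identity by the convention adopted after \eqref{eq9.C}. Consequently the proof is a short bookkeeping exercise once the complementarity correspondence and Lemma \ref{lemma9.A} are in hand.
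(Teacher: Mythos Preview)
Your proposal is correct and follows essentially the same approach as the paper: identify the correspondence $x\mapsto y=x\pm1$ between moves in $\L$ and moves in $\K$, then use Lemma \ref{lemma9.A} applied to both $(\L,\K)$ and $(\L',\K')$ to convert the Vandermonde ratio, picking up exactly the factor $y/(M+1-y)$ that combines with $(M-x)(M+a-x)=(M+1-y)(M+1+a-y)$ to give the stated formula. Your remark about the forbidden cases is a useful addendum but not needed for the argument itself.
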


\begin{proof}
A jump $x\to x+1$ in $\L$ is possible if and only if  $x\in\L$ and
$x+1\notin\L$. This is equivalent to saying that $x+1\in\K$ and $x\notin \K$,
which in turn means the possibility of the jump $y\to y-1$, where $y=x+1$.
Therefore, $\wt q(y\to y-1)=q(x\to x+1)$.

Now we have to express the quantity $q(x\to x+1)$ given by \eqref{eq9.B} in
terms of $\K$. Lemma \ref{lemma9.A} tell us that
$$
V(\L)=\const\,\frac{V(\K)}{\prod\limits_{y\in\K}y!(M-y)!}.
$$
It follows that
$$
\frac{V(\L-\{x\}+\{x+1\})}{V(\L)}=\frac{V(\K-\{y\}+\{y-1\})}{V(\K)}\frac{y}{M+1-y}
$$
Next,
$$
(M-x)(M+a-x)=(M+1-y)(M+1+a-y).
$$
Multiplying out these two quantities we get the desired expression for $\wt
q(y\to y-1)$.

Likewise, the jump $x\to x-1$ is equivalent to $y\to y+1$, where $y=x-1$, so we
rewrite the expression for $q(x\to x-1)$ given by \eqref{eq9.C}. We have
$$
\frac{V(\L-\{x\}+\{x-1\})}{V(\L)}=\frac{V(\K-\{y\}+\{y+1\})}{V(\K)}\frac{M-y}{y+1}.
$$
Next,
$$
x(b+x)=(y+1)(b+y+1).
$$
Multiplying out these two quantities we get the desired expression for $\wt
q(y\to y+1)$.
\end{proof}

We introduce the \emph{Hahn difference operator} $\De^\baM$ by
\begin{equation}\label{eq9.J}
\begin{aligned}
(\De^\baM F)(y) =(y+b+1)(M-y)&[F(y+1)-F(y)]\\ +y(M+a-y+1)&[F(y-1)-F(y)],
\end{aligned}
\end{equation}
where $F$ is a function in variable $y$. Note that $\De^\baM$ is well defined
on $\I$. Indeed, the coefficient in front of $[F(y+1)-F(y)]$ vanishes at the
point $y=M$, the right end of the interval; likewise, the coefficient in front
of $[F(y-1)-F(y)]$ vanishes at the left end $y=0$.

The difference operator $\De^\baM$ is associated with the classic Hahn
polynomials: see Koekoek-Swarttouw \cite[(1.5.5)]{KS} and the next subsection.
Note that our parameters $(a,b,M)$ correspond to parameters $(\be,\al,N)$ from
\cite[Section 1.5]{KS}.

It is directly verified that
$$
\De^\baM y^n=-n(n+a+b+1)y^n+\text{lower degree terms}, \qquad n=0,1,2,\dots\,.
$$
Note that the factor in front of $y^n$  is exactly the same as in
\eqref{eq9.I}. In particular, it does not depend on the additional parameter
$M$ that enters the definition of the difference operator.

Now we introduce the \emph{$m$-variate Hahn difference operator} in the same
way as we defined above the $m$-variate Jacobi operator:
\begin{equation}\label{eq9.Q}
\De^\baM_m=\frac1{V_m}\circ\left(\sum_{i=1}^m\De^\baM_{\text{\rm variable\,}
y_i}\right)\circ V_m+\const_{a,b,m}.
\end{equation}
Here $y_1,\dots,y_m$ is an $m$-tuple of variables, $V_m=V_m(y_1,\dots,y_m)$ is
the Vandermonde, $\De^\baM_{\text{\rm variable\,} y_i}$ denotes the one-variate
Hahn operator acting on the $i$th variable, and the constant is given by
\eqref{eq9.F}.  The same argument as above shows that the operator $\De^\baM_m$
is well defined on the space of symmetric polynomials and kills the constants.

Alternatively, $\De^\baM_m$ can be interpreted as an operator acting on the
space of functions on $m$-point configurations
$\K=(k_1>\dots>k_m)\subseteq(N^m)$ (here we write $(k_1,\dots,k_m)$ instead of
$(y_1,\dots,y_m)$). This is just the interpretation that we need.

On the other hand, the difference operator $D_{m,m+1,0,b\mid N}$ acts on the
functions defined on set of the diagrams $\la$ or, equivalently, on the set of
configurations $\L$.

Now we use the correspondence $\L\leftrightarrow \K$ to compare the both
operators.

\begin{proposition}\label{prop9.A}
Under the correspondence $\la\leftrightarrow\L\leftrightarrow
\K\leftrightarrow\ka$, the operator $D_{m,m+a,0,b\mid N}$ turns into the
operator $\De^\baM_m$.
\end{proposition}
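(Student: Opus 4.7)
My plan is to regard both operators as linear endomorphisms of the finite-dimensional space of functions on the set of $m$-point configurations $\K\subseteq\I$ and to match them entry by entry. Both annihilate the constant function: for $D_{m,m+a,0,b\mid N}$ this is the first equality in \eqref{eq6.G}, and for $\De^\baM_m$ it will follow from its definition via conjugation by $V_m$. Hence it is enough to compare the off-diagonal matrix elements---the rates of transition $\K\to\K'=\K-\{y\}+\{y\pm 1\}$---with the rates $\wt q$ of Lemma \ref{lemma9.D}.

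The first step is to spell out the off-diagonal action of $\De^\baM_m$ on a symmetric polynomial $F$ evaluated at $\K=(k_1>\dots>k_m)$. By \eqref{eq9.Q} one applies $\sum_i\De^\baM_{y_i}$ to $V_m F$ and then divides by $V_m(k_1,\dots,k_m)=V(\K)$. Each $\De^\baM_{y_i}$ produces a term with $y_i$ replaced by $y_i\pm 1$, so the crucial ingredient is the ratio $V_m(k_1,\dots,k_i\pm 1,\dots,k_m)/V_m(k_1,\dots,k_m)$. If $k_i\pm 1\in\K$ this ratio vanishes, reflecting that the transition is forbidden. Otherwise, since the $k_j$ are integers with $k_1>\dots>k_m$, the perturbed tuple is still strictly decreasing, and the ratio equals $V(\K')/V(\K)$ with $\K'=\K-\{k_i\}+\{k_i\pm 1\}$. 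Combining with the one-variable coefficients in \eqref{eq9.J} and using the symmetry of $F$, the coefficient of $F(\K')$ in $(\De^\baM_m F)(\K)$ becomes $(b+y+1)(M-y)\,V(\K')/V(\K)$ for a jump $y\to y+1$, and $y(M+1+a-y)\,V(\K')/V(\K)$ for $y\to y-1$. These are precisely the rates $\wt q$ of Lemma \ref{lemma9.D}.

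The second step is the claim that $\De^\baM_m$ kills the constant $1$, which reduces to the identity $(\sum_i\De^\baM_{y_i})V_m=-\const_{a,b,m}V_m$. This is proved exactly as \eqref{eq9.E}: $\sum_i\De^\baM_{y_i}$ preserves antisymmetric polynomials and does not raise degree (the $y_i^{n+1}$ leading terms cancel in \eqref{eq9.J}), so $(\sum_i\De^\baM_{y_i})V_m$ is antisymmetric of degree at most $\binom{m}{2}$, hence proportional to $V_m$; the proportionality constant is read off from the top monomial of $V_m$ via $\De^\baM y^n=-n(n+a+b+1)y^n+\text{lower}$, yielding $-\sum_{n=0}^{m-1}n(n+a+b+1)=-\const_{a,b,m}$. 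Adding the compensating constant in \eqref{eq9.Q} gives $\De^\baM_m\cdot 1=0$. Since both operators annihilate the constant and have matching off-diagonal entries, their diagonal entries agree as well, completing the proof.

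The main obstacle I foresee is the bookkeeping in the first step: one has to confirm that no unwanted sign arises when $k_i$ is replaced by $k_i\pm 1$ (which is where integrality plus strict decrease are crucial) and that the one-variable prefactors $(b+y+1)(M-y)$ and $y(M+1+a-y)$ from \eqref{eq9.J} line up exactly with the $\wt q$ formulas of Lemma \ref{lemma9.D} term by term. Everything else follows by the uniqueness of antisymmetric polynomials of bounded degree and the standard principle that two operators agreeing off the diagonal and both killing constants must coincide.
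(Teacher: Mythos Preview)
Your proposal is correct and follows essentially the same route as the paper: both arguments invoke Lemma~\ref{lemma9.D} to write $D_{m,m+a,0,b\mid N}$ in terms of the jump rates $\wt q(y\to y\pm1)$ on $\K$-configurations and then identify these with the off-diagonal entries of $\De^\baM_m$ obtained from its Vandermonde-conjugation definition \eqref{eq9.Q}. You are simply more explicit than the paper in unpacking the Vandermonde ratios and in verifying that $\De^\baM_m$ annihilates constants (a fact the paper states without proof just after \eqref{eq9.Q}), so that the diagonal entries are forced to agree once the off-diagonal ones do.
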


\begin{proof}
Let us regard $D_{m,m+a,0,b\mid N}$ as an operator on the space of functions
$F(\K)$. Then Lemma \ref{lemma9.D} shows that $D_{m,m+a,0,b\mid N}$ acts as the
following difference operator
$$
(D_{m,m+a,0,b\mid N} F)(\K)=\sum_{y\in\K}\sum_{\epsi=\pm1}\wt q(y\to
y+\epsi)[F(\K-\{y\}+\{y+\epsi\})-F(\K)].
$$
Looking at the explicit expressions for the jump rates $\wt q(y\to y+\epsi)$
given in Proposition \ref{prop9.A} and comparing them with the definition of
$\De^\baM_m$ (see \eqref{eq9.J}) we conclude that $D_{m,m+a,0,b\mid
N}=\De^\baM_m$.
\end{proof}

\subsection{Step 3: The transformation Hahn $\to$ Jacobi}\label{sect9.A}

Let us collect a few classic formulas about the Hahn and Jacobi orthogonal
polynomials. They can be found, e.g., in Koekoek-Swarttouw \cite{KS}.

The \emph{Hahn polynomials} with parameters $(a,b,M)$, denoted here by
$H^\baM_n(y)$, are the orthogonal polynomials on $\I=\{0,\dots,M\}$ with the
weight
$$
W^\baM_{\text{\rm Hahn}}(y)=\binom{b+y}y \binom{a+M-y}{M-y}, \qquad y\in\I.
$$
The subscript $n$ is the degree; it ranges also over $\I$. As was already
pointed out, our notation slightly differs from that of \cite{KS}: our
parameters $(a,b)$ correspond to parameters $(\be,\al)$ in \cite[Section
1.5]{KS}.

The Hahn polynomials form an eigenbasis for the Hahn difference operator
$\De^\baM$ defined in \eqref{eq9.J}:
\begin{equation}\label{eq9.N}
\De^\baM H^\baM_n=-n(n+b+a+1)H^\baM_n.
\end{equation}

Here is the explicit expression of the Hahn polynomials through a terminating
hypergeometric series of type $(3,2)$ at point $1$:
$$
H^\baM_n(y)={}_3 F_2\left[\begin{matrix}-n,\, n+b+a+1,\, -y\\
b+1,\, -M\end{matrix}\,\Biggl|\,1\right], \qquad n=0,\dots,M.
$$

Our notation for the Jacobi polynomials is $J^\ba_n(t)$; these are the
orthogonal polynomials on the unit interval $[0,1]$ with the weight
$$
W^\ba_{\text{\rm Jacobi}}(t)=t^b(1-t)^a, \qquad 0\le t\le 1.
$$
Note that many sources, including \cite{KS}, take the weight function
$(1-x)^a(1+x)^b$ with $x$ ranging over $[-1,1]$. The passage from $[0,1]$ to
$[-1,1]$ is given by the change of variable $x=2t-1$.

The Jacobi polynomials form an eigenbasis for the Jacobi difference operator:
\begin{equation}\label{eq9.O}
D^\ba J^\ba_n=-n(n+b+a+1)J^\ba_n, \qquad n=0,1,2,\dots\,.
\end{equation}

The Jacobi polynomials are expressed through the Gauss hypergeometric series:
$$
J^\ba_n(t)={}_2 F_1\left[\begin{matrix}-n,\, n+b+a+1\\
b+1\end{matrix}\,\Biggl|\,t\right], \qquad  n=0,1,2,\dots\,.
$$
Note that our normalization of the Jacobi polynomials differs from the
conventional one, but this is convenient for the computation below.

\begin{lemma}\label{lemma9.B}
The following relation holds
$$
\sum_{k=0}^M\binom{M}{k}t^k(1-t)^{M-k}H^\baM_n(k)=J^\ba_n(t), \qquad
n=0,\dots,M.
$$
\end{lemma}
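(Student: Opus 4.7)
The plan is a direct hypergeometric computation: substitute the explicit series representation of $H^\baM_n(k)$ given just before the lemma, swap the order of summation, and evaluate the resulting inner binomial sum in closed form. The cancellation that occurs will automatically produce the ${}_2F_1$ series representing $J^\ba_n(t)$.

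First I would write
\[
H^\baM_n(k)=\sum_{j=0}^{n}\frac{(-n)_j(n+a+b+1)_j(-k)_j}{(b+1)_j(-M)_j\,j!},
\]
plug this into the left-hand side, and interchange the two finite sums. This reduces the identity to evaluating, for each fixed $j\in\{0,\dots,n\}$, the one-variable sum
\[
S_j(t):=\sum_{k=0}^{M}\binom{M}{k}t^k(1-t)^{M-k}(-k)_j.
\]
Since $(-k)_j=(-1)^j k!/(k-j)!$ vanishes when $k<j$, one shifts $k\mapsto k+j$, factors out $t^j$ and $M!/(M-j)!$, and recognizes the remaining sum as the binomial expansion of $1=(t+(1-t))^{M-j}$. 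This gives the clean closed form
\[
S_j(t)=(-1)^j\frac{M!}{(M-j)!}\,t^j=(-M)_j\,t^j,
\]
where the second equality uses the standard identity $(-M)_j=(-1)^j M!/(M-j)!$ valid for $j\le M$.

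Substituting this back, the factor $(-M)_j$ in $S_j(t)$ cancels the $(-M)_j$ sitting in the denominator of the $j$-th term, and what remains is exactly
\[
\sum_{j=0}^{n}\frac{(-n)_j(n+a+b+1)_j}{(b+1)_j\,j!}\,t^j={}_2F_1\!\left[\begin{matrix}-n,\ n+a+b+1\\ b+1\end{matrix}\,\Biggl|\,t\right]=J^\ba_n(t),
\]
which is the right-hand side.

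There is essentially no obstacle; the only thing to be careful about is the range of summation in $S_j(t)$ (the condition $j\le n\le M$ guarantees that the shift $k\mapsto k+j$ is legitimate and that $(-M)_j\ne0$, so no division by zero occurs). One could alternatively prove the identity without computation by invoking the convergence of the difference operator $\Delta^{\baM}$ to the Jacobi operator $D^\ba$ under the binomial transform together with the spectral correspondence \eqref{eq9.N}--\eqref{eq9.O}, but the direct hypergeometric manipulation above is shorter and self-contained.
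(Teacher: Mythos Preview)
Your proof is correct and follows essentially the same route as the paper: interchange the two finite sums, shift the summation index in the inner binomial sum so that it collapses via the binomial theorem, and observe that the resulting factor $(-M)_j$ cancels the one in the denominator to leave the ${}_2F_1$ series for $J^\ba_n(t)$. The paper organizes the bookkeeping slightly differently (it combines the Pochhammer factors before substituting $q=k-p$), but the computation is the same.
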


\begin{proof}
This is checked directly using the explicit expressions for the polynomials.
Indeed, the sum in the left-hand side equals
$$
\sum_{k=0}^M\sum_{p=0}^n\frac{M!t^k(1-t)^{M-k}(-n)_p(n+b+a+1)_p(-k)_p}{k!(M-k)!(b+1)_p(-M)_pp!}.
$$

Let us change the order of summation and observe that $(-k)_p$ vanishes unless
$k\ge p$. Then the above expression can be rewritten as
$$
\sum_{p=0}^n\sum_{k=p}^M\frac{M!t^k(1-t)^{M-k}(-n)_p(n+b+a+1)_p(-k)_p}{k!(M-k)!(b+1)_p(-M)_pp!}.
$$

Next, let us set $q=k-p$ and observe that
$$
\frac{M!(-k)_p}{k!(M-k)!(-M)_p}=\frac{M!k!(M-p)!}{k!(k-p)!M!(M-k)!}=\binom{M-p}q.
$$

It follows that our double sum equals
$$
\sum_{p=0}^n\frac{(-n)_p(n+b+a+1)_p
}{(b+1)_pp!}\,t^p\,\sum_{q=0}^{M-p}\binom{M-p}qt^q(1-t)^{M-p-q}.
$$

The interior sum equals 1, so that we finally get
$$
\sum_{p=0}^n\frac{(-n)_p(n+b+a+1)_p }{(b+1)_pp!}\,t^p=J^\ba_n(t),
$$
as desired.
\end{proof}

The \emph{$m$-variate Hahn polynomials} are given by
$$
H^\baM_\nu(y_1,\dots,y_m)=\frac{\det\left[H^\baM_{n_j}(y_i)\right]}{V_m(y_1,\dots,y_m)}.
$$
Here $\nu$ is an arbitrary Young diagram contained in $(N^m)$ and
$$
n_j:=\nu_j+m-j, \qquad j=1,\dots,m.
$$
The definition is correct because the largest index $n_1$ does not exceed $M$
(recall that $M=N+m-1$; therefore, $\nu\subseteq(N^m)$ implies
$n_1=\nu_1+m-1\le M$).

Likewise, the \emph{$m$-variate Jacobi polynomials} are given by
$$
J^\ba_\nu(t_1,\dots,t_m)=\frac{\det\left[J^\ba_{n_j}(t_i)\right]}{V_m(t_1,\dots,t_m)}.
$$
Here $\nu$ is an arbitrary Young diagram with at most $m$ nonzero rows.

\begin{proposition}\label{prop9.C}
For every $N=1,2,\dots$ and every Young diagram $\nu\subseteq(N^m)$, the
operator $\La^\infty_N$ takes the Hahn polynomial $H^\baM_\nu$ to the
respective Jacobi polynomial $J^\ba_\nu$, within a constant factor.
\end{proposition}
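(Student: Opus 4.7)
The plan is to compute $(\La^\infty_N H^{(a,b,M)}_\nu)(\om)$ directly from the determinantal formulas and recognize the result as a scalar multiple of $J^{(a,b)}_\nu$. The key observation is that both $\La^\infty_N(\om,\K)$ (in the form given by Proposition \ref{prop9.B}) and the multivariate Hahn polynomial $H^{(a,b,M)}_\nu(\K)$ are built from $m\times m$ determinants normalized by the same Vandermonde $V(k_1,\dots,k_m)$; these Vandermondes cancel, and what remains is a sum over $m$-subsets $\K=\{k_1>\dots>k_m\}\subseteq\I$ of a product of two $m\times m$ determinants. This is precisely the setup of the Cauchy--Binet formula, which will collapse the double sum into a single determinant.

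Concretely, after substitution and Vandermonde cancellation, $(\La^\infty_N H^{(a,b,M)}_\nu)(\om)$ becomes
$$\frac{\const_{m,M}}{V(t_1,\dots,t_m)}\sum_\K\det\bigl[\binom{M}{k_j}t_i^{k_j}(1-t_i)^{M-k_j}\bigr]_{i,j=1}^m\det\bigl[H^{(a,b,M)}_{n_j}(k_i)\bigr]_{i,j=1}^m,$$
where $n_j=\nu_j+m-j$. Introduce the $m\times(M+1)$ matrix $P$ with entries $P_{i,k}=\binom{M}{k}t_i^k(1-t_i)^{M-k}$ and the $(M+1)\times m$ matrix $Q$ with entries $Q_{k,j}=H^{(a,b,M)}_{n_j}(k)$. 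Then Cauchy--Binet identifies the sum with $\det(PQ)$. But Lemma \ref{lemma9.B} computes $(PQ)_{i,j}=\sum_{k=0}^M\binom{M}{k}t_i^k(1-t_i)^{M-k}H^{(a,b,M)}_{n_j}(k)=J^{(a,b)}_{n_j}(t_i)$, so $\det(PQ)=\det\bigl[J^{(a,b)}_{n_j}(t_i)\bigr]=V(t_1,\dots,t_m)\,J^{(a,b)}_\nu(t_1,\dots,t_m)$, and the Vandermonde in the denominator cancels to give $(\La^\infty_N H^{(a,b,M)}_\nu)(\om)=\const_{m,M}\,J^{(a,b)}_\nu(t_1,\dots,t_m)$, which is the claim.

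No step appears genuinely hard here, since the two key ingredients (Proposition \ref{prop9.B} and Lemma \ref{lemma9.B}) are already in hand, and Cauchy--Binet is classical. The only point requiring care is the sign convention: the decreasing ordering $k_1>\dots>k_m$ used in both determinants amounts to reversing columns of $P$ and rows of $Q$ relative to the increasing ordering natural to Cauchy--Binet, but the two factors $(-1)^{m(m-1)/2}$ cancel, so no extra sign appears. With Proposition \ref{prop9.C} in hand, the intertwining \eqref{eq9.D} follows by checking it on the basis of multivariate Hahn polynomials: both $\De^{(a,b,M)}_m$ and $D^{(a,b)}_m$ act diagonally with the same eigenvalue $\sum_j\bigl[-n_j(n_j+a+b+1)\bigr]+\const_{a,b,m}$ (the $M$-dependence is invisible), and Proposition \ref{prop9.C} transports one eigenbasis to the other up to a common scalar, completing the reduction to Theorem \ref{thm9.A}.
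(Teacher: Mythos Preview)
Your proof is correct and follows essentially the same route as the paper: substitute the determinantal expression for $\La^\infty_N$ from Proposition \ref{prop9.B}, cancel the Vandermonde in $\K$, apply the Cauchy--Binet identity to the sum over $\K$ of a product of two $m\times m$ determinants, and then invoke Lemma \ref{lemma9.B} entrywise to recognize the resulting determinant as $V(t_1,\dots,t_m)J^{(a,b)}_\nu$. Your remarks on the sign convention and on how the proposition feeds into \eqref{eq9.D} are accurate and match what the paper does in its ``Completion of proof'' subsection.
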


\begin{proof}
By virtue of Proposition \ref{prop9.B},
\begin{multline}\label{eq9.K}
\La^\infty_N H^\baM_\nu)(t_1,\dots,t_m)\\
=\frac{\const_{m,M}}{V(t_1,\dots,t_m)}\sum_{M\ge
k_1>\dots>k_m\ge0}\det\left[\binom{M}{k_j}t_i^{k_j}(1-t_i)^{M-k_j}\right]
\det\left[H^\baM_{n_i}(k_j)\right].
\end{multline}

Now we apply a well-known identity, which is a consequence of the Cauchy-Binet
identity:
$$
\sum_{M\ge k_1>\dots>k_m\ge0}\det[f_i(k_j)]_{i,j=1}^m\det[g_i(k_j)]_{i,j=1}^m=
\det[h_{ij}]_{i,j=1}^m,
$$
where
$$
h_{ij}:=\sum_{k=0}^M f_i(k)g_j(k).
$$
It tells us that the sum in \eqref{eq9.K} equals the determinant of the
$m\times m$ matrix whose $(i,j)$ entry is
$$
\sum_{k=0}^M\binom{M}{k}t_i^{k}(1-t_i)^{M-k}H_{n_j}(k).
$$
By Lemma \ref{lemma9.B}, the last sum equals $J^\ba_{n_j}(t_i)$. This completes
the proof of the proposition.
\end{proof}

\subsection{Completion of proof}

As pointed out above (see \eqref{eq9.N} and \eqref{eq9.O}), the classic Hahn
and Jacobi polynomials are eigenfunctions of the respective operators, and the
$n$th eigenvalue in both cases is the same number $c(n):=-n(n+a+b+1)$.

By the very definition of the multivariate polynomials and operators, the
similar assertion holds  for arbitrary $m$ as well, and the eigenvalue
corresponding to a given label $\nu$ is equal to
$$
\sum_{i=1}^m[c(\nu_i+m-i)-c(m-i)].
$$

Combining this with the result of Step 3 (Proposition \ref{prop9.C}) we obtain
the desired commutation relation \eqref{eq9.D} which says that the link
$\La^\infty_N$ intertwines the Jacobi differential operator $D^{(a,b)}_m$ with
the Hahn difference operator $\De^{(a,b,N+m-1)}_m$.

Finally, as pointed out in the end of Subsection \ref{sect9.B}, the result of
Step 2 (Proposition \ref{prop9.A}) reduces Theorem \ref{thm9.A} to that
commutation relation.

This completes the proof of Theorem \ref{thm9.A}, which in turn implies Claim
\ref{claim2}. Thus, the proof of Theorem \ref{thm7.A} is completed.

\section{Appendix: uniform boundedness of multiplicities}\label{sect10}

Here we prove the statement used in the proof of Proposition \ref{prop2.F},
step 1. We formulate the result in a greater generality, which seems to be more
natural.

Let $\wt G$ be a connected reductive complex group and $G\subset\wt G$ be a
reductive subgroup. We assume $G$ is spherical, meaning that for any simple
$\wt G$-module $V$, the space $V^G$ of $G$-invariants has dimension at most 1.
For a simple $G$-module $W$ we write
$$
[V:W]:=\dim \operatorname{Hom}_G(W,V).
$$

\begin{proposition}\label{prop10.A}
Let $\wt G$, $G$, $V$, and $W$ be as above. If $W$ is fixed, then for the
multiplicity $[V:W]$ there exists a bound $[V:W]\le\const$, where the constant
depends only on $W$ but not on $V$.
\end{proposition}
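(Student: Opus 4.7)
Plan: By Frobenius reciprocity, $[V:W] = \dim(V \otimes W^*)^G$, which is the multiplicity of the simple $\wt G$-module $V$ in the $\wt G$-module
$$
M := (\C[\wt G] \otimes W^*)^G,
$$
where $G$ acts on $\C[\wt G]$ by right translations and on $W^*$ by the dual action, and $\wt G$ acts on $M$ by left translations on $\C[\wt G]$. Geometrically, $M = \Gamma(X, E)$, the regular sections of the $\wt G$-equivariant algebraic vector bundle $E := \wt G \times_G W^*$ of rank $\dim W$ over the affine variety $X := \wt G/G$ (affinity by Matsushima's theorem, since $G$ is reductive).

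Fix a Borel subgroup $\wt B = \wt T \wt U \subset \wt G$. For any rational $\wt G$-module $N$, the $\wt T$-weight space $N^{\wt U}_\lambda$ has dimension equal to the multiplicity of the irreducible $\wt G$-module of highest weight $\lambda$ in $N$. So it is enough to bound $\dim M^{\wt U}_\lambda$ uniformly in $\lambda$. By sphericity of $G$, the variety $X$ has an open $\wt B$-orbit $X^0 \cong \wt B/H$, where $H$ is the stabilizer of a basepoint. Since $E$ is locally free and $X^0$ is dense in $X$, the restriction map $M \to \Gamma(X^0, E|_{X^0})$ is injective and $\wt G$-equivariant, so each weight space $M^{\wt U}_\lambda$ injects into $\Gamma(X^0, E|_{X^0})^{\wt U}_\lambda$.

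On the open orbit, $E|_{X^0} \cong \wt B \times_H F$ for an $H$-module $F$ of dimension $\dim W$ (the fiber of $E$ over the basepoint), and its sections form $(\C[\wt B] \otimes F)^H$. Using the semidirect decomposition $\wt B = \wt U \rtimes \wt T$, which yields $\wt U \backslash \wt B \cong \wt T$ and $\C[\wt B]^{\wt U} = \C[\wt T]$, a direct calculation identifies
$$
\Gamma(X^0, E|_{X^0})^{\wt U} \cong (\C[\wt T] \otimes F')^{H'},
$$
where $F' := F^{H \cap \wt U}$ and $H' := H/(H \cap \wt U)$ is embedded into $\wt T$ via the projection $\wt B \to \wt T$. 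Since $H' \subset \wt T$ is diagonalizable, $F'$ decomposes into $H'$-weight spaces, and the $\lambda$-weight subspace of $(\C[\wt T] \otimes F')^{H'}$ is precisely the $H'$-weight space $F'_{(\lambda|_{H'})^{-1}}$, of dimension at most $\dim F' \le \dim W$. This yields $[V:W] \le \dim W$ uniformly in $V$.

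The main obstacle is the local computation on the open $\wt B$-orbit --- specifically, extracting the $\wt T$-weight structure of $\wt U$-invariants on $\wt B/H$ and analyzing how $H$ projects onto $\wt T$ through the quotient by $H \cap \wt U$. The computation itself is elementary once one exploits the semidirect presentation of $\wt B$, but it rests on the key geometric input furnished by the sphericity hypothesis: the existence of a well-behaved open $\wt B$-orbit $X^0 \subset X$ on which the equivariant bundle $E$ admits a manageable $H$-equivariant description. This is a special case of the local structure theorem for spherical varieties.
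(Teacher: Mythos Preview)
Your argument is correct and genuinely different from both proofs in the paper. The paper gives two proofs. The first (due to Popov) works inside the algebra $A=\C[\wt G/N]$: it shows, via a Hilbert-type finite generation argument, that the $W$-isotypic component $A^{(W)}$ is a finitely generated $A^G$-module, and then invokes sphericity in the form $\dim A_\la^G\le 1$ to bound $\dim A_\la^{(W)}$ by the sum of the dimensions over a finite generating set. The second proof, like yours, starts from Frobenius reciprocity and the induced module, but then uses a purely representation-theoretic trick: one embeds $W$ into the restriction $X_G$ of a $\wt G$-module $X$, observes $\Ind(X_G)\cong\Ind(\C)\otimes X$, and bounds multiplicities using the standard fact that in $V\otimes X^*$ each irreducible occurs with multiplicity at most $\dim X$, together with multiplicity-freeness of $\Ind(\C)=\C[\wt G/G]$.

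Your route instead exploits the geometry directly: you restrict sections of the associated bundle to the open $\wt B$-orbit on $\wt G/G$ (whose existence is precisely the sphericity hypothesis) and compute the $\wt U$-invariant $\wt T$-weight spaces by hand via the semidirect decomposition $\wt B=\wt U\rtimes\wt T$. This is essentially the local structure picture for spherical homogeneous spaces. The only delicate points are that $H\cap\wt U$ is normal in $H$ (clear since $\wt U\lhd\wt B$) and that the image $H'\subset\wt T$ is closed and hence diagonalizable (image of a closed subgroup under a quotient morphism of algebraic groups), both of which you use implicitly and which hold. A bonus of your approach is the explicit bound $[V:W]\le\dim W$, which is sharper than the $\dim X$ obtained in the paper's second proof (since any $\wt G$-module $X$ with $W\subset X|_G$ satisfies $\dim X\ge\dim W$).
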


The fact that we needed in Proposition \ref{prop2.E} is a particular case of
Proposition \ref{prop10.A} corresponding to $\wt G=GL(M+N,\C)$ and
$G=GL(M,\C)\times GL(N,\C)$.

\begin{proof}[First proof {\rm(}communicated  by Vladimir L. Popov{\rm)}]
Let us fix a Borel subgroup $B\subset\wt G$ and denote by $N$ the unipotent
radical of $B$. Let $A=\C[\wt G/N]$ be the algebra of regular functions on $\wt
G/N$. In other words, $A$ consists of holomorphic functions on $\wt G/N$ which
are $\wt G$-finite with respect to the action of $\wt G$ by left shifts. As a
$\wt G$-module, $A$ is the multiplicity free direct sum of all simple $\wt
G$-modules:
\begin{equation}\label{eq10.A}
A=\bigoplus_{\la\in\La_+}A_\la,
\end{equation}
where $\La_+$ denotes the additive semigroup of dominant weights with respect
to $B$ and $A_\la$ denotes the subspace of $A$ carrying the simple $\wt
G$-module with highest weight $\la$.

We fix a simple $G$-module $W$. Given a $G$-module $X$, we denote by $X^{(W)}$
the $W$-isotypic component in $X$. Using this notation, the desired claim can
be reformulated as follows: as $\la$ ranges over $\La_+$, the quantities $\dim
A_\la^{(W)}$ are uniformly bounded from above.

\emph{Step} 1. Let $A^G\subset A$ be the subalgebra of $G$-invariants.
Obviously, $A^{(W)}$ is a $A^G$-module. We claim that it is finitely generated.

Indeed, this is equivalent to saying that $\Hom_G(W,A)$ is finitely generated
as a $A^G$-module.

Observe that the expansion \eqref{eq10.A} is a grading of $A$. That is,
\begin{equation}
A_{\la'}A_{\la''}\subseteq A_{\la'+\la''}, \qquad \la',\la''\in\La_+.
\end{equation}
Since the semigroup $\La_+$ is finitely generated, the algebra $A$ is finitely
generated.

This property together with the fact that $G$ is assumed to be reductive make
it possible to apply the classic trick (used in Hilbert's theorem on
invariants) to the $A$-$G$-module $\Hom(W,A)$, see Popov-Vinberg \cite[Theorems
3.6 and 3.25]{PV}. Then we obtain that $(\Hom(W,A))^G$ is a finitely generated
$A^G$-module, as desired.

\emph{Step} 2. By virtue of Step 1, there exists a finite collection of weights
$\la(1),\dots, \la(n)\in\La_+$ such  that $A^{(W)}$ is generated over $A^G$ by
the subspace $ A_{\la(1)}^{(W)}+\dots+A_{\la(n)}^{(W)}$. {}From this and
\eqref{eq10.A} we conclude that for every weight $\la\in\La_+$, the subspace $
A_\la^{(W))}$ is contained in the sum of subspaces of the form
$A_{\la-\la(i)}^G A_{\la(i)}^{(W)}$, where $i\in\{1,\dots,n\}$ should be such
that $\la-\la(i)\in\La_+$.

Because $G$ is a spherical subgroup of $\wt G$, every subspace
$A_{\la-\la(i)}^G$ has dimension at most 1. This gives us the desired bound
$$
\dim A_\la^{(W)}\le\sum_{i=1}^n\dim A_{\la(i)}^{(W)},
$$
uniform on $\la\in\La_+$.
\end{proof}

\begin{proof}[Second proof {\rm(}sketch{\rm)}]
Given a finite-dimensional $G$-module $Y$, we can define the induced $\wt
G$-module $\Ind(Y)$: its elements are holomorphic vector-functions $f:\wt G\to
Y$, which are $\wt G$-finite with respect to right shifts and such that $f(g\wt
g)=gf(\wt g)$ for any $g\in G$ and $\wt g\in\wt G$.

As above, we fix a simple $G$-module $W$. The desired claim is equivalent to
the existence of a uniform bound for $[\Ind(W):V]$, the multiplicity of an
arbitrary simple $\wt G$-module $V$ in the decomposition of $\Ind(W)$.

Given a finite-dimensional $\wt G$-module $X$, let us denote by $X_G$ the same
space regarded as a $G$-module. One can choose $X$ in such a way that $X_G$
contained $W$. Then we obviously have $[\Ind(W):V]\le [\Ind(X_G):V]$.

The key observation is that $\Ind(X_G)$ is isomorphic to $\Ind(\C)\otimes X$,
where $\C$ stands for the trivial one-dimensional $G$-module.

Now let $V$ be an arbitrary simple $\wt G$-module. We have
$$
[\Ind(\C)\otimes X:V]=\dim\Hom_{\wt G}(V\otimes X^*, \Ind(\C)),
$$
where $X^*$ is the dual module to $X$. Observe that in the decomposition of
$V\otimes X^*$ on simple components, every multiplicity does not exceed $\dim
X^*=\dim X$ (this follows from a well-known formula describing the
decomposition of tensor products, see Zhelobenko \cite[end of \S124]{Zhe} or
Humphreys \cite[\S24, Exercise 9]{Humphreys} or else can be easily proved
directly). Since $\Ind(\C)$ is multiplicity free, we finally conclude that
$[\Ind(W):V]\le\dim X$, which is the desired uniform bound.
\end{proof}

\end{document}